\newcommand{\hkra}{\hookrightarrow}
\newcommand{\ra}{\rightarrow}
\newcommand{\rimp}{\Rightarrow}
\newcommand{\set}[1]{\left\{ #1 \right\}}
\newcommand{\cc}[1]{\overline{#1}}
\newcommand{\union}[2]{\bigcup\limits_{#1}{#2}}
\newcommand{\inter}[2]{\bigcap\limits_{#1}{#2}}
\newcommand{\sub}{\subset}
\newcommand{\super}{\supseteq}
\newcommand{\sm}{\ensuremath{\setminus}}
\newcommand{\s}[2]{\sum\limits_{#1}{#2} }
\newcommand{\p}[2]{\prod\limits_{#1}{#2} }
\newcommand{\g}{\circ}
\newcommand{\units}{^\times}
\newcommand{\inv}{^{-1}}
\newcommand{\dS}[2]{\bigoplus\limits_{#1}{#2}}
\newcommand{\concat}{\ast}
\newcommand{\cl}{\colon}
\newcommand{\djun}[1]{\bigsqcup\limits_{#1}}
\newcommand{\lbr}[1]{\Bigl(#1\Bigr)}
\newcommand{\emst}{\emptyset}
\newcommand{\xra}{\xrightarrow}
\newcommand{\gk}{\mathfrak{g}}
\newcommand{\inpr}[2]{\langle {#1},{#2} \rangle}
\newcommand{\maxi}[2]{\max\limits_{#1}{#2}}
\newcommand{\pd}{\partial}
\newcommand{\eva}{\normalfont\text{ev}}
\newcommand{\lspan}[1]{\langle {#1}\rangle}
\newcommand{\pf}{\,\pitchfork\,}
\newcommand{\conn}{\nabla}
\newcommand{\hpd}{\cc{\partial}}
\newcommand{\chml}{\check{H}}
\newcommand{\inn}{^\mathrm{o}}
\newcommand{\dsm}{\oplus}
\newcommand{\fcolim}{\varinjlim\limits}
\newcommand{\lc}{\underline}
\newcommand{\ucc}{^{\bullet}}
\newcommand{\dul}{^\vee}
\newcommand{\vfc}[1]{[#1]^{\virt}}
\newcommand{\fcl}[1]{[#1]}
\newcommand{\rht}{\;|\;}
\newcommand{\ide}{\text{id}}
\newcommand{\Pt}{\normalfont\text{pt}}
\newcommand{\im}{\normalfont\text{im}}
\newcommand{\reg}{\normalfont\text{reg}}
\newcommand{\PGL}{\normalfont\text{PGL}}
\newcommand{\PU}{\normalfont\text{PU}}
\newcommand{\coker}{\normalfont\text{coker}}
\newcommand{\pr}{\normalfont\text{pr}}
\newcommand{\Mbar}{\overline{\mathcal M}}
\newcommand{\vdim}{\text{vdim}}
\newcommand{\delbar}{\bar\partial}
\newcommand{\del}{\partial}
\newcommand{\Hom}{\normalfont\text{Hom}}
\newcommand{\Aut}{\normalfont\text{Aut}}
\newcommand{\bC}{\mathbb{C}}
\newcommand{\bD}{\mathbb{D}}
\newcommand{\bE}{\mathbb{E}}
\newcommand{\bL}{\mathbb{L}}
\newcommand{\bN}{\mathbb{N}}
\newcommand{\bP}{\mathbb{P}}
\newcommand{\bQ}{\mathbb{Q}}
\newcommand{\bR}{\mathbb{R}}
\newcommand{\bT}{\mathbb{T}}
\newcommand{\bZ}{\mathbb{Z}}
\newcommand{\cB}{\mathcal{B}}
\newcommand{\cC}{\mathcal{C}}
\newcommand{\cD}{\mathcal{D}}
\newcommand{\cE}{\mathcal{E}}
\newcommand{\cF}{\mathcal{F}}
\newcommand{\cG}{\mathcal{G}}
\newcommand{\cH}{\mathcal{H}}
\newcommand{\cJ}{\mathcal{J}}
\newcommand{\cK}{\mathcal{K}}
\newcommand{\cL}{\mathcal{L}}
\newcommand{\cM}{\mathcal{M}}
\newcommand{\cN}{\mathcal{N}}
\newcommand{\cO}{\mathcal{O}}
\newcommand{\cP}{\mathcal{P}}
\newcommand{\cS}{\mathcal{S}}
\newcommand{\cT}{\mathcal{T}}
\newcommand{\cU}{\mathcal{U}}
\newcommand{\cV}{\mathcal{V}}
\newcommand{\cW}{\mathcal{W}}
\newcommand{\fE}{\mathfrak{E}}
\newcommand{\fL}{\mathfrak{L}}
\newcommand{\fM}{\mathfrak{M}}
\newcommand{\fW}{\mathfrak{W}}
\newcommand{\fX}{\mathfrak{X}}
\newcommand{\fd}{\mathfrak{d}}
\newcommand{\ff}{\mathfrak{f}}
\newcommand{\fg}{\mathfrak{g}}
\newcommand{\fj}{\mathfrak{j}}
\newcommand{\fl}{\mathfrak{l}}
\newcommand{\fs}{\mathfrak{s}}
\newcommand{\ft}{\mathfrak{t}}
\newcommand{\fu}{\mathfrak{u}}
\newcommand{\codim}{\normalfont{\text{codim}}}
\newcommand{\virt}{\normalfont\text{vir}}
\newcommand{\rank}{\normalfont\text{rank}}
\newcommand{\wt}{\widetilde}
\newcommand{\wh}{\widehat}
\newcommand{\pcd}{\normalfont\text{PD}}
\newtheorem{theorem}{Theorem}[section]
\newtheorem{lemma}[theorem]{Lemma}
\newtheorem{corollary}[theorem]{Corollary}
\newtheorem{proposition}[theorem]{Proposition}
\theoremstyle{definition}
\newtheorem{definition}[theorem]{Definition}
\newtheorem{construction}[theorem]{Construction}
\theoremstyle{remark}
\newtheorem{remark}[theorem]{Remark}
\newtheorem{example}[theorem]{Example}
\newtheorem*{notation*}{Notation}
\numberwithin{equation}{subsection}
\DeclareMathOperator{\Pic}{Pic}
\newcommand{\Addresses}{{
		\bigskip
		\footnotesize
		\textsc{Université Paris Cité, Sorbonne Université, CNRS, IMJ-PRG, F-75005 Paris, France}\par\nopagebreak
		\text{ORCID}: \texttt{0000-0002-2392-7875}}}
\begin{document}

\title[GW invariants defined via global Kuranishi charts]{Properties of Gromov--Witten Invariants defined via Global Kuranishi Charts}
\author{Amanda Hirschi}

\maketitle
\begin{abstract}
  Using the global Kuranishi charts constructed by Hirschi--Swaminathan, we define gravitational descendants and equivariant Gromov--Witten invariants for general symplectic manifolds. We prove that these invariants satisfy the axioms of Kontsevich and Manin and their generalisations. A virtual localisation formula holds in this setting; we use it to derive an explicit formula for the equivariant Gromov--Witten invariants of Hamiltonian GKM manifolds. In particular, the symplectic Gromov--Witten invariants of smooth toric varieties agree with their algebro-geometric counterpart. In the semipositive case, the invariants studied here recover those of Ruan and Tian.\end{abstract}

\tableofcontents
\section{Introduction}

\subsection{Background}
The celebrated work \cite{Gr85} of Gromov showed that pseudoholomorphic curves provide a powerful tool in symplectic topology. In particular, while symplectic manifolds are a strictly more general class than K\"ahler manifolds, they exhibit many of the rigid properties of the latter. To this day, most symplectic invariants are defined using pseudoholomorphic curves; they figure prominently in Floer theory, \cite{Fl88,Sa99}, as well as in contact topology, e.g., in symplectic field theory \cite{EGH00} or contact dynamics \cite{HZ99}.\par
The \emph{Gromov--Witten invariants} of a closed symplectic manifold $(X,\omega)$ are one of the first symplectic pseudoholomorphic curve invariants to be defined, see \cite{RT95,RT97}. They are based on counts of $J$-holomorphic curves for an $\omega$-tame almost complex structure $J$. However, due to transversality issues, these invariants were first only defined for semipositive symplectic manifolds, which constrained certain singularities of the moduli space of pseudoholomorphic curves, \cite[Chapter 6-7]{MS12}. In general, abstract perturbations of the Cauchy--Riemann equation are required to deal with the non-regularity of the moduli space. The first virtual frameworks were developed by \cite{FO99,LT98}, defining Gromov--Witten invariants for general closed symplectic manifolds. Later frameworks were constructed in \cite{IP13,CM,Ger13,P16,MW17b,HWZ17,Joy17,IP19}. The aim of each approach, executed in different ways, is to construct a replacement for the virtual fundamental class for the \emph{moduli space of stable $J$-holomorphic maps} $\Mbar_{g,n}^{\,J}(X,A)$. These moduli spaces are solution spaces of the Cauchy-Riemann equation, an elliptic partial differential equation. Early on, nice local models were constructed. However, the construction of a virtual fundamental class required delicate local-to-global arguments to pass from this local information to a global invariant.\par
Despite the lack of a virtual fundamental class at that time, Kontsevich and Manin conjectured in \cite{KM94} that these invariants define a cohomological field theory on the (rational) cohomology of the target manifold $X$. In genus $0$, this reduces to a deformation of the ordinary cup product, called the quantum cup product. 
This structure stems from the rich geometry of the moduli spaces $\Mbar_{g,n}$ of stable curves, which was originally described in the seminal paper \cite{Wi91}. Kontsevich used the axioms to prove his famous recursion formula, giving counts of rational curves in $\bC P^2$ of any degree. They have also been extensively used for applications in symplectic geometry, such as in \cite{Sei99,LMP99,McD09,Ush11,HW24}. The rich algebraic structure of Gromov--Witten invariants was further developed by Dubrovin \cite{Dub14} and Givental \cite{Giv01,Giv01b,Giv96} among many others.

\subsection{Global Kuranishi charts} In \cite{AMS21}, the authors achieved a breakthrough by constructing a particularly simple representation for the moduli spaces of stable maps in genus $0$: that of a global Kuranishi chart. In \cite{HS22} and independently \cite{AMS23}, global Kuranishi charts were constructed for moduli spaces of stable maps of arbitrary genus. These constructions allow for a straightforward definition of a virtual fundamental class.

\begin{definition}A \emph{global Kuranishi chart} $(G,\cT,\cE,\fs)$ for a space $Z$ consists of a compact Lie group $G$ acting almost freely and locally linearly on the manifold $\cT$ and the vector bundle $\cE\to \cT$, with an equivariant section $\fs\cl\cT\to \cE$ satisfying $\fs\inv(0)/G\cong Z$. If $Z$ is compact, the associated \emph{virtual fundamental class} $\vfc{Z}\in \chml^{\vdim}(Z;\bQ)\dul$ is the composite
\begin{equation*}\label{eq:vfc-de}
    \chml^{\vdim}(Z;\bQ)\xra{\fs^*\tau(\cE/G)} H^{\dim(\cT/G)}_c(\cT/G;\bQ)\xra{\pcd} H_0(\cT;\bQ)\to \bQ.
\end{equation*}
Here $\vdim = \dim(\cT/G)- \rank(\cE)$, while $\tau(\cE/G)$ is the Thom class of the orbibundle $\cE/G$ and $\pcd$ denotes Poincar\'e duality.
\end{definition}
	
To motivate this definition of the virtual fundamental class, note that for any section $s$ of a finite-dimensional vector bundle $V\to M$, the pullback $s^*\tau(V)$ is a natural invariant of $\fs\inv(0)$. In the case where $s$ intersects the zero section transversely, we have $s^*\tau(V) = \pcd(\fs\inv(0))$ and integration over $\fs\inv(0)$ is precisely a special case of this virtual fundamental class.

Thus, we can define the \emph{Gromov--Witten homomorphism} $$\normalfont\text{GW}^{X,\omega}_{g,n,A}\cl H^*(X^n;\bQ)\to H^*(\Mbar_{g,n};\bQ)$$ by 
$$\normalfont\text{GW}^{X,\omega}_{g,n,A}(\alpha) = \pcd(\text{st}_*(\eva^*\alpha\cap \vfc{\Mbar_{g,n}^{\,J}(X,A)}))$$
using either \cite{HS22} or \cite{AMS21}. Here $\eva$ denotes the evaluation at the marked points, while the map $\text{st}\cl \Mbar_{g,n}^{\,J}(X,A)\to \Mbar_{g,n}$ forgets the map to $X$ and stabilises the domain. The Gromov--Witen invariants of $X$ are recovered by evaluating these cohomology classes $\normalfont\text{GW}^{X,\omega}_{g,n,A}(\alpha)$ on $[\Mbar_{g,n}]$.

\subsection{Main results} The purpose of this paper is to set up and prove extensions and properties of the invariants constructed in \cite{HS22}. Any further mention of Gromov--Witten invariants will refer to this construction unless specified otherwise. Given their usefulness for both foundational results as well as applications, we prove that these invariants indeed define a cohomological field theory on $H^*(X;\bQ)$.

\begin{theorem}
	The Gromov--Witten classes of $(X,\omega)$ satisfy the Kontsevich-Manin axioms, listed below.
\end{theorem}
\medskip

\noindent\textit{(Effective)} If $\inpr{[\omega]}{A} < 0$, then $\normalfont\text{GW}^{X,\omega}_{g,n,A} = 0$.

\bigskip

\noindent\textit{(Homology)} $\normalfont\text{GW}^{X,\omega}_{g,n,A}$ is induced by a homology class.

\bigskip

\noindent\textit{(Grading)} $\normalfont\text{GW}^{X,\omega}_{g,n,A}$ has degree $2((\dim_\bC X-3)(1-g) + \inpr{c_1(T_X)}{A} + n)$. 

\bigskip

\noindent\textit{(Symmetry)} $\normalfont\text{GW}^{X,\omega}_{g,n,A}$ is equivariant with respect to the canonical $S_n$-actions given by permuting the factors, respectively the marked points.

\bigskip

\noindent\textit{(Mapping to a point)} If $\bE$ denotes the Hodge bundle over $\Mbar_{g,n}$, then
$$\normalfont\text{GW}^{X,\omega}_{g,n,0}(\alpha_1,\dots,\alpha_n) = \inpr{\alpha_1\cdot \dots \cdot \alpha_n}{c_{\dim_\bC(X)}(T_X)\cap [X]} \; c_g(\bE^*)$$
\indent for all $\alpha_i \in H^*(X;\bQ).$
\bigskip

\noindent\textit{(Fundamental class)} If $1_X$ denotes the unit of $H^*(X;\bQ)$ and $\pi_n$ the map that forgets the $n^{\text{th}}$ marked point, then 
$$\normalfont\text{GW}^{X,\omega}_{g,n,A}(\alpha_1,\dots,\alpha_{n-1}, 1_X) = \pi_n^*\normalfont\text{GW}^{X,\omega}_{g,n-1,A}(\alpha_1,\dots,\alpha_{n-1}).$$
for any $\alpha_1,\dots,\alpha_{n-1}\in H^*(X;\bQ)$.

\bigskip

\noindent\textit{(Divisor)} If $|\alpha_n| = 2$ and ${\pi_n}_! = \pcd\g {\pi_n}_*\g \pcd$ denotes the exceptional pushforward, then 
$${\pi_n}_!\, \normalfont\text{GW}^{X,\omega}_{g,n,A}(\alpha_1,\dots,\alpha_n) = \inpr{\alpha_n}{A} \, \normalfont\text{GW}^{X,\omega}_{g,n-1,A}(\alpha_1,\dots,\alpha_{n-1}).$$
for any $\alpha_1,\dots,\alpha_{n-1} \in H^*(X;\bQ)$. 

\bigskip

\noindent\textit{(Splitting)} Write $\pcd(\Delta_X) = \s{k \in K}{\gamma_k\times\gamma'_k}$ with $\gamma_k,\gamma'_k\in H^*(X;\bQ)$. Given $S\subset \{1,\dots,n\}$, let $$\varphi_S \cl \Mbar_{g_0,n_0+1}\times\Mbar_{g_1,n_1+1}\to \Mbar_{g,n}$$ be the associated clutching map. Then
$$\varphi_S^* \normalfont\text{GW}^{X,\omega}_{g,n,A}(\alpha_1,\dots,\alpha_n) = (-1)^{\epsilon(\alpha;S)}\s{\substack{A_0+A_1 = A\\k\in K}}{\normalfont\text{GW}^{X,\omega}_{g_0,n_0+1,A_0}((\alpha_i)_{i\in S}, \gamma_k)\,\normalfont\text{GW}^{X,\omega}_{g_1,n_1+1,A_1}(\gamma'_k,(\alpha_j)_{j\notin S})}$$
where $\epsilon(\alpha;S) = |\{i > j\rht i \in S,\, j \notin S,\,|\alpha_i|,|\alpha_j| \in 1+ 2\bZ\}|$ and $\alpha_i \in H^*(X;\bQ)$

\bigskip

\noindent\textit{(Genus reduction)} If $\psi \cl \Mbar_{g,n}\to \Mbar_{g+1,n-2}$ denotes the map that creates a non-separating node by gluing the last two marked points, then 
$$\psi^*\normalfont\text{GW}^{X,\omega}_{g+1,n-2,A}(\alpha) = \normalfont\text{GW}^{X,\omega}_{g,n,A}(\alpha,\pcd(\Delta_X))$$ 
for any $\alpha\in  H^*(X^{n-2};\bQ)$.

\bigskip

 See \cite{AMS23} for the analogous results for Gromov--Witten invariants valued in certain complex-oriented generalised cohomology theories. One can generalise the above-defined Gromov--Witten invariants by integrating natural cohomology classes on the moduli space itself over the virtual fundamental class. In \textsection\ref{subsec:grav-desc}, we define the so-called gravitational descendants, discussed in \cite{Wi91,Giv96,RT97,KM98}. We prove that these invariants satisfy the famous String, Dilaton and Divisor equation.\\

In \textsection\ref{sec:localisation}, we define the notion of an \emph{equivariant virtual fundamental class} associated to a global Kuranishi chart equipped with a suitable group action. Subsequently, we apply this to moduli spaces of stable pseudoholomorphic maps whose target $X$ admits a Hamiltonian group action to construct equivariant Gromov-Witten invariants.

\begin{proposition}
    Let $(X,\omega,\mu)$ be a Hamiltonian $K$-manifold, where $K$ is a compact connected Lie group. If $J$ is $K$-invariant, we can construct a global Kuranishi chart for $\Mbar_{g,n}^{\,J}(X,A)$ that carries a compatible $K$-action. The \emph{equivariant Gromov--Witten homomorphisms} of $(X,\omega,\mu)$ are the $H^*_K$-linear maps
    $$\normalfont\text{GW}^{X,\omega,\mu}_{g,n,A} \cl H^*_K(X^n;\bQ)\to H^*(\Mbar_{g,n};\bQ)\otimes H^*_K(\Pt,\bQ)$$
    defined as in the non-equivariant setting. They define a cohomological field theory on $H^*_K(X;\bQ)$.  
\end{proposition}

Kontsevich observed in \cite{Kon95} that one could use localisation in order to compute Gromov--Witten invariants. The extension of Gromov--Witten theory to the equivariant setting was first systematically explored by Givental, \cite{Giv96}, and used to prove mirror symmetry for Calabi-Yau complete intersections. A construction in the algebraic setting of the equivariant invariants for smooth toric varieties was given in \cite{GP99}. The authors generalise the localisation formula of \cite{AB84} to Deligne-Mumford stacks with equivariant perfect obstruction theories; see \cite{Beh99} for an alternative proof. They use it to derive a combinatorial formula for the equivariant Gromov--Witten invariants of projective spaces. It allowed for the computation of Gromov--Witten invariants of toric and compact homogeneous varieties; such as in \cite{Liu13,Spi99}, as well as toric fibrations, \cite{Br14}. Givental, \cite{Giv01b}, used it to obtain conjectural formulas for the higher genus Gromov--Witten invariants in terms of genus $0$ invariants in the case of generically semisimple quantum cohomology; this was shown by Teleman in \cite{Te12}. Meanwhile, \cite{GiKi95} computed the small quantum cohomology of flag varieties using the localisation formula. For a localisation formula in the symplectic setting, see \cite{CT10}.\par 
We prove a virtual localisation formula, Theorem \ref{Virtual Localisation}, for spaces admitting equivariant global Kuranishi charts. Under certain assumptions on the Hamiltonian torus action, we can use it to derive a combinatorial formula for its equivariant Gromov--Witten invariants, generalising \cite[Theorem 78]{Liu13}. Explicitly, we consider Hamiltonian torus manifolds $(X,\omega)$ that satisfy the \emph{GKM condition}, see Definition \ref{de:gkm} for details. In particular, $X$ has finitely many fixed points, and the union of all at most $1$-dimensional orbits is a union of spheres.
 
\begin{theorem}[Theorem \ref{thm:formula-eq-gw}] If $X$ admits a Hamiltonian GKM action by a torus $\bT$, its equivariant Gromov--Witten invariants satisfy
	\begin{equation}\label{eq:formula-equ-gw}
	\normalfont\text{GW}^{X,\omega,\mu}_{g,n,A}(\alpha_1,\dots,\alpha_n)([\Mbar_{g,n}]) = \s{\Gamma\in G_{g,n}(X,A)}{\frac{1}{|\Aut(\Gamma)|}\lspan{j_{\Gamma}^*\eva^*\alpha\cdot\textbf{w}(\Gamma),\fcl{\Mbar_\Gamma}_\bT}}
	\end{equation}
	in the fraction field of $H^*_\bT(\Pt;\bQ)$. Here, $G_{g,n}(X,A)$ is a collection of decorated graphs, $\Mbar_\Gamma$ is a product of moduli spaces of stable curves, and $\textbf{w}(\Gamma)\in H^*_\bT(\Mbar_\Gamma;\bQ)$ is a weight associated to the graph $\Gamma$.
\end{theorem}

The class of Hamiltonian GKM torus manifolds contains not only all symplectic toric manifolds but also many non-K\"ahler examples, \cite{Tol98,Wo98,GKZ20,GKZ23}. Their symplectic geometry has been studied in \cite{GZ00,GZ01,GKZ20b,GKZ22}. To any such manifold, one can associate its \emph{GKM graph}, which combinatorially encodes information about the action and the topology of the manifold. In particular, it allows for a complete determination of the equivariant cohomology, \cite[Proposition 2.30]{GKZ22b}. As the right hand side of \eqref{eq:formula-equ-gw} only relies on the GKM graph of the Hamiltonian torus manifold, so do its equivariant Gromov--Witten invariants.

\begin{theorem}\label{} Suppose $(X,\omega,\mu)$ and $(X',\omega',\mu')$ are two Hamiltonian torus manifolds satisfying the GKM condition. If their GKM graphs are isomorphic, then their equivariant Gromov--Witten invariants agree.
\end{theorem}

In genus $0$, this can be seen as a quantum version of the fact that the GKM graph determines the equivariant cohomology of the manifold, \cite[Proposition~2.30]{GKZ22b}. Nonetheless, it is somewhat unexpected, as Hamiltonian GKM manifolds with the same GKM graph need not even be homotopy equivalent, \cite[Theorem 1.3]{GKZ22}.
Since any symplectic toric manifold is symplectomorphic to a smooth toric variety, \cite[Theorem 2.1]{Del88}, we have the following comparison result.

\begin{corollary}\label{} The symplectic Gromov--Witten invariants of a toric symplectic manifold agree with its algebraic Gromov--Witten invariants.
\end{corollary}

In \cite{LiSh17}, the equivariant Gromov--Witten invariants of algebraic GKM stacks were computed, generalising \cite{Liu13} in a different direction than we do here.\par 
We point out that the equivariant Gromov--Witten invariants considered in this paper do not agree with the Hamiltonian Gromov--Witten invariants defined in \cite{Mir03,CGMS02} or \cite{TX17}, which are defined using the vortex equation instead. A construction of quantum Kirwan maps exists in the setting of projective varieties, \cite{GoWo22} and under certain assumptions in the symplectic setting, \cite{Xu24}. The quantum Kirwan map relates the equivariant quantum cohomology of a toric variety to the quantum cohomology of its symplectic reduction. See also \cite{NWZ14}. It would be interesting to see whether this could be generalised further in the symplectic setting.\par

Finally, we compare the Gromov--Witten invariants considered here with the Gromov--Witten invariants defined by Ruan and Tian in \cite{RT97}. The latter ones are only defined if the symplectic target is \emph{semipositive}, that is, if any $J$-holomorphic sphere $u$ with negative first Chern number satisfies $c_1(u) < 3-\dim_\bC(X)$. This topological property ensures a certain control over the singularities of the moduli space for stable maps and allows for a pseudocycle definition of Gromov--Witten invariants, which will be recalled in \textsection\ref{sec:compare-pseudocycles}. 
 
 \begin{theorem}[Theorem \ref{pseudocycle-comparison}]\label{thm:pseudocycle} If $(X,\omega)$ is semipositive, the Gromov--Witten invariants considered here agree with the Gromov--Witten invariants of Ruan-Tian.
 \end{theorem}

The following immediate consequence of this result is not at all obvious from the global Kuranishi chart construction itself.

\begin{corollary} If $(X,\omega)$ is semipositive, then $\normalfont\text{GW}^{X,\omega}_{0,n,A}$ is integral for $n \geq 3$.
\end{corollary}
 
 In \cite{Schm23}, Schmaltz shows that the polyfold Gromov--Witten invariants of \cite{HWZ17} specialise to the genus $0$ Gromov--Witten invariants defined in \cite{MS12} via pseudocycles. Ionel and Parker show in \cite{IP19b} that the analogue of Theorem \ref{thm:pseudocycle} for their relative virtual fundamental class. For comparison results between algebraic and symplectic Gromov--Witten invariants, see \cite{Sie99,LT99}.

\subsection*{Acknowledgments} I am grateful to Ailsa Keating for her support and feedback, and to Nick Sheridan and Ivan Smith for extensive comments on an earlier draft. I thank Penka Georgieva, Andrin Hirschi, Kai Hugtenburg, Chiu-Chu Melissa Liu, Alexandru Oancea, Noah Porcelli, Oscar Randal-Williams, Dhruv Ranganathan, Mohan Swaminathan, Paul Seidel, and Guangbo Xu for valuable discussions and suggestions. I thank the referee for clarifying comments. My research was supported by EPSRC scholarship No.2434378 and is currently funded by ERC grant No.864919.

\medskip
\section{Review of the global Kuranishi chart construction}\label{sec:background}

Let $(X,\omega)$ be a closed symplectic manifold with $A\in H_2(X;\bZ)$ and fix $J \in \cJ_\tau(X,\omega)$. We recall the construction of a global Kuranishi chart for $\Mbar_{g,n}^{\,J}(X,A)$ described in \cite{HS22}. However, we will not go into too much detail and refer the reader to \cite{HS22} for a more thorough discussion.

\begin{definition}\label{aux-choices-defined}
	An \emph{auxiliary datum} $\alpha = (\nabla^X,\cO_X(1),p,\cU,\lambda,k)$ for $\Mbar_{g,n}^{\,J}(X,A)$ consists of
	\begin{enumerate}[(1),leftmargin=25pt,ref=\arabic*]
		\item a $J$-linear connection $\nabla^X$ on $T_X$;
		\item\label{polarization-on-target} a Hermitian line bundle $(\cO_X(1),\conn)\to X$ with curvature $F^\conn = -2\pi i \Omega$ for a symplectic form $\Omega$ taming $J$. We set $d \coloneq \langle[\Omega],A\rangle$.
		\item\label{framed-maps-etc} $p\gg 1$ is an integer. 
	\end{enumerate}

Given this, we abbreviate
$$m\coloneq p(2g-2+3d)\qquad\qquad  N\coloneq m-g$$
and set
\begin{equation}\label{eq:groups}  \cG\coloneq\PGL(N+1,\bC)\qquad\qquad G\coloneq\PU(N+1).\end{equation}
Here, given a smooth stable map $u \cl C\to X$ of genus $g$ representing $A$, we observe that $m$ is the degree of the $p^{\text{th}}$ power of the line bundle 
$$\fL_u \coloneq \omega_C \otimes u^*\cO_X(1)^{\otimes 3}.$$
By \cite[Lemma~4.1]{HS22}, if $A \neq 0$ or $g\geq 2$, we can choose $p$ sufficiently large so that $\fL_u^{\otimes p}$ is very ample, and has vanishing $H^1$ for any $[u,C]\in\Mbar_{g}^{\,J}(X,A)$. If $A = 0$ and $g < 2$, we have to allow for marked points and replace $\omega_C$ with $\omega_C(x_1+\dots + x_n)$.\par
In either case, $N = \dim_\bC H^0(C,\fL_u^{\otimes p})-1$ by the Riemann-Roch theorem. By definition, any basis $\cF$ of $H^0(C,\fL^{\otimes p}_u)$ induces a holomorphic map 
$$\iota_\cF \cl C\hkra \bP^N,$$ 
with non nontrivial automorphisms. It is regular because $H^1(C,\fL_u^{\otimes p})$ vanishes. We call any such map a \emph{framing}. Denote by 
\begin{equation}\label{eq:base-space} \Mbar_{g,n}^*(\bP^N,m)\sub \Mbar_{g,n}(\bP^N,m)\end{equation} 
the subspace of all regular automorphism-free holomorphic curves of genus $g$ of degree $m$ whose image is not contained in a hyperplane. These moduli spaces are smooth quasi-projective varieties of the expected dimension and will serve as our base spaces.\par
To define the remaining data of the auxiliary datum $\alpha$, let 
$$ \Mbar^{*,st}_{g,3d}(\bP^N,m) \sub  \Mbar_{g,3d}^*(\bP^N,m)$$
be the locus of curves with stable domain. Then,
\begin{enumerate}[leftmargin=25pt,ref=\arabic*]
	\setcounter{enumi}{3}
	\item $\cU$ is a good covering in the sense of \cite[Definition~3.10]{HS22};
	\item $\lambda$ is a $\PGL_\bC(N+1)$-equivariant map 
	$$\Mbar^{*,st}_{g,3d}(\bP^N,m)/S_{3d}\to \cG/G,$$
	where $\cG$ and $G$ were defined in \eqref{eq:groups};	
	\item $k\geq 1$ is an integer. 
\end{enumerate}
\end{definition}

We use perturbations based on the complex vector bundle 
\begin{equation}\label{}E \coloneq E_k \coloneq \cc{\Hom}_\bC(p_1^*T_{\bP^N},p_2^*T_X)\otimes p_1^*\cO_{\bP^N}(k) \otimes \cc{H^0(\bP^N,\cO(k))} \end{equation} 
over $\bP^N\times X$, where $\cc{H^0(\bP^N,\cO(k))}$ is the underlying real vector space of $H^0(\bP^N,\cO(k))$ equipped with the conjugate complex structure. Note that $E$ depends on $k$. Given a map $(\iota,u) \cl C\to \bP^N\times X$, we set $$E_{(\iota,u)} \coloneq H^0(C,(\iota,u)^*E).$$ 

We can now give the construction of the global Kuranishi chart, treating the case of $n = 0$ first and generalising to the other cases in Remark \ref{rem:adding-marked-points}.

\begin{construction}\label{high-level-description-defined}
		Given an unobstructed auxiliary datum $(\nabla^X,\cO_X(1),p,\cU,k)$, the associated global Kuranishi chart $\cK = (G,\cT,\cE,\fs)$ for $\Mbar_g^{\,J}(X,A)$ is defined as follows:
		\begin{itemize}[leftmargin=15pt]
			\item The \emph{thickening} $\mathcal T$ consists of tuples $(u,\iota,C,\eta,\alpha)$, up to reparametrisation, where
			\begin{enumerate}[(a),ref=\alph*]
				\item\label{thickening-condition-1} $u\cl C\to X$ is a framed stable map representing $A$ and $\iota \cl C\hkra \bP^N$ defines an element of $\Mbar_g^*(\bP^N,m)$ so that $(\iota,u)$ lies in the domain of $\lambda_\cU$;
				\item\label{thickening-condition-2} $\eta\in E_{(\iota,u)}$ satisfies 
				\begin{equation}\label{eq:perturbed-cr}
					\delbar_J\tilde u + \langle\eta\rangle\circ d\tilde{\iota} = 0
				\end{equation}
				where $\tilde u$ and $\tilde{\iota}$ denote the pullbacks to the normalisation $\tilde C$ of C;
				\item\label{thickening-condition-3} $\alpha\in H^1(C,\cO_C)$ is such that 
				\begin{equation*}\label{line-bundle-identity}
					[\iota^*\cO_{\bP^N}(1)]\otimes [\fL_u^{\otimes -p}]  = \exp(\alpha) 
				\end{equation*}
			where $\exp \cl H^1(C,\cO_C)\to \Pic^0(C)$ is the universal covering map.
                \item\label{regularity} $H^1(C,(\iota,u)^*E) = 0$ and $D(\delbar_J)_u\oplus (\lspan{\cdot}\g d\tilde{\iota})$ is surjective.
			\end{enumerate}
   
		  $G$ acts on $\cT$ via its standard action on $\bP^N$ making the forgetful map $\pi:\cT\to\Mbar_g^*(\bP^N,m)$ equivariant.
			\item The \emph{obstruction bundle} $\cE\to\cT$ has the fibre
			\begin{align}\label{obstruction-bundle-fibre-def}
				\cE_y = \fs\fu(N+1)\oplus E_{(\iota,u)} \oplus H^1(C,\cO_C)
			\end{align}
			 over a point $y= (u,\iota,C,\eta,\alpha)\in\cT$ and it is endowed with the natural lift of the $G$-action.
			\item The \emph{obstruction section} $\fs:\cT\to\cE$ is given by
			\begin{align}
				\fs(u,\iota,C,\eta,\alpha) = (\exp\inv\lambda_\cU(\iota,u),\eta,\alpha).
			\end{align}
			where $\exp \cl \fs\fu(N+1)\to \cG/G$ is induced by the exponential map and $\lambda_\cU\cl \cT\to \cG/G$ is a $G$-equivariant map defined in \cite[Equation~(3.2.1)]{HS22} using the good covering $\cU$ and $\lambda$. 
		\end{itemize}
\end{construction}

\begin{remark} Condition \eqref{regularity} ensures that all fibres of $\cE$ have the same dimension, while \eqref{eq:perturbed-cr} ensures that the thickening is cut out transversely.
\end{remark}

\begin{remark} The only purpose of the part $\lambda_\cU$ of the obstruction section is to select a class of `unitary framings'. The problem is that the choice of a very ample line bundle gives a full $\PGL$-orbit of framings. However, we only have a $\PU$-action on the thickening and thus need to further reduce the space of framings lying in the zero locus of the obstruction section.
\end{remark}

\begin{definition}
	An auxiliary datum is \emph{unobstructed} if the canonical forgetful map $$\fs\inv(0)\to \Mbar_{g}^J(X,A)$$ 
	is surjective.
\end{definition}	

The existence of unobstructed auxiliary data is shown in \cite[\textsection 4]{HS22}. By Proposition~5.1, $\cK =(G,\cT,\cE,\fs)$ obtained from Construction~\ref{high-level-description-defined} is a global Kuranishi chart for $\Mbar_g^{\,J}(X,A)$ with a canonical orientation of its thickening and obstruction bundle. Moreover, the canonical map $\pi\cl \cT\to \Mbar_g^*(\bP^N,m)$, remembering only the framing, is a topological submersion.
	
%

\begin{remark}\label{rem:adding-marked-points}
    We obtain a global Kuranishi chart for $\Mbar_{g,n}^{\,J}(X,A)$ by pulling back $\cK$ along the map
    $$\pi_{\lc{n}}\cl \Mbar_{g,n}^*(\bP^N,m)\coloneq \pi_{\lc{n}}\inv(\Mbar_g^*(\bP^N,m))\to \Mbar_g^*(\bP^N,m)$$ 
    that forgets all marked points.
\end{remark}

By \cite[\textsection6]{HS22}, the resulting virtual fundamental class $\vfc{\Mbar_{g,n}^{\,J}(X,A)}$ is independent of the choice of unobstructed auxiliary datum. We define the \emph{Gromov--Witten class} of $(X,\omega)$ associated to $(A,g,n)$ to be 
\begin{equation}\label{eq:gw-class-defined}
    \text{GW}^{X,\omega}_{g,n,A} \coloneq (\eva\times\text{st})_*\vfc{\Mbar_{g,n}^{\,J}(X,A)}\quad \text{in } H_{*}(X^n\times\Mbar_{g,n};\bQ)
\end{equation}
If $2g-2+ n \leq 0$ and $A\neq 0$, we formally treat $\Mbar_{g,n}$ as a point and use the same definition. We use the notation 
$$\lspan{\alpha_1,\dots,\alpha_n;\sigma}^{X,\omega}_{g,n,A} \coloneq \inpr{\alpha_1\times\dots\times\alpha_n\times\pcd(\sigma)}{\normalfont\text{GW}^{X,\omega}_{g,n,A}}$$
for $\alpha_1,\dots,\alpha_n\in H^*(X;\bQ)$ and $\sigma \in H_*(\Mbar_{g,n};\bQ)$. 

\section{Kontsevich--Manin axioms}\label{sec:axioms}

The Kontsevich-Manin axioms are a catchphrase used to describe the properties in \cite{KM94} that GW invariants are expected to satisfy. In contrast to the introduction, we will consider here the GW classes defined by \eqref{eq:gw-class-defined}. While the axioms are less elegant in terms of the GW classes, their proof is more transparent.

\subsection{The first five axioms}

The Effective, Homology, and Grading axioms follow directly from the construction. 

\begin{lemma}[Symmetry]\label{symmetry-axiom} We have
	\begin{equation*}\label{}\lspan{\alpha_{\tau(1)},\dots,\alpha_{\tau(n)};\tau_*\sigma}^{X,\omega}_{g,n,A} = (-1)^{\epsilon_{\tau,\alpha}}\lspan{\alpha_1,\dots,\alpha_n;\sigma}^{X,\omega}_{g,n,A}\end{equation*} 
	for any permutation $\tau\in S_n$ and classes $\alpha_i \in H^*(X;\bQ)$ and $\sigma\in H_*(\Mbar_{g,n};\bQ)$, where 
	$$\epsilon_{\tau,\alpha} = |\{i > j\mid \tau(i)< \tau(j),\;|\alpha_i|,|\alpha_j|\in 2\bZ +1\}|.$$
\end{lemma}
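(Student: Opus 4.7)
The plan is to establish that the entire global Kuranishi chart construction is $S_n$-equivariant, deduce $S_n$-invariance of the virtual fundamental class, and then read off the sign via the Koszul rule for cross products. First, I would unpack Remark \ref{rem:adding-marked-points}: the chart $(G,\cT_n,\cE_n,\fs_n)$ for $\Mbar_{g,n}(X,A;J)$ is pulled back from the chart $(G,\cT,\cE,\fs)$ for $\Mbar_g(X,A;J)$ along the restricted forgetful map $\pi_{\lc{n}}$. The symmetric group $S_n$ acts on $\cT_n$ by permuting the ordered marked points of the universal curve; this action commutes with the $G$-action on $\bP^N$, covers the identity on $\Mbar_g^*(\bP^N,m)$, and since $\cE_n$ and $\fs_n$ are pulled back from this base, it fixes them fibrewise. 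Hence $(G,\cT_n,\cE_n,\fs_n)$ carries a compatible $G\times S_n$-action.

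Second, each ingredient of \eqref{eq:vfc-de}---the Thom class of the orbibundle $\cE_n/G$, its pullback along $\fs_n$, and the integration over $\cT_n/G$---is natural under equivariant bundle automorphisms. This yields $\tau_*\vfc{\Mbar_{g,n}(X,A;J)}=\vfc{\Mbar_{g,n}(X,A;J)}$ for every $\tau\in S_n$. Since both $\eva\cl\Mbar_{g,n}(X,A;J)\to X^n$ and $\text{st}\cl\Mbar_{g,n}(X,A;J)\to\Mbar_{g,n}$ are $S_n$-equivariant (permuting factors of $X^n$ and relabelling marked points of $\Mbar_{g,n}$), the pushforward satisfies $(T_\tau)_*\text{GW}^{X,\omega}_{g,n,A}=\text{GW}^{X,\omega}_{g,n,A}$, where $T_\tau$ denotes the diagonal $\tau$-action on $X^n\times\Mbar_{g,n}$.

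Third, I would conclude using invariance and Koszul. The identity $(T_\tau)_*\text{GW}^{X,\omega}_{g,n,A}=\text{GW}^{X,\omega}_{g,n,A}$ lets us rewrite the left-hand side of the claim as $\inpr{T_\tau^*(\alpha_1\times\dots\times\alpha_n\times\pcd(\sigma))}{\text{GW}^{X,\omega}_{g,n,A}}$. The Koszul sign rule applied to permutation of the $X^n$-factors yields $T_\tau^*(\alpha_1\times\dots\times\alpha_n)=(-1)^{\epsilon(\tau,\alpha)}(\alpha_{\tau(1)}\times\dots\times\alpha_{\tau(n)})$, and equivariance of Poincaré duality under the orientation-preserving $S_n$-action on the Deligne--Mumford orbifold $\Mbar_{g,n}$ gives $T_\tau^*\pcd(\sigma)=\pcd(\tau_*\sigma)$, combining to the claimed identity. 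The main obstacle is purely bookkeeping: confirming that the $S_n$-lift on $\cT_n$ preserves all obstruction data (immediate from the pullback structure, since $S_n$ fixes the base) and that Poincaré duality on $\Mbar_{g,n}$ behaves as in the manifold case for an orientation-preserving orbifold automorphism.
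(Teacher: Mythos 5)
Your proposal is correct and follows essentially the same route as the paper: lift the $S_n$-action to the thickening and obstruction bundle (which is immediate since the chart is pulled back along the forgetful map to $\Mbar_g^*(\bP^N,m)$), deduce invariance of the virtual fundamental class from the $S_n$-invariance of the equivariant Thom class, and read off the sign from the Koszul rule. The paper's proof is a two-sentence version of your first two paragraphs, leaving the sign bookkeeping implicit.
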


\begin{proof} Given a global Kuranishi chart $\cK_n$ as in Construction \ref{high-level-description-defined}, the $S_n$-action lifts to a continuous action by orientation-preserving rel--$C^\infty$ diffeomorphisms on the thickening and the obstruction bundle. As the equivariant Thom class of the obstruction bundle is $S_n$-invariant, so is the virtual fundamental class.
\end{proof}

\begin{lemma}[Mapping to a point]\label{mapping-to-point} If $2g-2+ n> 0$, then
	$$\vfc{\Mbar_{g,n}^{\,J}(X,0)} = c_{g\dim_\bC(X)}(T_X\boxtimes \bE\dul)\cap \fcl{X\times \Mbar_{g,n}}$$
	in $H_*(X\times\Mbar_{g,n};\bQ)$, where $\bE$ denotes the Hodge bundle over $\Mbar_{g,n}$. In particular, 
	$$\lspan{\alpha_1,\dots,\alpha_n;[\Pt]}^{X,\omega}_{0,n,0} = \lspan{\alpha_1\cdot \dots \cdot \alpha_n,[X]},$$
\end{lemma}

\begin{proof} Given a constant stable map $u\cl C\to X$ with image $x$, we have $$H^1(C,u^*T_X) = H^1(C,\cO_C)\otimes T_xX.,$$ 
	where the tensor product is taken over $\bC$ as we will always do. Thus, the cokernel of $D\delbar_J(u)$ has (real) dimension $2g\dim(X)$ and the obstruction bundle of $\Mbar_{g,n}^{\,J}(X,0) = X\times\Mbar_{g,n}$ is given by $\normalfont\text{Ob} \coloneq  T_X\boxtimes \bE^*$. Let $s_0$ denote its zero section. Fix a global Kuranishi chart $\cK_n = (G,\cT,\cE,\fs)$ as given by Construction \ref{high-level-description-defined} with base space $\cM\subset \Mbar_{g,n}(\bP^N,m)$. Denote by $\cC\to \cM$ the universal curve and set $\cL \coloneq  R^1\pi_*\cO_\cC$. Then $\cK_{\normalfont\text{ob}} \coloneq  (G,\cT_{\normalfont\text{ob}},\widetilde{\normalfont\text{Ob}},\tilde{\fs})$ with 
	$$\cT_{\normalfont\text{ob}} \coloneq  X\times \set{([\iota,C,x_1,\dots,x_n],\alpha)\in \cL \mid [\iota^*\cO(1)] = p\cdot [\omega_C(x_1+\dots + x_n)]+\alpha}$$
	is a global Kuranishi chart for $\Mbar_{g,n}^{\,J}(X,0)$. The obstruction bundle is $$\widetilde{\normalfont\text{Ob}} = \normalfont\text{Ob}\boxplus \cL\oplus \fs\fu(N+1)$$ and $\tilde{\fs}$ is given by the zero section in the first summand, the obvious map in the second one, and $\exp\inv \lambda$ in the last one. Let $j \cl \cT_{\normalfont\text{ob}}\hkra \cT$ be the inclusion. There exists a natural equivariant morphism $\Phi\cl j^*\cE\to \widetilde{\normalfont\text{Ob}}$ of complex vector bundles; it is given by the identity on $\fs\fu(N+1)$ and $\cL$ and maps the perturbation term $\eta$ to the image of $\lspan{\eta}\g d\iota$ under the quotient map $$\Omega^{0,1}_J(\tilde{C},\tilde{u}^*T_X)\to H^1(C,u^*T_X).$$ 
	By the construction of $\cK_n$, the map $\Phi$ is surjective. Moreover, its kernel agrees with the normal bundle $ N_{X\times \cM/\cT}$ of $X\times\cM$ in $\cT$ (as rel--$C^\infty$ manifolds over $\cM)$. Fixing a splitting $L \cl \widetilde{\normalfont\text{Ob}}\to \cE|_{\cT'}$ of $\Phi$ we obtain that the two-term complexes associated to $\cK_{\normalfont\text{ob}}$ and $\cK_n$ respectively, are quasi-isomorphic in the sense of Lemma \ref{quasi-isomorphic-global-charts}, whence the claim follows.
\end{proof}

\begin{remark}\label{vfc-if-obstruction-bundle} This argument can be applied in any situation where $\Mbar_{g,n}^{\,J}(X,A)$ is smooth with obstruction bundle $\text{Ob}$ to see that $$\vfc{\Mbar_{g,n}^{\,J}(X,A)} = e(\text{Ob}) \cap \fcl{\Mbar_{g,n}^{\,J}(X,A)}$$ 
	under the identification of the dual of \v{C}ech cohomology with singular homology.\end{remark}

We observe the following vanishing statement, alluded to in \cite{KM94}. See \cite[Proposition 2.14(3)]{RT97} for the same statement in their setting.

\begin{lemma}\label{lem:vanishing-dimension} If $(1-g)(\dim_\bC(X)-3) + 2\inpr{c_1(T_X)}{A} < 0$, then $\normalfont\text{GW}^{X,\omega}_{g,n,A} = 0$ for any $n \geq 0$.
\end{lemma}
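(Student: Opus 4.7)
The argument splits into the base case $n=0$, where vanishing is by dimension, and the case $n\geq 1$, which we reduce to the $n=0$ case using the pullback description of the Kuranishi chart from Remark~\ref{rem:adding-marked-points}.

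First, fix an unobstructed auxiliary datum and let $\cK=(G,\cT,\cE,\fs)$ be the resulting global Kuranishi chart for $\Mbar_g(X,A;J)$ from Construction~\ref{high-level-description-defined}. By construction $\dim(\cT/G)-\rank(\cE/G)$ equals the virtual real dimension $2((\dim_\bC X-3)(1-g)+\inpr{c_1(T_X)}{A})$. The stated hypothesis forces this quantity to be negative, so $\rank(\cE/G)>\dim(\cT/G)$. The pulled-back Thom class $\fs^*\tau(\cE/G)$ then lies in $H^{\rank(\cE/G)}_c(\cT/G;\bQ)=0$, since the compactly supported cohomology of a finite-dimensional orbifold vanishes in degrees exceeding its real dimension. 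Consequently, the first arrow in the composite~\eqref{eq:vfc-de} is identically zero, so $\vfc{\Mbar_g(X,A;J)}=0$, and hence $\text{GW}^{X,\omega}_{g,0,A}=0$.

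For $n\geq 1$ I appeal to Remark~\ref{rem:adding-marked-points}: a global Kuranishi chart $\cK_n=(G,\cT_n,\cE_n,\fs_n)$ for $\Mbar_{g,n}(X,A;J)$ is obtained as the base change of $\cK$ along the forgetful map on the base moduli of stable curves in $\bP^N$. Writing $p\colon\cT_n\to\cT$ for the induced equivariant projection, $\cE_n=p^*\cE$ and $\fs_n=p^*\fs$. Naturality of the Thom class under vector bundle pullback then gives
\begin{equation*}
    \fs_n^*\tau(\cE_n/G)=p^*\bigl(\fs^*\tau(\cE/G)\bigr)=0
\end{equation*}
by the previous paragraph. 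The composite defining $\vfc{\Mbar_{g,n}(X,A;J)}$ is therefore identically zero, and pushing forward along $\eva\times\text{st}$ yields $\text{GW}^{X,\omega}_{g,n,A}=0$.

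\textbf{Main obstacle.} The proof is essentially formal once $\fs^*\tau(\cE/G)$ is identified as zero on dimensional grounds. The only point requiring care is verifying that the Thom class of $\cE/G$ behaves naturally under the equivariant base change $p$ (which is standard for vector bundle pullback and survives passage to orbibundles). In particular, one does \emph{not} need to upgrade the Thom class to a compactly supported representative: its vanishing as a cohomology class on $\cT/G$, forced purely by $\rank(\cE/G)>\dim(\cT/G)$, propagates automatically to the thickenings with marked points.
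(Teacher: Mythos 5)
Your proof is correct and follows essentially the same route as the paper's: vanishing of $\fs^*\tau(\cE/G)$ on degree grounds since $\rank(\cE)>\dim(\cT/G)$, propagated to the $n$-pointed charts via naturality of the Thom class under the pullback $\cE_n=\pi_n^*\cE$. The paper's proof is a three-line version of exactly this argument.
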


\begin{proof} Let $(G,\cT,\cE,\fs)$ be a global Kuranishi chart for $\Mbar_g^{\,J}(X,A)$ and let $(G,\cT_n,\cE_n,\fs_n)$ be the induced global Kuranishi chart for $\Mbar_g^{\,J}(X,A)$. By construction, there exist equivariant maps $\pi_n\cl \cT_n \to \cT$ and $\tilde{\pi}_n \cl \cE_n = \pi_n^*\cE \to \cE$ satisfying $\tilde{\pi}_n\fs_n = \fs_n\pi_n$. As $|\fs^*\tau_{\cE/G}| > \dim(\cT/G)$, it follows that $\fs_n^*\tau_{\cE_n/G} = \pi_n^*\fs^*\tau_{\cE/G} = 0$.   \end{proof}

\subsection{Fundamental class axiom}
Following \cite{KM94}, we call the pair $(g,n)$ of nonnegative integers \emph{basic} if it is an element of $\{(0,3),(0,1),(0,2)\}$.

\begin{proposition}[Fundamental class]\label{fundamental-class-axiom}  Suppose $n\geq 1$ and $(g,n)$ is not basic. Then, for $\alpha_1,\dots,\alpha_{n-1}\in H^*(X;\bQ)$ and $\sigma \in H_*(\Mbar_{g,n};\bQ)$, we have  
	$$\lspan{\alpha_1,\dots,\alpha_{n-1},1_X;\sigma}^{X,\omega}_{g,n,A} = \lspan{\alpha_1,\dots,\alpha_{n-1};{\pi_n}_*\sigma}^{X,\omega}_{g,n-1,A},$$
 where $1_X$ is the unit of $H^*(X;\bQ)$.
\end{proposition}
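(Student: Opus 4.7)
The plan is to exhibit a forgetful morphism between compatible global Kuranishi charts for $\Mbar_{g,n}(X,A;J)$ and $\Mbar_{g,n-1}(X,A;J)$ and then to exploit $\eva_n^*1_X = 1$ together with the projection formula to reduce the pairing to one on the lower moduli space. Since $(g,n)$ is not basic and $n\geq 1$, the case $A\neq 0$ or $g\geq 2$ is always available; the remaining possibility $A = 0$, $g\in\{0,1\}$ can be read off from the explicit computation of the virtual fundamental class in Lemma~\ref{mapping-to-point}, so I focus on the generic case.

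First I would fix a single unobstructed auxiliary datum and let $\cK = (G,\cT,\cE,\fs)$ be the associated global Kuranishi chart for $\Mbar_g(X,A;J)$ from Construction~\ref{high-level-description-defined}. By Remark~\ref{rem:adding-marked-points}, pulling $\cK$ back along the maps forgetting all marked points from the $n$-marked and $(n-1)$-marked loci in $\Mbar_{g,*}^*(\bP^N,m)$ yields compatible charts $\cK_n$ and $\cK_{n-1}$. Dropping the $n$-th marked point defines a $G$-equivariant forgetful map $\Pi_n\cl\cT_n\to\cT_{n-1}$ which, by universality of $\Mbar_{g,n}^*(\bP^N,m)\to\Mbar_{g,n-1}^*(\bP^N,m)$ as a universal curve, realises $\cT_n$ as its pullback along $\cT_{n-1}\to\Mbar_{g,n-1}^*(\bP^N,m)$. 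Hence $\Pi_n$ is a proper topological submersion of relative real dimension $2$, with $\cE_n = \Pi_n^*\cE_{n-1}$ and $\fs_n = \fs_{n-1}\circ\Pi_n$, so that $\fs_n^*\tau(\cE_n/G) = \Pi_n^*\fs_{n-1}^*\tau(\cE_{n-1}/G)$.

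Next, for $i<n$ the evaluation $\eva_i\cl\cT_n/G\to X$ factors through $\Pi_n$, while the stabilisations satisfy $\pi_n\circ\text{st}_n = \text{st}_{n-1}\circ\Pi_n$ and fit into a square realising $\Pi_n$ as the pullback of $\pi_n\cl\Mbar_{g,n}\to\Mbar_{g,n-1}$ in the orbifold sense. Using $\eva_n^*1_X = 1$, unwinding the definition of the pairing and applying the projection formula along $\Pi_n$ rewrites the left-hand side of the claimed identity as
\begin{equation*}
\int_{\cT_{n-1}/G}\fs_{n-1}^*\tau(\cE_{n-1}/G)\cdot\prod_{i<n}\eva_i^*\alpha_i\cdot(\Pi_n)_!\,\text{st}_n^*\pcd(\sigma).
\end{equation*}
Base change along the above square combined with the Poincar\'e duality identity $(\pi_n)_!\,\pcd(\sigma) = \pcd({\pi_n}_*\sigma)$ identifies the last factor with $\text{st}_{n-1}^*\pcd({\pi_n}_*\sigma)$, yielding exactly $\lspan{\alpha_1\times\dots\times\alpha_{n-1}\times\pcd({\pi_n}_*\sigma),\normalfont\text{GW}^{X,\omega}_{g,n-1,A}}$.

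The main obstacle is justifying that $\Pi_n$ genuinely realises $\cT_n$ as a pullback of the universal curve over $\Mbar_{g,n-1}$ after passing to the $G$-quotients, and that the stabilisation maps, which may contract components on which $u$ is non-constant, are compatible both with $\pi_n,\Pi_n$ and with the orientations used to define fibre integration in the rel--$C^\infty$ category. Once this base-change identity is in place, the rest of the argument is a formal consequence of $\eva_n^*1_X = 1$ and the projection formula. The boundary cases $(g,n-1)\in\{(0,1),(0,2),(1,0)\}$, where $\Mbar_{g,n-1}$ is treated as a point, collapse to a single fibre integral over $\pi_n$ and are checked directly.
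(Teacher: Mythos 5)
Your setup coincides with the paper's: a single chart $\cK$ induces $\cK_n$ and $\cK_{n-1}$ with $\cE_n = \Pi_n^*\cE_{n-1}$ and $\fs_n = \fs_{n-1}\circ\Pi_n$, and after using $\eva_n^*1_X = 1$ and the projection formula along the proper submersion $\Pi_n$, everything reduces to the push--pull identity $(\Pi_n)_!\,\text{st}_n^* = \text{st}_{n-1}^*\,(\pi_n)_!$. The problem is that this identity is essentially the entire content of the proposition, and you do not prove it: you invoke ``base change along the above square'', but the square formed by $\Pi_n$, $\pi_n$ and the two stabilisation maps is not a cartesian square of manifolds to which any base-change theorem applies. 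The fibre of $\Pi_n$ over a point of $\cT_{n-1}$ is the actual domain $C$, while the fibre of $\pi_n$ over $\text{st}(C)$ is the stabilised curve; these differ, and $\text{st}$ fails to be a submersion, exactly along the locus where the stabilisation contracts components. Moreover the target of $\text{st}$ is an orbifold, not a manifold. You explicitly name this as ``the main obstacle'' and then assume it away, so the argument as written does not close.

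The paper supplies the two missing ingredients. First, it chooses a presentation $P_{n-1}/H$ of $\Mbar_{g,n-1}$ as a global quotient, lets $P_n$ be its pullback along the representable map $\pi_n$, and replaces $\cK_\ell$ by the equivalent chart (group enlargement) with thickening $\wt\cT_\ell := \cT_\ell\times_{\Mbar_{g,\ell}} P_\ell$, which is a principal $H$-bundle over $\cT_\ell$ by Lemma \ref{resolution-fibre-product}; this converts the stabilisation maps into maps of manifolds $p_\ell\cl \wt\cT_\ell\to P_\ell$. Second, it observes that $p_{n-1}$ is a submersion away from a subset of codimension at least $2$ (the boundary stratum where contractions occur) and applies Lemma \ref{projection-formula-up-codimension}, whose entire purpose is to give the projection formula under this weakened hypothesis. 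To make your proof stand, you must either reproduce these steps or give an independent argument for $(\Pi_n)_!\,\text{st}_n^* = \text{st}_{n-1}^*\,(\pi_n)_!$; the remaining reductions, including $(\pi_n)_!\pcd(\sigma) = \pcd({\pi_n}_*\sigma)$ and the treatment of the unstable targets, are unobjectionable.
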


\begin{proof}  Let $\cK$ be a global Kuranishi chart for $\Mbar_g^{\,J}(X,A)$ as given by Construction \ref{high-level-description-defined}. Let $\cK_n$ and $\cK_{n-1}$ be the induced global Kuranishi charts for $\Mbar_{g,n}^{\,J}(X,A)$ and $\Mbar_{g,n-1}^{\,J}(X,A)$. Denote by $\cM_n$ and $\cM_{n-1}$ the respective base space. Let $\pi_n \cl \cT_n \to \cT_{n-1}$ be the forgetful map between the thickenings. By construction, $\cE_n = \pi_n^*\cE_{n-1}$ and $\fs_n = \pi_n^*\fs_{n-1}$.
We will construct principal $H$-bundles $\wt\cT_{n-1}\to \cT_{n-1}$ and $\wt\cT_n\to \cT_n$ for some compact Lie group $H$ so that the induced forgetful map $\wt\pi_n \cl \wt\cT_n \to \wt\cT_{n-1}$ satisfies
\begin{equation}\label{eq:fund-class-basic} (\wt\pi_n)_!\,\normalfont\text{st}^* = \normalfont\text{st}^*\,{\pi_n}_!.\end{equation}
This will prove the claim since the global Kuranishi chart $\wt\cK_\ell$ determined by the principal bundle $\wt\cT_\ell\to \cT_\ell$ is equivalent to $\cK_\ell$ via group enlargement. To this end, let $P_{n-1}/H$ be a presentation of $\Mbar_{g,n-1}$ as a global quotient and let $P_n$ be the pullback of $P_{n-1}$ along the representable morphism $\pi_n$. Then $P_n/H$ is a presentation of $\Mbar_{g,n}$ and $\pi_n$ pulls back to a smooth map $P_n \to P_{n-1}$. Set 
$$\wt\cT_\ell \coloneq  \cT_\ell\times_{\Mbar_{g,\ell}} P_\ell$$ for $\ell\in \{n-1,n\}$. By Lemma \ref{resolution-fibre-product}, $\wt\cT_\ell\to \cT_\ell$ is a principal $H$-bundle and determines the global Kuranishi chart
$$\wt\cK_\ell \coloneq  (G\times H,\wt\cT_\ell,\wt\cT_\ell\times_{\cT_\ell}\cE_\ell,\ide\times\fs_\ell)$$
for $\Mbar_{g,\ell}J(X,A)$. By construction, the square 
\begin{center}\begin{tikzcd}
		\wt\cT_{n} \arrow[r,"\wt\pi_{n}"] \arrow[d,"p_n"]&\wt\cT_{n-1} 
		\arrow[d,"p_{n-1}"]\\ 
		P_n \arrow[r,""] & P_{n-1} \end{tikzcd} \end{center}
is cartesian. As $p_{n-1}$ is a submersion away from a subset of codimension at least $2$, we can apply Lemma \ref{projection-formula-up-codimension} to deduce the equality in \eqref{eq:fund-class-basic}.
\end{proof}

\begin{remark}\label{} In the case where $g = 0$ and $n = 3$, we can apply the same argument to $\Mbar_{0,3}$ and $\Mbar_{0,2}$, the second of which we formally replace by a point, to obtain 
	\begin{equation}\label{} \lspan{\alpha_1,\alpha_2, 1_X}^{X,\omega}_{0,3,A} = 
		\begin{cases}
			0 \quad & A \neq 0\\
			\inpr{\alpha_1\cdot \alpha_2}{[X]}\quad & A = 0
	\end{cases}\end{equation}
using Lemma \ref{mapping-to-point}.
\end{remark}

\subsection{Divisor axiom} The Divisor axiom does not rely on the geometry of the moduli space of stable curves but is due to the fact that generically a stable map representing $A$ intersects a divisor $Y\sub X$ in $\lspan{\pcd(Y),A}$ many points.

\begin{proposition}[Divisor]\label{divisor-axiom} Suppose $(g,n)$ is not basic and $n \geq 1$. For $\alpha_1,\dots,\alpha_{n}\in H^*(X;\bQ)$ with $|\alpha_n| = 2$ and $\sigma \in H_*(\Mbar_{g,n-1};\bQ)$ we have 
	$$\lspan{\alpha_1, \dots ,\alpha_n;\pi_n^!\sigma}^{X,\omega}_{g,n,A} = \inpr{\alpha_n}{A}\,\lspan{\alpha_1, \dots ,\alpha_{n-1};\sigma}^{X,\omega}_{g,n-1,A}.$$
\end{proposition}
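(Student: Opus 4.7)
The plan is to reduce the identity to a fibre-integration computation over the universal curve, via the same resolution-by-principal-bundles device used in the proof of Proposition~\ref{fundamental-class-axiom}.

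I would first fix a global Kuranishi chart $\cK = (G,\cT,\cE,\fs)$ for $\Mbar_g(X,A;J)$ as in Construction~\ref{high-level-description-defined} and form the induced charts $\cK_n = (G,\cT_n,\cE_n,\fs_n)$ and $\cK_{n-1} = (G,\cT_{n-1},\cE_{n-1},\fs_{n-1})$ via Remark~\ref{rem:adding-marked-points}. By construction, the forgetful map $\pi_n\cl\cT_n\to\cT_{n-1}$ that drops the $n$-th marked point satisfies $\cE_n = \pi_n^*\cE_{n-1}$ and $\fs_n = \pi_n^*\fs_{n-1}$, and its generic fibre over a framed curve $(C,\iota,u,\eta,\alpha;x_1,\dots,x_{n-1})$ is the curve $C$ itself; the evaluation $\eva_n\cl\cT_n\to X$ at the $n$-th marked point restricts on such a fibre to the stable map $u\cl C\to X$ and is $G$-equivariant for the trivial action on $X$.

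Next, I would present $\Mbar_{g,n}$ and $\Mbar_{g,n-1}$ as global quotients $P_n/H$ and $P_{n-1}/H$ compatibly with the forgetful map, form the pullback thickenings $\wt\cT_\ell := \cT_\ell\times_{\Mbar_{g,\ell}}P_\ell$, and obtain equivalent global Kuranishi charts $\wt\cK_\ell$. By Lemma~\ref{resolution-fibre-product}, each $\wt\cT_\ell\to\cT_\ell$ is a principal $H$-bundle, and the induced map $\wt\pi_n\cl\wt\cT_n\to\wt\cT_{n-1}$ sits in a cartesian square above $P_n\to P_{n-1}$, the latter being a submersion away from a locus of codimension at least two; this is exactly the setting in which Lemma~\ref{projection-formula-up-codimension} applies. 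The core computation is the fibre-integration identity
$$(\wt\pi_n)_!\,\eva_n^*\alpha_n = \inpr{\alpha_n}{A}\cdot 1 \in H^0(\wt\cT_{n-1}/G;\bQ),$$
which follows from $|\alpha_n|=2$, the real two-dimensionality of the fibres, and the fact that $u_*\fcl{C} = A$ for every framed curve, yielding $\int_C u^*\alpha_n = \inpr{\alpha_n}{u_*\fcl{C}} = \inpr{\alpha_n}{A}$ on the smooth locus. Combining this with the projection formula, the compatibility $\fs_n^*\tau(\cE_n/G) = \pi_n^*\fs_{n-1}^*\tau(\cE_{n-1}/G)$, and the observation that $\eva_1,\dots,\eva_{n-1}$ and $\text{st}_n^*\pcd(\sigma)$ are all pullbacks along $\wt\pi_n$, produces the desired identity upon pairing with $\alpha_1\times\cdots\times\alpha_{n-1}\times\pcd(\sigma)$.

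The main obstacle is handling the non-smooth locus of $\wt\pi_n$: at nodal fibres and at fibres where dropping the $n$-th marked point destabilises a rational component, $\wt\pi_n$ fails to be a submersion. As in Proposition~\ref{fundamental-class-axiom}, Lemma~\ref{projection-formula-up-codimension} is tailor-made to absorb this defect, and the only genuinely new ingredient beyond that proof is the fibre-integration calculation yielding the scalar $\inpr{\alpha_n}{A}$. At destabilising fibres the contracted component maps to a single point in $X$, so integrating the pullback of $\alpha_n$ is unaffected by the contraction and the scalar is recovered on the dense smooth locus, which is enough thanks to Lemma~\ref{projection-formula-up-codimension}.
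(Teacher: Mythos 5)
Your route is genuinely different from the paper's. The paper does not push $\eva_n^*\alpha_n$ forward along the universal curve; instead it first arranges (Lemma \ref{evaluation-on-thickening}) that $\eva_n\cl\cT_n\to X$ is a relative submersion, replaces $\alpha_n$ by a multiple represented by a smooth hypersurface $Y\subset X$, forms the cut-down chart with thickening $\cT_Y=\eva_n\inv(Y)$, identifies $j_*\vfc{\Mbar_Y}=\eva_n^*\gamma\cap\vfc{\Mbar_{g,n}(X,A;J)}$ via Proposition \ref{vfc-embedded-chart}, and then shows (Lemma \ref{degree-divisor-axiom}) that the proper map $\cT_Y\to\cT_{n-1}$ between manifolds of equal dimension has degree $\inpr{\gamma}{A}$ by a local intersection-number computation at a generic point with $u\pf Y$. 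Your approach is the classical one: integrate $\eva_n^*\alpha_n$ over the fibres of $\pi_n$, which are the domain curves themselves, so that the scalar $\inpr{\alpha_n}{A}=\int_C u^*\alpha_n$ appears directly. This buys you two simplifications: you never need Lemma \ref{evaluation-on-thickening}, and you never need to replace $\alpha_n$ by a multiple admitting a hypersurface representative.

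The one step you should not wave through is the identity $(\wt\pi_n)_!\,\eva_n^*\alpha_n=\inpr{\alpha_n}{A}\cdot 1$. Lemma \ref{projection-formula-up-codimension} is a base-change compatibility $f'_!q^*=p^*f_!$ for a cartesian square whose base-change map is a submersion away from codimension two; it lets you intertwine $\normalfont\text{st}^*$ with the forgetful pushforwards (as in Proposition \ref{fundamental-class-axiom}), but it does not compute the value of the degree-zero class $(\wt\pi_n)_!\,\eva_n^*\alpha_n$, and it cannot be applied with the inclusion of a point as the base-change map, since that is not a submersion. What you actually need is a local degree computation: $(\wt\pi_n)_!\,\eva_n^*\alpha_n=\pcd\,(\wt\pi_n)_*(\eva_n^*\alpha_n\cap\fcl{\wt\cT_n/G})$ lands in top-degree Borel--Moore homology, so it is a locally constant multiple of the fundamental class, and the multiple must be identified over a generic point by exhibiting the fibre of $\cT_n\to\cT_{n-1}$ as the domain curve $C$ with $\eva_n|_C=u$ and invoking $u_*\fcl{C}=A$. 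This is exactly parallel in content to the paper's Lemma \ref{degree-divisor-axiom}, so your proof is not shorter at this point, merely dual to it. (A small side remark: since $\cM_n\to\cM_{n-1}$ forgets a marked point on a fixed framed curve in $\bP^N$ without stabilising, there is no destabilisation issue in the fibres; the only non-submersive locus is the nodal locus of the universal curve, which does have codimension two.) With that local computation supplied, your argument is correct.
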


The first observation is that for suitable auxiliary data, the evaluation maps become (non-surjective) relative submersions from the thickening to $X$. Let us introduce the following definition first.

\begin{definition}\label{} An unobstructed auxiliary datum $\alpha = (\conn^X,\cO_X(1),p,\cU,\lambda,k)$ \emph{$n$-unobstructed} if for any $(u,\iota,C,x_1,\dots,x_n)\in \fs_{\alpha,n}\inv(0)$ and the perturbed operator \eqref{eq:perturbed-cr} is surjective when restricted to 
	\begin{equation}\label{vanishing}\{\xi\in C^\infty(C,{u}^*T_X)\mid  \xi(x_i) = 0\}\oplus H^0(C,(\iota,u)^*E) \end{equation}
	for each $1 \leq i \leq n$.
\end{definition}

\begin{lemma}\label{strongly-unobstructed-aux-exist} If $\alpha =(\conn^X,\cO_X(1),p,\cU,\lambda,k)$ is an unobstructed auxiliary datum and $n \geq 1$, we can find $\wt k \geq 0$ so that $\wt\alpha \coloneq  (\conn^X,\cO_X(1),p,\cU,\lambda,\wt k)$ is $n$-unobstructed.
\end{lemma}

\begin{proof} Observe first that the zero locus $\fs_{\alpha,n}\inv(0)$ does not depend on the choice of $k$. Now given a point $(u,\iota,C,x_1,\dots,x_n,0,0)$ in the zero locus of the global Kuranishi chart $\cK_{\alpha,n}$, set 
	$$V_i \coloneq  \set{\xi \in C^\infty(C,{u'}^*T_X)\mid  \xi(\kappa(x_i)) = 0}$$
	and let $D_i$ be the restriction of $D\coloneq  D\delbar_J(u')$ to $V_i$. Since $V_i$ is of finite codimension, $D_i$ extends to a Fredholm operator
	$$D_i\cl \cc{V}^{{\ell,2}}\sub W^{\ell,2}(C',{u'}^*T_X) \to W^{\ell-1,2}(\wt C',\Omega^{0,1}_{\wt C'}\otimes_\bC {u'}^*T_X).$$
	 on the Sobolev completion of $V_i$ for $\ell $ sufficiently large, whose formal adjoint is given by $D^*$ post-composed with the orthogonal projection onto $\cc{V_i}^{{\ell-1,2}}$.
	 The proof of the existence of $k$ so that the map
	 $$\Phi_k\cl H^0(C',(\iota',u')^*E_k)\to \Omega^{0,1}(\wt C'.{u'}^*T_X): \eta \mapsto \lspan{\eta}\g d\iota$$
	 surjects onto the cokernel of $D_{u'}$ relies on \cite[Proposition 6.26]{AMS21}. Explicitly, it shows that for any element $\zeta \in \ker(D_{u'})$ we can find $k_\zeta \gg 1$ and $\eta \in H^0(C',(\iota',u')^*E_k)$ so that $$\lspan{\Phi_{k}(\eta),\zeta}_{L^2}\neq 0$$ 
	 for $k \geq k_\zeta$.
	 Taking a (necessarily finite) basis of $\ker(D^*)$, one obtains $k_j$ by taking the maximum of the associated integers $k_\zeta$. We claim that both steps also work with $D_i$ instead of $D_{u'}$. Indeed, the first step only uses the continuity of $\zeta$. Choosing $\ell\geq 4$, this is given for any element of the domain of $D_{u'}^*$. Since $\ker(D_i^*)$ is finite-dimensional as well, we may thus argue as in \cite[Proposition~6.26]{AMS21} to find $k_{u,i}$ so that $\Phi_{k'}$ surjects onto the cokernel of $D_i$ for any $k' \geq k_i{u,i}$. Using the same argument as in \cite[Lemma~4.19]{HS22} together with the fact that the codimension of \eqref{vanishing} in $C^\infty(C,{u}^*T_X)\oplus H^0(C,(\iota,u)^*E)$ is constant and finite, the existence of $k_i\geq k$ with $ k_{i}\geq  k_{u,i}$ for any $u$ now follows from the compactness of $\fs_{\alpha,n}\inv(0)$. Taking $\wt k = \maxi{i}{k_i}$, the auxiliary datum $\wt \alpha \coloneq  (\conn^X,\cO_X(1),p,\cU,\wt k)$ has the desired property.
\end{proof}

\begin{lemma}\label{evaluation-on-thickening} If $\alpha$ is an $n$-unobstructed auxiliary datum, the evaluation map $\eva_j \cl \cT_n \to X$ is a relative submersion for $1 \leq j \leq n$, possibly after shrinking $\cT_n$.
\end{lemma}

\begin{proof} Given $y = (u,\iota,C',x_1,\dots,x_n)\in \fs_{\alpha,n}\inv(0)$ with image $(u',\iota',C')\in \fs_{\alpha,0}\inv(0)$, let $\kappa \cl C\to C'$ be the canonical contraction map. Let $v \in T_{u(x_j)}X = T_{u(\kappa(x_j))}X$ be arbitrary. Extend it to a vector field $\xi \in C^\infty(C',{u'}^*T_X)$ and let $(\xi_1,\eta_1)\in \ker(D_{u'} +\lspan{\cdot}\g d\iota')$ be such that $\xi_1(\kappa(x_j)) = 0$ and $D_{u'}\xi_1 + \lspan{\eta_1}\g d\iota = -D_{u'}\xi$. Let $\wt\xi_1, \wt\xi$ and $\wt\eta$ be the pullback to an element of $C^\infty(C,u^*T_X)$, respectively $H^0(C,(\iota,u)^*E)$. Since $\iota$ is constant on the irreducible components of $C$ contracted by $\kappa$, we have
	$$(\wt\xi_1+\wt\xi,\wt\eta_1)\in \ker(D_u+\lspan{\cdot}\g d\iota) = T_{y}\cT_{\alpha,n}$$
	and $$ d\eva(y)(\xi_1+\xi,\eta_1) = \xi_1(\kappa(x_j)) = v.$$ 
\end{proof}

 Let now $Y\subset X$ be a smooth hypersurface Poincar\'e dual to a class $\gamma\in H^2(X,\bQ)$, possibly first multiplying $\gamma$ by a large integer. Let $\cK_n$ be a global Kuranishi chart for $\Mbar_{g,n}^{\,J}(X,A)$ satisfying the conclusion of Lemma \ref{evaluation-on-thickening}. Let $\cK_Y$ be the global Kuranishi chart with thickening $\cT_Y \coloneq  \eva_n\inv(Y)$ and all other data given by restriction. Set $\Mbar_Y \coloneq  \fs_Y\inv(0)/G$ and let $j \cl \cT_Y/G\hkra \cT_n/G$ be the inclusion.
By Proposition \ref{vfc-embedded-chart},
\begin{equation*}
    j_*\vfc{\Mbar_Y} = \eva_n^*\gamma\cap \vfc{\Mbar_{g,n}^{\,J}(X,A)}.
\end{equation*}

The forgetful map $\pi_n$ restricts to a proper map $\cT_Y\to \cT_{n-1}$ of manifolds of the same dimension.

\begin{lemma}\label{degree-divisor-axiom} ${\pi_n}_*\vfc{\Mbar_Y} = \inpr{\gamma}{A}\,\vfc{\Mbar_{g,n-1}^{\,J}(X,A)}$.
\end{lemma}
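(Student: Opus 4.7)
My plan is to use the embedded-chart identity displayed just before the lemma together with the fact that the obstruction data on $\cT_n$ pulls back cleanly from $\cT_{n-1}$ along $\pi_n$, reducing the statement to the topological identity $(\pi_n)_*\eva_n^*\gamma = \inpr{\gamma}{A}$.

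The first step is to observe that the slice map $\lambda_\cU$ and the parameters $(\eta,\alpha)$ appearing in $\fs_n$ depend only on the tuple $(u,\iota,C,\eta,\alpha)$ and are independent of the marked points. Consequently there is a canonical identification $\cE_n\cong\pi_n^*\cE_{n-1}$ of $G$-equivariant bundles over $\cT_n$ and $\fs_n=\pi_n^*\fs_{n-1}$; hence
\begin{equation*}
\fs_n^*\tau(\cE_n/G)=\pi_n^*\bigl(\fs_{n-1}^*\tau(\cE_{n-1}/G)\bigr).
\end{equation*}
In particular, the Thom-pullback factor in $\vfc{\Mbar_Y}$ is $(\pi_n\circ j)^*$-pulled-back from the corresponding class on $\cT_{n-1}$.

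Combining this with Proposition~\ref{vfc-embedded-chart}, which gives $j_*\vfc{\Mbar_Y}=\eva_n^*\gamma\cap\vfc{\Mbar_{g,n}(X,A;J)}$, I would evaluate the pushforward against a test class $\delta\in H^*(\Mbar_{g,n-1};\bQ)$ to obtain
\begin{equation*}
\inpr{\delta}{(\pi_n)_*\vfc{\Mbar_Y}}=\int_{\cT_n/G}\eva_n^*\gamma\cup\pi_n^*\bigl(\delta\cup\fs_{n-1}^*\tau(\cE_{n-1}/G)\bigr),
\end{equation*}
and then apply the projection formula together with fibre integration along $\pi_n$ to pull out the factor $(\pi_n)_*\eva_n^*\gamma$. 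The fibre of $\pi_n$ over a point $[u,\iota,C,\eta,\alpha,x_1,\ldots,x_{n-1}]$ is canonically $C$, parametrising the choice of $x_n$, and the evaluation $\eva_n$ restricts there to the map $u\cl C\to X$; hence $\eva_n^*\gamma$ restricts to $u^*\gamma\in H^2(C;\bQ)$ with fibre integral $\inpr{\gamma}{u_*[C]}=\inpr{\gamma}{A}$, yielding the stated identity.

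The main technical obstacle is that $\pi_n$ is not literally a smooth fibration with fibre $C$: over the locus where the new marked point $x_n$ would collide with a node or an existing marked point, stability forces the appearance of a ghost $\bP^1$-component and the fibre degenerates. This exceptional locus has real codimension at least $2$ in $\cT_{n-1}$, so, as in the proof of Proposition~\ref{fundamental-class-axiom}, I would invoke Lemma~\ref{projection-formula-up-codimension} to justify the projection formula and fibre-integration computation on the generic open stratum, which is enough for the cohomological identity sought.
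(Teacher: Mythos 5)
Your argument is correct, but it follows a genuinely different route from the paper's. The paper never integrates over the fibres of the universal curve: it uses the observation, recorded just before the lemma, that $\pi_n$ restricts to a \emph{proper map $\cT_Y\to\cT_{n-1}$ between manifolds of the same dimension}, and computes its degree directly. Concretely, it picks a generic point $y=[u,\iota,C,x_1,\dots,x_{n-1},\alpha,\eta]$ with smooth domain and $u$ transverse to $Y$, writes down a local model $V_{n-1}\times W$ for $\cT_Y$ near the preimage of $y$, and identifies the local degree at each preimage point with the local intersection index of $u$ with $Y$ at $x\in u\inv(Y)$; summing gives $\inpr{\gamma}{A}$, and the lemma follows because the obstruction data of $\cK_Y$ is pulled back from $\cK_{n-1}$. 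You instead push the identity $j_*\vfc{\Mbar_Y}=\eva_n^*\gamma\cap\vfc{\Mbar_{g,n}(X,A;J)}$ down the whole universal curve $\pi_n\cl\cT_n\to\cT_{n-1}$ and evaluate $({\pi_n})_!\,\eva_n^*\gamma=\inpr{\gamma}{A}$ on a generic fibre. The two arguments encode Poincar\'e-dual versions of the same topological fact (counting $u\inv(Y)$ with signs versus computing $\int_C u^*\gamma$), but yours trades the paper's pointwise transversality analysis and explicit local model for $\cT_Y$ against the formal package of the projection formula plus base change on the generic stratum; note that the plain projection formula $({\pi_n})_!(\alpha\cup\pi_n^*\beta)=({\pi_n})_!\alpha\cup\beta$ holds for any proper map of oriented homology manifolds with no submersion hypothesis, and it is only the identification of $({\pi_n})_!\,\eva_n^*\gamma$ with a fibre integral (a locally constant class evaluated where $\pi_n$ is a submersion) that requires the codimension-two argument you invoke. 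One small imprecision: your test classes $\delta$ should range over $\chml^*(\Mbar_{g,n-1}(X,A;J);\bQ)$, i.e.\ over classes defined on a neighbourhood in $\cT_{n-1}/G$, rather than over $H^*(\Mbar_{g,n-1};\bQ)$.
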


\begin{proof} We show that the restriction $\pi\cl\cT_Y\to \cT_{n-1}$ has degree $\inpr{\gamma}{A}$. It suffices to check the claim for a generic point in each connected component of $\cT_{n-1}$. Since $\cT_{n-1}$ is a covering of a space of regular stable maps, we may choose each such point to have smooth domain. Fix $y = [u,\iota,C,x_1,\dots,x_{n-1},\alpha,\eta]$ in $\cT_{n-1}$ with $u \pf Y$. By the definition of $\cT_n$ and $\cT_{n-1}$ we can find a neighbourhood $U\sub \cT_{n-1}$ of $y$ so that any curve in $U$ intersects $Y$ transversely and
	$$U\cong V\times B$$ 
	where $V\sub \cM_{n-1}$ and $B \sub T_{\cT_{n-1}/\cM_{n-1},y}$ are open neighbourhoods of the image of $y$, respectively, the origin. Recall that $\pi \cl \cT_n \to \cT_{n-1}$ fits into the cartesian square
	\begin{center}\begin{tikzcd}
			\cT_{n} \arrow[r,"\pi_n"] \arrow[d,""]&\cT_{n-1} \arrow[d,""]\\ \cM_{n} \arrow[r,"\pi_n"] & \cM_{n-1} \end{tikzcd} \end{center}
	where the vertical maps are topological submersions and the horizontal maps are submersions over a neighbourhood of $y$, respectively, of the image of $y$ in $\cM_{n-1}$. As the domain $C$ of $y$ is smooth, the relative smooth structure on $\cT_{n-1}$, and thus $\cT_{n}$, extends to a smooth structure. However, we will only use that $\cT_{n-1}$ has a relative smooth structure over $\cM_{n-1}$.\par
	 After shrinking $V$, we have
	$$\pi_n\inv(U) = \cT_n \times_{\cM_n} \pi_n\inv(V) \cong V_{n-1}\times C\times B$$
	and the (relative) differential of $\eva_n$ at $x \in \pi_n\inv(\{y\})\cong C$ is given by 
	$$T_{x} C \times B \to T_{u(x)}X : (v,\xi) \mapsto du(x)v + \xi(x).$$
	The orientation of $T_{\cT_Y/\cM_n,x}$ is induced by it being the kernel of $T_{\cT_n/\cM_n,x}\to T_{u(x)}X/T_{u(x)}Y$. As $du(x)$ induces a complex linear isomorphism to $T_{u(x)}X/T_{u(x)}Y$, we are reduced to showing that, given an exact sequence 
	$$0 \to W\to V\oplus \bC\xra{(v,z)\mapsto f(v) + az} \bC\to 0$$
	for a real-linear map $f$ and $a \in \bC\units$, the projection $W\to V$ is an orientation-preserving isomorphism. This follows from the fact that $z \mapsto -az$ is complex linear and thus orientation preserving. Hence
	$$\deg(\pi;y) =\s{x\in \pi\inv(\{y\})}{\normalfont\text{sign}(d\pi(x))} = \s{x\in \pi \inv(\{y\})}{\normalfont\text{ind}(u,Y,x)} = \lspan{\gamma,A}.$$ 
\end{proof}

The Divisor axiom is an immediate consequence.

\subsection{Splitting axiom}

Fix $g = g_0+ g_1$ and $n = n_0+n_1$ with $n_i \geq 0$ and $2g_i-2 + n_i+1 > 0$. Let $S\subset \{1,\dots,n\}$ be a subset with $|S| = n_0$. The clutching map 
$$\varphi_S \cl \Mbar_{g_0,n_0+1}\times\Mbar_{g_0,n_0+1}\to \Mbar_{g,n}$$ 
given by gluing two curves together at the $(n_0+1)^{\normalfont\text{th}}$ and first marked point and renumbering according to the partition induced by $S$ is a closed local immersion. This map lifts to maps
\begin{equation*}\label{splitting-spaces}\varphi_{S,X}\cl \Mbar_{g_0,n_0+1}^{\,J}(X,A_0)\times_{X}\Mbar_{g_1,n_1+1}^{\,J}(X,A_1)\to \Mbar_{g,n}^{\,J}(X,A_0+A_1).\end{equation*}
Together with the clutching maps described in the next subsection, the images of the maps $\varphi_S$ form the boundary divisor of $\Mbar_{g,n}$. The same is true for moduli spaces of stable maps, where one has an additional choice of how to ``split" the homology class. The Splitting axiom is an algebraic reflection thereof. 

\begin{proposition}[Splitting]\label{splitting-axiom} Write $\pcd(X) = \s{i\in I}{\gamma_i\times \gamma_i'}$ for $\gamma_i,\gamma_i'\in H^*(X;\bQ)$. We have
	\begin{multline*}\lspan{\alpha_1,\dots,\alpha_{n};{\varphi_S}_*(\sigma_0\otimes\sigma_1)}^{X,\omega}_{g,n,A} \\
		\quad=  (-1)^{\epsilon_{\alpha,S}}\s{A_0+A_1 = A}{\s{i}{}} \lspan{\alpha_{f_0(1)},\dots,\alpha_{f_0(n_0)}, \gamma_i;\sigma_0}^{X,\omega}_{g_0,n_0+1,A_0} \lspan{\gamma'_i,\alpha_{f_1(2)},\dots,\alpha_{f_1(n_1+1)};\sigma_1}^{X,\omega}_{g_1,n_1+1,A_1}\end{multline*}
	 for any $\alpha_1,\dots,\alpha_{n}\in H^*(X;\bQ)$ and $\sigma_i \in H_*(\Mbar_{g_i,n_i+1};\bQ)$, where $$\epsilon_{\alpha,S} \coloneq  |\{i < j \mid j \in S,\;i \notin S,\;|\alpha_i|,|\alpha_j|\in 2\bZ+1\}|.$$
\end{proposition}

By Lemma \ref{symmetry-axiom}, we may assume $S = \{1,\dots,n_0\}$ and omit it from the notation. The domain of $\varphi_X$ admits a global Kuranishi chart $\cK_{g_0,n_0+1,A_0}\times_X \cK_{g_1,n_1+1,A_1}$ of the expected virtual dimension by Lemma \ref{evaluation-on-thickening}, which embeds into $\cK_{g_0,n_0+1,A_0}\times \cK_{g_1,n_1+1,A_1}$. For the sake of brevity, we denote the base space of $\cK_{g,n,A}$ by $\cM$.
For $0 \leq m_0 \leq m$, let $\widehat{\cM}_{g_0,n_0,m_0}$ be the preimage of $\cM$ under $$\varphi_{\bP^N}\cl \Mbar_{g_0,n_0+1}(\bP^N,m_0)\times_{\bP^N} \Mbar_{g_1,n_1+1}(\bP^N,m_1) \to \Mbar_{g,n}(\bP^N,m)$$ and set 
$$\widehat{\cM}_{g_0,n_0} \coloneq  \djun{0\leq m_0\leq m}\widehat{\cM}_{g_0,n_0,m_0}.$$

\begin{lemma}\label{smooth-base-splitting} $\widehat{\cM}_{g_0,n_0,m_0}$ is a complex manifold of the expected dimension.
\end{lemma}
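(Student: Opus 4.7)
The plan is to realise $\widehat{\cM}_{g_0,n_0,m_0}$ as an open subset of the fibred product
\[
\cF := \Mbar_{g_0,n_0+1}(\bP^N,m_0)\times_{\bP^N}\Mbar_{g_1,n_1+1}(\bP^N,m_1),
\]
and then read off smoothness and dimension from the normalisation sequence at the clutching node.

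Let $\xi = ((C_0,\iota_0,\ldots),(C_1,\iota_1,\ldots)) \in \widehat{\cM}_{g_0,n_0,m_0}$, let $p_i \in C_i$ denote the marked point being glued (so $p_0 = x_{n_0+1}$, $p_1 = y_1$), and write $(C,\iota) = \varphi_{\bP^N}(\xi) \in \cM$ with $C = C_0 \cup_{p_0\sim p_1} C_1$ and normalisation $\nu\cl \tilde C = C_0 \sqcup C_1 \to C$ at this clutching node. Set $p := \iota_0(p_0) = \iota_1(p_1) \in \bP^N$. Since $\cM$ is smooth of expected dimension, the standard deformation theory of stable maps gives
\[
H^1(C,\iota^*T_{\bP^N}) = 0.
\]
Feeding the normalisation sequence
\[
0 \to \iota^*T_{\bP^N} \to \nu_*\nu^*\iota^*T_{\bP^N} \to T_{\bP^N,p} \to 0
\]
into cohomology yields the exactness of
\[
\bigoplus_{i=0,1} H^0(C_i,\iota_i^*T_{\bP^N}) \xra{\;\mathrm{ev}_{p_0}-\mathrm{ev}_{p_1}\;} T_{\bP^N,p} \to 0, \qquad \bigoplus_{i=0,1} H^1(C_i,\iota_i^*T_{\bP^N}) = 0.
\]

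From the right vanishing, each factor of $\cF$ is smooth of the expected complex dimension $m_i(N+1)+(1-g_i)(N-3)+n_i+1$ at $(C_i,\iota_i,\ldots)$. From the surjectivity of the difference-evaluation, the product evaluation
\[
\Mbar_{g_0,n_0+1}(\bP^N,m_0)\times \Mbar_{g_1,n_1+1}(\bP^N,m_1)\to \bP^N\times \bP^N
\]
is transverse to the diagonal $\Delta_{\bP^N}$ at $\xi$. Hence $\cF$ is a smooth complex manifold near $\xi$ of dimension
\[
m(N+1)+(1-g)(N-3)+n-1,
\]
with $m=m_0+m_1$, $g=g_0+g_1$, $n=n_0+n_1$; this is one less than $\dim\cM$, as expected of a clutching boundary divisor. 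The condition that the glued map is a regular embedding with image not contained in any hyperplane is open in $\cF$, so $\widehat{\cM}_{g_0,n_0,m_0}$ is open in $\cF$ and therefore a smooth complex manifold of the stated dimension.

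The main subtlety is justifying that smoothness of $\cM$ at the nodal point $(C,\iota)$ delivers the sheaf-cohomology vanishing $H^1(C,\iota^*T_{\bP^N})=0$ used above; this is standard once one invokes the perfect obstruction theory $R\pi_*f^*T_{\bP^N}$ of the forgetful morphism $\Mbar_{g,n}(\bP^N,m)\to \Mbar_{g,n}$ together with the smoothness of $\Mbar_{g,n}$. After that, the normalisation sequence supplies both the smoothness of the two factors and the transversality of the fibre product in a single step.
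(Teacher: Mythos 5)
Your argument is essentially the paper's: both proofs hinge on the partial normalisation sequence at the clutching node, using $H^1(C,\iota^*T_{\bP^N})=0$ (which for points of $\cM$ is part of the definition of the regular locus $\Mbar_g^*(\bP^N,m)$, so no appeal to a perfect obstruction theory is needed) to deduce unobstructedness of the two pieces; you additionally spell out the transversality of the evaluation maps to $\Delta_{\bP^N}$, which the paper leaves to the citation of \cite{RRS08}, and your dimension count is correct.

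The one point you omit is automorphism-freeness of the two factors. The moduli spaces $\Mbar_{g_i,n_i+1}(\bP^N,m_i)$ are a priori orbifolds, and $H^1$-vanishing alone only gives smoothness as a Deligne--Mumford stack; to conclude that $\widehat{\cM}_{g_0,n_0,m_0}$ is a complex \emph{manifold} you must check that the points $(C_i,\iota_i,\dots)$ have trivial isotropy. This is easy here --- either because $\iota|_{C_i}$ is injective (being the restriction of an embedding), or, as the paper argues, because any automorphism of a piece extends by the identity to an automorphism of the glued framed curve, which is trivial since $\cM$ lies in the automorphism-free locus --- but it should be stated, since it is exactly what upgrades ``smooth orbifold'' to ``complex manifold'' in the conclusion.
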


\begin{proof} Suppose $\varphi_{\bP^N}([\iota,C,x_*],[\iota',C',x'_*]) = [u,\Sigma,y_*]\in \cM$. As the normalisation of $\Sigma$ is $\tilde{\Sigma} = \tilde{C}\sqcup \tilde{C}'$ and $u$ is unobstructed, so are $\iota$ and $\iota'$. If $\rho$ is an automorphism of $(\iota,C,x_*)$, it can be extended by the identity to an automorphism of $(u,\Sigma,y_*)$. Hence $\rho = \ide_C$. Similarly, we see that $(\iota',C',x'_*)$ has no isotropy. Thus, we may conclude by \cite{RRS08}.
\end{proof}

Denote by $\hat{\varphi} \cl \widehat{\cM}_{g_0,n_0} \to \cM$ the map induced by $\varphi_{\bP^N}$. It is a $\PGL_\bC(N+1)$-equivariant immersion whose image has normal crossing singularities. Hence, $\hat{\varphi}^*\cK_{g,n,A}$ is a global Kuranishi chart whose thickening $\widehat{\cM}_{g_0,n_0} \times_\cM\cT_{g,n,A}$ admits a canonical relative smooth structure over $\widehat{\cM}_{g_0,n_0}$.\par
Let $[u,C,x_*]\in \Mbar_{g,n}^{\,J}(X,A)$ lie in the image of $\varphi_X$. Any splitting of the domain into two curves of the prescribed genus and prescribed set of points defines (via the restriction of a framing coming from $\cL_u$) a unique element in the image of $\hat{\varphi}$. Conversely, any decomposition of the domain of the framing leads to a corresponding splitting of $[u,C,x_*]$, 
 where the degree of the restrictions of $u$ may vary. This shows that $\cK_{g_0,n_0} \coloneq  \hat{\varphi}^*\cK_{g,n,A}$ is a global Kuranishi chart for
$$\Mbar_{g_0,n_0}(X) \coloneq  \djun{A_0+A_1 = A}\Mbar_{g_0,n_0+1}^{\,J}(X,A_0)\times_X\Mbar_{g_1,n_1+1}^{\,J}(X,A_1).$$
as is $\cK'_{g,n,A} \coloneq  \djun{A_0+A_1 = A}\cK_{g_0,n_0+1,A_0}\times_X \cK_{g_1,n_1+1,A_1}$. 

\begin{lemma}\label{charts-equivalent-splitting} $\cK_{g_0,n_0}$ and $\cK'_{g,n,A}$ are equivalent.
\end{lemma}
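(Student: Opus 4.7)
The plan is to produce a common enlargement of $\cK_{g_0,n_0}$ and $\cK'_{g,n,A}$ and to equate each original chart to it through the standard operations on global Kuranishi charts (group enlargement, vector bundle stabilisation, and germ equivalence) already used elsewhere in the paper. Both charts have the same zero locus $\Mbar_{g_0,n_0}(X)$, so the task is to reconcile the two different framing structures and the two different perturbation bundles on a neighbourhood of this locus.

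First, I would construct an intermediate chart $\cK''$ whose thickening parameterises tuples containing both kinds of data simultaneously: framings $\iota_i \cl C_i \hookrightarrow \bP^{N_i}$ of each component, together with perturbations $\eta_i$ and line bundle classes $\alpha_i$ as in $\cK'_{g,n,A}$, and a framing $\iota \cl C_0 \cup_x C_1 \hookrightarrow \bP^N$ of the glued nodal curve with its own perturbation $\eta$ and class $\alpha$ as in $\cK_{g_0,n_0}$. The structure group is $G \times G_0 \times G_1$ with $G_i := \PU(N_i+1)$, the obstruction bundle is the direct sum of the two chart's obstruction bundles, and the section combines the two obstruction sections diagonally. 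Smoothness of the combined thickening follows from the smoothness of $\widehat{\cM}_{g_0,n_0}$ established in the preceding lemma together with the unobstructedness assumptions on both sides.

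Next, the forgetful map $\cK'' \to \cK_{g_0,n_0}$ that drops the $\iota_i$-framings should be exhibited as the composition of a vector bundle stabilisation absorbing the fibrewise linear pieces $\eta_i \oplus \alpha_i \oplus \fs\fu(N_i+1)$ with a group enlargement by $G_0 \times G_1$, exactly as in the principal bundle construction used in the proof of the Fundamental class axiom; the same argument applied symmetrically identifies $\cK'' \to \cK'_{g,n,A}$ as an analogous enlargement by $G$. The principal bundles arise because bases of $H^0(C_i, \fL_{u_i}^{\otimes p})$ and of $H^0(C, \fL_u^{\otimes p})$ form principal $\GL$-bundles over the relevant moduli of framings, and passing to projective bases gives the $\PU$-actions.

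The main obstacle will be matching the two obstruction bundles, since the perturbation $\eta$ in $\cK_{g_0,n_0}$ takes values in a bundle built from $T_{\bP^N}$ while the $\eta_i$ in $\cK'_{g,n,A}$ use $T_{\bP^{N_i}}$, and there is no canonical comparison map between them. I would resolve this by imitating the two-term complex argument in the proof of the \emph{Mapping to a point} lemma: construct an equivariant morphism $\Phi$ of obstruction bundles over $\cK''$ whose image agrees with the obstruction bundle of the target chart and whose kernel is identified with the normal bundle of the inclusion of a smaller thickening, and then apply the quasi-isomorphism criterion for global Kuranishi charts (Lemma \emph{quasi-isomorphic-global-charts}). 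Surjectivity of $\Phi$ onto the cokernel of $D(\bar\partial_J)_u$ is guaranteed on both sides by condition \emph{(b)(2)} of unobstructedness, so only a kernel remains, and this kernel contributes a trivial summand that can be absorbed into a vector bundle stabilisation.
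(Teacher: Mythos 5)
Your proposal is correct and is essentially the paper's argument: the paper's entire proof is the one-line appeal to the double-sum construction of \cite[\textsection 5.1]{HS22}, and your intermediate chart $\cK''$ carrying both framing/perturbation packages with structure group $G\times G_0\times G_1$, related to each original chart by stabilisation and group enlargement, is precisely an unpacking of that construction. (Your final detour through a comparison morphism $\Phi$ and the quasi-isomorphism lemma is not needed in the double-sum set-up, since each package of extra data is eliminated separately rather than compared, but it does no harm.)
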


\begin{proof} This follows from a double-sum construction as in \cite[\textsection 6.1]{HS22}.
\end{proof}

We can now prove Proposition \ref{splitting-axiom}. The strategy of proof is the same as in Proposition \ref{fundamental-class-axiom} with the additional complication that $\tilde{\varphi}$ is not the pullback of $\varphi$.


\begin{proof} We first sketch the proof in the case of genus zero. In higher genus, the fact that we have to take the fibre products of orbifolds adds a layer of complexity; however, the strategy is the same. Factor $\varphi \cl \widehat{\cM}_{g_0,n_0}\to \cM$ as 
$$\widehat{\cM}_{0,n_0}\xra{\vartheta}\cM\times_{\Mbar_{0,n}}(\Mbar_{0,n_0+1}\times \Mbar_{0,n_1+1})\xra{\varphi'}\cM.$$
Then $\cM\times_{\Mbar_{0,n}}(\Mbar_{0,n_0+1}\times \Mbar_{0,n_1+1})$ is a quasi-projective variety over $\bC$ but not necessarily smooth. However, it is a homology $\bQ$-manifold. As $\varphi'$ is the pullback of the clutching map on the level of moduli space of stable curves, it is a closed immersion of schemes. We will show below that $\vartheta$ has degree $1$. Lifting this factorisation to the level of thickenings, we obtain that $\hat{\varphi}_!\;\text{st}^* = \text{st}^*\;\varphi_!$ as maps $H^*(\Mbar_{0,n_0+1}\times\Mbar_{0,n_1+1};\bQ)\to H^*(\cT/G;\bQ)$ by the results of \textsection\ref{sec:orbifold-intersection}.\par

For the general case, let $P/H$ be a presentation of $\Mbar_{g,n}$ as a global quotient. By \cite[Proposition~11.5.12]{ACG-moduli}, we may assume that $P \sub \Mbar_{g,n}(\bP^\ell,\ell+g)$ for some $\ell\geq 1$. As $\varphi$ is representable by \cite[Proposition 12.10.11]{ACG-moduli}, $P' \coloneq  P\times_{\Mbar_{g,n}} (\Mbar_{g_0,n_0+1}\times \Mbar_{g_1,n_1+1})$ is a $G$-smooth manifold with $P'/H$ representing $\Mbar_{g_0,n_0+1}\times \Mbar_{g_1,n_1+1}$. Moreover, $\cN\coloneq  P\times_{\Mbar_{g,n}}\cM$ is a principal $G'$-bundle over $\cM$, while
	$$\cN_{g_0,n_0} \coloneq  \cN\times_{\cM} \widehat{\cM}_{g_0,n_0} = P'\times_{\Mbar_{g_0,n_0+1}\times\Mbar_{g_1,n_1+1}}\widehat{\cM}_{g_0,n_0}$$
	is one over $\widehat{\cM}_{g_0,n_0}$. Denote by $\normalfont\text{st}\cl \cN\to \Mbar_{g,n}$ and $\normalfont\text{st}\cl \cN_{g_0,n_0}\to \Mbar_{g_0,n_0+1}\times\Mbar_{g_1,n_1+1}$ the induced stabilisation maps. Let $\tilde{\varphi}\cl \cN_{g_0,n_0}\to \cN$ be the morphism of principal bundles covering $\hat{\varphi}$. The diagram
	\begin{center}\begin{tikzcd}
			\cN_{g_0,n_0} \arrow[r,"\vartheta"] \arrow[dr,"\normalfont\text{st}"]&\cN'\arrow[r,"\varphi'"]  \arrow[d,"\normalfont\text{st}"]&\cN\arrow[d,"\normalfont\text{st}"]\\ & \Mbar_{g,n_0+1}\times \Mbar_{g_1,n_1+1}\arrow[r,"\varphi"] & \Mbar_{g,n} \end{tikzcd} \end{center}
	commutes, where $\tilde{\varphi} = \varphi'\vartheta$ and $\cN' = P'\times_P\cN$, so the square is cartesian. To see that $\vartheta$ is a birational equivalence, let $\cN^{\,'\text{sm}}\subset \cN'$ be the preimage (under $\cN' \to \cN\to \cM)$ of the subset of $\cM$ consisting of curves with one node. As all elements of $\cM$ are unobstructed, the complement of $\cN^{\,'\text{sm}}$ is of codimension at least $2$. It follows from a straightforward consideration of the fibre product $\cN'$ that the induced map $\cN^{\text{sm}}_{g_0,n_0}\to \cN^{\,'\text{sm}}$ is an isomorphism. 
    Thus, $\vartheta$ is a birational equivalence. In particular, it has degree $1$.\par
    Pull back $\widehat{\cK}_{g_0,n_0}$ along $\cN_{g_0,n_0}\to \widehat{\cM}_{g_0,n_0}$ and change the covering group from $G$ to $G\times G'$ to obtain an equivalent global Kuranishi chart $\widehat{\cK}^\cN_{g_0,n_0}$. Define $\cK^\cN_{g,n}$ analogously to obtain a global Kuranishi chart, which is rel--$C^\infty$ over $\cN$ and equivalent to $\cK_{g,n}$. The pullback of $\hat{\varphi}$ to a map between thickenings descends to $\varphi_X$ when restricted to the zero locus of the obstruction section. It factors as $\wt{\varphi} =\wt\varphi' \wt{\vartheta}$, which are lifts of $\varphi'$ respectively $\vartheta$. Then $\wt{\vartheta}$ has degree $1$, so $\wt\varphi$ satisfies
    $$\wt\varphi_!\;\text{st}^* = \text{st}^*\;\varphi_!$$
    as maps $H^*(\Mbar_{g_0,n_0+1}\times\Mbar_{g_1,n_1+1};\bQ)\to H^*(\cT^{\cN}/G\times H;\bQ)$ by Lemma \ref{push-pull-principal-bundle} and Lemma \ref{projection-formula-up-codimension}. As the obstruction bundle of $\widehat\cK_{g_0,n_0}^\cN$ is the pullback of the obstruction bundle of $\cK_{g,n}^\cN$, this shows that 
    \begin{multline}\label{splitting-vfc-relation}\s{\substack{A_0+A_1 = A}}{{\varphi_X}_*(\normalfont\text{st}^*\gamma\cap \vfc{\Mbar_{g_0,n_0+1}^{\,J}(X,A_0)\times_X \Mbar_{g_1,n_1+1}^{\,J}(X,A_1)})} \\= \normalfont\text{st}^*(\varphi_!\gamma)\cap \vfc{\Mbar_{g,n}^{\,J}(X,A)}\end{multline}
    for any $\gamma\in H^*(\Mbar_{g_0,n_0+1}\times \Mbar_{g_1,n_1+1};\bQ)$. Applying Proposition \ref{vfc-embedded-chart} to $\cK'_{g,n,A}$, we obtain an expression for the left hand side that exactly gives the Splitting axiom.
\end{proof}

\subsection{Genus reduction axiom} Fix $g$ and $n\geq 2$ and let $\eva_{n-1,n}$ be the evaluation at the marked points labelled by $n-1$ and $n$. Given an almost complex manifold $(Y,J_Y)$ and $B\in H_2(Y,\bZ)$, define $$\Mbar_{g,n}(Y,B;J_Y)_{n-1,n}\coloneq  \eva_{n-1,n}\inv(\Delta_Y)\subset \Mbar_{g,n}(Y,B;J_Y).$$ 
Let
$$\psi_Y\cl\Mbar_{g,n}(Y,B;J_Y)_{n-1,n}\to \Mbar_{g+1,n-2}(Y,B;J_Y)$$
be given by gluing the $(n-1)^{\text{nth}}$ and the $n^{\text{nth}}$ marked point of the domain together. It covers the corresponding smooth map $\psi \cl \Mbar_{g,n}\to \Mbar_{g+1,n-2}$. 

\begin{proposition}[Genus reduction]\label{genus-reduction} We have 
	\begin{equation*}
	    \lspan{\alpha_1,\dots,\alpha_{n-2},\pcd(\Delta_X);\sigma}^{X,\omega}_{g,n,A} = \lspan{\alpha_1,\dots,\alpha_{n-2};\psi_*\sigma}^{X,\omega}_{g+1,n-2,A}
	\end{equation*}
for any $\alpha_1,\dots,\alpha_{n-2}\in H^*(X;\bQ)$ and $\sigma \in H_*(\Mbar_{g,n};\bQ)$.
\end{proposition}

\medskip

\begin{lemma}\label{genus-reduction-base} The preimage 
$$\widetilde{\cM} \coloneq  \psi_{\bP^N}\inv(\Mbar_{g+1,n-2}^*(\bP^N,m))\subset \Mbar_{g,n}(\bP^N,m)$$
is a smooth manifold of the expected dimension. The induced map $\psi_{\bP^N}\cl \widetilde{\cM}\to \Mbar_{g+1,n-2}^*(\bP^N,m)$ is smooth and $G$-equivariant. It factors as the composition of a double cover with a map which is generically an embedding.
\end{lemma}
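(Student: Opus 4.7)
The plan is to identify $\widetilde{\cM}$ with the open subset of $(\eva_{n-1},\eva_n)\inv(\Delta_{\bP^N}) \sub \Mbar_{g,n}(\bP^N,m)$ on which the curve obtained by gluing $x_{n-1}$ to $x_n$ is non-degenerately embedded, and then to deduce smoothness by showing $(\eva_{n-1},\eva_n)$ is transverse to $\Delta_{\bP^N}$ along $\widetilde{\cM}$. The $G$-equivariance and smoothness of $\psi_{\bP^N}$ then follow from the canonical, holomorphic nature of the gluing construction.

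For transversality at $[u,C,x_*]\in \widetilde{\cM}$, let $[\tilde u,\tilde C,\tilde x_*] := \psi_{\bP^N}([u,C,x_*])$ and let $\tilde q\in \tilde C$ be the node produced by gluing. By hypothesis the glued curve is unobstructed and non-degenerately embedded, so classical positivity arguments for normal bundles of projectively embedded curves (cf.\ \cite{RRS08}) show that the restriction $H^0(\tilde C, N_{\tilde C/\bP^N}) \to (N_{\tilde C/\bP^N})_{\tilde u(\tilde q)}$ is surjective. Normalising at $\tilde q$ and pulling back deformations to $C$ yields surjectivity of the differential of $\eva_{n-1}-\eva_n$ onto $T_{u(x_{n-1})}\bP^N$, i.e.\ transversality to $\Delta_{\bP^N}$. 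Hence $\widetilde{\cM}$ is smooth of codimension $N$ inside $\Mbar_{g,n}(\bP^N,m)$, matching the expected dimension on comparison with the non-separating boundary divisor in $\Mbar^*_{g+1,n-2}(\bP^N,m)$. The main technical obstacle is verifying this normal bundle positivity uniformly along $\widetilde{\cM}$, including at stable maps with singular domains, where one must descend to the normalisation and keep track of the gluing data.

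For the factorisation, let $\tau$ be the involution on $\widetilde{\cM}$ swapping $x_{n-1}$ and $x_n$. Since the gluing construction is $\tau$-invariant, $\psi_{\bP^N}$ factors as $\widetilde{\cM} \to \widetilde{\cM}/\tau \to \Mbar^*_{g+1,n-2}(\bP^N,m)$, with the first map a degree-two finite map, ramified along the $\tau$-fixed locus. The image of the second map lies in the divisor of curves carrying a non-separating node; on the open dense subset of this divisor whose points carry exactly one such node, the preimage in $\widetilde{\cM}/\tau$ is uniquely determined by normalising the domain and recording the two preimages of the node as an unordered pair of marked points. Curves with more than one node form a proper closed subset, so the second map is generically an embedding.
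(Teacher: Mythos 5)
Your smoothness argument cuts $\widetilde{\cM}$ out of $\Mbar_{g,n}(\bP^N,m)$ by a transversality condition on $(\eva_{n-1},\eva_n)$, but this only yields a smooth manifold of the expected dimension if the ambient moduli space is already smooth — unobstructed and automorphism-free — at the points in question, and that is precisely the nontrivial content of the lemma. A point of $\widetilde{\cM}$ does \emph{not} lie in $\Mbar^*_{g,n}(\bP^N,m)$: the map $\iota$ identifies $x_{n-1}$ with $x_n$, so it is not an embedding, and only the regularity of its \emph{image} under $\psi_{\bP^N}$ is assumed. The paper supplies exactly the missing input: an automorphism of $(\iota,C,x_*)$ descends to the glued curve and is therefore trivial, and the partial-normalisation sequence $0\to F\to\kappa_*\kappa^*F\to F_q\to0$ on the glued curve $C'$ (with node $q$, gluing map $\kappa\cl C\to C'$, and $F$ the relevant deformation sheaf) yields a surjection $H^1(C',F)\to H^1(C,\kappa^*F)$, so regularity of the glued curve forces unobstructedness of $\iota$ on $C$. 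Note that the \emph{same} long exact sequence simultaneously gives surjectivity of $H^0(C,\kappa^*F)\to F_q$, which is exactly the transversality of $\eva_{n-1}-\eva_n$ to $\Delta_{\bP^N}$ that you want. Consequently the ``main technical obstacle'' you flag — verifying normal-bundle positivity uniformly along $\widetilde{\cM}$, including at singular domains — is spurious: no positivity input \`a la \cite{RRS08} is needed, since both halves fall out of the single hypothesis $H^1(C',F)=0$; whereas the step you omit (ambient smoothness and absence of automorphisms) is where the actual work lies.

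Two smaller points on the factorisation. The $\bZ/2$-action swapping $x_{n-1}$ and $x_n$ is free on $\widetilde{\cM}$: a fixed point would produce a nontrivial automorphism of $(\iota,C,x_*)$ exchanging the two marked points, which the first part of the argument rules out. So the first factor is an honest unramified double cover, as the lemma asserts, not a degree-two map ``ramified along the $\tau$-fixed locus.'' Moreover, generic injectivity of $\widetilde{\cM}/(\bZ/2)\to\Mbar^*_{g+1,n-2}(\bP^N,m)$ only gives a generic embedding in combination with the fact that $\psi_{\bP^N}$ is an immersion; this should be recorded explicitly (it follows from the same deformation-theoretic identifications, and the paper states it).
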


\begin{proof} Suppose $\psi_{\bP^N}([\iota,C,x_*]) = [\iota',C',x_*']$ and let $\rho \in \Aut(\iota,C,x_*)$. Then $\rho$ descends to an automorphism of $(\iota',C',x_*')$, which has to be the identity, as the later stable map has no nontrivial automorphisms by definition. As the gluing map $\kappa\cl C\to C'$ is injective on a dense subset, $\rho = \ide_C$. We have a short exact sequence 
	$$0 \ra \kappa^*{\iota'}^*\cO_{\bP^N}(1)\to \iota^*\cO_{\bP^N}(1)\to \bC_x \to 0,$$ 
	where $\bC_x$ denotes the skyscraper sheaf over $x$ and the last map is given by $s\mapsto s(x_{n-1})-s(x_n)$ (using a trivialisation of $\cO_{\bP^N}(1)$ near $\iota(x_n)$). Part of its long exact sequence is 
	$$0 = H^1(C',{\iota'}^*\cO_{\bP^N}(1))\to H^1(C,\iota^*\cO_{\bP^N}(1))\to H^1(\{x\},\bC) = 0.$$
	Hence $\iota$ is unobstructed.\par
	$\bZ/2$ acts smoothly on $\widetilde{\cM}$ by permuting the last two points and $\psi_{\bP^N}$ factors through $\widetilde{\cM}/(\bZ/2)$. As $\psi_{\bP^N}$ is an immersion and $\widetilde{\cM}/(\bZ/2)\to \Mbar_{g+1,n-2}^*(\bP^N,m)$ is injective over the locus of curves with smooth domain, the claim follows. 
\end{proof}

Let $\cK_{g,n,A}$ be a global Kuranishi chart for $\Mbar_{g,n}^{\,J}(X,A)$ satisfying the conclusion of Lemma \ref{evaluation-on-thickening} and let $\cK_{g+1,n-2,A}$ be a global Kuranishi chart for $\Mbar_{g,n}^{\,J}(X,A)$. Note that these charts are not related in any way.\par 
By Lemma~\ref{genus-reduction-base}, the pullback global Kuranishi chart $\psi_{\bP^N}^*\cK_{g,n,A}$ is a well-defined global Kuranishi chart whose thickening is relatively smooth over $\wt\cM$. Meanwhile, let $\cK'$ be the global Kuranishi chart with thickening $\cT' = \eva_{n-1,n}\inv(\Delta_X)$ and whose other data are given by restriction from $\cK_{g,n,A}$. As before, a double-sum construction as in \cite[\textsection 5.1]{HS22} can be used to show that $\cK'$ and $\psi_{\bP^N}^*\cK_{g+1,n-2}$ are equivalent global Kuranishi charts for $\Mbar_{g,n}^{\,J}(X,A)_{n-1,n}$.\par 
Pulling back $\psi_{\bP^N}^*\cK_{g+1,n-2}$ along $\cN\to \Mbar_{g+1,n-2}^*(\bP^N,m)$ and taking the product of the covering group with $G'$, we obtain an equivalent global Kuranishi chart $\tilde{\cK}_{g+1,n-2}$ for $\Mbar_{g,n}^{\,J}(X,A)_{n-1,n}$. The proof of Proposition \ref{splitting-axiom} now carries over in a straightforward manner.

\section{Gravitational descendants}\label{subsec:grav-desc}

The cohomology ring of $\Mbar_{g.n}$ for higher genus is still relatively unknown. However, it admits a subring, the \emph{tautological ring}, which is relatively computable, \cite{Pix13}. The tautological ring is generated by the Chern classes of the tautological line bundles, as well as pushforwards of these classes under the forgetful and clutching maps, \cite[\textsection0.3]{FP00}. This section defines $\psi$-classes for moduli spaces of stable maps and proves a generalisation of the Kontsevich-Manin axioms.

\begin{definition} Given an $n$-pointed family $(\pi \cl \cC\to \cV,\sigma_1,\dots,\sigma_n)$ of stable curves and $i \leq n$, we define the \emph{$i^{\text{th}}$ tautological line bundle} of $\cV$ to be $\bL^\cV_i \coloneq  \sigma_i^*(\ker(d\pi)^*)$. We define the \emph{$i^{\text{th}}$ $\psi$-class} to be $$\psi^\cV_i \coloneq  c_1(\bL^\cV_i)\in H^2(\cV;\bZ).$$
The \emph{Hodge bundle} of $\cC\to\cV$ is the complex rank-$g$ vector bundle $\bE^\cV \coloneq  \pi_*\omega_{\cC/\cV}$. The \emph{$\lambda$-classes} are $$\lambda^\cV_j \coloneq  c_j(\bE^\cV).$$ 
\end{definition}

These vector bundles patch together to form orbibundles, the \emph{$i^{\text{th}}$ tautological line bundle}, respectively the \emph{Hodge bundle}, on the Deligne-Mumford stack $\Mbar_{g,n}$. In this case, or if the family $\cV$ is clear from the context, we omit the superscripts. See \cite[Chapter~25]{HKK03} and \cite{FaP00} for context and relations satisfied by the integrals of these classes.

Fix a closed symplectic manifold $(X,\omega)$ and let $J \in \cJ_\tau(X,\omega)$ and $A\in H_2(X;\bZ)$.\par Let $\cK_n = (G,\cT_n/\cM_n,\cE_n,\fs_n)$ be a global Kuranishi chart for $\Mbar_{g,n}^{\,J}(X,A)$ as given by Construction \ref{high-level-description-defined}. Recall that $\cM_n$ is a $G$-invariant open subset of the automorphism-free locus of regular maps in $\Mbar_{g,n}(\bP^N,m)$, where $m = N+g$ and $N \gg 1$. It admits a quasi-projective smooth universal family $\cU_n = \cM_{n+1}$ on which $G = \PU(N+1)$ acts smoothly. Let $\Pi_n \cl \cT_n \to \cM_n$ be the structure map and define
\begin{equation*}\bL_i \coloneq  \bL_{n,i} \coloneq  \Pi_n^*(\bL^{\cM_n}_i) \qquad \qquad \bE \coloneq  \Pi_n^*(\bE^{\cM_n}).\end{equation*}
They are, by definition, relatively smooth vector bundles over $\cT_n$. The $G$-action on  $\cT_n$ lifts to a fibrewise linear $G$-action on $\bL_i$ and $\bE$. Define
\begin{equation*}\psi_i \coloneq  \psi_{n,i} \coloneq  c_1(\bL_i/G)\qquad \qquad \lambda_j \coloneq  c_j(\bE/G)\end{equation*}
in $H^*(\cT/G;\bQ)$. They restrict to classes on $\Mbar_{g,n}^{\,J}(X,A)$, also denoted $\psi_i$ and $\lambda$.

\begin{lemma}\label{lem:taut-classes-indep} The $\psi$- and $\lambda$-classes of $\Mbar_{g,n}^{\,J}(X,A)$ are independent of the choice of unobstructed auxiliary datum.
\end{lemma}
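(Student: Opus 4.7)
The plan is to show that the restrictions of $\bL_{n,i}/G$ and $\bE/G$ to the zero locus $\fs_n^{-1}(0)/G \cong \Mbar_{g,n}(X,A;J)$ depend only on the intrinsic family of marked nodal stable maps carried by $\Mbar_{g,n}(X,A;J)$, and hence are chart-independent.

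The key observation is that the fibres of $\bL_{n,i}/G$ and $\bE/G$ at a point $[u,C,x_*]$ are canonically $T^*_{x_i} C$ and $H^0(C, \omega_C)$, both of which are invariants of the stable map itself. Since $\Pi_n \cl \cT_n \to \cM_n$ is a rel--$C^\infty$ topological submersion by \cite[Theorem 2.16]{HS22}, a local slice through any preimage of a point $y \in \Mbar_{g,n}(X,A;J)$ carries a smooth family of nodal curves varying continuously with $y$. The induced local trivialisations of $\bL_{n,i}$ and $\bE$ descend to trivialisations of topological orbi-bundles on $\Mbar_{g,n}(X,A;J)$ whose transition functions are dictated entirely by how the underlying marked nodal curve deforms --- data intrinsic to the moduli space. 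Any two unobstructed charts therefore produce canonically isomorphic bundles on $\Mbar_{g,n}(X,A;J)$, so their rational Chern classes coincide.

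A more structural alternative, which I would use if the direct descent proves cumbersome, is to invoke the zig-zag of equivalences between any two unobstructed charts established in Section 5 of \cite{HS22} and check that each elementary equivalence (group enlargement, bundle stabilisation, germ equivalence) preserves the universal curve over the zero locus, and hence the tautological classes. Group enlargement and bundle stabilisation pull back the universal family along the evident projection; germ equivalence leaves the zero locus and its universal family unchanged. In each case $\bL_{n,i}/G$ and $\bE/G$ pull back to the corresponding bundles on the enlarged chart and agree on the zero locus after descent.

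The main obstacle in either approach will be handling the descent carefully, since the $G$-action on $\fs_n^{-1}(0)$ can have nontrivial finite stabilisers even though it acts freely on the automorphism-free base $\cM_n$. This can be addressed by passing to a global-quotient presentation $P/H$ of $\Mbar_{g,n}$ as in the proof of Proposition \ref{fundamental-class-axiom}, pulling back $\cK_n$ along $P \to \Mbar_{g,n}$, and working with the resulting enlarged chart on which the relevant quotient is free and the Chern-class arguments apply directly to honest topological vector bundles. The edge cases $2g-2+n \leq 0$ are handled by the paper's convention that $\Mbar_{g,n}$ is a point.
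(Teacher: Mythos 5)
Your ``structural alternative'' is in fact the paper's actual proof: given two unobstructed auxiliary data, the double-sum construction of \cite[\textsection 5.1]{HS22} yields a chart with covering group $G_0\times G_1$ whose base $\cN_{01}$ sits inside $\Mbar_{g,n}(\bP^{N_0}\times\bP^{N_1},(m_0,m_1))$, and the two projections $\Phi_j\cl\cN_{01}\to\cN_j$ are principal $G_{j'}$-bundles which, crucially, do not collapse irreducible components of the domains; hence $\bL_i^{\cN_{01}}=\Phi_j^*\bL_i^{\cN_j}$ on the nose, the equivariant Chern classes pull back, and the claim follows after pulling everything back to the double-sum thickening. Your primary, more intrinsic route (the fibres are canonically $T^*_{x_i}C$ and $H^0(C,\omega_C)$, so the bundles descend to intrinsic orbibundles on $\Mbar_{g,n}(X,A;J)$) is viable but elides exactly the point that needs an argument: to promote the pointwise identifications to an isomorphism of equivariant line bundles over the zero loci one must identify the two universal families of curves, and the substantive input is that the framings never contract a component of the domain of the stable map --- this is where very ampleness of $\fL_u^{\otimes p}$ enters, and it is not a formality, since collapsing a component containing $x_i$ changes the cotangent line by a boundary contribution as in Lemma \ref{lem:comparison-psi}. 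The paper's non-collapsing condition on the double-sum base is the clean packaging of this; if you take the intrinsic route you should state and use it explicitly. Finally, the detour through a global-quotient presentation $P/H$ of $\Mbar_{g,n}$ is unnecessary here: all Chern classes are taken equivariantly with $\bQ$-coefficients, so finite stabilisers on the zero locus cause no difficulty.
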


\begin{proof} Given two unobstructed auxiliary data, we obtain global Kuranishi charts $\cK_{n,0}$ and $\cK_1$ with base space $\cM_0 \sub \Mbar_{g,n}(\bP^{N_0},m_0)$ and $\cM_1 \sub \Mbar_{g,n}(\bP^{N_1},m_1)$ respectively. By \cite[\textsection6.1]{HS22} there exists a double-sum global Kuranishi chart of $\Mbar_{g,n}^{\,J}(X,A)$ with covering group $G_0\times G_1$ and base $\cM_{01}\sub \Mbar_{g,n}(\bP^{N_0}\times\bP^{N_1},(m_1,m_1))$ contained in $\Phi\inv(\cN_0\times\cN_1)$ such that the projections $\cN_0$ and $\cN_1$ do not collapse irreducible components of the domains. The induced map $\Phi_j \cl \cN_{01}\to \cN_j$ is a principal $G_{j'}$-bundle (for $\{j,j'\} = \{0,1\}$), so
$$\bL_i^{\cN_{01}} = \Phi_j^*\bL_i^{\cN_j}.$$
    In particular, $\bL_i^{\cN_{01}}$ is a principal $G_{j'}$-bundle over $\bL_i^{\cN_j}$; hence  $c_1(\bL_i^{\cN_{01}})_{G_0\times G_1} = \Phi_j^*c_1(\bL_i^{\cN_j})_{G_j}$. Pulling these relations back to the thickening of the double-sum Kuranishi chart, we obtain the claim.
\end{proof}

In particular, if $\Mbar_{g,n}^{\,J}(X,A)$ is unobstructed, we recover the standard definition of the $\psi$- and $\lambda$-classes.

\begin{proof} Given two unobstructed auxiliary data, we obtain global Kuranishi charts $\cK_{n,0}$ and $\cK_1$ with base space $\cM_0 \sub \Mbar_{g,n}(\bP^{N_0},m_0)$ and $\cM_1 \sub \Mbar_{g,n}(\bP^{N_1},m_1)$ respectively. By \cite[\textsection6.1]{HS22} there exists a double-sum global Kuranishi chart of $\Mbar_{g,n}^{\,J}(X,A)$ with covering group $G_0\times G_1$ and base $\cM_{01}\sub \Mbar_{g,n}(\bP^{N_0}\times\bP^{N_1},(m_1,m_1))$ contained in $\Phi\inv(\cN_0\times\cN_1)$ such that the projections $\cN_0$ and $\cN_1$ do not collapse irreducible components of the domains. The induced map $\Phi_j \cl \cN_{01}\to \cN_j$ is a principal $G_{j'}$-bundle (for $\{j,j'\} = \{0,1\}$), so
	$$\bL_i^{\cN_{01}} = \Phi_j^*\bL_i^{\cN_j}.$$
	In particular, $\bL_i^{\cN_{01}}$ is a principal $G_{j'}$-bundle over $\bL_i^{\cN_j}$; hence  $c_1(\bL_i^{\cN_{01}})_{G_0\times G_1} = \Phi_j^*c_1(\bL_i^{\cN_j})_{G_j}$. Pulling these relations back to the thickening of the double-sum Kuranishi chart, we obtain the claim.
\end{proof}

Fix now a global Kuranishi chart $\cK_n$ as given by Construction \ref{high-level-description-defined} with base space $\cM_n$ and structure map $\Pi_n \cl \cT_n\to\cM_n$. Recall that $\cM_n$ is the preimage of $\Mbar_g^*(\bP^N,m)$ under the forgetful map $\Mbar_{g,n}(\bP^N,m)\to \Mbar_{g,n}(\bP^N,m)$ for any $n$; in particular, the forgetful maps restrict to proper smooth maps $\cM_{n+1} \to \cM_n$ with canonical sections $\sigma_{i} \coloneq  \sigma^{(n)}_i \cl \cM_n \to \cM_{n+1}$. We abbreviate $D_{n+1,i} \coloneq  \im(\sigma^{(n)}_i)$ and set
$$D_{n+1,i}(A) \coloneq  \Pi_n\inv(D_{n+1,i})\qquad \qquad \cc{D}_{n+1,i}(A) \coloneq  D_{n+1,i}(A)/G.$$
$\cc{D}_{n+1,i}(A)$ is an oriented suborbifold of $\cT_{n+1}/G$ with Poincar\'e dual $D_{n+1,i,A} = \Pi_n^*\pcd(\cc{D}_{n+1,i})$.

\begin{definition} The \emph{descendent Gromov--Witten invariant} (or \emph{gravitational descendant}) of $(X,\omega)$ associated to $(A,g,n)$ is 
$$\lspan{\tau_{k_1}\alpha_1,\dots,\tau_{k_n}\alpha_n;\sigma}^{X,\omega}_{A,g,n} \coloneq  \lspan{\psi^{k_1}_1\eva_1^*\alpha_1\cdots\psi^{k_n}_n\eva_n^*\alpha_n\cdot \normalfont\text{st}^*\pcd(\sigma),\vfc{\Mbar_{g,n}^{\,J}(X,A)}}$$
    for $k_1,\dots,k_n \geq 0$, $\alpha_1,\dots,\alpha_n \in H^*(X;\bQ)$ and $\sigma\in H_*(\Mbar_{g,n};\bQ)$. The \emph{Hodge integrals} of $X$ are the numbers
$$\lspan{\lambda_1^{b_1}\cdots\lambda_g^{b_g}\cdot\psi^{k_1}_1\eva_1^*\alpha_1\cdots\psi^{k_n}_n\eva_n^*\alpha_n\cdot \normalfont\text{st}^*\pcd(\sigma),\vfc{\Mbar_{g,n}^{\,J}(X,A)}},$$
where $b_1,\dots,b_g \geq 0$ are integers.
\end{definition}

\begin{remark}\label{} Given a different $J' \in \cJ_\tau(X,\omega)$, a cobordism between two global Kuranishi charts for $\Mbar_{g,n}^{\,J}(X,A)$ and $\Mbar_{g,n}^{\,J'}(X,A)$ is constructed in \cite[\textsection6.2]{HS22}. These two global Kuranishi charts have the same base space $\cM$ and the coboridism between them is a rel--$C^\infty$ manifold with boundary over $\cM$. Thus, the pullback of $\bL_i^\cM$ extends to an equivariant line bundle over the cobordism, showing that the gravitational descendants of $X$ do not depend on the choice of $\omega$-tame almost complex structure.\end{remark}

\begin{remark} There are several slightly different definitions of descendent GW invariants in the literature. The $\psi$-classes in \cite{RT97} are defined to be the pullbacks $\text{st}^*\psi_i^{\Mbar_{g,n}}$ of the $\psi$-classes on the moduli space of these curves. These (respectively a slightly modified definition thereof) are called \emph{gravitational ancestors} in \cite{Giv01} and differ from our definition of $\psi$-classes. In \cite{KM98}, whose definition agrees with ours, the exact relationship between the two definitions is elucidated. In \cite{KKP03}, the enumerative \emph{modified $\psi$-classes} $\bar{\psi_i}$ are defined, which satisfy $\pi_{n+1}^*\bar{\psi}_{n,i} = \bar{\psi}_{n+1,i}$, where $\psi_{k,i}$ is the $\psi$-class on $\Mbar_{g,k}^{\,J}(X,A)$.
\end{remark}

\begin{lemma}\label{lem:comparison-psi} Let $\pi_{n+1}\cl \Mbar_{g,n+1}^{\,J}(X,A)\to \Mbar_{g,n}^{\,J}(X,A)$ be the forgetful map. Then 
\begin{equation}\label{eq:comp-psi}\psi_{n+1,i} = \pi_{n+1}^*\psi_{n,i} + D_{n+1,i,A}\end{equation}
    in $H^*_G(\cM_{n+1},\bQ)$. Moreover, $\psi_{n+1,i}\cdot D_{n+1,i,A} = 0$. 
\end{lemma}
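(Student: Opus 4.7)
The plan is to reduce the identity to one in the $G$-equivariant cohomology of the smooth $G$-manifold $\cM_{n+1}$ and then establish it by exhibiting an explicit isomorphism of $G$-equivariant line bundles. Since $\bL_{n+1,i} = \Pi_{n+1}^*\bL^{\cM_{n+1}}_i$ by definition, and both $\pi_{n+1}^*\bL_{n,i}$ and the divisor class $D_{n+1,i,A}$ are pullbacks of analogous objects on $\cM_{n+1}$ via the topological submersion $\Pi_{n+1}$, it suffices to prove the identity
\begin{equation*}
c_1(\bL^{\cM_{n+1}}_i) = \pi_{n+1}^*c_1(\bL^{\cM_n}_i) + \pcd(\cc{D}_{n+1,i})
\end{equation*}
in $H^2_G(\cM_{n+1};\bQ)$, and then apply $\Pi_{n+1}^*$.

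To establish the identity on $\cM_{n+1}$, I would construct a $G$-equivariant morphism of line bundles $\Phi\cl \pi_{n+1}^*\bL^{\cM_n}_i \to \bL^{\cM_{n+1}}_i$ whose vanishing locus is exactly the smooth divisor $\cc{D}_{n+1,i}$, with multiplicity one. On the complement of $\cc{D}_{n+1,i}$ the map $\pi_{n+1}$ preserves the domain curve together with its $i$-th marked point $x_i$, so the two cotangent lines $T^*_{x_i}C$ are canonically identified and $\Phi$ is the identity. Along $\cc{D}_{n+1,i}$ the domain acquires a rational bubble $B\cong \bP^1$ attached at $x_i$ and carrying $x_i$ and $x_{n+1}$, and $\pi_{n+1}$ contracts $B$ to the attaching node of the main component; the differential of this contraction at $x_i\in B$ vanishes, producing the zero of $\Phi$. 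Making this precise via the comparison of relative dualising sheaves $\omega_{\cU_{n+1}/\cM_{n+1}}$ and $g^*\omega_{\cU_n/\cM_n}$ for the natural contraction $g$ between the two universal curves, and then restricting along the $i$-th tautological section, yields an isomorphism $\bL^{\cM_{n+1}}_i\cong \pi_{n+1}^*\bL^{\cM_n}_i\otimes \cO_{\cM_{n+1}}(\cc{D}_{n+1,i})$ of $G$-equivariant line bundles, giving the asserted Chern class identity.

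For the vanishing $\psi_{n+1,i}\cdot D_{n+1,i,A} = 0$, I would show that $\bL^{\cM_{n+1}}_i|_{\cc{D}_{n+1,i}}$ is $G$-equivariantly trivial. Over $\cc{D}_{n+1,i}$ the bubble $B$ carries three distinguished distinct points (the node, $x_i$ and $x_{n+1}$), and since $\PGL_2(\bC)$ acts simply transitively on ordered triples of distinct points in $\bP^1$, this triple identifies $B$ canonically with a fixed $\bP^1$. Consequently $T^*_{x_i}B$ is canonically a fixed line, yielding a trivialisation of $\bL^{\cM_{n+1}}_i|_{\cc{D}_{n+1,i}}$; this trivialisation is $G$-equivariant because the $G$-action acts by postcomposition on the map component and fixes the combinatorial data of the domain, while on $B$ the underlying map is constant. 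Hence $\psi_{n+1,i}|_{\cc{D}_{n+1,i}} = 0$, which yields $\psi_{n+1,i}\cdot \pcd(\cc{D}_{n+1,i}) = 0$ on $\cM_{n+1}$, and pulling back along $\Pi_{n+1}$ completes the proof.

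The hard part is making the map $\Phi$ precise in the second step, since $\pi_{n+1}$ alters the domain discontinuously across $\cc{D}_{n+1,i}$ via bubble contraction; verifying the correct first-order vanishing requires a careful local model of the universal curves near this divisor. An alternative would be to invoke the classical comparison formula for $\psi$-classes on $\Mbar_{g,n+1}$ and transfer it to $\cM_{n+1}$ via the tautological morphism to the moduli stack of stable curves, in which case $G$-equivariance poses no further obstruction since $G$ acts trivially on the combinatorial data of the domain.
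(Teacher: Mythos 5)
Your argument is correct and is essentially the paper's: both establish \eqref{eq:comp-psi} by observing that $\bL_{n+1,i}\otimes\pi_{n+1}^*\bL_{n,i}^{-1}$ is trivial off $D_{n+1,i}$ and identifying its contribution along $D_{n+1,i}$ with the normal bundle of the divisor up to a $G$-equivariantly trivial factor, and both deduce $\psi_{n+1,i}\cdot D_{n+1,i,A}=0$ from the canonical $G$-equivariant trivialisation of the cotangent line at $x_i$ on the three-pointed bubble (which the paper phrases via the forgetful map to $\cM_4$ and the triviality of $\tilde{\pi}^*T_{\Mbar_{0,4},*}$) combined with the projection formula. The only cosmetic difference is that you package the first step as a bundle map $\Phi$ vanishing to first order along the divisor, whereas the paper instead computes the restriction of the difference line bundle to $D_{n+1,i}$ as $\rho^*\omega_{\pi_{n+2}}\otimes(\sigma_i^{(n+1)})^*\omega_{\pi_{n+1}}$, thereby avoiding the local-model verification of first-order vanishing that you flag as the delicate point.
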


\begin{proof} This is classical. Let $U \coloneq  \cM_{n+1}\sm D_{n+1,i}$. Given $[\iota,C,x_1,\dots,x_{n+1}] \in U$, the forgetful map $\pi_{n+1}$ does not change the irreducible component of $C$ containing $x_i$. Thus, $L_i \coloneq  \bL_i\otimes \pi_{n+1}^*\bL_i\inv$ is trivial over $U$. Set $\rho \coloneq  \sigma_{i}^{(n+1)}\sigma_i^{(n)}$ and $V \coloneq  \im(\rho)$. Then, 
	$$\lbr{\sigma_i^{(n)}}^*(\bL_i\otimes\pi_{n+1}^*\bL_i\inv) = \rho^*\omega_{\pi_{n+2}}\otimes \lbr{\sigma_i^{(n+1)}}^*\omega_{\pi_{n+1}}.$$
As the normal bundle of $D_{i,n+1}$ is $(\sigma_i^{(n+1)})^*\omega_{\pi_{n+1}}$, it remains to show that $\rho^*\omega_{\pi_{n+2}}$ is equivariantly trivial. Let $\tilde{\pi}\cl \cM_{n+1} \to \cM_4$ be the map, which forgets all marked points except the first, $i^{\text{th}}$ and last two ones. Then $\tilde{\pi}$ is $G$-equivariant and maps $D_{i,n+2}$ to 
$$\cM_{n,2}\times_{\bP^N}(\bP^N\times\Mbar_{0,4})\cong \cM_{2}\times \Mbar_{0,4}$$
and $V$ to $\cM_{2}\times \{*\}$ under this identification.
  Hence, $\rho^*\omega_{\pi_{n+2}}\inv = \tilde{\pi}^*T_{\Mbar_{0,4},*}|_V$ is trivial. If $\iota \cl \cc{D}_{n+1,i} \hkra \cM_{n+1}/G$ is the inclusion, then $$\psi_{n+1,i,G}\cdot \pcd(\cc{D}_{n+1,i}) = \iota_!c_1(\rho^*\omega_{\pi_{n+2}})_G=0.$$
  Thus, the last claim follows from Lemma \ref{projection-formula}.
\end{proof}

There are three further relations between the Gromov--Witten invariants that take $\psi$-classes into account. The first one generalises the Fundamental Class axiom, while the third corresponds to the Divisor axiom in \textsection\ref{sec:axioms}. Denote by $1_X$ the unit of $H^*(X;\bQ)$ and formally treat $\Mbar_{g,n}$ as a point if $2g-2+ n\leq 0$.

\begin{proposition}[String equation] We have
\begin{multline}\label{eq:string-equation-variant}
	\lspan{\tau_{k_1}\alpha_1,\dots,\tau_{k_n}\alpha_n,1_X;\sigma}^{X,\omega}_{A,g,n+1} = \lspan{\tau_{k_1}\alpha_1,\dots,\tau_{k_n}\alpha_n;{\pi_{n+1}}_*\sigma}^{X,\omega}_{A,g,n}\\+\sum_{i=1}^{n}\lspan{\tau_{k_1}\alpha_1,\dots,\tau_{k_i-1}\alpha_i,\dots,\tau_{k_n}\alpha_n;{\pi_{n+1}}_*(\pcd(D_{n+1,i})\cap \sigma)}^{X,\omega}_{A,g,n}
\end{multline} 
for any $\alpha_1,\dots,\alpha_n\in H^*(X;\bQ)$ and $\sigma \in H_*(\Mbar_{g,n+1};\bQ)$. In particular, if $\sigma = [\Mbar_{g,n+1}]$, then
\begin{equation*}\label{eq:string-equation}
	\lspan{\tau_{k_1}\alpha_1,\dots,\tau_{k_n}\alpha_n,1_X;[\Mbar_{g,n+1}]}^{X,\omega}_{A,g,n+1} = \sum_{i=1}^{n}\lspan{\tau_{k_1}\alpha_1,\dots,\tau_{k_i-1}\alpha_i,\dots,\tau_{k_n}\alpha_n;[\Mbar_{g,n}]}^{X,\omega}_{A,g,n}.
\end{equation*} 
\end{proposition}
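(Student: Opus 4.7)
The plan is to reduce the String equation to the Fundamental Class axiom (Proposition~\ref{fundamental-class-axiom}) combined with the comparison formula for $\psi$-classes in Lemma~\ref{lem:comparison-psi}. Since $\eva_{n+1}^*1_X = 1$ and $\eva_i^{(n+1)} = \eva_i^{(n)}\circ \pi_{n+1}$ for $i\leq n$, the only factors in the integrand on $\cT_{n+1}/G$ that fail to be pulled back along $\pi_{n+1}$ are the $\psi$-classes $\psi_{n+1,i}^{k_i}$.

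First I would expand each $\psi_{n+1,i}^{k_i}$ using Lemma~\ref{lem:comparison-psi}. Writing $p_i := \pi_{n+1}^*\psi_{n,i}$ and $E_i := D_{n+1,i,A}$, the relations $\psi_{n+1,i} = p_i + E_i$ and $\psi_{n+1,i}\cdot E_i = 0$ force $E_i^2 = -p_i E_i$; a routine inductive binomial computation then gives $\psi_{n+1,i}^{k_i} = p_i^{k_i} + p_i^{k_i-1}E_i$ for $k_i\geq 1$, where the second summand is understood to vanish when $k_i = 0$ (consistent with the convention $\tau_{-1} = 0$). Since the sections $\sigma_i^{(n)}\colon \cM_n\to \cM_{n+1}$ have pairwise disjoint images, $E_i\cdot E_j = 0$ for $i\neq j$, so expanding the product yields a pure $\pi_{n+1}$-pullback term plus $n$ correction terms each linear in one $E_i$.

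Next I would identify $E_i = \text{st}_{n+1}^*\pcd(\sigma_i)$. This holds because $D_{n+1,i}\subset \cM_{n+1}$ is, by construction, the preimage of $\sigma_i(\Mbar_{g,n})\subset \Mbar_{g,n+1}$ under the stabilization map $\cM_{n+1}\to \Mbar_{g,n+1}$; both parametrise stable curves in which the $(n+1)$-th marked point sits on a ghost bubble attached at the $i$-th point. The full integrand therefore splits as
$$\pi_{n+1}^*(\beta_0)\cdot \text{st}_{n+1}^*\pcd(\sigma) + \sum_{i=1}^n \pi_{n+1}^*(\beta_i)\cdot \text{st}_{n+1}^*\pcd\bigl(\pcd(\sigma_i)\cap \sigma\bigr),$$
with $\beta_0,\beta_i$ the appropriate products of pulled-back $\psi_{n,j}$-classes and $\eva_j^{(n)*}\alpha_j$. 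To each summand (which has the form $\pi_{n+1}^*\gamma\cdot \text{st}_{n+1}^*\delta$) I would apply the push-pull identity $(\tilde\pi_{n+1})_!\,\text{st}_{n+1}^* = \text{st}_n^*\,\pi_{n+1!}$ established in the proof of Proposition~\ref{fundamental-class-axiom} via a cartesian square of principal $H$-bundles, together with the projection formula for the relative submersion $\tilde\pi_{n+1}$. This reduces the pairing against $\vfc{\Mbar_{g,n+1}(X,A;J)}$ to a pairing against $\vfc{\Mbar_{g,n}(X,A;J)}$ with $\sigma$ replaced by $\pi_{n+1*}\sigma$ in the main term and by $\pi_{n+1*}(\pcd(\sigma_i)\cap\sigma)$ in the $i$-th correction term, exactly yielding the right-hand side.

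The most delicate point is the identification $D_{n+1,i,A} = \text{st}_{n+1}^*\pcd(\sigma_i)$: on $\Mbar_{g,n+1}$ the image of $\sigma_i$ is a boundary stratum parametrising nodal curves, while on $\cM_{n+1}$ the divisor $D_{n+1,i}$ arises from a genuine section of the universal curve, so matching the two requires a careful unpacking of the stabilization map. Modulo this, and modulo taking care of the customary degree shift in converting between $\pi_{n+1 *}$ and $\pi_{n+1 !}$, the argument is a direct adaptation of the Fundamental Class axiom proof.
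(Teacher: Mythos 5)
Your proposal is correct and follows essentially the same route as the paper: expand $\prod_i(\pi_{n+1}^*\psi_{n,i}+D_{n+1,i,A})^{k_i}$ using Lemma \ref{lem:comparison-psi} (the relation $\psi_{n+1,i}\cdot D_{n+1,i,A}=0$ and the disjointness $D_{n+1,i}\cap D_{n+1,j}=\emptyset$), identify $D_{n+1,i,A}=\normalfont\text{st}^*\pcd(\sigma_i)$, and push forward via the Fundamental Class axiom. Your closed-form $\psi_{n+1,i}^{k_i}=p_i^{k_i}+p_i^{k_i-1}E_i$ is exactly the outcome of the paper's inductive expansion, and your flagged concern about matching $D_{n+1,i}$ with the boundary divisor $\sigma_i(\Mbar_{g,n})$ is the same identification the paper invokes in its final line.
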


\begin{proof} Write $\alpha \coloneq  \alpha_1\times \dots \times\alpha_n$ and $\beta \coloneq  \pcd(\sigma)$. By \eqref{eq:comp-psi}, 
    \begin{multline*} \lspan{\tau_{k_1}\alpha_1,\dots,\tau_{k_n}\alpha_n,1_X;\sigma}^{X,\omega}_{A,g,n+1} \\ = \lspan{\prod_{i=1}^n(\pi_{n+1}^*\psi_i+D_{n+1,i,A})^{k_i}\cdot\pi_{n+1}^*\eva^*\alpha,\normalfont\text{st}^*\beta\cap \vfc{\Mbar_{g,n+1}^{\,J}(X,A)}}
    \end{multline*}	
	Note that 
	$$D_{n+1,i}\cap D_{n+1,j} = \emst$$ for $i \neq j$. Thus, it follows from induction and the last assertion of Lemma \ref{lem:comparison-psi} that
    $$ \prod_{i=1}^n(\pi_{n+1}^*\psi_i+ D_{n+1,i,A})^{k_i} =	\pi_{n+1}^*\prod_{i=1}^n  \psi_i^{k_i} + \sum_{j=1}^n \prod_{i=1}^n D_{n+1,i,A}^{\delta_{ij}}\pi_{n+1}^*\psi_i^{k_i-\delta_{ij}}.$$
    By the Fundamental Class axiom, Proposition \ref{fundamental-class-axiom}, 
    \begin{equation*} {\pi_{n+1}}_*(\normalfont\text{st}^*\pcd(\tau)\cap \vfc{\Mbar_{g,n+1}^{\,J}(X,A)}) = \normalfont\text{st}^*({\pi_{n+1}}_*\tau)\cap \vfc{\Mbar_{g,n+1}^{\,J}(X,A)},
    \end{equation*}
so \eqref{eq:string-equation-variant} follows from the equality $D_{n+1,i,A} = \normalfont\text{st}^*\pcd(\sigma_i)$.
\end{proof}

\begin{proposition}[Dilaton equation] We have
\begin{equation}
     \lspan{\tau_{k_1}\alpha_1,\dots,\tau_{k_{n}}\alpha_{n},\tau_1 1_X;\pi_{n+1}^!\sigma}^{X,\omega}_{A,g,n+1} = (2g-2+n)\,\lspan{\tau_{k_1}\alpha_1,\dots,\tau_{k_{n}}\alpha_{n};\sigma}^{X,\omega}_{A,g,n}
\end{equation}
for any $\alpha_1,\dots,\alpha_n\in H^*(X;\bQ)$ and $\sigma\in H_*(\Mbar_{g,n};\bQ)$.\end{proposition}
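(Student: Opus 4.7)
The proof mirrors the structure of the Fundamental Class axiom (Proposition \ref{fundamental-class-axiom}): reduce the left hand side to an integral against $\vfc{\Mbar_{g,n}(X,A;J)}$ via the projection formula along $\pi_{n+1}$, where the scalar factor $(2g-2+n)$ will be produced by pushing forward the extra $\psi$-class at the last marked point. Since $\cT_{n+1}=\cM_{n+1}\times_{\cM_n}\cT_n$ with $\cE_{n+1}=\pi_{n+1}^*\cE_n$ and $\fs_{n+1}=\pi_{n+1}^*\fs_n$, the projection formula gives
\begin{equation*}
\lspan{\pi_{n+1}^*\beta\cdot\gamma,\vfc{\Mbar_{g,n+1}(X,A;J)}}=\lspan{\beta\cdot(\pi_{n+1})_*\gamma,\vfc{\Mbar_{g,n}(X,A;J)}}
\end{equation*}
for any $\beta\in H^*(\cT_n/G;\bQ)$ and $\gamma\in H^*(\cT_{n+1}/G;\bQ)$, exactly as was used in the proof of Proposition \ref{fundamental-class-axiom}. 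Using $\eva_{n+1}^*1_X=1$ and the commutativity $\normalfont\text{st}_{n+1}^*\pi_{n+1}^*=\pi_{n+1}^*\normalfont\text{st}_n^*$, the left hand side equals
\begin{equation*}
\lspan{\psi_{n+1,n+1}\cdot\prod_{i=1}^n\psi_{n+1,i}^{k_i}\cdot\pi_{n+1}^*\bigl(\eva^*\alpha\cdot\normalfont\text{st}_n^*\pcd(\sigma)\bigr),\vfc{\Mbar_{g,n+1}(X,A;J)}}.
\end{equation*}

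The central geometric step is the vanishing $\psi_{n+1,n+1}\cdot D_{n+1,i,A}=0$ in $H^*_G(\cT_{n+1};\bQ)$ for every $i\le n$. I would prove this by the same method as in Lemma \ref{lem:comparison-psi}: on the divisor $D_{n+1,i}$ the $(n+1)$-th marked point lies on a rigid $\bP^1$-bubble containing the node together with the two colliding markings $x_i,x_{n+1}$, so the cotangent line $\bL_{n+1}^{\cM_{n+1}}$ restricted via $\sigma_i^{(n)}$ is $G$-equivariantly trivial (the $G$-action changes only the ambient embedding, not the bubble). Equivalently, one may invoke Lemma \ref{symmetry-axiom} to swap the roles of the $i$-th and $(n+1)$-th markings and read off the statement directly from Lemma \ref{lem:comparison-psi}. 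Given this vanishing, expanding $\psi_{n+1,i}^{k_i}=(\pi_{n+1}^*\psi_{n,i}+D_{n+1,i,A})^{k_i}$ collapses every cross term against $\psi_{n+1,n+1}$, leaving
\begin{equation*}
\psi_{n+1,n+1}\cdot\prod_{i=1}^n\psi_{n+1,i}^{k_i}=\psi_{n+1,n+1}\cdot\pi_{n+1}^*\prod_{i=1}^n\psi_{n,i}^{k_i}.
\end{equation*}

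Applying the projection formula from the first paragraph then rewrites the left hand side as
\begin{equation*}
\lspan{(\pi_{n+1})_*\psi_{n+1,n+1}\cdot\prod_{i=1}^n\psi_{n,i}^{k_i}\cdot\eva^*\alpha\cdot\normalfont\text{st}_n^*\pcd(\sigma),\vfc{\Mbar_{g,n}(X,A;J)}}.
\end{equation*}
It remains to compute $(\pi_{n+1})_*\psi_{n+1,n+1}\in H^0_G(\cM_n;\bQ)\cong\bQ$. Since $\cM_{n+1}\to\cM_n$ is (a $G$-invariant open piece of) the universal curve carrying the $n$ sections $\sigma_1^{(n)},\dots,\sigma_n^{(n)}$ with divisors $D_{n+1,i}$, the standard identity for the universal cotangent line reads $\psi_{n+1,n+1}=c_1(\omega_{\pi_{n+1}})+\sum_{i=1}^n D_{n+1,i}$ in $H^*_G(\cM_{n+1};\bQ)$. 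Fibrewise integration yields $(\pi_{n+1})_*c_1(\omega_{\pi_{n+1}})=2g-2$ (degree of the canonical bundle on a stable genus-$g$ curve) and $(\pi_{n+1})_*D_{n+1,i}=1$, so the total pushforward is $2g-2+n$, giving the desired factor.

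\textbf{Main obstacle.} The one delicate point is the vanishing $\psi_{n+1,n+1}\cdot D_{n+1,i,A}=0$: although geometrically the bubble-rigidity argument is intuitive, it must be justified equivariantly on $\cM_{n+1}$ (not just after restricting to the algebraic moduli stack) and then pulled back to the thickening. The argument carried out in the proof of Lemma \ref{lem:comparison-psi} adapts essentially verbatim after exchanging the roles of the indices $i$ and $n+1$, which is the cleanest route.
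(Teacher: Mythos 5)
Your proof is correct and follows essentially the same route as the paper: both rest on the $G$-equivariant identity $\psi_{n+1,n+1}=c_1(\omega_{\pi_{n+1}})+\sum_{i=1}^n D_{n+1,i}$, the fibrewise pushforward $(\pi_{n+1})_*\psi_{n+1,n+1}=2g-2+n$, the vanishing $\psi_{n+1,n+1}\cdot D_{n+1,i}=0$ to kill the cross terms from expanding the other $\psi$-classes, and the fact that $\cE_{n+1}$ and $\fs_{n+1}$ are pullbacks from $\cK_n$. The only cosmetic difference is that the paper derives the vanishing directly from adjunction ($c_1(\omega_{\pi_{n+1}}(D_{n+1,j}))|_{D_{n+1,j}}=0$ plus disjointness of the divisors) rather than by the symmetry swap you suggest, but both justifications are sound.
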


\begin{proof} By (the proof of) \cite[Lemma VI.3.7.2]{Ma99},
  \begin{equation}\label{eq:line-bundles-agree} \sigma_{n+1}^*\omega_{\pi_{n+2}}\cong \omega_{\pi_{n+1}}\lbr{\sum_{i=1}^n\sigma^{(n)}_{i}}\end{equation}
  as $G$-equivariant line bundles. Hence, $\psi_{n+1,G} = c_1(\omega_{\pi_{n+1}})_G + \sum_{j=1}^n\delta_{i,n+1,G}$. As $\cM_n$ is automorphism-free, the forgetful map $\pi_{n+1}\cl \cM_{n+1}\to \cM_n$ is its universal curve, and we have $$\deg(\omega_{\pi_{n+1}}|_C) = 2g-2$$ for any fibre $C$ of $\pi_{n+1}$. As $\cT_{n+1} = \cT_n\times_{\cM_{n}}\cM_{n+1}$, the same is true for $\cT_{n+1}\to \cT_n$. Thus, 
  $$ {\pi_{n+1}}_*(\pi_{n+1}^*\normalfont\text{st}^*\pcd(\sigma)\cdot\psi_{n+1}\cap \fcl{\cT_{n+1}}) = (2g-2+n)\,\normalfont\text{st}^*\pcd(\sigma)\cap \fcl{\cT_n}.$$
Since \eqref{eq:line-bundles-agree} is an equivariant isomorphism, we obtain the same equality in equivariant cohomology, and thus on the level of the quotient spaces:
 \begin{equation*}\label{eq:dil-eq-1}
    {\pi_{n+1}}_*(\pi_{n+1}^*\normalfont\text{st}^*\pcd(\sigma)\cdot\psi_{n+1,G}\cap \fcl{\cT_{n+1}/G}) = (2g-2+n)\,\normalfont\text{st}^*\pcd(\sigma)\cap\fcl{\cT_n/G}.
\end{equation*}
All other terms vanish because 
$$\psi_{n+1,G}\cdot \delta_{n+,j,G} = c_1(\omega_{\pi_{n+1}}(D_{n+1,j}))_G\cdot \delta_{n+1,j,G} + \sum_{i\neq j}\delta_{n+1,i,G}\cdot \delta_{n+1,j,G} = 0.$$
This completes the proof since both the obstruction bundle and section of $\cK_{n+1}$ are pullbacks of the relevant datum of $\cK_n$ by $\pi_{n+1}$.
\end{proof}

\begin{proposition}[Divisor equation] Let $\pi_{n+1}^! \cl H_*(\Mbar_{g,n};\bQ)\to H_{*+2}(\Mbar_{g,n+1};\bQ)$ be the exceptional pullback. Then 
    \begin{align}\label{eq:divisor-grav-desc}
\lspan{\tau_{k_1}\alpha_1,\dots,\tau_{k_{n}}\alpha_{n},\gamma;\pi_{n+1}^!\sigma}^{X,\omega}_{A,g,n+1} = \inpr{\gamma}{A}\,&\lspan{\tau_{k_1}\alpha_1,\dots,\tau_{k_{n}}\alpha_{n};\sigma}^{X,\omega}_{A,g,n}\\\notag +& \sum_{i=1}^{n}\lspan{\tau_{k_1}\alpha_1,\dots,\tau_{k_i-1}(\alpha_i\cdot\gamma),\dots,\tau_{k_n}\alpha_n;\sigma}^{X,\omega}_{A,g,n}
\end{align}
for any $\alpha_1,\dots,\alpha_n\in H^*(X;\bQ)$ and $\sigma\in H_*(\Mbar_{g,n};\bQ)$.
\end{proposition}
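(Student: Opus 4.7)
The strategy mirrors the proof of the String equation, combined with the Divisor axiom (Proposition \ref{divisor-axiom}) and the crucial observation that $\eva_{n+1}$ agrees with $\eva_j$ on the divisor $\cc{D}_{n+1,j}(A)$. Write $\alpha := \alpha_1\times\dots\times\alpha_n$ and $\beta := \pcd(\sigma)$. Since $\pi_{n+1}\cl \Mbar_{g,n+1}\to \Mbar_{g,n}$ is a submersion away from a locus of codimension at least $2$, Lemma \ref{projection-formula-up-codimension} gives $\normalfont\text{st}^*\pcd(\pi_{n+1}^!\sigma) = \pi_{n+1}^*\normalfont\text{st}^*\beta$ on $\cT_{n+1}/G$. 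Expanding the $\psi$-classes via Lemma \ref{lem:comparison-psi} exactly as in the String equation, and using $\psi_{n+1,i}\cdot D_{n+1,i,A} = 0$ together with $D_{n+1,i,A}\cdot D_{n+1,j,A} = 0$ for $i\neq j$, one obtains
\begin{equation*}
	\prod_{i=1}^n \psi_{n+1,i}^{k_i} = \pi_{n+1}^*\prod_{i=1}^n\psi_i^{k_i} + \sum_{j=1}^n D_{n+1,j,A}\cdot \pi_{n+1}^*\Bigl(\psi_j^{k_j-1}\prod_{i\neq j}\psi_i^{k_i}\Bigr).
\end{equation*}

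For the first summand, the integrand factors through $\pi_{n+1}^*$ except for $\eva_{n+1}^*\gamma$; by the projection formula and the Divisor axiom in the form derived from Lemma \ref{degree-divisor-axiom} and Proposition \ref{vfc-embedded-chart}, namely $\pi_{n+1,*}(\eva_{n+1}^*\gamma\cap \vfc{\Mbar_{g,n+1}(X,A;J)}) = \inpr{\gamma}{A}\,\vfc{\Mbar_{g,n}(X,A;J)}$, this contributes precisely the first term on the right-hand side of \eqref{eq:divisor-grav-desc}.

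For the $j^{\text{th}}$ term of the sum, the key identity is that on $\cc{D}_{n+1,j}(A)$, where the $(n+1)^{\text{th}}$ and $j^{\text{th}}$ marked points coincide, the evaluation maps satisfy $\eva_{n+1}|_{\cc{D}_{n+1,j}(A)} = \eva_j\circ \pi_{n+1}|_{\cc{D}_{n+1,j}(A)}$. Hence $\eva_{n+1}^*\gamma\cdot D_{n+1,j,A} = \pi_{n+1}^*\eva_j^*\gamma\cdot D_{n+1,j,A}$. The entire integrand in this summand then becomes $D_{n+1,j,A}$ times the $\pi_{n+1}^*$-pullback of the class on $\cT_n/G$ associated to replacing $\tau_{k_j}\alpha_j$ by $\tau_{k_j-1}(\alpha_j\cdot\gamma)$. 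Applying the analog of Proposition \ref{vfc-embedded-chart} to the $G$-invariant suborbifold $\cc{D}_{n+1,j}(A)/G$ (or alternatively using that $\pi_{n+1}$ restricts to a diffeomorphism from $\cc{D}_{n+1,j}(A)$ onto $\cT_n/G$, as in the proof of the Divisor axiom), we have $\pi_{n+1,*}(D_{n+1,j,A}\cap \vfc{\Mbar_{g,n+1}(X,A;J)}) = \vfc{\Mbar_{g,n}(X,A;J)}$, and the projection formula delivers exactly the $j^{\text{th}}$ term in the second sum of \eqref{eq:divisor-grav-desc}.

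The main obstacle is verifying that the diffeomorphism statement $\pi_{n+1}|_{\cc{D}_{n+1,j}(A)}\cong \cT_n/G$ and the identification of $D_{n+1,j,A}\cap \vfc{}$ with $\vfc{\Mbar_Y}$ for $Y$ the preimage of the diagonal, are compatible at the level of the thickenings; this is essentially the content of Proposition \ref{vfc-embedded-chart} applied to the submanifold $\eva_{j,n+1}\inv(\Delta_X)$ of $\cT_{n+1}$ in the manner already exploited in the proofs of the Divisor axiom and the Genus reduction axiom, together with Lemma \ref{evaluation-on-thickening} ensuring that the evaluation maps on the thickening are relative submersions so that these preimages are smooth.
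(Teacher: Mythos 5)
Your proposal is correct and follows essentially the same route as the paper's proof: expand the $\psi$-classes via Lemma \ref{lem:comparison-psi}, treat the leading term with the Divisor axiom (Proposition \ref{divisor-axiom}), and handle the correction terms using the identity $\eva_{n+1}|_{\cc{D}_{n+1,j}(A)} = \eva_j|_{\cc{D}_{n+1,j}(A)}$ together with the pushforward of $D_{n+1,j,A}$ along $\pi_{n+1}$. The extra care you take in spelling out the compatibility at the level of thickenings is implicit in the paper's terser argument but not a different method.
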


\begin{proof} Let $\alpha \coloneq \alpha_1\times\dots \times\alpha_n$. By Lemma \ref{lem:comparison-psi}, the left-hand side of \eqref{eq:divisor-grav-desc} splits into a sum of terms, one of which is
\begin{equation*}\lspan{\pi_{n+1}^*\lbr{\eva^*\alpha\cdot \prod_{i=1}^n\psi_{n,i}^{k_i}\cdot \normalfont\text{st}^*\pcd(\sigma)},\eva_{n+1}^*\gamma\cap \vfc{\Mbar_{g,n+1}^{\,J}(X,A)}} = \lspan{\gamma,A}\,\lspan{\tau_{k_1}\alpha_1,\dots,\tau_{k_{n}}\alpha_{n};\sigma}^{X,\omega}_{A,g,n}\end{equation*}
    by Proposition \ref{divisor-axiom}. The other terms are given by 
    \begin{multline*}\lspan{\pi_{n+1}^*(\eva^*\alpha\cdot \prod_{i=1}^n\psi_{n,i}^{k_i-\delta_{ij}}\cdot \normalfont\text{st}^*\pcd(\sigma))\cdot D_{n+1,j,A}\cdot \eva_{n+1}^*\gamma,\vfc{\Mbar_{g,n+1}^{\,J}(X,A)}}\\ =\lspan{\pi_{n+1}^*(\eva^*\alpha\times \prod_{i=1}^n\psi_{n,i}^{k_i-\delta_{ij}}\cdot \normalfont\text{st}^*\pcd(\sigma))\cdot  \eva_j^*\gamma\cdot D_{n+1,j,A},\vfc{\Mbar_{g,n+1}^{\,J}(X,A)}}\\ = \lspan{\tau_{k_1}\alpha_1,\dots,\tau_{k_j-1}(\alpha_j\cdot\gamma),\dots,\tau_{k_n}\alpha_n;\sigma}^{X,\omega}_{A,g,n}\end{multline*}
    for $1\leq j \leq n$. We use in the second equality that $\eva_{n+1}|_{\cc{D}_{n+1,j}(A)} = \eva_j|_{\cc{D}_{n+1,j}(A)}$.
\end{proof}

\section{Virtual localisation and equivariant GW theory}\label{sec:localisation}

In this section, we define global Kuranishi charts endowed with a compatible group action and construct an equivariant virtual fundamental class. We prove a localisation formula analogous to \cite{GP99} in the setting of global Kuranishi charts, Theorem \ref{Virtual Localisation}, and show that it applies to the equivariant GW invariants of Hamiltonian symplectic manifolds constructed in \textsection\ref{subsec:equiv-gw}. We use it to give a formula for the equivariant GW invariants of a GKM Hamiltonian manifold in \textsection\ref{subsubsec:computations-gw}.

\subsection{Equivariant virtual fundamental classes}\label{subsec:equi-vfc}
We define what it means for a global Kuranishi chart to carry a compatible group action and construct the associated equivariant virtual fundamental class. The technical background for this can be found in \textsection\ref{subsec:trace}. The construction can be considered a special case of parameterised virtual fundamental classes.

\begin{definition}\label{} Suppose $K$ is a compact Lie group acting on a space $\fM$. A \emph{$K$-equivariant global Kuranishi chart} $(\cK,\wt G)$ for $\fM$ consists of a global Kuranishi chart $\cK = (G,\cT,\cE,\fs)$ for $\fM$ together with an extension
 $$1 \to G\to \wt G\to K \to  1$$ 
 of compact Lie groups so that 
 \begin{itemize}
 	\item the $G$-actions on $\cT$ and $\cE$ extend to $\wt G$-actions
 	\item all maps are $\wt G$-equivariant
 	\item the induced action of $K$ on $\fs\inv(0)/G$ agrees with the action of $K$ on $\fM$ under the homeomorphism $\fs\inv(0)/G\cong \fM$.
 \end{itemize} 
 We say it is a \emph{rel--$C^\infty$ $K$-equivariant global Kuranishi chart} if it is rel--$C^\infty$ over a base space $S$ so that $\cT\to S$ is $K$-invariant and $\wt G$ acts by rel--$C^\infty$ diffeomorphisms.
\end{definition}

In the examples we consider later, we have $\wt G = G\times K$.

\begin{definition}\label{equivariant-vf-de} Let $(\cK,\wt G)$ be an oriented global Kuranishi chart for a $\bT$-space $\fM$. The \emph{equivariant virtual fundamental class} $\vfc{\fM}_K$ is the element of $\Hom_{H_K^*}(\chml^{*+\vdim}_K(\fM;\bQ),H^*_K)$ given by the composite
	\begin{equation}\label{e-vfc-de} \chml^{*+\vdim}_K(\fM;\bQ) \xra{\fs^*\tau_K(\cE/G) \cdot}H^{*+\dim(\cT/G)}_{K,fc}(\cT/G;\bQ)\xra{\int^K_{\cT/G}} H^*_K\end{equation}
 where the subscript fc denotes cohomology with fibrewise compact support (of the fibration $(\cT/G)_K \to BK$) and $H^*_K\coloneq H^*_K(\Pt;\bQ)$.
\end{definition}

Here we use that 
$$\chml^*_K(\fM;\bQ) = \fcolim_{W\super \fM}\chml^*_K(W;\bQ)$$
where we take the direct limit over open $K$-invariant neighbourhoods of $\fM$ in $\cT/G$. The first map in \eqref{e-vfc-de} is induced by the composition 
\begin{multline*}
    \chml^*_K(W;\bQ)\xra{\fs^*\tau_K(\cE/G)|_W\cdot} \chml^{*+\rank(\cE)}_K(W\mid \fM;\bQ) \xra{\simeq}\chml^{*+\rank(\cE)}_K(\cT/G\mid\fM;\bQ)\\\to \chml^{*+\rank(\cE)}_{K,c}(\cT/G;\bQ)
\end{multline*}
while the second map is the trace map of $(\cT/G)_K \to BK$ defined in \textsection\ref{subsec:trace}.

\begin{lemma}
    If $K$ acts freely on $\fM$ and $\cK$ is a $K$-compatible global Kuranishi chart, then the equivariant virtual fundamental class $\vfc{\fM}_K$ vanishes.
\end{lemma}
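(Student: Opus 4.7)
The plan is to exploit the fact that on a $K$-invariant neighborhood of $\fM$, the $K$-action remains free, which forces the Borel construction to collapse to an orbit space of strictly smaller dimension, and then apply a degree count against the compactly supported cohomology of that quotient.

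First, since $K$ is compact and acts continuously on $\cT/G$, the subset of points with trivial stabilizer is open (containing the principal orbit locus for the free action). By hypothesis $\fM$ lies entirely in this subset, so I can choose a $K$-invariant open neighborhood $W \subset \cT/G$ of $\fM$ on which $K$ acts freely. The Thom class $\tau_K(\cE/G)$ is supported on the zero section, hence $\fs^{*}\tau_K(\cE/G)$ is supported on $\fs^{-1}(0)/G = \fM \subset W$. Consequently, for any $\alpha \in \chml^{*+\vdim}_K(\fM;\bQ)$, the product $\fs^{*}\tau_K(\cE/G)\cdot \alpha$ is represented, via extension by zero, by a class $\beta \in H^{*+\dim(\cT/G)}_{K,fc}(W;\bQ)$.

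Next I use that a free $K$-action trivializes the Borel construction: since $K$ acts freely on $W$, the projection $W_K = EK\times_K W \to W/K$ is a fibration with contractible fiber $EK$, so $H^{*}_{K}(W;\bQ) \cong H^{*}(W/K;\bQ)$, and matching the support conditions yields $H^{*}_{K,fc}(W;\bQ) \cong H^{*}_{c}(W/K;\bQ)$. The orbit space $W/K$ is a topological manifold (or effective orbifold) of dimension $\dim(\cT/G) - \dim K$, so $H^d_c(W/K;\bQ) = 0$ for $d > \dim(\cT/G) - \dim K$. For $*\geq 0$ and $\dim K \geq 1$, we have $*+\dim(\cT/G) > \dim(\cT/G) - \dim K$, hence $\beta = 0$ and the image $\vfc{\fM}_K(\alpha)$ vanishes. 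For $* < 0$, the target $H^*_K$ is already trivial, so $\vfc{\fM}_K$ is identically zero.

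The main subtlety will be making the identification $H^{*}_{K,fc}(W;\bQ) \cong H^{*}_{c}(W/K;\bQ)$ precise, i.e.\ verifying that fibrewise compact support along $(\cT/G)_K \to BK$ corresponds to compact support on the quotient orbifold $W/K$ under the homotopy equivalence $W_K \simeq W/K$. Equivalently, one should show that the equivariant trace map defined in the relevant subsection factors through the ordinary pushforward of $W/K$ to a point, shifted by $\dim K$. Once this compatibility is granted, the rest is a clean dimension count.
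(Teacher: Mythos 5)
Your proof is correct and follows essentially the same route as the paper's: shrink to a $K$-invariant neighbourhood on which the action is free (legitimate since the action is locally linear and the definition of $\vfc{\fM}_K$ is a colimit over such neighbourhoods), identify $H^{*}_{K,fc}$ with compactly supported cohomology of the quotient, and conclude by the dimension drop $\dim K \geq 1$. The paper's proof is just a terser version of this, asserting the vanishing of the trace map $H^{\dim(\cT/G)+*}_{K,c}(\cT/G;\bQ)\to H^*_K(\Pt;\bQ)$ for exactly the degree reason you spell out.
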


\begin{proof} Write $\cK = (G,\cT,\cE,\fs)$. As $K$ acts freely in a neighbourhood of $\fM$ in $\cT/G$, we may shrink the thickening and assume $K$ acts freely on all of $\cT/G$. Then $H^{\dim(\cT/G)+*}_{K,c}(\cT/G;\bQ)\to H^*_K(\Pt;\bQ)$ vanishes, and thus so does $\vfc{\fM}_K.$
\end{proof}

\begin{remark}\label{comparison-with-noneq-vfc} By Corollary \ref{cor:eq-trace} we have a commutative square
\begin{equation}\label{vfc-commute}\begin{tikzcd}
			\chml^{\vdim+*}_K(\fM;\bQ)\arrow[r,"\vfc{\fM}_K"] \arrow[d,""]&H^*_K(\Pt;\bQ) \arrow[d,""]\\ \chml^{\vdim+*}(\fM;\bQ)\arrow[r,"\vfc{\fM}"]&H^0(\Pt;\bQ)  \end{tikzcd} \end{equation}
   allowing us to recover $\vfc{\fM}$ partially from $\vfc{\fM}_K$. 
\end{remark}

The arguments of \textsection\ref{subsec:embedded-charts} can be carried over to the equivariant setting using Lemma \ref{lem:trace-subfibration}. We obtain the analogous statement for equivariant virtual fundamental classes.

\begin{proposition}\label{equivariant-vfc-embedded-chart} Suppose $j\cl \cK'\hkra\cK$ is a rel--$C^\infty$ embedding of oriented global Kuranishi charts as in Proposition \ref{vfc-embedded-chart} and that $K$ acts relatively smoothly on each global Kuranishi chart. If the embedding is $K$-equivariant, then
	$$j_*(e_{K\times G}(j^*\cE/\cE')\cap \vfc{\fM'}_{K}) = \pcd_{K\times G}(\cT')\cap \vfc{\fM}_{K}.$$ 
\end{proposition}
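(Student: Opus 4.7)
The approach is to adapt the proof of Proposition \ref{vfc-embedded-chart} systematically to the equivariant setting, replacing ordinary Thom classes, cap products, and traces with their $K$-equivariant counterparts from \S\ref{subsec:trace}. Write $\cK = (G,\cT,\cE,\fs)$ and $\cK' = (G,\cT',\cE',\fs')$, and use the rel--$C^\infty$ embedding $j$ to identify $\cT'/G$ with a $K$-invariant rel--$C^\infty$ suborbifold of $\cT/G$. The definition of an embedding of global Kuranishi charts, together with the hypothesis of $K$-equivariance, supplies a $K\times G$-equivariant short exact sequence $0 \to \cE' \to j^*\cE \to N \to 0$ of rel--$C^\infty$ orbi-bundles over $\cT'/G$ (where $N := j^*\cE/\cE'$), such that $\fs \circ j$ takes the form $(\fs',0)$ under an equivariant splitting, and such that the normal bundle of $\cT'/G$ in $\cT/G$ is $K\times G$-equivariantly oriented with Thom class representing $\pcd_{K\times G}(\cT')$.

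From the multiplicativity of the equivariant Thom class under direct sums and from the fact that pullback along the zero section sends $\tau_K(N/G)$ to $e_{K\times G}(N/G)$, one obtains the identity
\[ j^*\bigl(\fs^*\tau_K(\cE/G)\bigr) = \fs'^*\tau_K(\cE'/G) \cdot e_{K\times G}(N/G) \]
in $H^{*}_{K\times G}(\cT'/G\mid \fM';\bQ)$. Multiplying by $\pcd_{K\times G}(\cT')$, viewed as the equivariant Thom class of the normal bundle of $\cT'/G \subset \cT/G$ pushed forward, reduces a computation over $\cT/G$ to one over $\cT'/G$. This is the step where Lemma \ref{lem:trace-subfibration} enters: it records precisely the compatibility of the equivariant trace map for $(\cT/G)_K \to BK$ with the inclusion of the rel--$C^\infty$ subfibration $(\cT'/G)_K \to BK$, so that
\[ \int^K_{\cT/G} \pcd_{K\times G}(\cT') \cdot \beta \;=\; \int^K_{\cT'/G} j^*\beta \]
for any fibrewise-compactly-supported class $\beta$ on $\cT/G$.

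Assembling these ingredients and unwinding the definition \eqref{e-vfc-de} of the equivariant virtual fundamental class yields the claim: evaluating the right-hand side on $\alpha \in \chml^{*+\vdim}_K(\fM;\bQ)$ produces the equivariant trace over $\cT'/G$ of $\fs'^*\tau_K(\cE'/G) \cdot e_{K\times G}(N/G) \cdot j^*\alpha$, which by definition equals $\vfc{\fM'}_K$ applied to $e_{K\times G}(N/G) \cap j^*\alpha$. The main obstacle is the bookkeeping around the direct limit $\chml^*_K(\fM;\bQ) = \varinjlim_{W \supset \fM} \chml^*_K(W;\bQ)$ and the fibrewise compact supports: one must ensure that the Thom-class factorisation and the trace compatibility hold at the level of a cofinal system of $K$-invariant neighbourhoods, so that all cup products and pushforwards are well-defined and commute with the limit. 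These are direct analogues of the checks carried out in the non-equivariant Proposition \ref{vfc-embedded-chart}, now performed in a fibration over $BK$.
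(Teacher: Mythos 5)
Your proposal is correct and follows essentially the same route as the paper: the paper's proof of Proposition \ref{equivariant-vfc-embedded-chart} consists precisely of the remark that the arguments of \textsection\ref{subsec:embedded-charts} carry over to the equivariant setting once Lemma \ref{lem:trace-subfibration} supplies the compatibility of the equivariant trace with the inclusion of the subfibration $(\cT'/G)_K \subset (\cT/G)_K$. Your factorisation of $j^*\fs^*\tau_K(\cE/G)$ via multiplicativity of the Thom class, together with the identity $\int^K_{\cT/G}\pcd_{K\times G}(\cT')\cdot\beta = \int^K_{\cT'/G}j^*\beta$ and the bookkeeping over the direct limit of $K$-invariant neighbourhoods, is exactly the intended adaptation.
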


\begin{remark} Suppose $\fM$ is a moduli space parameterised by a topological space $B$. Using the obvious definition of a parameterised global Kuranishi chart, the results in \textsection\ref{subsec:trace} allow for the definition of a parameterised virtual fundamental class of the form 
$$\vfc{\fM}_B \cl \chml^{\vdim+*}_{fc}(\fM;\bQ)\to H^*(B;\bQ).$$
In particular, by Lemma \ref{lem:trace-base-change} and Lemma \ref{lem:trace-functoriality}, the results of \textsection\ref{subsec:embedded-charts} generalise to this setting. The equivariant virtual fundamental class defined here is just a special case of this construction. This is also discussed (for smooth parameter spaces) in \cite[\textsection 4.7]{AMS23}. 
\end{remark}

\subsection{Virtual localisation}\label{subsec:localisation} We recall the following localisation theorem, due to \cite{Hsi75,AB84}.

\begin{theorem}[Localisation formula]\label{} Let $M$ be a closed smooth manifold equipped with a smooth action by a torus $\bT$. Then the restriction $H^*_\bT(M;\bQ)\to H^*_\bT(M^\bT;\bQ)$ becomes an isomorphism after tensoring with the fraction field of $H_\bT^*$.
\end{theorem}

 We first discuss a generalisation of this result to orbifolds. This is classical, cf. \cite[Theorem~2.1]{Mei98}, but all accounts we found required the manifold and the group action to be smooth, so we first explain how to adapt the argument to our slightly more general situation.\par 
 For the subsequent discussion, let $M$ be a topological manifold and $\bT = (S^1)^k$ a torus. Suppose $M$ is equipped with a locally linear $\wt G$-action, where $1 \to G \to \wt G\to \bT\to 1$ is an extension by compact Lie groups and $G$ acts almost freely on $M$. In particular, we obtain an orbifold structure on $M/G$ given by the groupoid $[M/G] = [G\times M\rightrightarrows M]$ as discussed in \textsection\ref{sec:orbifold-intersection}. The $\wt G$-action induces to a $\bT$-action on $[M/G]$, as well as on the coarse moduli space $M/G$.  

\begin{lemma}\label{lem:fixed-locus-orbifold} Each component of the fixed point locus $(M/G)^\bT$ of the quotient is a canonical suborbifold of $[M/G]$. \end{lemma}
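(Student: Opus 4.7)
The plan is to realize each component of $(M/G)^\bT$ as the $G$-quotient of a $G$-invariant locally linear submanifold $\tilde N\subseteq M$, and to obtain the suborbifold structure from the induced almost free $G$-action on this submanifold. Fix $x\in M$ projecting to a point of $(M/G)^\bT$ and consider the isotropy $\Gamma_x := (\bT\times G)_x \subseteq \bT\times G$. The kernel of the projection $\Gamma_x\to \bT$ is $\{1\}\times G_x$, which is finite by the almost-free hypothesis on $G$, and the image is all of $\bT$ because $[x]$ is $\bT$-fixed. Hence $\Gamma_x^0$ is a $k$-torus projecting isomorphically onto $\bT$, determined by a continuous homomorphism $\psi_x\colon \bT\to G$ with $\Gamma_x^0 = \{(t,\psi_x(t)) : t\in\bT\}$.

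First I would define the candidate local model $N_x := M^{\Gamma_x^0}$. Since the $(\bT\times G)$-action is locally linear, so is the $\Gamma_x^0$-action, and the classical fact that the fixed locus of a locally linear compact-torus action is a locally flat submanifold gives $N_x$ the structure of a topological submanifold in a neighborhood of $x$. Any $y\in N_x$ satisfies $ty = \psi_x(t)^{-1} y \in Gy$ for all $t\in\bT$, so $N_x$ lies inside $\tilde N := \{y\in M : \bT y \subseteq Gy\}$. Conversely, applying the slice theorem for locally linear actions around the $(\bT\times G)$-orbit of $x$, any $y\in \tilde N$ sufficiently close to $x$ has $\Gamma_y^0$ a $k$-torus contained, up to $(\bT\times G)$-conjugation, in $\Gamma_x$; since both $\Gamma_y^0$ and $\Gamma_x^0$ project isomorphically onto $\bT$, they must agree after conjugation by some $(1,g)\in\{1\}\times G$. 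Thus $y\in g\cdot N_x$ locally, showing $\tilde N = G\cdot N_x$ in a neighborhood of the orbit.

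Once $\tilde N$ is known to be a $G$-invariant locally linear submanifold on which $G$ still acts almost freely, the action groupoid $[\tilde N/G]$ is an orbifold and the inclusion $\tilde N\hookrightarrow M$ presents it as a closed suborbifold of $[M/G]$ whose underlying topological space is exactly $(M/G)^\bT$. The construction involves no auxiliary choices (the torus $\Gamma_x^0$ is canonically attached to $x$, and replacing $x$ by a $G$-translate replaces $N_x$ by its $G$-translate), so the suborbifold structure is canonical; taking connected components then yields the statement of the lemma.

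The main obstacle I anticipate is executing the above slice-theoretic arguments in the merely locally linear (rather than smooth) category: one must invoke the appropriate theorems on locally linear compact group actions, namely that fixed sets are locally flat submanifolds and that nearby isotropy groups are subconjugate to the given one, without appealing to the smooth slice theorem of Palais. The cleanest workaround is to fix a linear slice representation at $x$—which exists by local linearity—and carry out the entire analysis of $N_x$, $\tilde N$, and the conjugacy of the identity components $\Gamma_y^0$ inside this genuine linear $(\bT\times G)$-representation, where everything reduces to elementary facts about compact Lie groups acting linearly on finite-dimensional vector spaces.
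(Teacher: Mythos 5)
Your argument is correct and is essentially the paper's: both proofs characterise the preimage of $(M/G)^\bT$ as the set of points whose $(\bT\times G)$-stabiliser has dimension $\dim\bT$, and both realise the local model as the fixed locus of the identity component of that stabiliser acting on a local (linear) slice, so that local flatness follows from local linearity and the $G$-action on the resulting invariant submanifold presents the suborbifold. One small slip: the projection $\Gamma_x^0\to\bT$ is in general only a surjection with finite kernel (a finite covering), not an isomorphism, so $\Gamma_x^0$ need not be the graph of a homomorphism $\psi_x\colon\bT\to G$; however, your argument only uses surjectivity of this projection (to get $N_x\subseteq\tilde N$) and the fact that $\Gamma_x^0$ is the unique $\dim\bT$-dimensional connected subgroup of $\Gamma_x$ (to get the conjugacy statement), so the proof goes through after this harmless correction.
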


By slight abuse of notation, we denote the union of these components with their respective orbifold structure by $[M/G]^\bT$.

\begin{proof} Let $q \cl M\to \cc{M} \coloneq M/G$ be the quotient map, and set $M^{qf} \coloneq q\inv((M/G)^\bT)$.Then $x \in M^{qf}$ if and only if for any $t \in \bT$ with lift $\wt g \in G$, we have $\wt g\cdot x = g \cdot x$ for some $g \in G$. As $G$ acts almost freely and $\bT$ is connected, this is shows that
	$$M^{qf} = \set{y \in M\mid \dim(\wt G_y) = \dim(\bT)}.$$
	Let $y \in M^{qf}$ be arbitrary and fix a local slice $S\sub M$ of the $\wt G$-orbit through $y$. Then $y'\in S^{qf} \coloneq S\cap M^{qf}$ if and only if the identity component $\wt G_{y',0}$ of $\wt G_{y'}$ agrees with $\wt G_{y,0}$. This shows that $S^{qf}$ is a submanifold of $S$. Moreover, an open neighbourhood $U$ of $y$ in $M^{qf}$ is given by 
	\begin{equation}\label{eq:local-quotient-fixed-locus}U \cong \wt G\times_{\wt G_y} S^{qf}. \end{equation}
	It follows that each path component of $M^{qf}$ is a locally flat submanifold of $M$, on which $G$ acts almost freely and locally linearly.
\end{proof}

\begin{lemma}\label{lem:fixed-locus-normal-bundle} Suppose the tangent microbundle $T_\mu M$ admits a $\wt G$-equivariant vector bundle lift $E$ with a subbundle $E'\sub E$ so that $E'|_{G(x)}$ is a vector bundle lift of $T_\mu (G\cdot x)$. Then the embedding $j\cl [M/G]^\bT\hkra [M/G]$ admits a $\bT$-equivariant normal bundle. 
\end{lemma}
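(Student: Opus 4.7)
The strategy is to exploit the hypothesised vector bundle lifts to split off the normal directions in a $\bT$-equivariant way, and then to descend the resulting vector bundle to the orbifold.

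First, form the quotient $F := E/E'$. By the assumption on $E'$, this is a $(\bT\times G)$-equivariant vector bundle on $M$ whose restriction to each $G$-orbit $G\cdot x$ is a vector bundle lift of the fibre of the normal microbundle to $G\cdot x$ in $M$; equivalently, $F$ serves as a $(\bT\times G)$-equivariant vector bundle lift of the pullback to $M$ of the tangent microbundle of the orbifold $[M/G]$.

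Next, restrict $F$ to the submanifold $M^{qf}$ from the proof of Lemma \ref{lem:fixed-locus-orbifold}. Since $\bT$ is compact, fibrewise averaging yields a $(\bT\times G)$-invariant Euclidean metric on $F|_{M^{qf}}$, with respect to which one may decompose
\[ F|_{M^{qf}} = F^{\bT} \oplus N \]
into the $\bT$-fixed subbundle and its orthogonal complement. Both summands carry commuting $\bT$- and $G$-actions, and $\bT$ acts trivially on $F^\bT$ but without nonzero fixed vectors on each fibre of $N$. Using the local slice description \eqref{eq:local-quotient-fixed-locus} and the fact that the isotropy representation at $y\in M^{qf}$ is exactly the fibre of $E$ at $y$ (with the given linear action), one checks fibrewise that $F^\bT$ is a lift of $T_\mu M^{qf}$ while $N$ is a lift of the normal microbundle of $M^{qf}$ in $M$. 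Since $M^{qf}$ is $\bT$-fixed and both summands are $G$-equivariant, the splitting descends to $[M/G]^\bT = [M^{qf}/G]$, yielding a $\bT$-equivariant vector bundle $N/G$ that lifts the normal microbundle of $j$.

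Finally, promote this microbundle lift to a genuine $\bT$-equivariant tubular neighbourhood. Around each point, the local slice description \eqref{eq:local-quotient-fixed-locus} provides a $(\bT\times G)$-equivariant tubular neighbourhood with model fibre given by $N$, so one can patch these local tubular neighbourhoods using a $(\bT\times G)$-invariant partition of unity subordinate to a slice-type cover, to get a $\bT$-equivariant neighbourhood of the zero section of $N/G$ in $[M/G]$. The principal obstacle here is the last step: in the topological (or rel--$C^\infty$) setting there is no global exponential map, so the existence of the global tubular neighbourhood relies on the locally linear hypothesis together with the vector bundle lift $E$, which furnishes precisely the infinitesimal data needed to carry out the patching in the equivariant locally linear category.
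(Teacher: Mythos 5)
Your identification of the candidate normal bundle is essentially the paper's: you pass to a complement/quotient of $E'$ in $E$ and take its ``moving part'' over $M^{qf}$. Two points, however, need repair. First, a precision issue: $\bT$ does not act on the fibres of $F$ over a point $y\in M^{qf}$, because such $y$ is fixed only in the quotient $M/G$, not in $M$; the splitting must be taken fibrewise with respect to the identity component of the stabiliser $(\bT\times G)_{y,0}$ (this is exactly how the paper defines $N_\lambda$), and one then uses the slice description \eqref{eq:local-quotient-fixed-locus} to see that the rank of the fixed part is locally constant along each component $M_\lambda$. Your phrase ``the $\bT$-fixed subbundle'' does not literally make sense as written, though the intended object is recoverable.

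The more serious problem is your last step. Patching local tubular neighbourhoods by a partition of unity is not a valid operation in the topological (locally linear) category: there is no linear structure on the target in which to average the local embeddings, and this is precisely the kind of step that fails for topological manifolds. It is also unnecessary. The hypothesis that $E$ is a $(\bT\times G)$-equivariant vector bundle lift of the tangent \emph{microbundle} $T_\mu M$ already includes, by definition, a global $(\bT\times G)$-equivariant open embedding $\psi\cl U\to M\times M$ of a neighbourhood of the zero section onto a neighbourhood of the diagonal. The paper's proof simply restricts $\psi$ to the complement $V$ of $E'$ over $M_\lambda$, notes that $\psi$ descends to the quotient so that $V/G$ lifts $T_\mu(M/G)$, and then checks via equivariance and the slice description that $\pr_2\psi(x,v)$ lands in $M^{qf}$ if and only if $v$ is fixed by $(\bT\times G)_{x,0}$; the moving part $N_\lambda$ is thereby exhibited as a normal bundle with tubular neighbourhood already in hand. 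You should replace your patching argument with this direct use of the embedding furnished by the microbundle lift.
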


\begin{proof} Let $V\sub E$ be a $\wt G$-invariant complement of $E'$ and given a path-component $M_\lambda$ of $M^{qf}$, define 
	$$N_\lambda \coloneq \{(x,v)\in V|_{M_\lambda}\mid \exists \wt g\in \wt G_{x,0}: \wt g\cdot v\neq v\}.$$
	By definition, there exists a $\wt G$-equivariant neighbourhood $U \sub E$ of the zero section and an equivariant embedding $\psi \cl U \to M\times M$ onto a neighbourhood of the diagonal. Then $\psi$ descends to an open embedding $$(U\cap V)/G\to M/G\times M/G.$$ 
	Hence, we can think of $V/G$ as an orbibundle lift of the tangent microbundle of $M/G$. Let $x \in M_\lambda$ be arbitrary and let $S$ be a slice through $x$ as in \eqref{eq:local-quotient-fixed-locus}. By the choice of $S$, the stabiliser $\wt G_{x'}$ of any $x'\in S$ is conjugate to a subgroup of $\wt G_{x}$.  Due to the equivariance of $\psi$, we have $\pr_2\psi(x,v)\in S\cap M^{qf}$ for $v \in V_x$ if and only if $\wt g\cdot v = v$ for any $\wt g\in \wt G_{x,0}$. As $G\cdot x \sub M_\lambda$, this shows that the normal bundle of $M_\lambda$ is given by $N_\lambda$.
\end{proof}

We abbreviate $R \coloneq H^*_\bT(\Pt;\bQ) \cong \bQ[u_1,\dots,u_k]$ and let $Q$ denote its fraction field.

\begin{proposition}[Localisation formula]\label{prop:orbifold-localisation} Suppose $[M/G]$ satisfies the assumptions of Lemma \ref{lem:fixed-locus-normal-bundle}. Then the inclusion $j \cl [M/G]^\bT\to [M/G]$ induces an isomorphism 
$$H^*_{\bT,c}([M/G];\bQ)\otimes_R Q\xra{\simeq} H^*_c([M/G]^\bT;\bQ)\otimes_\bQ Q$$ with inverse given by $\frac{1}{e(N)}(j|_{M})_!$. In particular, for $\alpha\in H^*_{\bT,c}([M/G];\bQ)$ we have 
	\begin{equation}\label{eq:integration-formula-orbifold} \int^\bT_{M/G} \alpha = \s{\lambda}{\int^\bT_{M_\lambda/G}}\frac{j^*\alpha}{e(N_\lambda)}.\end{equation}
\end{proposition}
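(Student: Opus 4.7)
The plan is to adapt the classical Atiyah--Bott--Borel localisation theorem to the present topological-orbifold setting. The crucial simplification is the identification $H^*_\bT([M/G];\bQ) = H^*_{\bT\times G}(M;\bQ)$ via the Borel construction, which lets me work with the $(\bT\times G)$-action on the topological manifold $M$ and appeal only to the local geometry of the $\bT$-action. I would first reduce the isomorphism statement to the vanishing claim: for every $(\bT\times G)$-invariant open $U\subseteq M\sm M^{qf}$,
$$H^*_{\bT,c}(U/G;\bQ)\otimes_R Q = 0.$$
Given this vanishing, the long exact sequence of the pair $(M/G,(M/G)\sm [M/G]^\bT)$ in compactly supported equivariant cohomology forces $j^*$ to be an isomorphism after $\otimes_R Q$.

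Second, I would establish this vanishing via the local slice decomposition \eqref{eq:local-quotient-fixed-locus}. Any orbit through a point $x\in M\sm M^{qf}$ has an invariant neighbourhood of the form $(\bT\times G)\times_{(\bT\times G)_x}S$, and by the very definition of $M^{qf}$ the image of $(\bT\times G)_{x,0}$ in $\bT$ is a proper closed subgroup $\bT'\ssub \bT$. The $\bT$-equivariant cohomology of such a neighbourhood is then a module over $H^*(B\bT';\bQ)$ via the restriction $R\to H^*(B\bT';\bQ)$, whose kernel contains any nonzero character of $\bT$ annihilating $\bT'$; tensoring with $Q$ over $R$ therefore kills the module. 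A Mayer--Vietoris induction over a locally finite cover of $U$ by such slice neighbourhoods then propagates the vanishing to all of $U$.

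With the isomorphism in hand, I would identify the inverse using the equivariant Thom isomorphism for the normal bundle $N$ supplied by Lemma \ref{lem:fixed-locus-normal-bundle}, which yields a Gysin map $j_!$ satisfying the self-intersection formula $j^*j_! = e(N)\cdot(-)$. Near $[M/G]^\bT$, the bundle $N$ decomposes equivariantly into subbundles on which $\bT$ acts through nonzero characters, so $e(N)$ is a unit in $H^*_{\bT}([M/G]^\bT;\bQ)\otimes_R Q$; thus $\frac{1}{e(N)}j_!$ is a two-sided inverse to $j^*$. The integration formula \eqref{eq:integration-formula-orbifold} then follows by applying the equivariant trace $\int^\bT_{M/G}$ to the identity $\alpha = \sum_\lambda\frac{1}{e(N_\lambda)}(j|_{M_\lambda/G})_!\,j^*\alpha$ and invoking the pushforward compatibility of the trace from \textsection\ref{subsec:trace}.

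I expect the main obstacle to be the Mayer--Vietoris step in the topological-orbifold setting: one must verify that compactly supported equivariant cohomology obeys Mayer--Vietoris for the slice cover and that the local decomposition of $N$ into character subbundles survives at the topological level. The local linearity of both the $\bT$- and $G$-actions, together with the equivariant tubular neighbourhood furnished by Lemmas \ref{lem:fixed-locus-orbifold} and \ref{lem:fixed-locus-normal-bundle}, is precisely what is needed to provide these ingredients, so the classical proof goes through without essential modification.
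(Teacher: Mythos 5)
Your proof is correct, but it takes a genuinely different route from the paper's. You follow the classical Atiyah--Bott/Quillen strategy: first show that the localised compactly supported equivariant cohomology of the complement $M\sm M^{qf}$ vanishes, via the slice decomposition \eqref{eq:local-quotient-fixed-locus}, the observation that the stabiliser of a non-fixed point projects to a proper closed subgroup of $\bT$ (so that some nonzero element of $R$ annihilates the cohomology of each slice neighbourhood and of each invariant open subset thereof), and a Mayer--Vietoris induction; the long exact sequence of the pair then makes $j^*$ an isomorphism after $\otimes_R Q$, and the self-intersection formula $j^*j_! = e(N)\cdot(-)$ together with the invertibility of $e(N)$ identifies the inverse. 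The paper instead never analyses the complement: it invokes both Gysin composition formulas, $j^*j_!(-)=\sum_\lambda e(N_\lambda)\cdot(-)$ and $j_!j^*(-)=\sum_\lambda r^*e(N_\lambda)\cdot(-)$ (the latter using the equivariant retraction $r$ of a tubular neighbourhood from Lemma \ref{lem:fixed-locus-orbifold}), so that everything reduces to the invertibility of $e(N_\lambda)$ in the localised ring; this is then checked by noting that $M_\lambda/G$ has finite cohomological dimension, so only the image of $e(N_\lambda)$ in $H^0\otimes_{\bQ}Q$ matters, which is verified at a single point from the absence of nonzero $(\bT\times G)_{x,0}$-fixed vectors in $(N_\lambda)_x$. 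Your approach costs the extra slice/Mayer--Vietoris analysis but needs only the formula for $j^*j_!$; the paper's is shorter but leans on the slightly more delicate $j_!j^*$ identity. One small caution on your version: the claim that $N$ decomposes \emph{globally} into nonzero-character subbundles over $[M/G]^\bT$ is more than you need and is not immediate for a topological orbibundle; the fibrewise statement plus nilpotence of the positive-degree part (the paper's finite-cohomological-dimension argument) already gives that $e(N_\lambda)$ is a unit after localisation.
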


\begin{proof} By Lemma \ref{lem:fixed-locus-orbifold}, there exists an $\bT$-invariant neighbourhood $W$ of $(M/G)^\bT$ inside $M/G$ and an equivariant retraction $r \cl W \to (M/G)^\bT$. By \eqref{gysin-embedding}, we have 
	$$j^*j_!(-) = \s{\lambda}{e(N_\lambda)\cdot -} \qquad \qquad j_!j^*(-) = \s{\lambda}{r^*e(N_\lambda)\cdot -}.$$
	Hence it suffices to show that $e(N_\lambda)$ is invertible in $H^*_\bT([M_\lambda/G];\bQ)\otimes_RQ$. As $M_\lambda/G$ has finite cohomological dimension, it suffices to show that the image of $e(N_\lambda)$ in $H^0([M_\lambda/G];\bQ)\otimes_\bQ Q$ is nonzero. This is can be checked at an arbitrary point $\bar{x}\in M_\lambda/G$ and follows from the fact that the $\wt G_{x,0}$-action on $(N_\lambda)_x$ has no nonzero fixed points, whence the $\bT$-action on $(N_\lambda/G)_{\bar{x}}$ only fixes the origin.
\end{proof}

Let now $\fM$ be a compact space with an oriented rel--$C^\infty$ global Kuranishi chart $\cK = (G,\cT/\cM,\cE,\fs)$ over a smooth manifold $\cM$. Suppose $\fM$ admits a $\bT$-action and $\cK$ lifts to a rel--$C^\infty$ equivariant global Kuranishi chart $(\cK,\wt G)$. By Lemma \ref{lem:fixed-locus-orbifold}, we have a decomposition $$\cT^{qf} =\djun{\lambda}\cT_\lambda$$ into its path-components, where the $\cT_\lambda$ are $\wt G$-invariant submanifolds of possibly varying dimension. For each $\lambda$, we have a splitting $\cE|_{\cT_\lambda} = \cE^f_\lambda\oplus \cE^m_\lambda$, where 
$$\cE^f_\lambda \coloneq \set{(x,e)\in \cE|_{\cT_\lambda} \mid \forall  \wt g\in \wt G_{x,0} : \wt g\cdot e = e},$$
and $\cE^m_\lambda$ is a $\wt G$-invariant complement, the choice of which is irrelevant. Since $\fs$ is $\wt G$-equivariant, $\fs(\cT_\lambda)\subset \cE^f_\lambda$. Moreover, the rank of $\cE^f_\lambda$ is constant along $\cT_\lambda$. This shows that

\begin{lemma} $\cK_\lambda\coloneq (G,\cT_\lambda,\cE^f_\lambda,\fs|_{\cT_\lambda})$ is a global Kuranishi chart for $\fM_\lambda\coloneq \fs\inv(0)\cap \cT_\lambda/G$ and 
$$\fM^\bT = \djun{\lambda} \fM_\lambda.$$
\end{lemma}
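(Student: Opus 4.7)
The plan is to verify the axioms of a global Kuranishi chart for $\cK_\lambda$ one by one, noting that most ingredients have already been supplied by the paragraph preceding the lemma. First I would check that the restricted section $\fs|_{\cT_\lambda}$ actually takes values in the fixed subbundle $\cE^f_\lambda$: for any $x\in \cT_\lambda$ and any $(t,g)\in(\bT\times G)_{x,0}$, the equivariance of $\fs$ yields $(t,g)\cdot \fs(x)=\fs((t,g)\cdot x)=\fs(x)$, so $\fs(x)$ lies in the fibre $\cE^f_{\lambda,x}$. Since the $G$-action commutes with the $\bT$-action and therefore preserves the identity component of each $(\bT\times G)$-stabiliser, the subbundle $\cE^f_\lambda$ is $G$-invariant. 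Combined with the constant-rank assertion from the paragraph above, this makes $\cE^f_\lambda\to\cT_\lambda$ a genuine $G$-equivariant vector bundle and $\fs|_{\cT_\lambda}$ a $G$-equivariant section.

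Next I would observe that the $G$-action on $\cT_\lambda$ inherits almost freeness and local linearity from the ambient $\cT$, so $(G,\cT_\lambda,\cE^f_\lambda,\fs|_{\cT_\lambda})$ does constitute a global Kuranishi chart in the sense of the paper. Its zero locus equals $\fs\inv(0)\cap \cT_\lambda$, because the inclusion $\cE^f_\lambda\hookrightarrow \cE|_{\cT_\lambda}$ does not alter which points map to zero; passing to $G$-orbits gives precisely $\fM_\lambda$ by definition.

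For the identification $\fM^\bT=\djun{\lambda}\fM_\lambda$ I would invoke Lemma \ref{lem:fixed-locus-orbifold} applied to $M=\cT$: a point of $\fM=\fs\inv(0)/G$ is $\bT$-fixed exactly when any of its lifts to $\fs\inv(0)$ lies in $\cT^{qf}=\djun{\lambda}\cT_\lambda$. Consequently $\fM^\bT=(\fs\inv(0)\cap \cT^{qf})/G=\djun{\lambda}(\fs\inv(0)\cap\cT_\lambda)/G=\djun{\lambda}\fM_\lambda$.

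The only mildly delicate step is the constant-rank property of $\cE^f_\lambda$, which is asserted but not proved in the paragraph preceding the lemma; I would justify it via an equivariant slice argument, using that near $x\in \cT_\lambda$ the identity component of the $(\bT\times G)$-stabiliser is locally constant, so the fixed subspace of the fibre of $\cE$ depends only on this identity component and hence has constant dimension along $\cT_\lambda$. Beyond this, the argument is essentially bookkeeping, and I do not anticipate any serious obstacle.
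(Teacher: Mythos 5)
Your proposal is correct and follows essentially the same route as the paper, which states the lemma as an immediate consequence of the preceding paragraph (the decomposition $\cT^{qf}=\djun{\lambda}\cT_\lambda$ from Lemma \ref{lem:fixed-locus-orbifold}, the splitting $\cE|_{\cT_\lambda}=\cE^f_\lambda\oplus\cE^m_\lambda$, the inclusion $\fs(\cT_\lambda)\subset\cE^f_\lambda$ by equivariance, and the constant rank of $\cE^f_\lambda$). Your slice argument for the constant-rank claim is a sensible filling-in of a step the paper merely asserts, and it is consistent with the local model $(\bT\times G)\times_{(\bT\times G)_y}S^{qf}$ from the proof of Lemma \ref{lem:fixed-locus-orbifold}.
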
 

Our assumptions imply that $T_\mu\cT$ admits a $\wt G$-equivariant vector bundle lift of $T_\mu\cT$, given by $T_{\cT/\cM}\oplus \pi^*T_\cM$, where $\pi \cl \cT\to \cM$ is the structural map. By a straightforward generalisation of \cite[Lemma~5.9]{HS22} to compact groups, the $\wt G$-action is locally linear, and we may shrink $\cT$ to assume that the $\wt G$-action has only finitely many orbit types. 

\begin{lemma}\label{lem:gkc-normal-bundle} Up to replacing $\cK$ by an equivalent rel--$C^\infty$ global Kuranishi chart over $\cM$, we may assume that $\cT$ satisfies the assumptions of Lemma \ref{lem:fixed-locus-normal-bundle}.
\end{lemma}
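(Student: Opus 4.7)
The plan is to produce $E$ and $E'$ explicitly from the rel--$C^\infty$ structure on $\pi \cl \cT \to \cM$ together with the infinitesimal $G$-action. For $E$, I would take $E := T_{\cT/\cM}\oplus \pi^*T_\cM$, which is a vector bundle lift of $T_\mu \cT$ thanks to the topological submersion structure of $\pi$ and the smoothness of $\cM$. The $G$-action on $\cT$ is $\pi$-equivariant and relatively smooth, so its derivative equips both summands with compatible $G$-linear structures. In the situations of interest, the $\bT$-action on $\cT$ fixes the framing $\iota$ and the domain $C$ that determine the projection $\pi$ to $\cM$; hence it projects trivially to $\cM$, preserves the splitting of $E$, and acts on $T_{\cT/\cM}$ by fibrewise derivatives while acting trivially on $\pi^*T_\cM$. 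The $G$- and $\bT$-structures commute since the two actions commute, so $E$ is a $(\bT\times G)$-equivariant vector bundle lift of $T_\mu \cT$.

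\textbf{Constructing $E'$.} The subbundle is built from the infinitesimal $G$-action. Differentiating yields a map
\begin{equation*}
\phi \cl \cT \times \fg \longrightarrow E, \qquad (x,\xi) \longmapsto \bigl(\xi^{\cT/\cM}_x,\; \pi^*\xi^\cM_x\bigr),
\end{equation*}
where $\xi^{\cT/\cM}_x$ and $\xi^\cM_x$ are the fibre and base components of the infinitesimal action of $\xi$ at $x$. Declaring $\fg$ to carry the adjoint $G$-action and the trivial $\bT$-action (valid because $\bT$ and $G$ commute, whence $t_*X_\xi = X_\xi\circ t$ for any $t\in \bT$), the map $\phi$ is $(\bT\times G)$-equivariant. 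Since $G$ acts almost freely, the linear map $\fg \to T_x\cT$ is injective for every $x$, so $\phi$ has constant fibrewise rank $\dim G$ and its image $E' := \im(\phi) \sub E$ is a $(\bT\times G)$-equivariant subbundle. By construction, at any $y\in G\cdot x$ the fibre $E'_y$ equals the tangent space to the smooth orbit $G\cdot x$ (smooth because $G_x$ is finite), so $E'|_{G\cdot x}$ agrees with the tangent bundle of the submanifold $G\cdot x$ and is in particular a vector bundle lift of $T_\mu(G\cdot x)$.

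\textbf{Replacement and main obstacle.} The ``up to replacing'' clause absorbs two technical points. First, to globally realise $E$ as a $(\bT\times G)$-equivariant vector bundle lift with a clean splitting $T_{\cT/\cM}\oplus\pi^*T_\cM$, one may need to pass to an equivalent chart, e.g.\ via group enlargement or a fibre product $\cT\times_\cM \cM'$ along an equivariant refinement $\cM'\to\cM$. Second, for $\phi$ to yield a genuine vector bundle morphism into $E$ rather than merely a microbundle morphism into $T_\mu \cT$, the $G$-action must be rel--$C^\infty$ and the $\bT$-action must project trivially to $\cM$; both conditions are arranged by a routine modification of the chart produced in \cite{HS22}. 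I expect the main obstacle to be precisely this bookkeeping, namely verifying the rel--$C^\infty$ regularity of $\phi$ and checking that the resulting vector bundle lift $E'|_{G\cdot x}$ indeed models the microbundle $T_\mu(G\cdot x)$ orbit-by-orbit; the dimension and equivariance statements are automatic from almost freeness of the $G$-action and commutativity of the two actions.
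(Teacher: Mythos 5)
There is a genuine gap at the heart of your construction of $E'$: you write ``Differentiating yields a map $\phi\cl\cT\times\fg\to E$'', but the $G$-action on the thickening $\cT$ is only continuous and locally linear — it is not smooth in the group direction. Even if every individual $g\in G$ acts by a rel--$C^\infty$ diffeomorphism, the infinitesimal action $\xi\mapsto\fdz\exp(t\xi)\cdot x$ need not exist, because the curve $t\mapsto \exp(t\xi)\cdot x$ need not be differentiable in $\cT$. Your closing remark that the required regularity ``is arranged by a routine modification of the chart'' is precisely where the actual content of the lemma lives, and it is not routine: this is the reason the statement carries the clause ``up to replacing $\cK$ by an equivalent chart'' at all.

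The paper resolves this by stabilisation: by Lashof's theorem \cite[Theorem 3.6]{La79} there is an orthogonal $G$-representation $V$ such that $\cT\times V$ admits a genuine smooth structure for which the diagonal $G$-action is smooth; by \cite[Addendum 4.14]{AMS21} one has a $G$-equivariant isomorphism $T(\cT\times V)\cong T_{\cT/\cM}\oplus\pi^*T_\cM\oplus V$, and only on $\cT\times V$ does the infinitesimal action give the embedding $(\cT\times V)\times\fg\hkra T_{\cT/\cM}\oplus\pi^*T_\cM\oplus V$. One then replaces $(G,\cT,\cE,\fs)$ by the equivalent chart $(G,\cT\times V,\cE\times V,\fs\times\ide_V)$. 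Your identification of $E$ itself as $T_{\cT/\cM}\oplus\pi^*T_\cM$ and your equivariance bookkeeping for the $\bT$-action are fine and match the paper; what is missing is the mechanism that makes the infinitesimal $G$-action exist as a vector bundle map. A secondary instance of the same issue: you assert that $G\cdot x$ is a smooth orbit ``because $G_x$ is finite'', but smoothness of orbits again presupposes differentiability of the action, which is only available after the Lashof replacement.
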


\begin{proof} By \cite[Theorem 3.6]{La79}, there exists an orthogonal $G$-representation $V$ so that $\cT\times V$ admits a smooth structure with respect to which the diagonal $G$-action is smooth. Moreover, there exists a $G$-equivariant isomorphism 
	$$T(\cT\times V)\to T_{\cT/\cM}\oplus \pi^*T_\cM\oplus V$$ 
	by \cite[Addendum 4.14]{AMS21}. The infinitesimal action on $\cT\times V$ thus provides the desired embedding $(\cT\times V)\times\fg \hkra T_{\cT/\cM}\oplus \pi^*T_\cM\oplus V$. Replacing $(G,\cT,\cE,\fs)$ by $(G,\cT\times V,\cE\times V,\fs\times\ide_V)$, the claim follows.
\end{proof}

\begin{theorem}[Virtual localisation formula]\label{Virtual Localisation} Let $\cK= (G,\cT/\cM,\cE,\fs)$ be a rel--$C^\infty$ global Kuranishi chart for the $\bT$-space $\fM$ and suppose $\cK$ is equipped with a compatible rel--$C^\infty$ $\bT$-action. Then, 
	\begin{equation}\label{virt-loc}\vfc{\fM}_\bT =  \s{\lambda}{{j_\lambda}_*\lbr{\frac{e_{\bT}(\cE^m_\lambda/G)}{e_{\bT}(N_\lambda/G)}\cap \vfc{\fM_\lambda}_\bT}}\end{equation}
 in $(\chml\ucc_\bT(\fM;\bQ)\otimes_R Q)\dul$.\end{theorem}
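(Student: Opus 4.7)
The plan is to apply the orbifold localisation formula of Proposition \ref{prop:orbifold-localisation} to the quotient $\cT/G$ with its induced $\bT$-action, combined with the multiplicativity of the equivariant Thom class under the $(\bT\times G)$-invariant splitting $\cE|_{\cT_\lambda} = \cE^f_\lambda \oplus \cE^m_\lambda$.

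First, by Lemma \ref{lem:gkc-normal-bundle} I may replace $\cK$ by an equivalent rel--$C^\infty$ global Kuranishi chart satisfying the hypotheses of Lemma \ref{lem:fixed-locus-normal-bundle}, so that each component $\cT_\lambda$ of $\cT^{qf}$ carries a $(\bT\times G)$-equivariant normal bundle $N_\lambda$ in $\cT$. Given $\beta \in \chml\ucc_\bT(\fM;\bQ)$, extend it to a class on a $\bT$-invariant neighbourhood of $\fM$ in $\cT/G$. By Definition \ref{equivariant-vf-de},
\begin{equation*}
\vfc{\fM}_\bT(\beta) = \int^\bT_{\cT/G} \fs^*\tau_\bT(\cE/G) \cdot \beta.
\end{equation*}
Applying Proposition \ref{prop:orbifold-localisation} to this integral (after tensoring with $Q$) yields
\begin{equation*}
\vfc{\fM}_\bT(\beta) = \sum_\lambda \int^\bT_{\cT_\lambda/G} \frac{j_\lambda^*\fs^*\tau_\bT(\cE/G)\cdot j_\lambda^*\beta}{e_\bT(N_\lambda/G)}.
\end{equation*}

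The key remaining identity is
\begin{equation*}
j_\lambda^*\fs^*\tau_\bT(\cE/G) = (\fs|_{\cT_\lambda})^*\tau_\bT(\cE^f_\lambda/G) \cdot e_\bT(\cE^m_\lambda/G).
\end{equation*}
This follows because the equivariant Thom class is multiplicative on direct sums, so $\tau_\bT(\cE/G)|_{\cT_\lambda} = \tau_\bT(\cE^f_\lambda/G)\cdot \tau_\bT(\cE^m_\lambda/G)$, while $\fs(\cT_\lambda) \subset \cE^f_\lambda$, so the pullback of $\tau_\bT(\cE^m_\lambda/G)$ along $\fs|_{\cT_\lambda}$ is the pullback along the zero section, i.e.~the Euler class $e_\bT(\cE^m_\lambda/G)$. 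Substituting and recognising that $\int^\bT_{\cT_\lambda/G}(\fs|_{\cT_\lambda})^*\tau_\bT(\cE^f_\lambda/G)\cdot -$ is by definition the pairing with $\vfc{\fM_\lambda}_\bT$ reproduces the right-hand side of \eqref{virt-loc}.

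The main technical obstacle will be verifying that Proposition \ref{prop:orbifold-localisation}, stated for compactly supported orbifold cohomology, transfers to the fibrewise compact setting required by the equivariant trace map of Definition \ref{equivariant-vf-de}. This should follow by taking a direct limit over $\bT$-invariant open neighbourhoods of $\fM$ in $\cT/G$ together with the functoriality of the equivariant trace established in Corollary \ref{cor:eq-trace}, but care must be taken since the normal bundle contributions involve the fraction field $Q$ and the relevant isomorphism $j_\lambda^*(j_\lambda)_! = e_\bT(N_\lambda/G)\cdot-$ must be shown to preserve the relevant support conditions. A secondary point is that the multiplicativity of the equivariant Thom class at the level of orbibundles reduces to the standard statement for vector bundles via the $G$-equivariant resolution furnished by the enlargement trick of Lemma \ref{lem:gkc-normal-bundle}.
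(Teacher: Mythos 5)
Your proposal is correct and follows essentially the same route as the paper: reduce to the orbifold localisation formula \eqref{eq:integration-formula-orbifold} on $\cT/G$ after invoking Lemma \ref{lem:gkc-normal-bundle}, and then account for the obstruction bundle via the splitting $\cE|_{\cT_\lambda}=\cE^f_\lambda\oplus\cE^m_\lambda$. The only cosmetic difference is that you re-derive the identity $j_\lambda^*\fs^*\tau_\bT(\cE/G)=(\fs|_{\cT_\lambda})^*\tau_\bT(\cE^f_\lambda/G)\cdot e_\bT(\cE^m_\lambda/G)$ directly from Thom-class multiplicativity, whereas the paper cites this packaged as Proposition \ref{equivariant-vfc-embedded-chart}.
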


\begin{proof} By definition, 
	$$\lspan{\vfc{\fM}_\bT,\alpha} = \int^\bT_{\cT/G} \fs^*\tau_\bT(\cE/G)\cdot \alpha$$ 
	for any $\alpha\in \chml\ucc_\bT(\fM;\bQ)$. Thus the claim follows from Proposition \ref{equivariant-vfc-embedded-chart} combined with \eqref{eq:integration-formula-orbifold}. 
\end{proof}


%


\subsection{Equivariant GW invariants}\label{subsec:equiv-gw}

Suppose $(X,\omega)$ is a closed symplectic manifold equipped with a Hamiltonian action by a compact connected Lie group $K$ with moment map $\mu$. Let $A\in H_2(X;\bZ)$. We first note the following compatibility with our construction of a global Kuranishi chart.

\begin{lemma}\label{equivariant-global-chart} If $J\in \cJ_\tau(X,\omega)$ is $K$-invariant, the following holds.
	\begin{enumerate}[\normalfont 1),leftmargin=15pt,ref=\arabic*]
		\item\label{equi-aux-datum} There exists an unobstructed auxiliary datum $(\conn^X,\cO_X(1),p,\cU,\lambda,k)$ such that $\conn^X$ is $K$-invariant, $\cO_X(1)$ admits a unitary $K$-linearisation, and $\lambda_\cU$ is $K$-invariant.
		\item\label{equi-existence} The resulting global Kuranishi chart obtained by \cite[Construction~3.13]{HS22} using this auxiliary datum admits a relatively smooth compatible $K$-action.
		\item\label{equi-uniqueness} If $(\conn^{X'},\cO_{X'}(1),p',\cU',k')$ is another unobstructed auxiliary datum satisfying the conditions of \eqref{equi-aux-datum}, then the associated global Kuranishi charts are equivalent via $K$-compatible charts such that the moves respect the $K$-actions. If $J'$ is another $\omega$-compatible almost complex structure such that the $K$-action is $J'$-holomorphic, then the cobordism constructed in \cite[\textsection6.2]{HS22} can be chosen to be $K$-compatible.
	\end{enumerate}
\end{lemma}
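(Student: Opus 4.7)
The three parts are treated in turn, with equivariance baked in at each stage of the construction of \cite{HS22}.

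For part \eqref{equi-aux-datum}, the plan is as follows. Starting from any $J$-linear connection on $T_X$, we obtain a $K$-invariant one by averaging over $K$ (using that $J$ is $K$-invariant so the space of $J$-linear connections is preserved). For the Hermitian line bundle, the key input is that the Hamiltonian assumption gives a lift of $[\omega]$ to $K$-equivariant cohomology via the moment map $\mu$. Concretely, one perturbs $\omega$ to a nearby $K$-invariant symplectic form $\Omega$ that still tames $J$ and whose cohomology class is rational; clearing denominators we may assume $[\Omega]\in H^2(X;\bZ)$, and by the Kostant--Weil construction the prequantum line bundle $(\cO_X(1),\nabla)$ with curvature $-2\pi i \Omega$ can be chosen with a unitary $K$-linearisation compatible with $\nabla$. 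For the good covering $\cU$, the construction in \cite[\textsection 3.2]{HS22} can be carried out equivariantly by choosing the local slices and cut-off data to be $K$-invariant; because $K$ is compact one can average all auxiliary choices and use that $K$ permutes stable maps (hence choices of slice representatives) through the $K$-action. This ensures the slice map $\lambda_\cU$ is $K$-invariant. Finally, having $\lambda_\cU$, $\nabla^X$ and $\cO_X(1)$ $K$-equivariant does not interfere with the unobstructedness arguments of \cite[\textsection 3]{HS22}, since these are statements about surjectivity of certain Fredholm-type operators at stable maps and are stable under taking $K$-invariant versions.

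For part \eqref{equi-existence}, we observe that each ingredient in Construction \ref{high-level-description-defined} is intrinsically $K$-equivariant once \eqref{equi-aux-datum} holds. $K$ acts on tuples $(u,\iota,C,\eta,\alpha)$ by postcomposing $u$ with the $K$-action on $X$ and transporting $\eta$ via the $K$-linearisation of $E$; this makes sense because $E=\overline{\Hom}_\bC(p_1^*T_{\bP^N},p_2^*T_X)\otimes p_1^*\cO_{\bP^N}(k)\otimes \overline{H^0(\bP^N,\cO(k))}$ carries the induced $K$-action trivial on the $\bP^N$ factors. Since $\nabla^X$ is $K$-invariant, $\bar\partial_J$ is $K$-equivariant, so \eqref{eq:perturbed-cr} is preserved by $K$; the condition on $\alpha\in H^1(C,\cO_C)$ is intrinsic to the curve and so $K$-invariant. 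The $K$-action commutes with the $G$-action on $\bP^N$ (which only reshuffles framings), and both actions are relatively smooth over the base $\Mbar_g^*(\bP^N,m)$ by the relatively smooth structure of \cite[\textsection 4]{HS22}, since the $K$-action does not move the domain curve. The obstruction section decomposes as $\fs=(\exp^{-1}\lambda_\cU,\eta,\alpha)$: the first component is $K$-invariant by the $K$-invariance of $\lambda_\cU$, while the latter two are $K$-equivariant by construction. Hence $\fs$ is $K$-equivariant and $\cK$ is rel--$C^\infty$ $K$-compatible.

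For part \eqref{equi-uniqueness}, we upgrade the comparison results of \cite[\textsection5]{HS22} to the equivariant setting. Given two unobstructed $K$-equivariant auxiliary data, the double-sum construction of \cite[\textsection5.1]{HS22} produces an intermediate global Kuranishi chart whose covering group is $G_0\times G_1$ and whose base sits inside $\Mbar_g^*(\bP^{N_0}\times\bP^{N_1},(m_0,m_1))$; because both auxiliary data are $K$-equivariant the double-sum chart inherits a compatible rel--$C^\infty$ $K$-action, and the two projection moves to $\cK_0,\cK_1$ are $K$-equivariant equivalences (stabilisation, group enlargement and germ equivalence all being functorial in $K$-equivariant data). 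For the $J$ dependence, the path $\{J_t\}_{t\in[0,1]}$ can be averaged to a $K$-invariant path of $\omega$-tame almost complex structures, and the cobordism construction of \cite[\textsection5.2]{HS22} applied to a $K$-equivariant family of auxiliary data produces a rel--$C^\infty$ $K$-compatible global Kuranishi chart with boundary over $[0,1]\times \cM$.

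The main technical obstacle I expect is part \eqref{equi-aux-datum}, specifically ensuring the simultaneous existence of a $K$-equivariant Hermitian line bundle $\cO_X(1)$ taming $J$ and a $K$-invariant good covering $\cU$; once these $K$-equivariant replacements are in place, the steps of \cite{HS22} go through essentially verbatim because all constructions there are natural in the choice of auxiliary datum.
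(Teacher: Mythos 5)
Your proposal follows essentially the same route as the paper: average the connection and the cut-off data of the good covering over the compact group $K$, use the Hamiltonian structure to obtain a $K$-linearised polarisation (the paper simply cites \cite[Corollary 1.4]{MiR01} where you sketch the Kostant--Weil prequantisation argument), and then observe that Construction \ref{high-level-description-defined} and the equivalence/cobordism arguments of \cite[\textsection 4--5]{HS22} carry over verbatim to the equivariant setting. The extra detail you give in parts (2) and (3) is consistent with, and slightly more explicit than, the paper's proof.
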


\begin{proof} By \cite[Corollary 1.4]{MiR01} we can find a polarisation $\cO_X(1)$ as in Definition \ref{aux-choices-defined}\eqref{polarization-on-target} such that the $K$-action on $X$ lifts to a fibrewise linear unitary $K$-action on $\cO_X(1)$.
If $\tilde{\conn}^X$ is a $J$-linear connection on $T_X$, then so is the connection $\conn^X$ obtained by averaging $\tilde{\conn}^X$ over $K$. Given these two data, \cite[Theorem~3.17]{HS22} asserts that we can complete them to an unobstructed auxiliary datum $(\conn^X,\cO_X(1),p,\tilde{\cU},\tilde{k})$. Averaging the cut-off functions in $\tilde{\cU}$ over $K$ and possibly increasing $\tilde{k}$ to $k$, we obtain an unobstructed auxiliary datum $(\conn^X,\cO_X(1),p,\cU,\lambda,k)$ as claimed. By \cite[Construction~3.13]{HS22}, the associated global Kuranishi chart is $K$-compatible. The statements about relative smoothness follow from the description of its universal property in \cite[Definition~5.3]{HS22}. Finally, \eqref{equi-uniqueness} can be seen by noting that the proofs of \cite[\textsection 6]{HS22} carry over verbatim to the equivariant setting.
\end{proof}

\begin{definition} The \emph{equivariant Gromov--Witten invariants} of $(X,\omega,\mu)$ are the maps
\begin{equation}\normalfont\text{GW}^{X,\omega,\mu}_{g,n,A} \coloneq ((\eva\times \normalfont\text{st})_K)_*\vfc{\Mbar_{g,n}(X,A;)J}_K \cl H^{*+\vdim}_K(X^n\times \Mbar_{g,n};\bQ)\to H^*_K\end{equation}
where $J$ is any $K$-invariant $\omega$-tame almost complex structure on $X$. We write 
$$\lspan{\alpha_1,\dots,\alpha_n;\sigma}^{X,\omega,\mu}_{g,n,A} \coloneq \normalfont\text{GW}^{X,\omega,\mu}_{g,n,A}(\alpha_1\times\dots\times \alpha_n\times\pcd(\sigma))$$
for $\alpha_j \in H^*_K(X;\bQ)$ and $\sigma\in H_*(\Mbar_{g,n};\bQ)$.
\end{definition}

\begin{remark} By \cite[Proposition 5.8]{Kir84}, $(X,\mu)$ is equivariantly formal. Hence, we can recover the non-equivariant GW invariants from the equivariant invariants by \eqref{vfc-commute}.
\end{remark}

\begin{proposition}\label{equivariant-axioms} The invariants $\normalfont\text{GW}^{X,\omega,\mu}_{g,n,A}$ satisfy the equivariant analogue of the Kontsevich--Manin axioms.
\end{proposition}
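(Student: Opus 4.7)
The strategy is to equivariantly lift the arguments of \textsection\ref{sec:axioms} step by step. By Lemma \ref{equivariant-global-chart}, every chart used in those proofs admits a $K$-compatible version, and all comparison moves between two such charts (group enlargement, stabilisation, double-sum constructions) can be chosen $K$-equivariantly. The obstruction bundle $E$ over $\bP^N\times X$ is naturally $K$-equivariant for the trivial action on $\bP^N$, and the forgetful, evaluation, clutching and gluing morphisms between the relevant moduli spaces all lift to $K$-equivariant maps of $K$-compatible thickenings. Combined with the equivariant embedded-chart formula of Proposition \ref{equivariant-vfc-embedded-chart} and the functorial properties of the equivariant trace map of \textsection\ref{subsec:trace}, this reduces each axiom to an equivariant version of an identity already established in \textsection\ref{sec:axioms}.

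The Effective, Homology and Grading axioms follow immediately from the construction of the equivariant virtual class in \textsection\ref{subsec:equi-vfc}. Symmetry holds because the $S_n$-action on the thickening commutes with the $K$-action, so the equivariant Thom class of the obstruction bundle is $S_n$-invariant. For Mapping to a point, I would reproduce the argument of Lemma \ref{mapping-to-point}: the cokernel bundle $T_X\boxtimes \bE^*$ is $K$-equivariant (with trivial $K$-action on $\bE$), the comparison chart $\cK_{\normalfont\text{ob}}$ is $K$-compatible, and the bundle morphism $\Phi$ splits $K$-equivariantly by averaging, so the two associated equivariant virtual fundamental classes agree.

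For the Fundamental Class, Divisor and Genus Reduction axioms, the essential ingredient is an equivariant analogue of Lemma \ref{evaluation-on-thickening}: since the spanning perturbations used there depend only on $K$-equivariant line bundles, the integer $k'_u$ can be taken $K$-invariant and one obtains a $K$-compatible chart on which $\eva$ is a relative submersion. The remainder of each proof is a chase of pushforwards along $K$-equivariant maps together with equivariant projection formulas and push--pull for $(G\times K)$-equivariant principal bundles; since the trace of \textsection\ref{subsec:trace} is natural in the base $BK$ and satisfies base change, the arguments transfer. For Splitting, the double-sum chart, the fibered-product charts, and the resolution $P/H$ appearing in the proof of Proposition \ref{splitting-axiom} are all $K$-equivariant when $P$ is chosen with trivial $K$-action on $\Mbar_{g,n}$, so \eqref{splitting-vfc-relation} lifts to an equality of equivariant virtual fundamental classes and the axiom follows by applying the equivariant analogue of Proposition \ref{vfc-embedded-chart}.

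The main technical obstacle will be verifying that the cohomological identities used implicitly throughout \textsection\ref{sec:axioms} --- in particular the projection formula up to codimension~$2$, the push--pull formula for principal bundles, and the orbifold Poincar\'e duality of \textsection\ref{sec:orbifold-intersection} --- admit equivariant counterparts for rel--$C^\infty$ $K$-compatible charts. This should follow by viewing the equivariant setting as a special case of the parameterised formalism alluded to in \textsection\ref{subsec:equi-vfc}: equivariant cohomology fibres over $BK$, the trace map is defined fibrewise and commutes with base change, and each of these identities then reduces to its non-equivariant analogue on a fibre.
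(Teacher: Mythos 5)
Your proposal is correct and follows essentially the same route as the paper: the paper's proof likewise reduces each axiom to its non-equivariant counterpart from \textsection\ref{sec:axioms} by working with $K$-compatible charts from Lemma \ref{equivariant-global-chart}, substituting Proposition \ref{equivariant-vfc-embedded-chart} for Proposition \ref{vfc-embedded-chart}, and invoking the equivariant generalisation of Lemma \ref{projection-formula-up-codimension}. The paper only writes out the Fundamental Class axiom explicitly and leaves the rest to the reader, so your axiom-by-axiom elaboration is simply a more detailed version of the same argument.
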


\begin{proof} The equivariant Kontsevich-Manin axioms are the properties of the GW homorphisms, considered as maps $H^*(X^n\times\Mbar_{g,n};\bQ)\to H^{*}(\Pt;\bQ)$, in the setting of equivariant cohomology. The arguments of \textsection\ref{sec:axioms} carry over, using Proposition \ref{equivariant-vfc-embedded-chart} instead of Proposition \ref{vfc-embedded-chart}.  As an example, we discuss the Fundamental class axiom. It asserts that
\begin{equation*}
   \lspan{\alpha_1,\dots,\alpha_n,1_X;\sigma}^{X,\omega,\mu}_{g,n+1,A} =\lspan{\alpha_1,\dots,\alpha_n, {\pi_{n+1}}_*\sigma}^{X,\omega,\mu}_{g,n,A}
\end{equation*}
as elements of $H^*_K$. Let $\cK_n$ be a global Kuranishi chart for $\Mbar_{g,n-1}^{\,J}(X,A)$ equipped with a compatible $K$-action and let $\cK_{n+1}$ be its pullback along the forgetful map $\pi_{n+1}$. By the proof of Proposition \ref{fundamental-class-axiom}, we may replace them with global Kuranishi charts $\wt \cK_n$ and $\wt \cK_n{n+1}$ which are still compatible with the group action and where the forgetful map $\wt\pi_{n+1}\cl \wt\cT_{n+1}/\wt G\to \wt\cT_n/\wt G$ satisfies 
$$(\wt\pi_{n+1})_!\;\text{st}^* = \text{st}^*\;{\pi_{n+1}}_!$$
in ordinary cohomology. Now we may conclude by using the straightforward generalisation of Lemma \ref{projection-formula-up-codimension} to equivariant cohomology. The other axioms are left to the interested reader. 
\end{proof}

Set $QH_K^*(X,\omega) \coloneq H^*_K(X;\bQ)\otimes_\bQ \Lambda$ and endow it with the product 
\begin{equation*}\alpha \concat \beta = \s{A\in H_2(X;\bZ)}{(\alpha\concat \beta)_A t^{\omega(A)}},\end{equation*}
where 
\begin{equation}\label{eq:3-eq-gw}\int_{X}^K(\alpha\concat \beta)_A\cdot \gamma = \normalfont\text{GW}^{X,\omega,\mu}_{0,3,A}(\alpha,\beta,\gamma)\end{equation}
for any $\gamma\in H^*_K(X;\bQ)$. By the equivariant Symmetry and Splitting axiom, this is graded commutative and associative. Note that \eqref{eq:3-eq-gw} determines $(\alpha \concat \beta)_A$ uniquely since $(X,\mu)$ is equivariantly formal.

\subsection{Equivariant GW invariants of Hamiltonian GKM manifolds}\label{subsubsec:computations-gw} Throughout, $\bT$ will denote a torus. We consider actions satisfying certain conditions, which are essential for our computations later on. These actions are named after Goresky--Kottwitz--MacPherson, who studied their cohomological properties in \cite{GKM98}. 

\begin{definition}\label{de:gkm} An effective $\bT$-action on a closed smooth manifold $M$ \emph{satisfies the GKM condition} if 
	\begin{enumerate}[\normalfont i),leftmargin=20pt,ref=\roman*]
		\item\label{eq-formal} $M$ is equivariantly formal, i.e., $H^*_\bT(M;\bZ)\cong H^*(M)\otimes H^*_\bT$,
		\item $M^\bT$ is finite,
		\item for each $p \in M^\bT$, the weights of the irreducible $\bT$-subrepresentation of $T_pM$ are linearly independent.
	\end{enumerate} 
\end{definition}

By \cite[Proposition~2.3]{GKZ22b}, this shows that the subspace 
$$M_1 \coloneq \{x \in M:\dim(\bT\cdot x) \leq 1\}$$ 
is a union of cleanly-intersecting spheres. Note that in the Hamiltonian case, condition \eqref{eq-formal} is automatically satisfied by \cite[Proposition~5.8]{Kir84}. Refer to \cite{Kur09} for a general introduction to GKM theory. We just mention an important invariant of GKM manifolds.

\begin{definition}\label{} Suppose $\bT$ is a $k$-dimensional torus with Lie algebra $\ft$ and $X$ admits a GKM action by $\bT$. Its \emph{GKM graph} $\Gamma_{X}$ has vertices $V(\Gamma_X) = X^\bT$ with $\{x,y\}\in E(\Gamma_X)$ if there exists an $\bT$-invariant sphere containing both $x$ and $y$. It is equipped with a map $V(\Gamma_X)\to \bZ^*_\ft/\{\pm 1\}$, which maps a vertex $x$ to the corresponding weight of the isotropy representation on $T_xX$.
 \end{definition}

\begin{example} In \cite{Tol98}, Tolman constructed a symplectic $6$-manifold with an effective Hamiltonian $T^2$-action that does not admit an invariant K\"ahler form. The manifold is constructed by gluing two $T^2$-invariant open subsets of projective manifolds along an invariant hyperplane. It satisfies the GKM condition and her construction generalises to give an infinite family of such examples, \cite{LiPa21}.
\end{example}

For the rest of this subsection, fix a Hamiltonian $\bT$-manifold $(X,\omega,\mu)$ that satisfies the GKM condition. Let $\cS_X = \{S_1,\dots,S_\ell\}$ be the set of $\bT$-invariant spheres and fix an $\bT$-invariant almost complex structure $J$ that leaves each $TS_i$ invariant. The virtual localisation formula, Theorem \ref{thm:formula-eq-gw}, requires notation that will be introduced in the next subsection. The result relies on Theorem~\ref{Virtual Localisation} and an explicit identification of the components of the fixed point locus of the moduli space and the relevant Euler classes. We first identify the components of $\Mbar_{g,n}^{\,J}(X,A)^\bT$ in \textsection\ref{subsubsec:components}, and subsequently state Theorem \ref{thm:formula-eq-gw}. Its proof, that is, the computation of the respective (fraction of) Euler classes, is carried out in \textsection \ref{subsubsec:euler}. 

\subsubsection{Components of fixed points locus}\label{subsubsec:components} Due to the GKM assumption, we can associate to each component of $\Mbar_{g,n}^{\,J}(X,A)^\bT$ a unique decorated graph $\Gamma$ as in the case of smooth toric varieties discussed in \cite{GP99,Liu13}. For this, let

\begin{definition}\label{de:decorated-graph} A \emph{decorated graph} $\Gamma = (V(\Gamma),E(\Gamma))$ is a finite non-empty graph together with 
	\begin{enumerate}[\normalfont i),leftmargin=20pt]
		\item a marking $\Lambda\cl \{1,\dots,n\}\to V(\Gamma)$; denote $n_v \coloneq \Lambda\inv(\{v\})$;
		\item $p_\Gamma \cl V(\Gamma) \to X^{\bT}$ maps each vertex to a fixed point; 
		\item $S_\Gamma\cl E(\Gamma) \to \cS_X$ sends an edge to a $\bT$-invariant symplectic sphere so that if $e = \{v,v'\}$, then $p_\Gamma(v),p_\Gamma(v') \in S_\Gamma(e)$;
		\item $g \cl V(\Gamma) \to \bN_0$ associates to each vertex a genus;
		\item $d \cl E(\Gamma) \to \bN$ associates to $e$ a degree.
	\end{enumerate}
	We denote by $E_v$ the set of edges adjacent to the vertex $v$ and decompose the vertex set as
	$$V(\Gamma) = V^s(\Gamma)\sqcup V^n(\Gamma)\sqcup V^b(\Gamma)\sqcup V^m(\Gamma),$$ 
	where 
	\begin{itemize}
		\item $V^s(\Gamma) = \{v \in V(\Gamma)\mid 2g(v)-2 + |n_v| +|E_v| \geq 3\}$
		\item $V^n(\Gamma) = \{v\in V(\Gamma) \mid g(v) =0,\, |n_v| = 0,\;|E_v| = 2\}$
		\item $V^b(\Gamma) = \{v\in V(\Gamma) \mid g(v) =0,\, |n_v| = 0,\;|E_v| = 1\}$
		\item $V^m(\Gamma) = \{v \in V(\Gamma)\mid g(v) = 0,\, |n_v| = 1\}$.
	\end{itemize}
	Let $F^s(\Gamma) \coloneq \{(e,v)\in E(\Gamma)\times V^s(\Gamma)\mid e \in E_v\}$ be the set of stable flags.
	We associate to $\Gamma$ the moduli space $$\Mbar_\Gamma \coloneq \p{v\in V^s(\Gamma)}{\Mbar_{g(v),n_v\cup E_v}}$$ 
	and let $\varphi_\Gamma\cl \Mbar_\Gamma \to \Mbar_{g,n}$ be the induced clutching map.
\end{definition}

 We call two decorated graphs $\Gamma$ and $\Gamma'$ \emph{isomorphic} if there exists an isomorphism $\Gamma\to \Gamma'$ of graphs that preserves all decorations. 
Given $A\in H_2(X;\bZ)$ and $g,n\geq 0$, denote by $G_{g,n}(X,A)$ the set of isomorphism classes of graphs $\Gamma$ as above such that 
$$\s{v \in V(\Gamma)}{g(v)} = g \qquad \qquad \s{e\in E(\Gamma)}{d_e[S_\Gamma(e)]} = [A].$$

In particular, we can think of any $[\Gamma]\in G_{g,n}(X,A)$ as a graph equipped with a morphism $\Gamma\to \Gamma_X$ of graphs, as well as a labelling and genus and degree decorations. 

\begin{proposition}\label{prop:fixed-points-gw-moduli} There exists a canonical bijection between the components of $\Mbar_{g,n}^{\,J}(X,A)^\bT$ and $G_n(X,A)$.
\end{proposition}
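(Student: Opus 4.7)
The plan is to produce a bijection in two steps: first extract a decorated graph from any $\bT$-fixed stable map, and conversely reconstruct a canonical $\bT$-fixed moduli stratum from any decorated graph in $G_{g,n}(X,A)$.

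First I would analyse what a $\bT$-fixed stable map $[u,C,x_*]$ must look like. A representative is fixed exactly when for every $t\in\bT$ there is an automorphism $\phi_t\in\Aut(C,x_*)$ with $t\cdot u = u\circ\phi_t$. Because $\bT$ is connected and the automorphism group of a stable pointed curve is finite once enough special points are present, a standard continuity argument forces $\phi_t$ to act trivially on every component carrying at least three special points; the restriction of $u$ to such a component therefore has image in $X^\bT$ and, being $J$-holomorphic with finite target, is constant. For the remaining genus-zero components, the image is a connected $\bT$-invariant algebraic subset of positive dimension, which by Assumption \ref{localisation-assumptions}\eqref{one-dim-orbits} is forced to lie in a single sphere $S_j\in\cS_X$. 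The map $u|_{C_i}\colon\bP^1\to S_j\cong\bP^1$ is $\bT$-equivariant and $J$-holomorphic, hence (in suitable coordinates) of the form $z\mapsto z^{d}$, totally ramified over the two $\bT$-fixed points of $S_j$.

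From this structural description I would build the graph $\Gamma$ as follows. Edges are the non-contracted irreducible components, labelled by their sphere $S_\Gamma(e)\in\cS_X$ and the degree $d(e)$ of the cover. Vertices come in the four flavours $V^s,V^n,V^m,V^b$ described in the paper: a contracted component of genus $g(v)$ with at least three special points contributes a stable vertex; a node where two non-contracted spheres meet contributes a $V^n$-vertex; a marked point sitting on a non-contracted component (necessarily at one of the two $\bT$-fixed points of the target sphere) contributes a $V^m$-vertex; and a fixed point where a non-contracted component terminates without any other component or marking contributes a $V^b$-vertex. The marking $\Lambda$ records which vertex each $x_i$ belongs to, the map $p_\Gamma$ is determined by the common image of the incident data in $X^\bT$, and the degree and genus sums match $A$ and $g$ by construction, showing $\Gamma\in G_{g,n}(X,A)$. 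The assignment is clearly constant on the connected component of the fixed locus containing $[u,C,x_*]$, and isomorphic graphs produce the same component.

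For the converse, given $\Gamma\in G_{g,n}(X,A)$, the moduli of $\bT$-fixed stable maps of combinatorial type $\Gamma$ is obtained by choosing, for each $v\in V^s(\Gamma)$, an element of $\Mbar_{g(v),n_v\cup E_v}$ together with the constant map to $p_\Gamma(v)$, and for each edge $e$ the unique (up to reparameterisation fixing $0,\infty$) cover $z\mapsto z^{d(e)}$ of $S_\Gamma(e)$; these are glued at the prescribed fixed points. This yields a connected space isomorphic to $\Mbar_\Gamma/\Aut(\Gamma)$ which surjects onto a component of $\Mbar_{g,n}(X,A;J)^{\bT}$. Combined with the first direction, this gives the bijection.

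The main obstacle is the combinatorial bookkeeping: one must check that the auxiliary vertex types $V^n, V^b, V^m$ precisely account for all special $\bT$-fixed points on non-contracted components and that no two non-isomorphic decorated graphs produce stable maps lying in the same connected component. The key input making this work is the hypothesis that the one-dimensional orbits form a disjoint union of copies of $\bC^\times$, which guarantees that the $\bT$-invariant spheres are cleanly embedded and that the only way two non-contracted components can meet is at an element of $X^\bT$, so that the incidence data is indeed captured by a graph with $\bT$-fixed point labels.
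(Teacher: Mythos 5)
Your proposal is correct and follows essentially the same route as the paper: show that components with three or more special points (or positive genus) are contracted to isolated fixed points, that non-contracted components are degree-$d$ covers of invariant spheres totally ramified over the two fixed points, and then read off the decorated graph from this structure. The only place you are lighter than the paper is the claim that an edge component is of the form $z\mapsto z^d$ "in suitable coordinates" — since the map is a priori only $\bT$-invariant up to reparametrisation of the domain, the paper gives an explicit argument (using the extended $\bC^\times$-action and a path between putative poles) to rule out several zeros or poles — while on the other hand you add an explicit inverse construction from $\Gamma$ to a fixed component, which the paper leaves implicit.
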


\begin{proof} We make a few preliminary observations about the elements of the fixed-point locus.\par
	Suppose $[u,C,x_*]\in \Mbar_{g,n}^{\,J}(X,A)^{\bT}$ and let $C'\sub C$ be an irreducible component such that $g(C') > 0$ or $C'$ has at least $3$ special points. Then for $x \in C'$, its orbit
	$$\bT\cdot u(x) \sub u(\Aut(C',y_1,\dots,y_k)\cdot x),$$ 
	is finite. As $\bT$ is compact and connected, $\bT_{u(x)} =\bT$, so $u(x)$ must be a fixed point. As the fixed points are isolated, $u$ contracts $C'$ to a single fixed point. If $x \in C$ is an arbitrary point, then $\bT\cdot u(x)\sub \im(u)$. If $v_\xi$ denotes the vector field associated to $\xi\in \ft$, this implies $\set{v_\xi(x),Jv_\xi(x)\mid \xi \in \ft}\sub \im(du(x))$. As $v_\xi(u(x))\notin \bR\lspan{v_\eta(u(x)),J_{u(x)}v_\eta(u(x))}$ for linearly independent $\xi,\eta\in \ft$, we have $$\dim(\{\xi \in \ft\mid v_\xi(u(x))\neq 0\}) \leq 1.$$ 
	Thus, $\im(u)$ is contained in the union of $\bT$-orbits of dimension at most one. If $C'$ is an irreducible component not contracted by $u$, then $C'$ is a sphere with at most two special points. Since the complement of a finite number of points in the sphere is connected, $u$ maps $C'$ onto a sphere $S\in \cS_X$. Hence, it induces a surjective holomorphic map $u'\cl \bP^1 \to (S,J|_S)$. The $S^1$-action on $(S,J|_{S})$ induced by $\mu$ extends to a holomorphic $\bC\units$-action with respect to which $u'$ remains invariant (up to automorphisms). Identify $S$ with $\bP^1$ and let $R \geq 0$ be sufficiently large such that ${u'}\inv(S^2\sm \bD_R)$ is a disjoint union of open subsets containing each at most one pole. Suppose $u'$ has two poles; let $\gamma$ be a path between them that is disjoint from ${u'}\inv(\{0\})$. Then there exists $t > 0$ so that $|t\cdot u(\gamma(s))| > R$ for any $s \in [0,1]$. Let $\phi\in \text{PSL}_\bC(2)$ be such that $t \cdot u = u\phi$. Then $\phi(\gamma)$ has endpoints in two distinct components of ${u'}\inv(S^2\sm \bD_R)$, a contradiction. Similarly, $u'$ has exactly one zero. It now follows from the Riemann-Hurwitz formula that $u'$ is totally branched over $0$ and $\infty$ and of degree 
	$$\deg_u(C') = \frac{\lspan{\omega,u_*[C']}}{\omega(S)}.$$
	Given a component $\cP$ of $\Mbar_{g,n}^{\,J}(X,A)^\bT$ and $[u,C,x_*]\in\cP$, we obtain a graph $\Gamma \in G_{g,n}(X,A)$ by setting
	$$V(\Gamma) \coloneq \pi_0(u\inv(X^\bT))$$
	with an (unoriented) edge connecting $C_0$ and $C_1$ in $V(\Gamma)$ if and only if there exists a non-contracted component $C' $ so that $C_0 \cup C' \cup C_1$ is a connected, possibly nodal surface contained in $C$. Then $V^s(\Gamma)$ corresponds to connected sub-curves of $C$ contracted to a point and $V^n(\Gamma)$ and $V^m(\Gamma)$ correspond to nodal, respectively marked points of vertices lying on an edge, while $V^b(\Gamma)$ corresponds to non-special branch points. The decorations are defined in the obvious way. 
\end{proof}

We denote by $\Mbar(\Gamma,J)$ the component of $\Mbar_{g,n}^{\,J}(X,A)^\bT$ corresponding to $\Gamma\in G_{g,n}(X,A)$.

\begin{theorem}\label{thm:formula-eq-gw} If $(X,\omega,\mu)$ is a Hamiltonian $\bT$-manifold satisfying the GKM condition, then its equivariant GW invariants are given by 
	\begin{equation}\label{eq:formula-equivariant-gw}
		\lspan{\alpha_1,\dots,\alpha_n;\sigma}^{X,\omega,\mu}_{g,n,A} = \s{\Gamma\in G_{g,n}(X,A)}{\frac{1}{|\Aut(\Gamma)|}\lspan{j_{\Gamma}^*\eva^*\alpha\cdot\varphi_\Gamma^*\pcd(\sigma)\cdot\textbf{w}(\Gamma),\fcl{\Mbar_\Gamma}_\bT}}
	\end{equation}
	for any $\alpha_1,\dots,\alpha_n\in H^*_\bT(X;\bQ)$ and $\sigma \in H_*(\Mbar_{g,n};\bQ)$. 
\end{theorem}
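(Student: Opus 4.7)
The plan is to apply the virtual localisation formula of Theorem~\ref{Virtual Localisation} to the $\bT$-compatible global Kuranishi chart constructed via Lemma~\ref{equivariant-global-chart}. By Proposition~\ref{prop:fixed-points-gw-moduli}, the path components of $\Mbar_{g,n}(X,A;J)^\bT$ are indexed by the decorated graphs $\Gamma \in G_{g,n}(X,A)$, and so \eqref{virt-loc} reduces the pairing $\lspan{\alpha_1,\dots,\alpha_n;\sigma}^{X,\omega,\mu}_{g,n,A}$ to a sum over $\Gamma$ of contributions from $\Mbar(\Gamma,J)$ equipped with its equivariant virtual fundamental class.

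The first step is to identify $\Mbar(\Gamma,J)$, as an orbifold, with a finite quotient of $\Mbar_\Gamma$. Each stable vertex $v\in V^s(\Gamma)$ contributes a factor $\Mbar_{g(v), n_v \cup E_v}$, parameterising the contracted sub-curve mapped to $p_\Gamma(v)$ with marked points at $n_v$ and at the nodes $E_v$ where non-contracted components attach. The non-stable vertices $V^n(\Gamma)\cup V^b(\Gamma)\cup V^m(\Gamma)$ record the rigid data of nodes, branch points, and marked points on the edge covers. As explained in the proof of Proposition~\ref{prop:fixed-points-gw-moduli}, each edge $e$ corresponds to a cover $\bP^1\to S_\Gamma(e)$ totally ramified at the two fixed points on $S_\Gamma(e)$ of degree $d_e$, unique up to an automorphism group isomorphic to $\bZ/d_e$. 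Together with $\Aut(\Gamma)$, these realise $\Mbar(\Gamma,J)$ as the quotient of $\Mbar_\Gamma$ by a finite group of order $|\Aut(\Gamma)|\prod_e d_e$; the clutching map on the universal family descends to $\varphi_\Gamma$.

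The second step is to compute the equivariant Euler class
\[
\mathbf{w}(\Gamma) := \frac{e_\bT(\cE^m_\Gamma/G)}{e_\bT(N_\Gamma/G)}
\]
appearing in \eqref{virt-loc}. The vertical deformation--obstruction complex of the global Kuranishi chart restricted to $\Mbar(\Gamma,J)$ splits, up to a $\bT$-acyclic piece, into vertex, edge, and flag contributions, in complete analogy with \cite{GP99,Liu13}. A contracted stable vertex $v$ mapping to $p := p_\Gamma(v)$ contributes an expression in $w(p)$ together with $\lambda$- and $\psi$-classes from the universal curve over $\Mbar_{g(v),n_v\cup E_v}$; each edge $e$ contributes an explicit product of characters determined by $d_e$ and the weights $w(S_\Gamma(e),\cdot)$ at its two endpoints, arising from $H^\bullet$ of $u_e^*T_X$; each non-stable flag contributes a smoothing factor involving $w(e,v)$. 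The product of these contributions, pulled back to $\Mbar_\Gamma$, defines $\mathbf{w}(\Gamma)$.

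The main obstacle lies in this second step: while each Euler-class contribution is classical when the moduli space carries a perfect obstruction theory, here one must verify that the two-term complex $(T_{\cT/\cM}, \cE)$ of the global Kuranishi chart, restricted to $\Mbar(\Gamma,J)$ and projected onto its moving part, is $\bT$-equivariantly quasi-isomorphic to the standard vertex/edge/flag complex. This will follow from Lemma~\ref{quasi-isomorphic-global-charts} applied equivariantly, combined with the fact that the obstruction bundle $\cE$ was built so that both the perturbation term $\lspan{\eta}$ and the line-bundle summand $H^1(C,\cO_C)$ carry the expected $\bT$-character. Once this is checked, combining the two steps with Proposition~\ref{equivariant-vfc-embedded-chart} and integrating over $\Mbar_\Gamma$ produces the automorphism factor $1/|\Aut(\Gamma)|$ (the $\prod_e d_e$ from the cover automorphisms cancels against factors of $d_e$ in the edge weights) and yields \eqref{eq:formula-equivariant-gw}.
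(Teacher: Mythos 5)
Your overall strategy---localise via Theorem \ref{Virtual Localisation}, index the fixed components by decorated graphs via Proposition \ref{prop:fixed-points-gw-moduli}, and compute the localisation weight by splitting the deformation--obstruction theory into vertex, edge and flag contributions as in Graber--Pandharipande and Liu---is the same as the paper's. However, the step you yourself call ``the main obstacle'' is where essentially all of the content of the proof lies, and you do not carry it out. Invoking Lemma \ref{quasi-isomorphic-global-charts} ``applied equivariantly'' is not sufficient: the two-term complex $(T_{\cT/\cM},\cE)$ of the global Kuranishi chart contains summands with no analogue in the algebraic perfect obstruction theory---the trivial bundle with fibre $\fs\fu(N+1)$, the summand with fibre $H^1(C,\cO_C)$, and the framing/reparametrisation directions of the thickening---and their moving parts must be determined explicitly before any comparison with the vertex/edge/flag complex can be made. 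The paper does this by introducing an intermediate global Kuranishi chart $\wh\cK$ over the base $\wh\cM_\Gamma$ of split domains, so that $N_{\wh\cM_\Gamma/\cM}$ isolates the node-smoothing factors $w(e,v)+w(e,v')$ and $w(e,v)-\psi_{(v,e)}$; it proves that the fixed part of the section is transverse and that $\cL^m=0$ (Lemma \ref{lem:fixed-point-locus-regular}), computes $e_\bT(\text{Ob}_\Gamma)/e_\bT(\cB_\Gamma)$ from the normalisation sequence (Lemma \ref{lem:euler-class-obstruction-bundle}), and matches $V^m=\fs\fu(N+1)^m$ against the moving infinitesimal automorphisms of the edge components via the Euler sequence (Lemma \ref{lem:euler-class-automorphisms}), which is the source of the branch-point factors $\prod w(e,v)$. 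None of this is visible in your outline, and it is precisely the part that is not ``classical'' in the global Kuranishi chart framework.

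There is also a concrete inconsistency in your automorphism bookkeeping. You take the deck group of $\Mbar_\Gamma\to\Mbar(\Gamma,J)$ to have order $|\Aut(\Gamma)|\prod_e d_e$ and assert that the $\prod_e d_e$ ``cancels against factors of $d_e$ in the edge weights''; but the formula \eqref{eq:formula-equivariant-gw} you are proving carries the prefactor $1/|\Aut(\Gamma)|$ with $\textbf{w}(\Gamma)$ fixed by \eqref{de:weight-function}, so your count is off by $\prod_e d_e$ unless the cancellation is exhibited explicitly. The paper instead shows that $\phi_\Gamma$ is the quotient map by $\Aut(\Gamma)$ alone and applies Lemma \ref{vfc-of-cover} to get exactly $\vfc{\Mbar(\Gamma,J)}=\frac{1}{|\Aut(\Gamma)|}\vfc{\wh\cM(\Gamma,J)}$, with the degree-$d_e$ cover automorphisms accounted for within the chart rather than as deck transformations. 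As written, your assertion would change the formula; you need to either adopt the paper's covering or verify the claimed cancellation against the definition of $\textbf{h}(S,d)$.
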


Here $\textbf{w}(\Gamma) \in H^*(\Mbar_\Gamma;\bQ)\otimes_R Q$ is a certain class defined below. Recall that $R = H^*_\bT$ and $Q$ is its fraction field. Define
$$w(p) \coloneq e_\bT(T_pX) \qquad\normalfont\text{and} \qquad w(S,p) \coloneq e_\bT(T_pS)$$
in $R$ for $p \in X^\bT$ and $S\in \cS_X$ containing $p$. 
For $\Gamma \in G_n(X,A)$ and $(e,v)\in F^s(\Gamma)$, we set 
$$w(e,v) \coloneq  e_\bT(T_{x_{e,v}}C_e) = \frac{w(S_\Gamma(e),p_\Gamma(v))}{d_e}\in Q,$$ 
where $\{x_{e,v}\} = C_v\cap C_e$.\par  
We can now define the cohomology classes on moduli spaces of stable curves that will appear in the definition of $\textbf{w}(\Gamma)$. Given $p \in X^\bT$ and $g \geq 0$, define 
	\begin{equation}\label{eq:vertex-weight}\textbf{h}(p,g) = \p{\substack{S\in \cS_X\\p\in S}}{\frac{\sum_{j = 0}^g(-1)^j\lambda_j w(S,p)^{g-j}}{w(S,p)}},
	\end{equation}
	where $\lambda_j$ is the $j^{\normalfont\text{th}}$ Hodge class of $\Mbar_\Gamma$.
For a $\bT$-invariant sphere $S$, write its normal bundle as
$$N_{S/X} =  L_1 \oplus \dots \oplus L_{m-1},$$
 where $L_i$ is a $\bT$-invariant complex line bundle of degree $a_i$. Given an integer $d$, define
\begin{equation}\label{eq:edge-weight}\textbf{h}(S,d) \coloneq  \frac{(-1)^{d}d^{2d}}{(d!)^2w(S,p)^{2d}}\prod_{i =1}^{m-1}{b\lbr{\frac1d w(S,p),e_\bT((L_i)_p),da_i}} \in Q,\end{equation}
where $p \in S\cap X^\bT$ is arbitrary and 
$$b(x,y,r) \coloneq \begin{cases}
	\p{\substack{0 \leq k \leq r}}{(x-ky)\inv} \quad & r \geq 0\\
	\p{\substack{0 \leq k \leq -r-1}}{(x+ky)} \quad & r < 0
\end{cases}$$
 for $x,y \in Q$ so that $x \notin \bZ y$, and $r \in \bZ$. This is given in for the classes we apply the function $b$ on by the third property of Definition \ref{de:gkm}.

\begin{definition}\label{} The \emph{weight} $\textbf{w}(\Gamma)$ of $\Gamma\in G_n(X,A)$ is 
	\begin{multline}\label{de:weight-function}\textbf{w}(\Gamma) \coloneq \p{\substack{v\in V^n(\Gamma)\\E_v = \{e,e'\}}}{\frac{w(p_\Gamma(v))}{w(e,v)+w(e,v')}}\p{(e,v)\in F^s(\Gamma)}{\frac{w(p_\Gamma(v))}{w(e,v)-\psi_{(v,e)}}} \p{\substack{v\in V^b(\Gamma)\\ (e,v)\in F(\Gamma)}}{w(e,v)}\\\cdot\p{v \in V^s(\Gamma)}{\textbf{h}(p_\Gamma(v),g_v)} \p{e \in E(\Gamma)}{\textbf{h}(S_\Gamma(e),d_e)} \end{multline}
	where $\psi_{(v,e)}$ is the $\psi$-class on $\Mbar_\Gamma$ at the marked point $(v,e)$, which corresponds to $e \in E_v$.
\end{definition}


\begin{remark}[Comparison with \cite{Liu13}] In \cite{Liu13}, Liu streamlines \eqref{de:weight-function} by introducing additional notation to deal with the unstable vertices. We refrain from this as the global Kuranishi chart set-up already requires an abundance of symbols. However, despite the superficial difference between Theorem \ref{thm:formula-eq-gw} and \cite[Theorem 73]{Liu13}, they give the same formula in the case of a toric symplectic manifold, that is, a smooth toric variety by \cite{Del88}.\end{remark}

\subsubsection{Proof of Theorem \ref{thm:formula-eq-gw}}\label{subsubsec:euler} Let $\Gamma\in G_n(A,J)$. We identify the class with an arbitrary representative. Fix an unobstructed $\bT$-compatible auxiliary datum $(\conn^X,\cO_X(1),p,\cU,\lambda,k)$ as in Lemma \ref{equivariant-global-chart}\eqref{equi-aux-datum}, and let $\cK = (G,\cT/\cM,\cE,\fs)$ be the associated global Kuranishi chart for $\Mbar_{g,n}^{\,J}(X,A)$. Denote by $\cT_\Gamma$ the preimage of $\Mbar(\Gamma,J)$ under the forgetful map ${\fs}\inv(0) \to \Mbar_{g,n}^{\,J}(X,A)$. Given Theorem \ref{Virtual Localisation} and Proposition \ref{prop:fixed-points-gw-moduli}, it remains to show that the Euler class of the normal bundle of the global Kuranishi chart of $\Mbar(\Gamma,J)$ agrees with $\textbf{w}(\Gamma)$.\par
We will make use of an intermediate global Kuranishi chart to simplify our computations. To motivate this, observe that lying in $\Mbar(\Gamma,J)$ imposes constraints on the topological type of the domain, which are encoded in $\Gamma$. Thus, we can consider $\Mbar(\Gamma,J)$ as the fixed point locus inside the $\bT$-invariant moduli space of maps $\Mbar_\Gamma^{\,J}(X,A)$, whose domains satisfy the same constraints on their topological type. Finally, we compute the virtual normal bundle of $\Mbar_\Gamma(X,A;J)$ inside the full moduli space of stable maps. This allows us to separate the contributions coming from deformations of the map and those of the domain.

\subsection*{Base space:} Given $[\iota,C,x_*]\in \cM$, define 
$$V'_\iota \coloneq \set{C' \sub C \mid \text{ maximal connected curve such that }\deg_{C'}(\iota^*\cO(1)) = p\deg(\omega_{C'}(D_{C'}))}$$
where $D_{C'}$ is the divisor given by the nodal points connecting $C'$ to other irreducible components of $C$. Set $$V_\iota \coloneq V'_\iota \cup  \{x\mid x \normalfont\text{ a special point, }x \notin \union{C'\in V'_\iota}{C'}\}.$$ Let $\cM_\Gamma \sub \cM$ be the subset of elements for which
\begin{enumerate}[\normalfont i)]
	\item there exists a bijection $\phi_\iota\cl V_\iota \to V(\Gamma)\sm V^b (\Gamma)$, which maps a marked (respectively nodal) point to a marked (respectively nodal) point and preserves the arithmetic genus and the markings (defined in the obvious way on $V_\iota$);
	\item an irreducible component $C'$ of $C$ not contained in $\bigcup V'_\iota$ has genus $0$, corresponds to a unique edge $e\in E(\Gamma)$, and satisfies
	$$\deg(\iota^*\cO(1)) = m_e: = p(n_e-2) 3pd_e \,\Omega(S_\Gamma(e)).$$
\end{enumerate} 
where $\Omega$ was defined in Definition \ref{aux-choices-defined}\eqref{polarization-on-target}. By construction, $\cT_\Gamma$ is mapped to $\cM_\Gamma$ by $\pi \cl \cT\to \cM$.
Let $n_e\in \{1,2\}$ be the number of special points on the corresponding irreducible component and set $m_v \coloneq p(2g_v-2+|n_v| +|E_v|)$ for $v \in V^s(\Gamma)$. Denote by
\begin{equation}\label{eq:new-base}\Mbar_\Gamma(\bP^N) \sub \p{v\in V^s(\Gamma)}{\Mbar_{g(v),n_v\cup E_v}(\bP^N,m_v)} \times \p{e\in E(\Gamma)}{\Mbar_{0,n_e}(\bP^N,m_e)} \end{equation}
 the domain of the clutching map $\varphi_\Gamma\cl \Mbar_\Gamma(\bP^N)\to \Mbar_{g,n}(\bP^N,X)$ and define $\wh\cM_\Gamma \coloneq \varphi_\Gamma\inv(\cM_\Gamma)$. By Lemma \ref{smooth-base-splitting}, applied iteratively, $\wh\cM_\Gamma$ is a smooth manifold of the expected dimension.\par
\subsection*{Kuranishi chart:} 
 Let $\wh\cT\sub \wh\cM_\Gamma\times_\cM \cT$ be the subspace consisting of maps $[\zeta,(u,C,x_*,\iota,\alpha,\eta)]$ with $u_*[C_e] = d_e[S_\Gamma(e)]$. Denote by $\phi_\Gamma\cl \wh\cT\to \cT$ the pullback of $\varphi_\Gamma$, and set $\wh\cE \coloneq \phi_\Gamma^*\cE$, and $\wh\fs \coloneq \phi_\Gamma^*\fs$.
This defines an oriented rel--$C^\infty$ global Kuranishi chart $\wh\cK \coloneq (G,\wh\cT/\wh\cM_\Gamma,\wh\cE,\wh\fs)$ for 
\begin{equation}\label{eq:domain-restricted}
	\Mbar_\Gamma^{\,J}(X,A)\coloneq \prod_{v\in V^s(\Gamma)}{\Mbar_{g_v,n_v\cup E_v}^{\,J}(X,0)}\times_X \p{e\in E(\Gamma)}{\Mbar_{0,n_e}^{\,J}(X,d_e[S_e])},
\end{equation}
where we implicitly take the fibre product over $X$ in the big products. Clearly, $\wh\cK$ admits a compatible $\bT$-action and its virtual normal bundle in $\cK$ is the pullback of $N_{\wh\cM_\Gamma/\cM}$. 

Let $\wh\cM(\Gamma,J)$ be the component of the fixed-point locus of $\Mbar_\Gamma^{\,J}(X,A)$ that is mapped to $\Mbar(\Gamma,J)$ by the clutching map. Then
$$\wh\cM(\Gamma,J) \cong \p{v\in V^s(\Gamma)}{\Mbar_{g(v),n_v\cup E_v}} = \Mbar_\Gamma,$$
since edge components are completely determined by their degree, which is encoded in $\Gamma$. Recall that $\Aut(\Gamma)$ is the group of automorphisms of $\Gamma$ that preserve the decorations.

\begin{lemma} $\Aut(\Gamma)$ acts on $\wh\cK$ and $\phi_\Gamma\cl \wh\cT\to \cT$ is the quotient map onto its image. In particular, $\wh\cM(\Gamma,J)/\Aut(\Gamma) \cong \Mbar(\Gamma,J)$.
\end{lemma}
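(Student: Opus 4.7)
The plan is to proceed in three steps: define the action, verify the quotient property, and then identify the fixed-locus quotient explicitly.

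First, I would construct the $\Aut(\Gamma)$-action. Any $\sigma \in \Aut(\Gamma)$ permutes $V^s(\Gamma)$ and $E(\Gamma)$ while preserving the genus, marking, $p_\Gamma$, $S_\Gamma$, and degree decorations, and in particular preserves incidence. Inspecting \eqref{eq:new-base}, the factors of the product that $\sigma$ permutes are therefore moduli spaces of the same type, and the assignment of edge-adjacent marked points $E_v$ to each vertex factor is preserved. This gives a well-defined action of $\Aut(\Gamma)$ on $\wh\cM_\Gamma$ which, since it covers the trivial action on $\cM$, lifts to $\wh\cT$ via the fibre product description, and then to $\wh\cE = \phi_\Gamma^* \cE$ and to $\wh\fs = \phi_\Gamma^*\fs$. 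This action commutes with the $G$- and $\bT$-actions since they act on the ``$\cT$-factor'' only.

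Next, I would show that $\phi_\Gamma$ is the quotient map by $\Aut(\Gamma)$ onto its image. By the definition of $\cM_\Gamma$, a point $[\iota,C,x_*]\in\phi_\Gamma(\wh\cT)$ canonically determines an underlying \emph{unlabeled} decorated dual graph on $C$: the vertex set is $V_\iota$ (together with a $V^b$-type vertex for each non-special branch point on an edge component), with decorations read off from $\iota$, $u$, and the special/nodal structure of $C$. The condition for $[\iota,C,x_*]$ to lie in $\phi_\Gamma(\wh\cT)$ is precisely that this unlabeled graph is isomorphic to $\Gamma$, and a preimage of $[\iota,C,x_*]$ under $\phi_\Gamma$ is exactly a choice of such an isomorphism. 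The group of such isomorphisms is a torsor over $\Aut(\Gamma)$, and by construction the $\Aut(\Gamma)$-action on $\wh\cM_\Gamma$ permutes these choices simply transitively in the fibre. Pulling this back through $\wh\cT\to \wh\cM_\Gamma$ gives the quotient map statement.

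For the final assertion, using Proposition \ref{prop:fixed-points-gw-moduli}, each edge factor $\Mbar_{0,n_e}(X,d_e[S_e];J)^\bT$ consists of $\bT$-fixed maps $\bP^1 \to S_\Gamma(e)$ that are totally branched degree-$d_e$ covers over the two fixed points; such maps are unique up to reparametrisation, so this factor contributes a single (orbi-)point. The vertex contributions are the $\bT$-fixed locus of $\Mbar_{g_v,n_v\cup E_v}(X,0;J)$, which by Lemma \ref{mapping-to-point} equals $\{p_\Gamma(v)\}\times \Mbar_{g_v,n_v\cup E_v}$ on the component in question. Multiplying, $\wh\cM(\Gamma,J)\cong \prod_{v\in V^s(\Gamma)} \Mbar_{g(v), n_v\cup E_v} = \Mbar_\Gamma$, and taking the $\Aut(\Gamma)$-quotient of this identification delivers $\Mbar(\Gamma,J)$ by the quotient statement just established.

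The main obstacle is the second step: making precise that the fibre of $\phi_\Gamma$ at a given labeled stable map is an $\Aut(\Gamma)$-torsor. This amounts to checking that the recipe which reads off $V_\iota$, the edge components, and their decorations from $(\iota,C,x_*,u)$ reconstructs the combinatorial type $\Gamma$ uniquely up to graph automorphisms, with no further hidden symmetry contributed by the framing $\iota$ or the perturbation datum $(\eta,\alpha)$ — which is true because $\phi_\Gamma$ is the base-change of $\varphi_\Gamma$ along $\cT\to \cM$ and $\varphi_\Gamma$ is itself $\Aut(\Gamma)$-invariant with this torsor property on its image, by the same reasoning used in Lemma \ref{smooth-base-splitting}.
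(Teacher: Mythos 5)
Your proposal is correct and follows essentially the same route as the paper: the decorations of $\Gamma$ determine the factors in \eqref{eq:new-base}, so $\Aut(\Gamma)$ acts on $\wh\cM_\Gamma$ leaving $\varphi_\Gamma$ invariant; a preimage under $\varphi_\Gamma$ is an identification of the decorated dual graph with $\Gamma$, on which $\Aut(\Gamma)$ acts transitively; and the statement for $\phi_\Gamma$ and for the fixed loci follows because $\phi_\Gamma$ is a pullback of $\varphi_\Gamma$ and is equivariant with $\phi_\Gamma^*\fs = \wh\fs$. Your torsor observation is a slight sharpening of the paper's transitivity claim but does not change the argument.
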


\begin{proof}  The degree of each factor of the right-hand side of \eqref{eq:new-base} is determined by the decorations of $\Gamma$, so $\Aut(\Gamma)$ acts by biholomorphisms on $\wh\cM_\Gamma$, leaving $\varphi_\Gamma$ invariant. Since an element of $\varphi_\Gamma([\iota,C,x_*])$ corresponds to an identification of the (partially contracted) dual graph of $(C,x_*)$ with $\Gamma$, the action is transitive, so $\varphi_\Gamma$ is the quotient map. The claim for $\phi_\Gamma$ now follows since it is a pullback of $\varphi_\Gamma$. As $\phi_\Gamma^*\fs = \wh\fs$ and $\phi_\Gamma$ is equivariant, we may conclude.\end{proof}

Combining this observation with Lemma \ref{vfc-of-cover} we obtain
$$\vfc{\Mbar(\Gamma,J)} = \frac{1}{|\Aut(\Gamma)|}\vfc{\wh\cM(\Gamma,J)},$$
so we may work with $\wh\cM(\Gamma,J)$ for the remainder of the proof.\par

We first determine the Euler classes of the virtual normal bundle in $\wh\cK$ and compute the Euler class of the normal bundle of $\wh\cM_\Gamma$ in $\cM$ at the end. To relate the Euler classes to more explicit data depending on $\Gamma$ and $X$, we will use as inspiration the exact sequence 
$$0 \ra H^0(C,u^*T_X) \to T_{\cT,y}\xra{d^v\fs} \cE_y \to H^1(C,u^*T_X)\to 0$$
for $y = (u,C,x_*,\iota,\alpha,0)$ in $\cT^{qf}$, respectively its lift to $\wh\cT$. While the kernel and cokernel of $D\hpd_J (u)$ do not form vector bundles over $\cT$, respectively $\wh\cT$, in general, we will determine explicit descriptions for the moving part of both kernel and cokernel over the fixed point locus of $\{\eta = 0\}$ in $\wh\cT$, see Lemma \ref{lem:euler-class-obstruction-bundle}. For this, it is easier to work on the level of $\wh\cT$ as we can use well-defined partial normalisation sequences.\par 
Write
 $$\wh\cE = V \oplus \cD\oplus \cL,$$ 
using the decomposition of the obstruction bundle in \eqref{obstruction-bundle-fibre-def}. Note that $V$ is the trivial bundle with fibre $\fs\fu(N+1)$, but its fixed and moving part will no longer be trivial bundles.
Furthermore, let 
$$Z \sub  {\wh\fs}\inv(0)\cap\wh\cT^{qf}$$ 
be the preimage of $\wh\cM(\Gamma,J)$ under the quotient map and let $\wh\cT_Z\sub \wh\cT^{qf}$ be the component containing $Z$.

\begin{remark}\label{rem:projection-from-identity-component} The projection $p \cl (\bT\times G)_y \to \bT$ is surjective for any $y \in \cT^{qf}$ since $\cT^{qf}$ is the preimage of the fixed-point locus of the quotient. As $p$ is a morphism of Lie groups and $\bT$ is connected, the restriction $(\bT\times G)_{y,0} \to \bT$ to the identity component is surjective as well.
\end{remark}

\begin{lemma}\label{lem:fixed-point-locus-regular}  $\wh \cM(\Gamma,J)$ is regular. In particular,
	$$\vfc{\wh\cM(\Gamma,J)} = \fcl{\wh\cM(\Gamma,J)} = \fcl{\Mbar_\Gamma}.$$
\end{lemma}

\begin{proof} Given $y  = [(\zeta,(u,C,x_*,\iota,0,0))]\in Z$ be as above, we consider the partial normalisation sequence 
	\begin{equation}\label{eq:normalisation-sequence} 0 \to \cO_{C}\to \bigoplus\limits_{v \in V^s(\Gamma)}\cO_{C_v}\oplus  \bigoplus\limits_{e \in E(\Gamma)}\cO_{C_e}\to \bigoplus\limits_{v \in V^n(\Gamma)}\cO_{x_v}\oplus  \bigoplus\limits_{(e,v) \in F^s(\Gamma)}\cO_{x_{e,v}}\to 0,\end{equation}
	whose associated long exact sequence shows that 
	$$H^1(C,\cO_C) \cong \bigoplus\limits_{v \in V^s(\Gamma)}H^1(C_v,\cO_{C_v}).$$ 
	As any $C_v$ is contracted to a fixed point, we deduce
	\begin{equation}\label{eq:fixed-hodge-bundle}\cL^f = \cL. \end{equation} 
	To see the claim, twist \eqref{eq:normalisation-sequence} by $u^*T_X$ and take the fixed part of its long exact sequence to obtain 
	$$H^1(C,u^*T_X)^f \cong \bigoplus\limits_{v \in V^s(\Gamma)}H^1(C_v,T_{p_\Gamma(v)}X)^f\oplus  \bigoplus\limits_{(e) \in E(\Gamma)}H^1(C_e,u^*T_X)^f.$$
	Since the fixed points of $X$ are isolated, the first direct sum vanishes. The second direct sum vanishes because $S_\Gamma(e)\cong \bP^1$ is convex by \cite[Proposition 7.4.3]{MS12}, so $$H^1(C_e,u^*T_X)^f\sub H^1(C_e,u^*T_{S_\Gamma(e)}) = 0.$$
 	Taking the fixed part of the exact sequence
	\begin{equation}\label{eq:deformation-sequence}
		0\to H^0(C,u^*T_X) \to T_{\wh\cT/\wh\cM_\Gamma,y} \xra{d\fs'_\Gamma(y)} \cD_y\to H^1(C,u^*T_X)\to 0,
	\end{equation}
	the above shows that the fixed part of the vertical derivative $d\fs'_\Gamma(y)$ is surjective, which is exactly $d\fs_\Gamma^f(y)$.
	Finally, we note that by the proof of \cite[Lemma~6.1]{HS22}, variations of the framing suffice to achieve transversality for the projection of the obstruction section to $\cL$ and that the section to $V^f$ is transverse to $V^f$ since the $G$-action on $\wh\cT$ extends to a $\PGL_\bC(N+1)$-action on $Z$.
\end{proof}

\begin{lemma} The dimensions of $H^1(C,u^*T_X)$ and $H^0(C,u^*T_X)^m$ are independent of the point $[u,C,x_*,\iota,0,0]\in Z$. 
\end{lemma}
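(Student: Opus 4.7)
My plan is to reduce the computation to the individual irreducible components of $C$ using the partial normalization sequence from the proof of Lemma \ref{lem:fixed-point-locus-regular}, twisted by $u^*T_X$, and then to exploit $\bT$-equivariance to pass to the moving part.

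For any $y = [u, C, x_*, \iota, 0, 0] \in Z$, Proposition \ref{prop:fixed-points-gw-moduli} together with the definition of $Z$ implies that the combinatorial type of $C$ and of the map $u$ is rigidified by $\Gamma$: each $v\in V(\Gamma)$ contributes a connected subcurve $C_v$ of fixed arithmetic genus $g_v$ collapsed to $p_\Gamma(v)\in X^\bT$, while each $e\in E(\Gamma)$ contributes a component $C_e\cong \bP^1$ mapped to $S_\Gamma(e)$ as a totally ramified degree-$d_e$ cover. On any contracted component $u^*T_X|_{C_v}$ is the trivial bundle $T_{p_\Gamma(v)}X\otimes \cO_{C_v}$, so that $H^i(C_v, u^*T_X) = H^i(C_v,\cO_{C_v})\otimes T_{p_\Gamma(v)}X$, whose dimension and $\bT$-weight decomposition depend only on $g_v$ and on the weights of $\bT$ on $T_{p_\Gamma(v)}X$.

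For an edge component I would invoke the fact that $T_X|_{S_\Gamma(e)}$ splits $\bT$-equivariantly into line bundles on $S_\Gamma(e)\cong \bP^1$ whose degrees are determined by the differences of $\bT$-weights at the two fixed points of $S_\Gamma(e)$, a standard input also used in \cite{GP99, Liu13}. Pulling back along the degree-$d_e$ cover $u|_{C_e}$ produces an equivariant splitting of $u^*T_X|_{C_e}$ into line bundles on $\bP^1$ of prescribed degrees, so that the dimensions $\dim H^i(C_e, u^*T_X|_{C_e})$ and their $\bT$-weight decompositions are again determined by $\Gamma$. Plugging these component-wise computations into the long exact sequence obtained by twisting \eqref{eq:normalisation-sequence} by $u^*T_X$ --- whose rightmost term contributes copies of $T_{p_\Gamma(v)}X$ at the nodal and marked points, with $\bT$-weights given by the weights at the relevant fixed points --- one then reads off that both $\dim H^0(C, u^*T_X)$ and $\dim H^1(C, u^*T_X)$, together with their isotypic decomposition as $\bT$-representations, depend only on $\Gamma$.

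The main conceptual point, and the reason to work with the refined thickening $\wh\cT$ rather than with $\cT$ directly, is precisely that this rigidifies the topology of the domain along $Z$ and so trivialises the otherwise delicate question of how $H^*(C, u^*T_X)$ varies in families; I do not expect any further obstacle, since the $\bT$-equivariance of every arrow in the normalisation sequence immediately yields constancy of the moving part $\dim H^0(C, u^*T_X)^m$ from the constancy of the total dimension.
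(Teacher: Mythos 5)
Your overall strategy---decompose $C$ according to $\Gamma$, compute the cohomology of $u^*T_X$ component by component, and feed the result into the long exact sequence obtained by twisting the partial normalisation sequence \eqref{eq:normalisation-sequence} by $u^*T_X$---is exactly the paper's, and your component-wise computations match: $H^i(C_v,u^*T_X)=H^i(C_v,\cO_{C_v})\otimes T_{p_\Gamma(v)}X$ on contracted pieces, and on edge components an equivariant splitting of $u^*T_X|_{C_e}$ into line bundles on $\bP^1$ whose degrees are determined by $d_e$ and $S_\Gamma(e)$, hence by $\Gamma$.

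The gap is in the sentence ``one then reads off that both $\dim H^0(C,u^*T_X)$ and $\dim H^1(C,u^*T_X)$ \dots depend only on $\Gamma$.'' A five-term exact sequence whose three other terms have constant dimension only forces the \emph{difference} $\dim H^0(C,u^*T_X)-\dim H^1(C,u^*T_X)$ to be constant; both dimensions could still jump simultaneously if the rank of the restriction map
$$\bigoplus\limits_{v}H^0(C_v,u^*T_X)\oplus\bigoplus\limits_{e}H^0(C_e,u^*T_X)\longrightarrow \bigoplus T_{p_\Gamma(v)}X$$
varied from point to point, and knowing the isotypic decompositions of source and target does not control this rank. The paper's proof hinges on the additional observation that this map is surjective, so the connecting homomorphism in \eqref{eq:twisted-normalisation-sequence} vanishes, $H^1(C,u^*T_X)$ is identified with the direct sum of the $H^1$'s of the components, and $H^0(C,u^*T_X)$ becomes the kernel of a surjection between spaces of constant dimension. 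This surjectivity is the one step that is not bookkeeping, and you need to state and justify it. Relatedly, your closing claim that constancy of the \emph{total} dimension of $H^0$ ``immediately yields'' constancy of the moving part is not right as stated: the moving part is taken with respect to the identity component of the stabiliser $(\bT\times G)_{y,0}$, so you need constancy of the full isotypic data, or, as in the paper, to take the moving part of the short exact sequence produced by the surjectivity above.
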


\begin{proof} 
	If $v \in V^s(\Gamma)$, then 
	$$H^1(C_v,u^*T_X) = H^1(C_v,\cO_{C_v})\otimes T_{p_\Gamma(v)}X,$$
	whose rank only depends on $v$. If $e\in E(\Gamma)$ with $S = S_\Gamma(e)$, then 
	$$H^1(C_e,(u|_{C_e})^*T_X) = H^1(\bP^1,(u|_{C_e})^*TS)\oplus  H^1(\bP^1,(u|_{C_e})^*N_{S/X})$$
	where $N_{S/X}$ is the normal bundle of $S$ inside $X$. Then 
	$$(u|_{C_e})^*N_{S/X} = \cO(\ell_1)\oplus \dots \oplus \cO(\ell_{\dim_\bC(X)-1}),$$ 
	where $\ell_i$ is determined by $d_e$ and $S$, both encoded in $\Gamma$. Similarly, the dimension of $$\bigoplus\limits_{v\in V^s(\Gamma)}H^0(C_v,u^*T_X) \oplus \bigoplus\limits_{e\in E(\Gamma)}H^0(C_e,u^*T_X)$$ 
	does not depend on $(u,C)$. Twist \eqref{eq:normalisation-sequence} by $u^*T_X$ to obtain the long exact sequence
	\begin{multline}\label{eq:twisted-normalisation-sequence}
		0 \to H^0(C,u^*T_X) \to \bigoplus\limits_{v\in V^s(\Gamma)}H^0(C_v,u^*T_X) \oplus \bigoplus\limits_{e\in E(\Gamma)}H^0(C_e,u^*T_X)\\\to \bigoplus\limits_{v\in V^n(\Gamma)}T_{p_\Gamma(v)}X \oplus \bigoplus\limits_{(e,v)\in F^s(\Gamma)} T_{p_\Gamma(v)}X \to H^1(C,u^*T_X) \\\to \bigoplus\limits_{v\in V^s(\Gamma)}H^1(C_v,u^*T_X) \oplus \bigoplus\limits_{e\in E(\Gamma)}H^1(C_e,u^*T_X)\to 0
	\end{multline}
	as in \cite[\textsection5.3.4]{Liu13}. As the map from $H^0$ to the tangent spaces of $X$ at the respective fixed points is surjective, we obtain the claim for $H^1$. The claim for $H^0(C,u^*T_X)^m$ now follows by taking the moving part of the first short exact sequence derived from \eqref{eq:twisted-normalisation-sequence}.
\end{proof}

For each $e \in E$, fix an equivariant biholomorphism $\rho_e \cl \bP^1 \to S_\Gamma(e)$ and set $W_e \coloneq \rho_e^*T_X$. Then $Z$ admits two vector bundles, the first one trivial:
$$\cB_\Gamma = \ker\lbr{\bigoplus\limits_{v\in V^s(\Gamma)}T_{p_\Gamma(v)}X\oplus \bigoplus\limits_{e \in E(\Gamma)}H^0(\bP^1,u_{d_e}^*W_e)^m\twoheadrightarrow \bigoplus\limits_{(e,v)\in V^s(\Gamma)}T_{p_\Gamma(v)}X\oplus \bigoplus\limits_{v\in V^n(\Gamma)}T_{p_\Gamma(v)}},$$ 
where $u_d \cl \bP^1\to \bP^1$ is given by $u_d(z) = z^d$. The second one is
$$\text{Ob}_\Gamma \coloneq \bigoplus\limits_{v\in V^s(\Gamma)}\cL^v\otimes T_{p_\Gamma(v)}X\oplus \bigoplus\limits_{e\in E(\Gamma)}H^1(\bP^1,u_{d_e}^*W_e)$$ 
pulled back from $\wh\cM_\Gamma$. Here $\cM_v$ is the factor in $\wh\cM_\Gamma$ indexed by $v\in V^s(\Gamma)$ and $\cL^v \to \cM_v$ is the dual of the Hodge bundle, i.e., has fibre $\cL^v_{[\iota,C]} = H^1(C,\cO_C)$.\par 
To see how these bundles are related to the normal bundle $\cN$ of $\wh\cT_Z$ in $\wh\cT$, decompose it as
$$\cN = \cN^v\oplus \cN^b,$$ 
i.e., into the moving part of the vertical tangent bundle of $\wh\cT$ restricted to $\wh\cT_Z$ and the moving part of the pullback of $T_{\wh\cM_\Gamma}$ restricted to $\wh\cT^{qf}$.

\begin{lemma}\label{} The identifications $(\cB_\Gamma)_y \cong H^0(C,u^*T_X)^m$ and $(\normalfont\text{Ob}_\Gamma)_y \cong H^1(C,u^*T_X)^m$ fit into an exact sequence
	\begin{equation}\label{eq:important-ses}0 \to \cB_\Gamma\to \cN^v\xra{d\fs^{'m}}\cD^m|_Z \to \normalfont\text{Ob}_\Gamma\to 0 \end{equation}
	of $(G\times \bT)$-vector bundles over $Z$.
\end{lemma}

\begin{proof} As $\cB_\Gamma$ is the kernel of the linearisation of the restriction cutting out $\wh\cT$ from a product of thickenings, it suffices to show the inclusion $Z\times \cB_\Gamma\hkra T_{\wh\cT/\cM}|_{Z}$ of $(G\times\bT)$-vector bundles where we have a single stable vertex, respectively a single edge. In this case, it is straightforward. As $G\times\bT$ acts nontrivially on $\cB_\Gamma$, it follows that $Z \times\cB_\Gamma\sub N^v_{\wh\cT_Z/\cT}|_Z$. 
	The vertical tangent space of $\wh\cT$ at $y = [u,C,\iota,x_*,\alpha,\eta]$ satisfies
	$$T_{\wh\cT/\wh\cM,y} \sub C^\infty(C,u^*T_X)\oplus E_{(\iota,u)}$$
	and the vertical derivative $T_{\wh\cT/\wh\cM,y}\to \cD_y$ of $\fs'$ is the projection onto the second summand. Hence $(\xi,\eta)\in \ker(d\fs^{'m}(y))$ if and only if $\xi \in \ker(D\hpd_J(u))$ and $\xi$ is not fixed by the torus action. Thus, $\cB_\Gamma$ is mapped bijectively onto the kernel of $d\fs^{'m}$. To construct the map $\cD^m \to \text{Ob}_\Gamma$, we can again reduce to the case of having just one stable vertex or one edge. In the case where $\Gamma$ consists of a single stable vertex and no edges, we can apply the argument in Proposition \ref{mapping-to-point}. In the case of a single edge (and no stable vertices), $\Mbar(\Gamma,J)$ is just a single point, and $Z$ is thus a single $G$-orbit. Letting $\iota \cl \bP^1 \to \bP^N$ be given by $\iota([z_0:z_1]) = [z_0^{m_e}:z_1^{m_e}:0:\dots:0]$, we obtain, by the choice of $\rho_e$, a basepoint $y_0$ in $Z$. The composite
	$$\cD^m_{y_0} \to  C^\infty(\bP^1,\cc{\Hom}_\bC(\iota^*T_{\bP^N},u_{d_e}^*\rho_e^*T_X))\xra{d\iota^*} C^\infty(\bP^1,\cc{\Hom}_\bC(T_{\bP^1},u_{d_e}^*\rho_e^*T_X)) \to H^1(\bP^1,u_{d_e}^*W_e)$$
	can be extended via the $G$-action to a map $\cD^m \to Z\times H^1(\bP^1,u_{d_e}^*W_e)$ of vector bundles. Exactness of \eqref{eq:important-ses} at $\text{Ob}_\Gamma$ can now be checked pointwise. The argument is analogous to the proof of Proposition \ref{mapping-to-point}.
\end{proof}

Let $\cB_\Gamma$ and $\normalfont\text{Ob}_\Gamma$ also denote the orbibundles induced by $\cB_\Gamma$ and $\normalfont\text{Ob}_\Gamma$ on $\wh\cM(\Gamma,J)$.

\begin{lemma}\label{lem:euler-class-obstruction-bundle}
	 The Euler class $e_\bT(\cB_\Gamma)$ is invertible in $\chml^*(\wh\cM(\Gamma,J);\bQ)\otimes_R Q$ and
	\begin{equation*}\label{eq:euler-class-obstruction}
		\frac{e_{\bT}(\normalfont\text{Ob}_\Gamma)}{e_\bT(\cB_\Gamma)} =  \p{v \in V^n(\Gamma)}{w(p_\Gamma(v))} \p{(e,v) \in F^s(\Gamma)}{w(p_\Gamma(v))}\p{v \in V^s(\Gamma)}{\textbf{h}(p_\Gamma(v),g_v)} \p{e \in E(\Gamma)}{\textbf{h}(S_\Gamma(e),d_e)},
	\end{equation*}
	where $\textbf{h}(p,g)$ and $\textbf{h}(S,d)$ are given by \eqref{eq:vertex-weight} and \eqref{eq:edge-weight} respectively.
\end{lemma}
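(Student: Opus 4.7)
The strategy is to split every bundle in sight into $\bT$-eigen pieces, reduce the computation of $e_\bT(\normalfont\text{Ob}_\Gamma)$ and $e_\bT(\cB_\Gamma)$ to a standard Hodge-bundle calculation at each stable vertex together with an equivariant $H^\ast$-calculation for line bundles on $\bP^1$ at each edge, and then reassemble the factors using the defining exact sequence of $\cB_\Gamma$. Throughout, I use Assumption \ref{localisation-assumptions}, which forces a $\bT$-equivariant splitting $T_pX = \bigoplus_{S\in\cS_X,\, p\in S}T_pS$ with all weights $w(S,p)\in R$ non-zero; since each summand of $\cB_\Gamma$ appearing in the localised ring has no $\bT$-fixed vectors, this also yields the invertibility of $e_\bT(\cB_\Gamma)$ in $\chml^\ast(\wh{\cM}(\Gamma,J);\bQ)\otimes_R Q$.

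The first main step is to observe that the inclusion $\cB_\Gamma\hookrightarrow M$ fits into a short exact sequence
\begin{equation*}
0\to\cB_\Gamma\to\bigoplus_{v\in V^s(\Gamma)}T_{p_\Gamma(v)}X\oplus\bigoplus_{e\in E(\Gamma)}H^0(\bP^1,u_{d_e}^\ast W_e)^m\xra{\varepsilon}\bigoplus_{(e,v)\in F^s(\Gamma)}T_{p_\Gamma(v)}X\oplus\bigoplus_{v\in V^n(\Gamma)}T_{p_\Gamma(v)}X\to 0,
\end{equation*}
where $\varepsilon$ is the evaluation-type map. Surjectivity of $\varepsilon$ follows from the convexity of each $S_\Gamma(e)\cong\bP^1$ (so that $u_{d_e}^\ast T_X|_{S_\Gamma(e)}$ is globally generated), which gives $e_\bT(\cB_\Gamma)=e_\bT(M)/e_\bT(R)$. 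The $R$-term contributes exactly the product $\prod_{(e,v)\in F^s(\Gamma)}w(p_\Gamma(v))\cdot\prod_{v\in V^n(\Gamma)}w(p_\Gamma(v))$ that appears on the right-hand side of the claimed formula, while the $V^s$-summand in $M$ contributes $\prod_{v\in V^s(\Gamma)}w(p_\Gamma(v))$ in the denominator, which I will pair with the Hodge summand of $\normalfont\text{Ob}_\Gamma$.

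For each $v\in V^s(\Gamma)$ with $p=p_\Gamma(v)$, the identity
\begin{equation*}
e_\bT\bigl(\bE_v^\ast\otimes L\bigr)=\sum_{j=0}^{g_v}(-1)^j\lambda_j\,c_1(L)^{g_v-j},
\end{equation*}
applied termwise to the decomposition $T_pX=\bigoplus_{S\ni p}T_pS$, shows that the contribution of the Hodge summand of $\normalfont\text{Ob}_\Gamma$ divided by $e_\bT(T_pX)=w(p)$ is precisely $\textbf{h}(p_\Gamma(v),g_v)$ as defined in \eqref{eq:vertex-weight}. For each $e\in E(\Gamma)$, write $T_X|_{S_\Gamma(e)}=T_{S_\Gamma(e)}\oplus\bigoplus_j L_j$ with each $L_j$ a $\bT$-equivariant line bundle whose weight at a fixed point $p\in S_\Gamma(e)\cap X^\bT$ corresponds to another invariant sphere $S'\ni p$. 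Pulling back along $u_{d_e}$ and computing $H^0$ and $H^1$ on $\bP^1$ as explicit $\bT$-representations—the degrees and weights are pinned down by the prescribed weights at $0$ and $\infty$—yields the prefactor $(-1)^{d_e}d_e^{2d_e}/((d_e!)^2 w(S_\Gamma(e),p)^{2d_e})$ from the $T_{S_\Gamma(e)}$-summand and the product of $b$-factors from the normal line-bundle summands. Together these assemble into $\textbf{h}(S_\Gamma(e),d_e)$ of \eqref{eq:edge-weight}.

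The main technical obstacle is the explicit bookkeeping of weights in the equivariant $H^0$/$H^1$-computation for the edge line bundles on $\bP^1$, in particular verifying that the formulas reduce to the compact form of $b(\cdot,\cdot,\cdot)$ in \eqref{eq:edge-weight} regardless of which of the two fixed points on $S_\Gamma(e)$ is chosen; this is, however, exactly parallel to the calculation carried out in \cite[\textsection5.3]{Liu13} for smooth toric varieties and the argument there transfers verbatim to the present setting. Collecting the vertex, edge, and exact-sequence contributions gives the stated formula.
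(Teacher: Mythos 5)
Your proposal is correct and follows essentially the same route as the paper: the same defining short exact sequence for $\cB_\Gamma$ yielding $e_\bT(\cB_\Gamma)$ as a ratio, the same pairing of the $V^s$-summand with the Hodge contribution of $\normalfont\text{Ob}_\Gamma$ to produce $\textbf{h}(p_\Gamma(v),g_v)$, the same splitting $u_{d_e}^*W_e$ into the tangent and normal line bundles of $S_\Gamma(e)$, and the same appeal to the $\bP^1$ weight computation of \cite{Liu13} for the edge factors. Your explicit remarks on the surjectivity of the evaluation map and on the invertibility of $e_\bT(\cB_\Gamma)$ only make explicit what the paper leaves implicit.
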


\begin{proof} By definition,
	$$ e_{\bT}(\normalfont\text{Ob}_\Gamma) = \p{v \in V^s(\Gamma)}{e_{\bT}(\bE_{g_v}\dul\otimes T_{p_\Gamma(v)}X)} \p{e \in E(\Gamma)}{e_{\bT}(H^1(\bP^1,u_{d_e}^*W_e)^m)}.$$
	As $\cB_\Gamma$, respectively its fibre, fits into the short exact sequence 
	$$0 \to\cB_\Gamma\to \bigoplus\limits_{v\in V^s(\Gamma)}T_{p_\Gamma(v)}X\oplus \bigoplus\limits_{e \in E(\Gamma)}H^0(\bP^1,u_{d_e}^*W_e)^m\to \bigoplus\limits_{(e,v)\in V^s(\Gamma)}T_{p_\Gamma(v)}X\oplus \bigoplus\limits_{v\in V^n(\Gamma)}T_{p_\Gamma(v)} \to 0,$$
	we have
	$$ e_{\bT}(\cB_\Gamma) = \p{v \in V^n(\Gamma)}{\frac{1}{w(p_\Gamma(v))}}\p{(e,v)\in F^s(\Gamma)}{\frac{1}{w(p_\Gamma(v))}}\p{v \in V^s(\Gamma)}{w(p_\Gamma(v))} \p{e \in E(\Gamma)}{e_{\bT}(H^0(\bP^1,u_{d_e}^*W_e)^m)};$$
	whose last product is nonzero since we take the non-fixed part of the $\bT$-representation.
	For $e \in E(\Gamma)$, we observe that
	$$e_\bT(H^i(\bP^1,u_{d_e}^*W_e)^m) = e_\bT(H^i(\bP^1,u_{d_e}^*T_{\bP^1})^m)\cdot\prod_{j =1}^{m-1} e_\bT(H^i(\bP^1,u_{d_e}^*L_j)^m),$$
	where $L_j \to \bP^1$ is a complex line bundle so that $\rho_e^*N_{S_\Gamma(e)/X} = L_1 \oplus \dots \oplus L_{m-1}$.\par The main ingredient of \cite[Lemma 66]{Liu13} is Example 19 op. cit., which holds for any torus action on a line bundle over $\bP^1$. Hence, \cite[Lemma 66]{Liu13} applies directly to our situation. The claim follows.
\end{proof}

Given $y = [\zeta,u,C,x_*,\iota,\alpha,\eta]$ in $Z$, we turn to a second exact sequence, induced by the Euler exact sequence:
\begin{equation*}\label{eq:derived-from-euler-sequence} 0 \to H^0(C,\cO_C)\to  H^0(C,\iota^*\cO(1)^{\oplus N+1})\to H^0(C,\iota^*T_{\bP^N})\to H^1(C,\cO_C)\to 0 .\end{equation*}
This sequence is the fibre over $\bar{y}$ of an exact sequence of vector bundles 
\begin{equation}\label{eq:from-euler-sequence}0 \to  R^0\pi_*\cO_{\cC_\Gamma}\to \fg\fl(N+1)\to R^0\pi_*\eva^*T_{\bP^N}\to R^1\pi_*\cO_{\cC_\Gamma}\to 0, \end{equation}
 on $\cM_\Gamma$, pulled back to $\wh\cT$, where $\pi \cl \cC_\Gamma\to \cM_\Gamma$ is the universal curve.\par 
 To see this, note that the bundle $R^0\pi_*\eva^*\cO_{\bP^N}(1)^{\oplus N+1}$ admits a global trivialisation over $\wh\cM_\Gamma$ given by the global sections $\tilde{\sigma}_i([\iota,C,x_*]) = \iota^*\sigma_i$ for $0 \leq i \leq N$, where $\sigma_i$ denotes the $i^{th}$-standard section of $\cO_{\bP^N}(1)$. 

\begin{remark}\label{rem:lie-algebras} Since $\iota \cl C\hkra \bP^N$ is determined by $\iota^*\sigma_0,\dots,\iota^*\sigma_N$, we obtain an identification of the stabiliser of $[\iota,C,x_*]$ under the $\PGL_\bC(N+1)$-action with the automorphism group of the domain $(C,x_*)$. In particular, the kernel of $H^0(C,\iota^*\cO(1)^{\oplus N+1})\to T_{\wh\cM_\Gamma,[\iota,C,x_*]}$ is the Lie algebra of $\Aut(C,x_*)$.\end{remark}

\begin{lemma}\label{lem:euler-class-automorphisms} We have 
\begin{equation}\label{eq:euler-class-automorphisms}\frac{e_\bT(V^m)}{e_\bT(N^b_{\wh\cT_Z/\wh\cT}/G)} = \p{\substack{v\in V^b(E)\\ E_v = \{e\}}}{w(e,v)}. \end{equation}
\end{lemma}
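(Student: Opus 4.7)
The plan is to fit $V^m$ and $N^b_{\wh\cT_Z/\wh\cT}$ into a common four-term exact sequence arising from the infinitesimal $\PGL(N+1,\bC)$-action on $\wh\cM_\Gamma$, then to check that the two remaining terms of this sequence are either trivially $\bT$-fixed or are computable directly in terms of the weights $w(e,v)$. More precisely, combining Remark \ref{rem:lie-algebras} with the Euler sequence \eqref{eq:from-euler-sequence} produces, over the relevant locus of $\wh\cT_Z$, a $(\bT\times G)$-equivariant exact sequence
\begin{equation*}
0 \to \mathfrak{aut}(C,x_*) \to V\otimes_\RE\bC \to T_{\wh\cM_\Gamma} \to \cQ \to 0,
\end{equation*}
where $\mathfrak{aut}(C,x_*)$ is the sheaf whose fibre at $[\iota,C,x_*]$ is the Lie algebra of $\Aut(C,x_*)$, and $\cQ$ is the cokernel of the infinitesimal $\mathfrak{pgl}(N+1,\bC)$-action.

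The crucial observation is that $\cQ^m = 0$: by Lemma \ref{equivariant-global-chart}\eqref{equi-aux-datum} the $\bT$-action on $\wh\cM_\Gamma$ factors through $\PGL(N+1,\bC)$, so it acts trivially on any local $\PGL$-quotient, and $\cQ$ is precisely the tangent sheaf of such a quotient (pulled back to $\wh\cT_Z$). Taking moving parts and Euler classes in equivariant K-theory and then passing to the $G$-quotient, the left-hand side of \eqref{eq:euler-class-automorphisms} collapses to $e_\bT(\mathfrak{aut}(C,x_*)^m/G)$.

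The final step is to evaluate this Euler class by decomposing $\mathfrak{aut}(C,x_*)$ as a direct sum over the irreducible components of $C$. Stable components contribute nothing; the only contributions come from edge components $C_e\cong\bP^1$ with at most two special points of $C$. For such a $C_e$ the Lie algebra $\mathfrak{aut}(C_e,\text{spec.~pts.})$ is a subalgebra of $\mathfrak{sl}(2,\bC)$ whose scaling generator is $\bT$-fixed and whose moving part is spanned by the infinitesimal translations centred at each endpoint of $C_e$ corresponding to a branch vertex $v\in V^b(\Gamma)$. By construction of the degree-$d_e$ cover $u_{d_e}$, the $\bT$-weight of the translation centred at such an endpoint equals $w(e,v)$, and since $G$ acts trivially on the abstract domain this weight passes unchanged to the orbifold Euler class. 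Multiplying over all branch flags produces the claimed product.

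The main obstacle is justifying the vanishing $\cQ^m = 0$. This requires identifying $\cQ$ with the tangent sheaf of a local $\PGL(N+1,\bC)$-quotient of $\wh\cM_\Gamma$, which in turn depends on interpreting $\wh\cM_\Gamma$ locally as a $\PGL$-torsor over a moduli of $\Gamma$-decorated curves equipped with the framing datum used in \eqref{eq:new-base}. Once this geometric description is in place, the remaining steps are essentially formal.
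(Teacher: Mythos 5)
Your overall route is the one the paper takes: combine the Euler sequence \eqref{eq:from-euler-sequence} with Remark \ref{rem:lie-algebras} to exhibit $\normalfont\text{Lie}(\Aut(C,x_*))$ as the kernel of the infinitesimal $\PGL(N+1,\bC)$-action, and then evaluate its moving part component by component, where only edge components carrying a branch point contribute a line $T_{x_{e,v}}C_e$ of weight $w(e,v)$. That part, and the bookkeeping of the $G$-directions, matches the paper's argument.

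The gap is exactly where you flag it: the vanishing $\cQ^m=0$. Your proposed justification --- that $\bT$ acts through $\PGL(N+1,\bC)$ and hence trivially on a local $\PGL$-quotient whose tangent sheaf is $\cQ$ --- does not go through, because along $Z$ the $\PGL$-stabilisers are the automorphism groups $\Aut(C,x_*)\cong\prod_{e}\Aut(C_e,\textbf{x}_e)$, which are positive-dimensional as soon as $\Gamma$ has an edge. So $\cQ$ is not the tangent sheaf of a quotient manifold near the points that matter, and in general the identity component of the stabiliser of a point acts nontrivially on the cokernel of the infinitesimal action even though it lies in the acting group (consider $\bC^\times$ acting on $\bC^2$ with weights $\pm1$: at the origin the orbit is a point and the stabiliser acts nontrivially on the full cokernel $\bC^2$). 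The correct way to get the vanishing, which is what the paper does, is to never form $\cQ$ as a single object: since $\normalfont\text{st}_\Gamma\cl\wh\cM_\Gamma\to\Mbar_\Gamma$ is a submersion one splits $T_{\wh\cM_\Gamma}=\normalfont\text{st}_\Gamma^*T_{\Mbar_\Gamma}\oplus T^{\normalfont\text{rel}}_{\wh\cM_\Gamma}$; the cokernel of $\fs\fl(N+1)\to R^0\pi_*\eva^*T_{\bP^N}$ is $\cL=R^1\pi_*\cO_{\cC_\Gamma}$, whose moving part vanishes by the partial-normalisation argument of Lemma \ref{lem:fixed-point-locus-regular}, and $(\normalfont\text{st}_\Gamma^*T_{\Mbar_\Gamma})^m=0$ because the stable components are contracted to isolated fixed points, so the stabiliser acts trivially on their deformations. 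Substituting these two explicit vanishing statements for the appeal to a local quotient turns your sketch into the paper's proof.
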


\begin{proof} The induced map $\normalfont\text{st}_\Gamma\cl \wh\cM_\Gamma\to \Mbar_\Gamma$ is a submersion, so we can write 
	$$T_{\wh\cM_\Gamma} = \normalfont\text{st}_\Gamma^*T_{\Mbar_\Gamma} \oplus T^{\normalfont\text{rel}}_{\wh\cM_\Gamma}.$$ 
	As $\wh\cM_\Gamma$ is unobstructed, the canonical map $R^0\pi_*\eva^*T_{\bP^N}\to T^{\normalfont\text{rel}}_{\wh\cM_\Gamma}$ is a surjection. By Remark \ref{rem:lie-algebras}, its kernel is the Lie algebra of the automorphisms of $(C,x_*)$. If $(C,x_*)$ is the domain of $y \in Z$, then
	$$\normalfont\text{Lie}(\Aut(C,x_*)) =\normalfont\text{Lie}\lbr{\dS{e \in E(\Gamma)}{\Aut(C_e,\textbf{x}_e)}}$$
	since all other components are stable. Here $\textbf{x}_e = \{x_{e,v}\mid v \notin V^b(\Gamma)\}$. The Lie algebra of the automorphism group of each edge component is given by $H^0(C_e,T_{C_e}(-\textbf{x}_e))$ with 
	$$H^0(C_e,T_{C_e}(-\textbf{x}_e))^f = H^0(C_e,T_{C_e}(-\textbf{x}'_e)),$$
	where $\textbf{x}'_e \coloneq \{x_{e,v}\mid (e,v)\in F(\Gamma)\}$, and 
	$$H^0(C_e,T_{C_e}(-\textbf{x}_e))^m = \begin{cases} T_{x_{e,v}}C_e\quad & \textbf{x}_e \neq \textbf{x}'_e\\
		0 \quad & \textbf{x}_e = \textbf{x}'_e\end{cases}$$
	Pull \eqref{eq:from-euler-sequence} back to $Z$ and take the natural complement of the image $\bC\to \fg\fl(N+1)$ to obtain 
	$$0 \to{\fs\fl(N+1)}\to q^*R^0\pi_*\eva^*T_{\bP^N}\to \cL\to 0$$
	By \eqref{eq:fixed-hodge-bundle}, the moving part of this sequence is 
	$0\to {\fs\fl(N+1)}^m\to (q^*R^0\pi_*\eva^*T_{\bP^N})^m\to 0.$
	
	Now, take the complement to the tangent space of the $G$-action on $\wh\cM_\Gamma$, respectively, of the adjoint action on $\fs\fl(N+1)$, to obtain the exact sequence 
	\begin{equation}\label{eq:moving-automorphisms}0 \to \bigoplus\limits_{e\in E(\Gamma)}H^0(C_e,T_{C_e}(-\textbf{x}_e))^m\to  V^m \to N^b_{\wh\cT_Z/\wh\cT}|_Z\to 0. \end{equation}
	The first bundle of this sequence is a trivial vector bundle whose equivariant Euler class is the right-hand side of \eqref{eq:euler-class-automorphisms}. As $e_\bT(N^v_{\wh\cT_Z/\cT}/G)\cdot e_\bT(N^b_{\wh\cT_Z/\cT}/G)$ is invertible in the localised ring $\chml^*(\wh\cM(\Gamma,J);\bQ)\otimes_R Q$, so is $e_\bT(N^b_{\wh\cT_Z/\cT}/G)$. Thus, the claim follows by taking the equivariant Euler class of \eqref{eq:moving-automorphisms} and inverting $e_\bT(N^b_{\wh\cT_Z/\wh\cT})$.
\end{proof}

\begin{remark} Equation \eqref{eq:moving-automorphisms} is the linearisation of the fact that $[u\psi,C,x_*]$ is no longer fixed by the torus action for an automorphism $\psi$ of $(C,x_*)$ exactly if $\psi$ moves at least one branch point on an irreducible component $C_e$ corresponding to an edge.\\
\end{remark}

It remains to compute the Poincar\'e dual of the immersion $\wh\cT_\Gamma\to \cT$. 
\begin{lemma}\label{} We have
	\begin{equation}\label{eq:euler-class-normal-bundle-base}
		e(N_{\wh\cT_\Gamma/\cT}) = \p{\substack{v\in V^n(\Gamma)\\E_v = \{e,e'\}}}{(w(e,v)+w(e,v'))}\p{(e,v)\in F^s(\Gamma)}{(w(e,v)-\psi_{(v,e)})},
	\end{equation}
where $\psi_{(v,e)}$ is the $\psi$-class on $\Mbar^*_{g_v,n_v\cup E_v}(\bP^N,m_v)$ at the marked point $(v,e)$.
\end{lemma}

\begin{proof} As $\wh\cT_\Gamma$ is the fibre product $\cT\times_\cM \wh\cM_\Gamma$, it suffices to compute $e(N_{\wh\cM_\Gamma/\cM})$. To this end, let $y = (([\iota_v,C_v,\textbf{x}_v])_{v\in V^s(\Gamma)},([\iota_e,C_e,\textbf{x}_e])_{e\in E(\Gamma)})$ be an element of $\wh\cM_\Gamma$. The fibre of $N_{\wh\cM_\Gamma/\cM}$ at $y$ is given by
	$$\bigoplus\limits_{\substack{v\in V^n(\Gamma)\\E_v = \{e,e'\}}}T_{x_{e',v}}C_e\otimes T_{x_{e,v}}C_{e'} \oplus \bigoplus\limits_{(e,v) \in F^s(\Gamma)}{T_{x_{e,v}}C_v\otimes T_{x_{e,v}}C_e}.$$
	where $F^s(\Gamma) = \{(e,v) \in E(\Gamma)\times V(\Gamma)\mid v\in V^s(\Gamma)\}$ is the set of flags associated to the stable vertices of $\Gamma$. Hence,
	\begin{equation*}
		e(N_{\wh\cM_\Gamma/\cM}) = \p{\substack{v\in V^n(\Gamma)\\E_v = \{e,e'\}}}{(w(e,v)+w(e,v'))}\p{(e,v)\in F^s(\Gamma)}{(w(e,v)-\psi_{(v,e)})},
	\end{equation*}
as claimed.
\end{proof}

 In summary, we obtain that 
\begin{multline*}
	\frac{e_\bT(\cE^m_\Gamma)}{e_\bT(N_{\wh\cT/\cT})} =\p{\substack{v\in V^n(\Gamma)\\E_v = \{e,e'\}}}{\frac{w(p_\Gamma(v))}{w(e,v)+w(e,v')}}\p{(e,v)\in F^s(\Gamma)}{\frac{w(p_\Gamma(v))}{w(e,v)-\psi_{(v,e)}}}\\ \cdot \p{\substack{v\in V^b(\Gamma)\\ (e,v)\in F(\Gamma)}}{w(e,v)}\p{v \in V^s(\Gamma)}{\textbf{h}(p_\Gamma(v),g_v)} \p{e \in E(\Gamma)}{\textbf{h}(S_\Gamma(e),d_e)}
\end{multline*}
which is exactly $\textbf{w}(\Gamma)$.
Therefore, by Theorem \ref{Virtual Localisation}, the equivariant GW invariants of $(X,\omega,\mu)$ are given by \eqref{eq:formula-equivariant-gw}.

\section{Comparison with the Gromov--Witten invariants of Ruan and Tian}\label{sec:compare-pseudocycles}

\subsection{Definition of GW invariants via pseudocycles}

We call a symplectic manifold $(X,\omega)$ \emph{semipositive} if for any $A\in \pi_2(X)$
\begin{equation}\label{semipos}\omega(A) > 0,\; c_1(A)\geq 3-n \quad \rimp\quad c_1(A) \geq 0.\end{equation}
In particular, any symplectic manifold of complex dimension at most $3$ is semipositive.
Let us recall the definition of GW invariants in \cite{RT97} for $(X,\omega)$ satisfying \eqref{semipos}.\par
Suppose first that $2g-2+ n > 0$. A \emph{good cover} $p_\mu \cl \Mbar_{g,n}^\mu \to \cc{M}_{g,n}$ of the (coarse) moduli space of stable curves is a finite cover such that $\Mbar_{g,n}^\mu$ admits a universal family that is a projective normal variety. Such good covers can be constructed using level-m structures; refer to \cite[Chapter XVI]{ACG-moduli} or \cite{Mu83} for more details. Let $\ff_\mu\cl \cc{\cU}^\mu_{g,n}\to \Mbar^\mu_{g,n}$ be the universal curve. Removing the bar means we only consider smooth curves. Fix a closed embedding $\phi \cl  \cc{\cU}^\mu_{g,n}\hkra \bP^k$.\par 

Given $J \in \cJ_\tau(X,\omega)$ our perturbations $\nu$ are sections of $\cc{\Hom}_\bC(p_1^*T_{\bP^k},p_2^*T_X)$ over $\bP^k\times X$. Let $\Mbar_{g,n}^\mu(A;J,\nu)$ be the space of equivalence classes of \emph{stable $(J,\nu)$-maps of type $(g,n)$} $(u,j,C,\fj,x_*)$ where 
\begin{enumerate}
    \item $(C,\fj,x_*)$ is of type $(g,n)$,
    \item $j \cl C\to \cc{\cU}_{g,n}^\mu$ is a holomorphic map onto a fibre of $\cc{\cU}_{g,n}^\mu$,
    \item $u \cl C\to X$ is a stable smooth map, representing $A$ and satisfying 
    $$\hpd_Ju = \nu(\phi j,u)\g d(\phi j).$$
\end{enumerate}
Here $(u,j,C,x_*)$ is \emph{equivalent} to $(u',j',C',x'_*)$ if there exists a biholomorphism $(C,x_*)\to (C',x'_*)$ pulling back $u'$ to $u$. We call a stable $(J,\nu)$-map \emph{simple} if 
\begin{enumerate}
		\item for each irreducible component $Z\sub C$ on which $u$ is nonconstant, $u|_Z$ is a simple map, i.e., does not factor through a branched holomorphic covering,
		\item $u(Z) \neq u(Z')$ for any two irreducible components $Z \neq Z' \subset C$ on which $u$ is nonconstant. 
	\end{enumerate}
 The space of simple $(J,\nu)$-maps is denoted by $\Mbar_{g,n}^{\mu,*}(A;J,\nu)$.
It admits a canonical forgetful map $\Mbar_{g,n}^\mu(A;J,\nu)\to \Mbar_{g,n}^\mu$ through which the stabilisation map $\Mbar_{g,n}^\mu(A;J,\nu)\to \Mbar_{g,n}$ factors. 

$\Mbar_{g,n}^\mu(A;J,\nu)$ can be stratified by the topological type of the domains together with the distribution of the homology class. These are most concisely described in terms of their \emph{dual graph}. Explicitly, to each stable $(J,\nu)$-map we can associate a \emph{marked graph} $\gamma$ consisting of a $n$-marked graph $G$ together with maps $\fd \cl V(G)\to H_2(X,\bZ)$ and $g\cl V(G)\to \bN_0$ so that 
$$\dim(H^1(G)) + \s{v\in V(G)}{g(v)} = g \qquad \qquad \s{v\in V(G)}{\fd(v)} = A$$
and for any $v\in V(G)$ the stability condition $2g(v) + |\{f \in \text{Fl}(G) : s(f) = v\}|\geq 3$ holds, where $\text{Fl}(G)$ is the set of flags of $G$. 
We denote by $\Mbar_{\gamma}^\mu(A;J,\nu)$ the stratum of stable maps whose dual graph is given by $\gamma$ and by $\Mbar_{\gamma}^{\mu,*}(A;J,\nu)$ its intersection with the space of simple maps.\par 

By \cite[Proposition 2.3, Theorem 3.1]{RT97}, respectively \cite[Theorem 3.3]{Zi17} (whose arguments simplify to our setting), there exists a Baire subset $\cH$ of tuples $(J,\nu)$ so that for any $(J,\nu)\in \cH$
\begin{enumerate}
		\item $\Mbar_{\gamma}^{\mu,*}(A;J,\nu)$ is a smooth oriented manifold of dimension 
  $$2(1-g)\dim_\bC(X) + 2\inpr{c_1(T_X)}{A} + \dim_\bR(\Mbar_{\gamma}^\mu).$$
		\item The maps $\eva$ and $\normalfont\text{st}_\mu$ define a pseudocycle $\eva\times\normalfont\text{st}_\mu\cl \cM^{\mu,*}_{g,n}(A;J,\nu)\to X\times \Mbar_{g,n}^\mu$.
\end{enumerate}

We use the definition of a pseudocycle given in \cite{IP19b}; see also \cite[Chapter 6.1]{MS12} or \cite{Zi08}.

\begin{definition} A \emph{$d$-dimensional pseudocycle} $f \cl M\to N$ is a continuous map from a $d$-dimensional oriented manifold $M$ to a locally compact space $N$ so that $f(M)$ has compact closure and $$\Omega_f \coloneq \inter{\substack{K \sub M\\\text{compact}}}{\cc{f(M\sm K)}}$$ has Lebesgue covering dimension $\leq d-2$. 
\end{definition}

Given a $d$-dimensional pseudocycle $f\cl M \to N$, we can define a class $[f]\in H^{\text{BM}}_d(N,\bZ)$ as follows. Let $M\inn \coloneq f\inv(N\sm \Omega_f)$. Then $M\inn$ is an oriented $d$-dimensional manifold and the restriction $f\inn\cl M\inn \to N\sm \Omega_f$ is proper. We define $[f]\in H_d^{\text{BM}}(N,\bZ)$ to be the image of $[M\inn]$ under $$H_d^{\text{BM}}(M\inn,\bZ)\xra{f\inn_*}H^{\text{BM}}_d(N\sm \Omega_f,\bZ)\cong H^{\text{BM}}_d(N,\bZ).$$
Here the isomorphism is an immediate consequence of $\dim(\Omega_f) < d-1$ and the long exact sequence in Borel--Moore homology. Refer to  \cite[\textsection 3, \textsection A.3]{IP19b} for more details.

\begin{definition}\cite{RT97} The \emph{pseudocycle Gromov--Witten invariant} of $(X,\omega)$ is
    $$\sigma^A_{g,n} = \frac{1}{d_\mu}(\ide\times p_\mu)_*[\eva\times \text{st}_\mu]\in H_*(X^n\times \Mbar_{g,n};\bQ).$$
\end{definition}


\begin{theorem}\label{pseudocycle-comparison} Suppose $(X,\omega)$ is semipositive. Then the pseudocycle Gromov--Witten invariants agree with the invariants defined in \cite{HS22}. Explicitly,
	\begin{equation}\label{comparision-pseudocycles}\sigma^A_{g,n}= (\eva\times\normalfont\text{st})_*\vfc{\Mbar_{g,n}^{\,J}(X,A)}\end{equation}
  for any $g,n \geq 0$, where $\vfc{\cdot}$ is the virtual fundamental class constructed in \cite{HS22}. 
\end{theorem}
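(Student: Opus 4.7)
The plan is to construct a global Kuranishi chart adapted to the Ruan--Tian perturbation scheme, interpolate it with the chart of Construction \ref{high-level-description-defined} via a rel--$C^\infty$ cobordism, and then use the semipositivity hypothesis to identify the resulting VFC with the pseudocycle class. First, since $p_\mu$ is an \'etale covering of degree $d_\mu$, pulling back the global Kuranishi chart along the \'etale map $\Mbar_{g,n}^\mu(\bP^N,m)\to \Mbar_{g,n}^*(\bP^N,m)$ produces a chart $\cK^\mu$ for $\Mbar_{g,n}^\mu(X,A;J)$; a standard projection-formula argument in the spirit of Lemma \ref{vfc-of-cover} reduces the comparison to showing
\begin{equation*}
    [\eva\times\normalfont\text{st}_\mu] = (\eva\times\normalfont\text{st}_\mu)_*\vfc{\Mbar^\mu_{g,n}(X,A;J)}
\end{equation*}
as Borel--Moore homology classes on $X^n\times \Mbar_{g,n}^\mu$.

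Next, I would modify $\cK^\mu$ by replacing the equation $\overline{\partial}_J\tilde u + \langle\eta\rangle\circ d\tilde\iota = 0$ defining the thickening with the Ruan--Tian perturbed equation
\begin{equation*}
    \overline{\partial}_J\tilde u + \langle\eta\rangle\circ d\tilde\iota = \nu(\phi\, j,\tilde u)\circ d(\phi \, j),
\end{equation*}
where $j\colon C\to\cc{\cU}_{g,n}^\mu$ is the canonical fibre inclusion supplied by the level structure. Scaling $\nu$ by $t\in[0,1]$ gives a 1-parameter family of modified thickenings; for $\nu$ sufficiently small in $C^\infty$, the unobstructedness conditions \eqref{very-ample-acyclic-line-bundle}--\eqref{spanning-cokernel} persist along the whole family, so following \cite[\textsection 5.2]{HS22} they assemble into a rel--$C^\infty$ cobordism of oriented global Kuranishi charts. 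By cobordism invariance of the VFC, the charts at $t=0$ and $t=1$ define the same virtual fundamental class, and the zero locus at $t=1$ is precisely $\Mbar_{g,n}^\mu(A;J,\nu)$.

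To conclude, I would apply the Ruan--Tian regularity theory to the $t=1$ chart. For $(J,\nu)$ in a Baire subset the simple stratum $\Mbar_{g,n}^{\mu,*}(A;J,\nu)$ is a smooth manifold of expected dimension cut out transversely, and along it the obstruction bundle splits equivariantly so that only the $\fs\fu(N+1)\oplus H^1(C,\cO_C)$-summand contributes nontrivially, the perturbation term $\eta$ becoming redundant by the surjectivity of $D\overline{\partial}_{J,\nu}$. Proposition \ref{vfc-embedded-chart} then identifies the contribution of the simple stratum to $\vfc{\cdot}$ with its ordinary fundamental class. The semipositivity condition \eqref{semipos} bounds the real dimension of every non-simple stratum by $\vdim-2$, as in \cite[Proposition 3.4]{RT97} and \cite[Theorem 3.3]{Zi17}, so such strata contribute nothing to Borel--Moore homology in degree $\vdim$, hence nothing to the pushforward. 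Identifying pseudocycle classes with Borel--Moore fundamental classes of open manifolds with codimension-$2$ complement (cf.\ \cite[\textsection A.3]{IP19b}) finishes the comparison.

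The main obstacle is the second step: ensuring that the surjectivity condition \eqref{spanning-cokernel} survives uniformly along the cobordism. The linearisation of $\nu\circ d(\phi j)$ at a solution mixes the domain and target directions in a way that could in principle destroy this surjectivity. This is handled by the observation that \eqref{spanning-cokernel} is a $C^\infty$-open condition and $\nu$ may be taken arbitrarily small in $C^\infty$ without changing the pseudocycle invariant (by the Ruan--Tian cobordism argument among generic perturbations), so after a further rescaling one obtains a valid family. A secondary technical point is that \cite[\textsection 5.2]{HS22} formulates cobordisms through variations of $J$ rather than through an extra inhomogeneous term $\nu$, but the construction adapts mutatis mutandis since $\nu$ enters the obstruction section analogously to how perturbations of $J$ do.
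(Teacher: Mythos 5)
Your proposal follows essentially the same route as the paper: pass to a good cover $\Mbar^\mu_{g,n}$, build a global Kuranishi chart with boundary for the one-parameter family interpolating between the unperturbed and the Ruan--Tian-perturbed equation, invoke cobordism invariance together with the degree-$d_\mu$ covering lemma, and then identify the $t=1$ virtual class with the pseudocycle class using transversality over the simple locus and the codimension-two bound on the remaining strata coming from semipositivity. The only cosmetic difference is that the paper secures unobstructedness along the whole family by choosing the auxiliary datum (in particular $p$, $\cU$ and $k$) adapted to the cobordism space from the outset, rather than by shrinking $\nu$ within the Baire set of regular perturbations.
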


\begin{remark}[Unstable range] The GW invariants for $(g,n)$ in the unstable range are defined via the Divisor and Symmetry axioms, see \cite{MS12}. Since both the invariants defined via global Kuranishi charts as the ones of \cite{RT97} satisfy these axioms, we only have to prove Theorem \ref{pseudocycle-comparison} for the cases where $2g-2 + n> 0$.\end{remark}

\smallskip
\subsection{Proof of Theorem \ref{pseudocycle-comparison}}  Fix $(g,n)$ with $2g-2 + n >0$. The case of $(g,n)$ in the unstable range will follow from the cases in the stable range and the Divisor axiom. Let $p_\mu \cl \Mbar_{g,n}^\mu \to \Mbar_{g,n}$ be a good finite cover. Set $d_\mu \coloneq \deg(p_\mu)$ and define 
$$\cW_{g,n,A}^{\mu}\coloneq \set{(t,u,C,x_*,j)\;\bigg|\; \substack{ u \cl (C,x_*)\to X \text{ smooth stable of type }(g,n),\\ t \in [0,1],\;j \cl (C,x_*)\to \cc{\cU}_{g,n}^\mu,\;u_*[C] = A,\\  \hpd_J u = t\,(\phi j\times u)^*\nu}},$$
where $j\cl C\to \cc{\cU}^\mu_{g,n}$ is a contraction onto a fibre $F$ of $\cc{\cU}^\mu_{g,n}$. We have a canonical map $P \cl \cW_{g,n,A}^{\mu}\to [0,1]$. 

\begin{lemma}\label{}$\cW_{g,n,A}^{\mu}$ is compact and Hausdorff when endowed with the topology induced by Gromov convergence.
\end{lemma}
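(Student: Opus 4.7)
The plan is to prove compactness by adapting the standard Gromov compactness theorem for stable $J$-holomorphic maps to the inhomogeneous equation $\bar\partial_J u = t(\phi j\times u)^*\nu$, while simultaneously keeping track of the auxiliary datum $j$. Fix a sequence $(t_k,u_k,C_k,x_{*,k},j_k)$ in $\cW_{g,n,A}^\mu$. Since $[0,1]$ is compact, we may pass to a subsequence so that $t_k\to t_\infty$. The symplectic area of $u_k$ is controlled by $\omega(A)$ and the $L^2$-norm of the inhomogeneous term; because $\nu$ is a continuous section over the compact target $\bP^k\times X$, and $\phi j_k$ takes values in the compact subvariety $\phi(\cc{\cU}_{g,n}^\mu)\subset \bP^k$, the perturbation is uniformly bounded. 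This gives a uniform energy bound for the $u_k$, and the pointwise bound on $\nu$ ensures the standard bubbling analysis still applies (see e.g.\ \cite[Chapter 5]{MS12}).

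Next I would extract a Gromov-convergent subsequence of $(u_k,C_k,x_{*,k})$ to a nodal stable map $(u_\infty,C_\infty,x_{*,\infty})$. Simultaneously, since $\cc{\cU}_{g,n}^\mu$ is a projective (hence compact) variety, the smooth maps $j_k\cl C_k\to \cc{\cU}_{g,n}^\mu$ have $C^0$-bounded images; combined with the gradient bounds coming from the rescaled perturbed equation on stable components and the fact that $j_k$ factors through a single fibre of $\ff_\mu$, we can pass to a subsequential limit $j_\infty\cl C_\infty\to \cc{\cU}_{g,n}^\mu$ factoring through a single fibre. The limit $(t_\infty,u_\infty,C_\infty,x_{*,\infty},j_\infty)$ satisfies the perturbed equation on stable components; on bubble components, the map $j_\infty$ is constant (being obtained from rescaling a map to a compact variety at a point), so $d(\phi j_\infty)=0$ there and the equation reduces to $\bar\partial_J u_\infty = 0$, which is exactly what standard Gromov compactness provides for bubbles.

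The Hausdorff property will follow from the standard fact that Gromov convergence on the space of stable maps is Hausdorff, together with the Hausdorff property of $[0,1]$ and of the compact space of maps into $\cc{\cU}_{g,n}^\mu$: any two distinct limits of the same sequence would force two distinct Gromov limits of $(u_k,C_k,x_{*,k})$, contradicting the usual uniqueness statement.

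The main obstacle will be the bookkeeping around the auxiliary map $j$ in the bubbling limit: one must verify that $j_\infty$ still contracts onto a single fibre of $\cc{\cU}_{g,n}^\mu$ (so that the limit lies in $\cW_{g,n,A}^\mu$), that bubble components are indeed collapsed by $j_\infty$ so that the equation degenerates correctly, and that no energy is lost in the process of passing to the limit. Once this is established, the rest is a direct translation of the argument for the standard Gromov moduli space.
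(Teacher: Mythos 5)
Your proposal is correct and follows essentially the same route as the paper: the paper simply cites \cite[Proposition 3.1]{RT95} for compactness of the perturbed moduli space (which is exactly the energy-bound-plus-bubbling argument you sketch, including the observation that bubbles are genuinely $J$-holomorphic because $j$ is constant on them) and the uniqueness-of-Gromov-limits argument of \cite[Theorem 5.5.3]{MS12} for the Hausdorff property. The only difference is that you spell out the standard argument where the paper outsources it to the references.
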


\begin{proof} Compactness follows from \cite[Proposition~3.1]{RT95}, while the Hausdorff property follows by the arguments of the proof of \cite[Theorem~5.5.3]{MS12}.\end{proof}

\begin{remark}\label{support-away-from-nodes} Denote by $\cc{\cU}_{g,n}^{\mu,\text{sing}}$ the nodes and marked points of the universal curve. In order to apply \cite{Swa21} later on, we restrict to perturbations $\nu$ that are supported away from $\phi(\cc{\cU}_{g,n}^{\mu,\text{sing}})$. By elliptic regularity, such perturbations $\nu$ suffice to achieve transversality.\end{remark}
We will construct a global Kuranishi chart with boundary for $\cW_{g,n,A}^{\mu}$, which restricts to a cover of the previously constructed global Kuranishi chart for $\Mbar_{g,n}^{\,J}(X,A)$ over $P\inv(\{0\})$ and to a global Kuranishi chart for $\Mbar^\mu_{g,n}(X,A;J,\nu)$ over $P\inv(\{1\})$. By Lemma \ref{vfc-of-cover} and Lemma \ref{cobordant-charts}, this will imply that 
$$d_\mu\,(\eva\times \text{st})_*\vfc{\Mbar_{g,n}^{\,J}(X,A)} = (\eva\times \text{st})_*\vfc{\Mbar_{g,n}^\mu(X,A;J,\nu)}$$
in $H_*(X^n\times \Mbar_{g,n};\bQ)$. Then we compare $(\eva\times \text{st})_*\vfc{\Mbar_{g,n}^\mu(X,A;J,\nu)}$ with $\sigma^A_{g,n}$ in order to conclude.\\

Fix an unobstructed auxiliary datum $(\conn^X,\cO_X(1),p,\cU,\lambda,k)$ where 
\begin{enumerate}
	\item $p\gg 0$ is so large that $\fL_u^{\otimes p}$ is very ample for any $(t,u)\in \cW^\mu_{g,n,A}$;
	\item $\cU$ is a good covering in the sense of \cite[Definition~3.10]{HS22} where we take the image of $\cW^\mu_{g,n,A}$ in the polyfold of smooth maps to $X$ instead of $\Mbar_{g,n}^{\,J}(X,A)$ in the third condition;
 \item the integer $k \gg 1$ will be determined later.
\end{enumerate}
Define $\widetilde{\cT}$ to be the set $\set{(t,u,C,x_*,j,\iota,\alpha,\eta)}/\sim$ so that
\begin{itemize}
	\item $u \cl (C,x_*)\to X$ is a smooth stable map of genus $g$ representing $A$.
	\item $j \cl (C,x_*)\to \cc{\cU}_{g,nb}$ is a holomorphic map onto a fibre $F$ of the universal curve so that the induced map $C\to F$ is a contraction of pointed nodal surfaces,
	\item $\iota \cl (C,x_*)\to \bP^N$ is an element of $\Mbar^*_{g,n}(\bP^N,m)$,
	\item $\alpha\in H^1(C,\cO_C)$ satisfies $[\iota^*\cO_{\bP^N}(1)] = p\cdot [\fL_u] + \alpha$ in $\Pic(C)$,
	\item $\eta \in E_{(\iota,u)}\coloneq H^0(C,\iota^*{T_{\bP^N}^*}^{0,1}\otimes u^*T_X\otimes\iota^*\cO(k))\otimes \cc{H^0(\bP^N,\cO(k))}$ is such that 
	\begin{equation}\label{ps-pert-cr}\delbar_J\tilde{u} + \lspan{\eta}\g d\tilde{\iota} - t\,\nu(\phi \tilde{j},\tilde{u}) = 0 \end{equation}
	on the normalisation $\tilde{C}$ of $C$, 
\end{itemize}
and $\sim$ denotes the equivalence given by reparametrisations of the domain. Denote by $P \cl \tilde{\cT}\to [0,1]$ the obvious projection as well, and define $\widetilde{\cT}_t \coloneq P\inv(\{t\})$. 

Set $\widetilde{\cM} \coloneq \Mbar_{g,n}^\mu \times_{\cc{M}_{g,n}}\Mbar_{g,n}^*(\bP^N,m)$ and let $\pi \cl \widetilde{\cT}\to \widetilde{\cM}$ be the forgetful map. Define $\widetilde{\cE}\to \widetilde{\cT}$ by letting its fibre over $y = (t,u,C,j,\iota,\alpha,\eta)$ be
$$\widetilde{\cE}_y = \fs\fu(N+1)\oplus H^1(C,\cO_C) \oplus E_{(\iota,u)},$$
while the obstruction section $\widetilde{\fs}$ is given by $\widetilde{\fs}(y) = (i\log(\lambda(u,\iota)),\alpha,\eta)$. 
Let $G \coloneq \PU(N+1)$ act via post-composition on the framings and the perturbation terms $\eta$. For $i \in \{0,1\}$, denote 
$$\widetilde{\cK}_i \coloneq (G,\widetilde{\cT}_i,\widetilde{\cE}_i|_{\widetilde{\cT}_i},\widetilde{\fs}|_{\widetilde{\cT}_i}).$$

\begin{lemma}\label{} We can choose $k$ sufficiently large so that the linearisation of \eqref{ps-pert-cr} restricted to $C^\infty(C,u^*T_X)_{\{x_i\}} \oplus E_{(\iota,u)}$ is surjective for any element in $\widetilde{\fs}\inv(0)$.\footnote{ The subscript denotes the subspaces of vector fields which vanish at the marked points.}
\end{lemma}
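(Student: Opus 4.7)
The plan is to establish surjectivity of the linearisation pointwise via an adaptation of \cite[Proposition 6.26]{AMS21}, and then to upgrade to a uniform choice of $k$ using the compactness of $\cW^\mu_{g,n,A}$.

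First, I would fix $y = (t, u, C, x_*, j, \iota, \alpha, \eta) \in \widetilde{\fs}\inv(0)$ and write out the linearisation of \eqref{ps-pert-cr} at $y$ in the directions $(\xi, \eta') \in C^\infty(C, u^*T_X)_{\{x_i\}} \oplus E_{(\iota,u)}$ as
$$(\xi, \eta') \;\mapsto\; \bigl(D\delbar_J(u) - t\,(D_2\nu)(\phi j, u)\bigr)\,\xi + \langle \eta'\rangle \circ d\tilde{\iota},$$
where $D_2\nu$ denotes the derivative of $\nu$ in its second argument. By Remark~\ref{support-away-from-nodes}, $\nu$ is supported away from $\phi(\cc{\cU}_{g,n}^{\mu,\text{sing}})$, so $(D_2\nu)(\phi j, u)$ is a smooth zeroth-order operator and hence compact on the relevant Sobolev completions. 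Therefore the operator on the $\xi$-slot is a compact perturbation of $D\delbar_J(u)$, with the same index and a finite-dimensional cokernel. I would then invoke \cite[Proposition 6.26]{AMS21}, combined with \cite[Lemma 6.24]{AMS21} applied to the divisor comprising both nodal and marked points, to produce a finite $k_y \geq k$ such that $\langle\cdot\rangle \circ d\tilde{\iota}$ from $E_{(\iota,u)}$ (computed at level $k_y$) surjects onto this cokernel. The argument in \textit{loc.\,cit.} rests only on the very ampleness of sufficiently high powers of $\iota^*\cO_{\bP^N}(1)$, and the compact perturbation does not affect it.

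To remove the dependence of $k_y$ on $y$, I would argue by contradiction. If no uniform $k$ worked, one could choose a sequence $y_n \in \widetilde{\fs}\inv(0)$ requiring $k_{y_n} \to \infty$. Since $\widetilde{\fs}\inv(0)/G \cong \cW^\mu_{g,n,A}$ is compact, a subsequence Gromov-converges to some $y_\infty$, at which the pointwise argument yields a finite $k_\infty$. Because surjectivity of a Fredholm operator is an open condition under sufficiently small perturbations, $k_\infty$ would then suffice for all large $n$, a contradiction.

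The main obstacle is the adaptation of \cite[Proposition 6.26]{AMS21} to the perturbed equation and the verification that surjectivity is preserved under Gromov limits. Both difficulties are mitigated by Remark~\ref{support-away-from-nodes}: since $\nu$ vanishes in a neighbourhood of the nodal and marked-point locus, the perturbation interacts trivially with bubbling and node formation in the limit, and the zeroth-order term converges smoothly on the smooth part of the limit curve. The sheaf-theoretic input to the proof of \cite[Proposition 6.26]{AMS21} (the vanishing of $H^1$ for line bundles of sufficiently high degree) is insensitive to the compact perturbation of the differential operator.
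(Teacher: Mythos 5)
Your proposal is correct and is essentially the paper's intended argument: the paper's proof is a one-line reference to "a straightforward adaptation of the proof of \cite[Lemma 3.18]{HS22}", and the adaptation you spell out — pointwise surjectivity via \cite[Proposition 6.26]{AMS21} and \cite[Lemma 6.24]{AMS21}, with the $t\,\nu$-term treated as a compact zeroth-order perturbation that does not affect the unique-continuation/Serre-vanishing input, followed by openness of surjectivity and compactness of $\widetilde{\fs}\inv(0)$ to obtain a uniform $k$ — is precisely the argument the paper runs in the unperturbed case in Lemma \ref{evaluation-on-thickening}. Your appeal to Remark \ref{support-away-from-nodes} to keep the perturbation away from the nodal and marked locus is also the right reason the limiting/openness step survives domain degeneration.
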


\begin{proof} This follows from a straightforward adaptation of the proof of \cite[Lemma~ 4.19]{HS22}. The proof of said lemma, respectively its predecessor, uses the linear gluing analysis of \cite[Appendix~B]{P16}, which considers the presence of perturbations such as the one we consider here.
\end{proof}

\begin{proposition}\label{} For $k\gg 0$, $\widetilde{\cT}^{\normalfont\text{reg}}$ is naturally a rel--$C^\infty$ manifold over $[0,1]\times \widetilde{\cM}$ and the structure map is a topological submersion. The restriction $\widetilde{\cE}^{\reg} \coloneq \widetilde{\cE}|_{\widetilde{\cT}^{\reg}}$ is a rel--$C^\infty$ vector bundle, and the restriction of $\widetilde{\fs}$ is of class rel--$C^\infty$. Moreover, $\eva \cl \widetilde{\cT}\to X^n$ is a rel--$C^\infty$ submersion.
\end{proposition}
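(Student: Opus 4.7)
The plan is to adapt the strategy used in \cite[\textsection 4]{HS22} to construct a rel--$C^\infty$ structure on the thickening, treating the extra parameter $t\in[0,1]$ on equal footing with the base directions in $\widetilde{\cM}$. Given $y_0=(t_0,u_0,C_0,x_{*,0},j_0,\iota_0,\alpha_0,\eta_0)\in \widetilde{\cT}^{\reg}$, I would first exhibit a local parameterisation of $\widetilde{\cT}^{\reg}$ over a neighbourhood $U$ of $(t_0,[j_0,\iota_0,C_0,x_{*,0}])\in [0,1]\times \widetilde{\cM}$. Using that the support of $\nu$ lies away from the nodes (Remark \ref{support-away-from-nodes}), the standard desingularisation/gluing construction provides a smoothly varying family of domains $C_{(t,\zeta)}$ with identifications to $C_0$ away from thin necks, so that pulling back \eqref{ps-pert-cr} along these identifications yields a smooth family of nonlinear elliptic equations parameterised by $U$.

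Next, I would apply the implicit function theorem to this family. By the previous lemma, the linearisation of \eqref{ps-pert-cr} at $y_0$ restricted to $C^\infty(C_0,u_0^*T_X)_{\{x_i\}}\oplus E_{(\iota_0,u_0)}$ is surjective, so a right inverse exists and solving produces a rel--$C^\infty$ family $(u,\eta)$ over $U$; the summand $\alpha$ is then determined smoothly by the remaining base data via the Picard relation. This yields a local rel--$C^\infty$ chart, and the structure map to $[0,1]\times\widetilde{\cM}$ is a topological submersion because it is a smooth projection in these coordinates. The fibre $\fs\fu(N+1)\oplus H^1(C,\cO_C)\oplus E_{(\iota,u)}$ of $\widetilde{\cE}$ is assembled from the cohomology of rel--$C^\infty$ families of holomorphic bundles (a trivial bundle plus two derived pushforwards), hence forms a rel--$C^\infty$ vector bundle. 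The three components of $\widetilde{\fs}$ are rel--$C^\infty$ individually: $i\log\lambda(\iota,u)$ by the smoothness of the slice map constructed in \cite[\textsection 3.2]{HS22}, while $\alpha$ and $\eta$ are tautologically coordinates of the local chart.

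For the evaluation map, I would invoke an enhanced version of the preceding lemma: by the argument of Lemma \ref{evaluation-on-thickening} (applying \cite[Proposition 6.26]{AMS21} with the divisor consisting of all nodal \emph{and} marked points), after enlarging $k$ the linearisation of \eqref{ps-pert-cr} restricted to the full space $C^\infty(C,u^*T_X)\oplus E_{(\iota,u)}$ is surjective at every $y\in\widetilde{\fs}\inv(0)$. The compactness of $\widetilde{\fs}\inv(0)$ together with the openness of surjectivity guarantees a single $k$ works uniformly along the cobordism. Surjectivity onto the full space implies that variations of $(u,\eta)$ hit every element of $\prod_i T_{u(x_i)}X$, making $\eva$ a rel--$C^\infty$ submersion.

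The main obstacle will be ensuring that the parameterisation behaves well over \emph{nodal} points of $\widetilde{\cM}$: one must align the gluing description of nearby smooth domains in $\Mbar_{g,n}^*(\bP^N,m)$ with the pullback of the level-$m$ structure from $\Mbar_{g,n}^\mu$ so that the auxiliary map $j\cl C\to\cc{\cU}_{g,n}^\mu$ depends smoothly on the gluing parameter. Here Remark \ref{support-away-from-nodes} is essential: because the forcing term $\nu(\phi\tilde j,\tilde u)\circ d(\phi\tilde j)$ vanishes on a neighbourhood of the nodes, the behaviour of $j$ on thin necks is invisible to \eqref{ps-pert-cr}, and the only delicate interaction between the cobordism parameter and the nodal structure is removed. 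Beyond this point, the proof is a routine transcription of \cite[\textsection 4]{HS22} with $t$ treated as an additional smooth parameter.
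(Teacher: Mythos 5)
Your outline correctly identifies the inputs (surjectivity of the restricted linearisation, the support condition of Remark \ref{support-away-from-nodes}, enlarging $k$ to make $\eva$ submersive as in Lemma \ref{evaluation-on-thickening}), and your treatment of the evaluation map and of the submersion property of the structure map matches the paper's. However, there is a genuine gap in the core of the argument: you propose to build the rel--$C^\infty$ structure by a direct desingularisation-plus-implicit-function-theorem argument applied to the perturbed equation \eqref{ps-pert-cr}, and you dismiss the behaviour near nodal domains as ``a routine transcription.'' That step is not routine --- the degeneration of the domain means there is no single smooth family of elliptic equations on a fixed function space to which one can apply the IFT, and establishing rel--$C^\infty$ regularity of the solution set across the nodal strata is precisely the hard gluing theorem that \cite{HS22} outsources to \cite{Swa21}. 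Your sketch contains no substitute for that input.

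The paper's proof avoids redoing this analysis by a structural move you are missing: Gromov's shearing (graph) trick. One forgets the $\alpha$-parameter and reinterprets a solution of \eqref{ps-pert-cr} as a genuine pseudoholomorphic map into the total space of the bundle $E\to\bP^N\times X$, for a $t$-dependent family of almost complex structures $\tilde J^t$ built from the splitting $T_E=\pi^*T_{\bP^N\times X}\oplus\pi^*E$ that absorbs both the $\eta$-perturbation and the inhomogeneous term $t\,\nu$. After this conversion the black-box gluing result of \cite{Swa21}, as used in \cite[Proposition 4.4]{HS22}, applies verbatim with $[0,1]\times\widetilde{\cM}$ as parameter space (the support condition on $\nu$ being exactly what makes \cite{Swa21} applicable near the nodes). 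Without this conversion, or an explicit gluing analysis for the perturbed equation, your argument does not close.
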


\begin{proof} Forgetting the $\alpha$-parameter and using Gromov's shearing trick, we can consider $\widetilde{\cT}$ as a subset of the moduli space of embedded regular perturbed holomorphic maps to the total space of a vector bundle $E\to \bP^N\times X$. Fixing a splitting $T_E = \pi^*T_{\bP^N\times X}\oplus \pi^*E$, we define the family of almost complex structures on $E$ by
$$\tilde{J}^t_{e}(\hat{x},v,e') = (J_0\hat{x}, Jv + \lspan{e}(\hat{x}),J^E e')$$
for $(\hat{x},v)\in T_{\bP^N\times X,\pi_E(e)}$ and $e'\in E_e$.\par
By Remark \ref{support-away-from-nodes}, we can use \cite{Swa21} as in \cite[Proposition~5.4]{HS22} to deduce the relative smoothness of $\widetilde{\cT}$ over $[0,1]\times \widetilde{\cM}$.
The other claims follow from the same reasoning as in \cite[\textsection 5]{HS22}, respectively a straightforward generalisation of Lemma \ref{evaluation-on-thickening} for the last assertion.
\end{proof}

As the arguments in \cite[\textsection 5]{HS22} carry over word by word, we obtain the first step of our proof.

\begin{corollary} $\cW^\mu_{g,n,A}$ admits an oriented global Kuranishi chart $\widetilde{\cK}_n$ with boundary of the expected dimension. 
\end{corollary}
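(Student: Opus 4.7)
The plan is to verify the four axioms of an oriented global Kuranishi chart with boundary for the tuple $(G,\widetilde{\cT},\widetilde{\cE},\widetilde{\fs})$: identification of the zero locus with $\cW^\mu_{g,n,A}$, almost freeness and local linearity of the $G$-action, the rel--$C^\infty$ boundary structure, and orientability together with the dimension count.

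First, I would identify $\widetilde{\fs}\inv(0)/G$ with $\cW^\mu_{g,n,A}$, following \cite[Remark 4.13]{HS22}. An element $(t,u,C,x_*,j,\iota,\alpha,\eta)$ lies in $\widetilde{\fs}\inv(0)$ exactly when $\lambda_\cU(u,\iota) = [\normalfont\text{Id}]$, $\alpha = 0$, and $\eta = 0$, in which case equation \eqref{ps-pert-cr} reduces to $\delbar_J\tilde{u} = t\,\nu(\phi\tilde{j},\tilde{u})$, so $(t,u,C,x_*,j)$ defines an element of $\cW^\mu_{g,n,A}$. Conversely, given such a tuple, very ampleness of $\fL_u^{\otimes p}$ furnishes a framing $\iota$ satisfying the slice condition after a unique $G$-translation; this yields a continuous bijection $\widetilde{\fs}\inv(0)/G\to \cW^\mu_{g,n,A}$, which is a homeomorphism for the Gromov topology on both sides.

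Next, the $G$-action on $\widetilde{\cT}$ is almost free because a framed stable map admits only finitely many automorphisms, and locally linear by a verbatim extension of the argument in \cite[Lemma 4.9]{HS22} to the base $[0,1]\times \widetilde{\cM}$. The obstruction section $\widetilde{\fs}$ is manifestly $G$-equivariant via the natural $G$-action on $\fs\fu(N+1)$ by conjugation and on $E_{(\iota,u)}$ through the framing. The boundary of $\widetilde{\cT}$ is the preimage of $\{0,1\}\sub [0,1]$ under the topological submersion $P$, decomposing as $\widetilde{\cT}_0 \sqcup \widetilde{\cT}_1$; over the former, $\widetilde{\cK}_0$ agrees with the pullback of the chart of Construction \ref{high-level-description-defined} along the cover $\Mbar_{g,n}^\mu\to \cc{M}_{g,n}$, while over the latter, $\widetilde{\cK}_1$ is a global Kuranishi chart for $\Mbar_{g,n}^\mu(X,A;J,\nu)$ by the same construction.

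Finally, orientability is established by equipping the virtual bundle $T_{\widetilde{\cT}/\widetilde{\cM}}\ominus \widetilde{\cE}$ with a stable complex structure, using the complex linearity of the linearised operator of \eqref{ps-pert-cr} on the vertical tangent bundle together with the complex bundle structures on each summand of $\widetilde{\cE}$; combining this with the complex orientation of $\widetilde{\cM}$ orients the quotient $T_{\widetilde{\cT}/G}\ominus \widetilde{\cE}/G$, exactly as in \cite[\textsection 4.4]{HS22}. A dimension count gives the expected virtual dimension, higher by one than that of $\Mbar_{g,n}^\mu(X,A;J,\nu)$ as befits a cobordism. The main subtlety I foresee is ensuring compatibility of the rel--$C^\infty$ structures across the two boundary strata; this is handled by running the argument of \cite[Proposition 4.4]{HS22} uniformly in $t\in[0,1]$, which is permissible because $t\nu$ depends smoothly on $t$ and is supported away from the nodes by Remark \ref{support-away-from-nodes}.
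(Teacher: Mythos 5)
Your proposal is correct and follows essentially the same route as the paper, which disposes of this corollary in one line by asserting that the arguments of \cite[\textsection 4]{HS22} carry over word for word once the preceding proposition has established the rel--$C^\infty$ structure of $\widetilde{\cT}^{\reg}$ over $[0,1]\times\widetilde{\cM}$. Your four verification steps (zero-locus identification via \cite[Remark 4.13]{HS22}, almost freeness and local linearity of the $G$-action, identification of the two boundary strata, and the stable complex orientation with the dimension count) are precisely the content of that citation spelled out.
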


In particular, $\widetilde{\cK}_{n,0}$ is an oriented global Kuranishi chart for $\Mbar^\mu_X \coloneq \Mbar_{g,n}^{\,J}(X,A)\times_{\cc{M}_{g,n}}\Mbar_{g,n}^\mu$ with 
	\begin{equation}\label{vfc-of-good-cover}(\eva\times \normalfont\text{st})_*\vfc{\Mbar^\mu_X}_{\widetilde{\cK}_0} = d_\mu\, (\eva\times \normalfont\text{st})_*\vfc{\Mbar_{g,n}^{\,J}(X,A)}.\end{equation}
due to 

\begin{lemma}\label{vfc-of-cover} Suppose $\cK = (G,\cT,\cE,\fs)$ is an oriented global Kuranishi chart of a space $M$ and $\cT$ admits a degree-$d$ cover $p \cl \cT' \to\cT$. If $p$ is $G$-equivariant with respect to some $G$-action on $\cT'$, then $\cK' \coloneq (G,\cT',p^*\cE,p^*\fs)$ is a global Kuranishi chart for $M' \coloneq (p^*\fs)\inv(0)/G$. The canonical map $\cc{p}\cl M'\to M$ is a degree-$d$ cover and
	$$\cc{p}_*\vfc{M'}_{\cK'} = d\,\vfc{M}_\cK.$$
\end{lemma}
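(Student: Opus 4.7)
The plan is to verify that $\cK'$ satisfies the axioms of a global Kuranishi chart, identify $\cc{p}$ as a degree-$d$ cover, and then trace through the definition of the virtual fundamental class to get the factor of $d$.

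First, since $p$ is a $G$-equivariant topological cover, the $G$-action on $\cT'$ is automatically almost free and locally linear: stabilizers of points in $\cT'$ inject (via $p_*$) into stabilizers of their images in $\cT$, and $G$-invariant charts for $\cT$ lift along $p$ to $G$-invariant charts for $\cT'$. The pullback $p^*\cE$ is naturally a $G$-equivariant vector bundle with equivariant section $p^*\fs$, and $(p^*\fs)\inv(0) = p\inv(\fs\inv(0))$. Orientations pull back. Taking quotients, $p$ descends to a degree-$d$ orbifold covering map $\bar{p}\cl \cT'/G \to \cT/G$ that restricts to $\cc{p}\cl M'\to M$, also of degree $d$. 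This establishes that $\cK'$ is an oriented global Kuranishi chart for $M'$ and simultaneously verifies the statement about $\cc{p}$.

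Next, the Thom class is natural under pullback of oriented (orbi)bundles, so $p^*\cE/G = \bar{p}^*(\cE/G)$ and hence
\[ (p^*\fs)^*\tau(p^*\cE/G) = \bar{p}^*\bigl(\fs^*\tau(\cE/G)\bigr) \]
as elements of $\chml^{\rank(\cE)}(\cT'/G\mid M';\bQ)$. For any class $\alpha \in \chml^{\vdim}(M;\bQ)$, extended to a neighbourhood in $\cT/G$, this gives
\[ (p^*\fs)^*\tau(p^*\cE/G)\cdot \cc{p}^*\alpha \;=\; \bar{p}^*\bigl(\fs^*\tau(\cE/G)\cdot \alpha\bigr). \]

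Finally, since $\bar{p}$ is a degree-$d$ (orbifold) cover between oriented rational homology manifolds of the same dimension, integration satisfies $\int_{\cT'/G}\bar{p}^*(-) = d\cdot \int_{\cT/G}(-)$ on compactly supported classes. Unwinding \eqref{eq:vfc-de} for $\cK'$ and pairing with $\cc{p}^*\alpha$ yields
\[ \lspan{\cc{p}_*\vfc{M'}_{\cK'},\alpha} \;=\; \lspan{\vfc{M'}_{\cK'},\cc{p}^*\alpha} \;=\; d\cdot \lspan{\vfc{M}_\cK,\alpha}, \]
which is the claimed identity. The only minor subtlety is the verification that the naturality of the Thom class and the degree formula are valid in the orbifold setting, both of which are standard for almost-free locally linear quotients.
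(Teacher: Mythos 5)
Your proposal is correct and follows essentially the same route as the paper, whose proof is just the compressed version of yours: the first part is declared straightforward, and the identity of virtual classes is deduced from the functoriality of the Thom class under pullback together with the fact that the induced map of homotopy quotients $\cT'_G\to\cT_G$ has degree $d$ (your formulation in terms of the orbit spaces $\cT'/G\to\cT/G$ as oriented rational homology manifolds is equivalent with $\bQ$-coefficients for almost free actions). Your extra detail on lifting local linearity and on the adjunction $\lspan{\cc{p}_*\vfc{M'}_{\cK'},\alpha}=\lspan{\vfc{M'}_{\cK'},\cc{p}^*\alpha}$ fills in exactly the steps the paper leaves implicit.
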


\begin{proof} The first part is straightforward. The relation between the virtual fundamental classes follows from the functoriality of Thom classes and because the map $\cT'_G\to \cT_G$ of homotopy quotients has degree $d$.
\end{proof}
 
It remains to show that 
	\begin{equation*}\label{vfc-ruan-tian}d_\mu \sigma^A_{g,n}  = (\eva\times \normalfont\text{st}_\mu)_*\vfc{\Mbar_{g,n}^\mu(A;J,\nu)}_{\widetilde{\cK}_1}.\end{equation*}

 This is a consequence of the following general result. It is the analogue of \cite[Theorem 5.2]{IP19b} in our setting. Compare also with Lemma 3.6 op. cit.

 \begin{lemma}\label{pseudocycle-vfc} Let $M$ be an oriented manifold of dimension $d$ inside a compact space $\cc{M}$ that admits a global Kuranishi chart $\cK = (G,\cT,\cE,\fs)$ of dimension $d$. Suppose $\fs$ intersects the zero section transversely over the preimage of $M$ and $G$ acts freely on that locus. Let $f \cl \cc{M}\to N$ be a continuous map to (the orbit space of) a smooth compact oriented orbifold, so that $f|_M$ is a pseudocycle. Then 
$f_*\vfc{\cc{M}} = [f|_M]$ in $H_*(N;\bQ)$.
 \end{lemma}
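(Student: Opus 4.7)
The plan is to use the hypothesis that $\fs$ is transverse and $G$ acts freely over the preimage of $M$ to identify $\vfc{\cc M}$ locally with the fundamental class of the smooth manifold $M$, and then match $f_*\vfc{\cc M}$ with $[f|_M]$ by exploiting the codimension bound on $\Omega_f$.

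First, I would pick a $G$-invariant open neighborhood $V$ of the preimage of $M$ in $\cT$ on which $G$ acts freely and $\fs|_V$ is transverse to the zero section. Setting $U := V/G$ produces an oriented smooth manifold of dimension $\dim(\cT/G)$ in which $M = (\fs^{-1}(0) \cap V)/G$ appears as an oriented $d$-submanifold cut out transversely. By the standard Thom-class argument, the restriction $\fs^*\tau(\cE/G)|_U$ coincides with the Thom class of the normal bundle of $M$ in $U$ and hence represents the Poincar\'e dual of $M$ in $U$.

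Next I would verify the containment $f(B) \subseteq \Omega_f$ for $B := \cc M \setminus M$. Since $\cc M$ is compact and $M$ is open in $\cc M$, every point of $B$ is the limit of a sequence in $M$ that eventually leaves every compact subset of $M$; continuity of $f$ together with the definition of $\Omega_f$ forces $f(B) \subseteq \Omega_f$, so $\dim f(B) \leq d-2$. In particular the restriction map $H^{BM}_d(N;\bQ) \to H^{BM}_d(N \setminus \Omega_f;\bQ)$ is an isomorphism.

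For $\alpha \in H^d(N;\bQ)$, the codimension bound on $\Omega_f$ lets me represent $\alpha$ by a \v{C}ech cocycle vanishing on a neighborhood of $\Omega_f$; its pullback $f^*\alpha$ then vanishes on a neighborhood of $B$ in $\cc M$ and hence extends to $U$. Combining the definition of $\vfc{\cc M}$ with the Thom isomorphism on $U$ yields
\[
\langle f_*\vfc{\cc M}, \alpha\rangle \;=\; \int_U \fs^*\tau(\cE/G) \cup f^*\alpha \;=\; \int_M (f|_M)^*\alpha,
\]
and the right-hand side is precisely the pairing $\langle [f|_M], \alpha\rangle$ coming from the pseudocycle construction of \cite[\textsection 3]{IP19b}. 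Running this over all $\alpha$ identifies $f_*\vfc{\cc M}$ with $[f|_M]$ in $H_d(N;\bQ)$.

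The main obstacle will be the careful management of support conditions together with the rel--$C^\infty$/orbifold structure of $U$ and $N$: one must check that the Thom class of the orbibundle $\cE/G$ restricts on the transverse, free locus to an ordinary Thom class, and that \v{C}ech cocycle representatives may genuinely be chosen to vanish on a neighborhood of $\Omega_f$. Both statements are standard rationally, but they demand a consistent bookkeeping in the mixed smooth/orbifold/topological setting of \cite{HS22}.
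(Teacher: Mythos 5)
Your argument is essentially the paper's: both proofs localise $\vfc{\cc{M}}$ to the transverse, free locus, where $\fs^*\tau(\cE/G)$ becomes the Poincar\'e dual of $M$, and then use the codimension bound on the bad set to see that the pairing only detects $M$. The paper organises this as a commutative square involving $\chml^d_c(N\sm P;\bQ)$ with $P := f(\cc{M}\sm M)$ and the extension-by-zero maps $i_!$, $j_!$; your choice of a \v{C}ech representative of $\alpha$ supported away from $\Omega_f$ is exactly a lift along $i_!$, so the two formulations are dual versions of the same step. The one place where you assert more than the hypotheses give is the containment $f(\cc{M}\sm M)\subseteq \Omega_{f}$: your justification presumes every point of $\cc{M}\sm M$ is a limit of a sequence escaping every compact subset of $M$, i.e.\ that $M$ is dense in $\cc{M}$, which is not stated in the lemma (it holds in the intended application). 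The paper avoids this particular claim by working with $P$ rather than $\Omega_f$, but it then needs $\check{H}^{BM}_d(N\sm P)\cong \check{H}^{BM}_d(N)$, which implicitly requires $\dim P\leq d-2$; so this is a shared, application-justified assumption rather than a defect of your route alone. Your closing remarks about support bookkeeping and the orbibundle Thom class are the right things to check and present no real obstruction.
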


\begin{proof} Set $P \coloneq f(\cc{M}\sm M)$ and $M\inn \coloneq f\inv(N\sm P)$. Then $M\inn$ is an open submanifold of $M$ and $f \cl M\inn \to N\sm P$ is proper. In particular, $[f] = f_*[M\inn]\in H_d^{\text{BM}}(N\sm P;\bQ) \cong H_d^{\text{BM}}(N;\bQ)$. Let $j \cl M \hkra \cc{M}$ be the inclusion inducing $j_! \cl \chml^*_c(M\inn;\bQ)\to \chml^*(\cc{M};\bQ)$. By assumption on the Kuranishi section $\fs$, the class $(j_!)^*\vfc{\cc{M}}$ in $\chml^d_c(M\inn;\bQ)\dul$ corresponds to evaluation at the fundamental class $[M\inn]$. This implies that the diagram
\begin{center}\begin{tikzcd}
    \chml^d_c(N\sm P;\bQ)\arrow[r,"(f|_{M})^*"]\arrow[d,"i_!"] & \chml^d_c(M;\bQ)\arrow[d,"j_!"]\arrow[r,"\pcd"]&H_0(M;\bQ)\arrow[d,""]\\
     \chml^d(N;\bQ)\arrow[r,"f^*"] & \chml^d(\cc{M};\bQ)\arrow[r,"\vfc{\bar{M}}"]&\bQ
\end{tikzcd}\end{center}
commutes. Hence, $f_*\vfc{\cc{M}}$ agrees with the evaluation at $[f|_M]$ and thus the two define the same class in homology.
\end{proof}

\begin{remark} In \cite{IP19}, Ionel and Parker define the virtual fundamental class associated to a space with a thin compactification. The proof above shows that if $M$ is a smooth manifold with a thin compactification $\cc{M}$ that satisfies the assumptions of Lemma \ref{pseudocycle-vfc}, then the two notions of virtual fundamental class agree. \end{remark}

\appendix
\section{Virtual fundamental classes of cut-down moduli spaces}\label{subsec:embedded-charts}

This section is the technical backbone of \textsection\ref{sec:axioms}. We investigate how geometric relations between global Kuranishi charts translate into relations between their virtual fundamental classes.

\begin{definition}\label{morphism} A \emph{morphism of global Kuranishi charts} $\ff\cl \cK' =(G',\cT',\cE',\fs')\to \cK= (G,\cT,\cE,\fs)$ consists of a group morphism $\alpha\cl G\to G'$, an $\alpha$-equivariant map $f\cl \cT\to \cT'$ and an $\alpha$-equivariant vector bundle morphism $\tilde{f}\cl \cE\to f^*\cE'$ so that $\tilde{f}\fs = \fs' f$. We call $\ff$ an \emph{embedding} if $\alpha = \ide$, $f$ is an embedding of manifolds, and if $\tilde{f}$ is an injection of vector bundles.\\ If $\cK$ and $\cK'$ are rel--$C^\infty$ over base spaces $\cM$ respectively $\cM'$, we say the morphism is of \emph{class rel--$C^\infty$} if $\alpha$ is smooth and $f$ and $\tilde{f}$ are rel--$C^\infty$ covering a smooth morphism $\cM\to \cM'$. 
\end{definition}

\begin{remark} If $\ff = (\alpha,f,\tilde{f})$ is such that $\alpha$, $f$ and $F$ are embeddings in the respective category, we can replace $(G',\cT',\cE',\fs')$ with $(G,G\times_{G'} \cT', G\times_{G'} \cE',\ide\times\fs')$. \end{remark}

If $\cT/\cM$ is a rel--$C^\infty$ manifold with smooth base, its tangent microbundle has a canonical (equivariant) vector bundle lift given by $T_\cT\coloneq q^*T_\cM\dsm T_{\cT/\cM}$, where $q\cl \cT\to \cM$ is the structural map. Given a rel--$C^\infty$ embedding $j\cl \cT'/\cM' \hkra \cT/\cM$, where $\cM'$ is a smooth submanifold of $\cM$, we define the \emph{normal bundle} of $\cT'/\cM'$ inside $\cT/\cM$ to be 
$$N_{\cT'/\cT} \coloneq q^*N_{\cM'/\cM} \oplus N^v_{\cT'/\cT_{\cM'}},$$
where $N^v_{\cT'/\cT_{S'}} \coloneq j^*T_{\cT_{\cM'}/\cM'}/T_{\cT'/\cM'}$ is the \emph{vertical normal bundle} with $\cT_{\cM'} \coloneq \cT\times_\cM\cM'$.

\begin{definition}\label{Virtual Normal Bundle} Suppose $j \cl \cK'\hkra \cK = (G,\cT/\cM,\cE,\fs)$ is a rel--$C^\infty$ embedding. We call $ N_{\cK'/\cK}\coloneq N_{\cT'/\cT} - \cD$ 
	its \emph{virtual normal bundle}, where $\cD= \coker(\tilde{j})$. 
\end{definition}
Throughout, we assume our global Kuranishi charts to be oriented in the following sense, equivalent to \cite[\textsection5.4]{AMS21}. Clearly, if both $\cK$ and $\cK'$ are oriented, then so is $N_{\cK'/\cK}$. 

\begin{definition}\label{} A \emph{(Borel equivariant) orientation} of a Kuranishi chart $\cK= (G,\cT,\cE,\fs)$ consists of a $\bQ$-orientation of the virtual vector bundle $(T\cT)_G - \lc{\gk}$ and $\cE_G$ over $\cT_G$.   
\end{definition}

We need orientations of both $\cT/G$ as well as $\cE$ in order to define the virtual fundamental class. By \cite[Lemma 5.11]{AMS21}, this is equivalent to a $\bQ$-orientation of $(T\cT)_G - \lc{\gk}-\cE$.

\begin{notation*}
	Given $A\sub B$, we write $H_*(B\rht A;\bQ) \coloneq H_*(B,B\sm A;\bQ)$ and similarly for cohomology.
\end{notation*}

\begin{example} If $\cT$ and the action on it are smooth, there exists an embedding $\cT\times\fg\hkra T\cT$, where $\fg = \text{Lie}(G)$. Taking a $G$-invariant complement $\cD$ of this distribution, any choice of equivariant Thom class $\tau \in H^{\dim(\cT/G)}_G(\cD\rht\cT,\bQ)$ defines a $\bQ$-orientation of $\cT-\lc{\fg}$. 
\end{example}

Given an oriented orbifold $\cc{\cT}$ and an oriented suborbifold $\cc{\cT'}\hkra \cc{\cT}$ of codimension $k$, we have the Poincar\'e duality isomorphisms
$$H_{k}(\cc{\cT}\rht \cc{\cT}';\bQ) \cong H^{\dim(\cc{\cT'})}_c(\cc{\cT}\rht \cc{\cT}';\bQ)\cong H_0(\cc{\cT}';\bQ)$$
Thus, $H^{k}(\cc{\cT}\rht \cc{\cT}';\bQ) \cong \bQ^{|\pi_0(\cc{\cT}')|}$ and taking the sum of all generators, we obtain the \emph{Poincar\'e dual} $\pcd(\cc{\cT}')$ of $\cc{\cT}'$ in $\cc{\cT}$. The composite $j_!j^*\cl H\ucc(\cc{\cT};\bQ)\ra H^{*+k}(\cc{\cT};\bQ)$ is given by multiplication with the image of $\pcd(\cc{\cT}')$ in $H^k(\cc{\cT};\bQ)$.

\begin{remark}\label{rem:poinare-dual} In the case of thickenings as above, we can give an explicit description of the Poincar\'e dual in terms of the normal bundle. Factor the inclusion $j \cl \cT'\to \cT$ as 
	$$\cT'\xra{i} \widehat{\cT} \coloneq \cT'\times_\cM\cM' \to \cT,$$
	where we equip $\widehat{\cT}$ with the canonical $G$-orientation. A relative version of the equivariant tubular neighbourhood theorem shows that $\pcd_{\widehat{\cT}}(\cT'/G)$ corresponds to the equivariant Thom class of $N^v_{\cT'/\cT}$ under the canonical isomorphism induced by the tubular neighbourhood. Meanwhile, $\pcd_{\cT/G}(\widehat{\cT}/G) = q^*\pcd_{\cM/G}(\cM'/G)$, which corresponds to the equivariant Thom class of $q^*N_{\cM'/\cM}$. Thus, 
	\begin{equation}\label{}\pcd_{\cT/G}(\cT'/G) = \pcd_{\cT/G}(\widehat{\cT}/G)\cdot\pcd_{\widehat{\cT}}(\cT'/G). \end{equation}
\end{remark}

\begin{proposition}\label{vfc-embedded-chart} Let $j\cl \cK'\hkra\cK$ be a rel--$C^\infty$ embedding of oriented rel--$C^\infty$ global Kuranishi charts, covering a smooth embedding $\cM' \hkra\cM$ of oriented base spaces. If $N_{\cK'/\cK} \oplus \cD = N_{\cT'/\cT}$, then
	\begin{equation}\label{vfc-ec}
		j_*(e_G(\cD)\cap \vfc{\fM'}) = \pcd(\cT'/G)\cap \vfc{\fM}.
	\end{equation} 
\end{proposition}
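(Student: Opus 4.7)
The plan is to reduce both sides of \eqref{vfc-ec} to the same integral over $\cT/G$ via the closed embedding $j\cl \cT'/G\hkra \cT/G$, combining the multiplicativity of equivariant Thom classes on direct sums with orbifold Poincar\'e duality. Since both sides define linear functionals on $\chml^*(\fM;\bQ)$, it suffices to evaluate on an arbitrary class $\alpha$ and show equality in $\bQ$.

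First I would choose a $G$-invariant Hermitian metric on $j^*\cE$ and split the short exact sequence $0\to \cE'\to j^*\cE\to \cD\to 0$ to obtain an equivariant isomorphism $j^*\cE \cong \cE'\oplus \cD$. The key identity is then
\begin{equation}\label{plan-thom}
j^*\lbr{\fs^*\tau(\cE/G)} = (\fs')^*\tau(\cE'/G)\cdot e_G(\cD),
\end{equation}
which follows from $\tau(\cE'\oplus \cD/G) = \pr_1^*\tau(\cE'/G)\cdot \pr_2^*\tau(\cD/G)$ together with the observation that $\fs|_{\cT'}$ has $\cD$-component the zero section (because it factors through $\cE'$ by the structural compatibility $\fs\circ j = \tilde{j}\circ \fs'$), so pulling back $\tau(\cD/G)$ along the zero section produces $e_G(\cD)$.

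The hypothesis $N_{\cT'/\cT} = N_{\cK'/\cK}\oplus \cD$ enters as the statement that the vertical derivative of $\fs$ along the normal directions induces a surjection $N_{\cT'/\cT}\to \cD$ with a chosen splitting; this is the transversality needed for $\cT'/G$ to be a closed oriented suborbifold of $\cT/G$ of codimension $\rank(N_{\cT'/\cT})$, with orientations of $\cK$, $\cK'$, $\cD$, and $N_{\cK'/\cK}$ all fitting together consistently. This is precisely the input required to invoke the orbifold projection formula developed in \textsection\ref{sec:orbifold-intersection}:
\begin{equation}\label{plan-pd}
\int_{\cT'/G} j^*\gamma = \int_{\cT/G} \pcd(\cT'/G)\cdot \gamma
\end{equation}
for $\gamma\in H^*(\cT/G;\bQ)$ with fibrewise compact support near $\fM$. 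Combining \eqref{plan-thom} and \eqref{plan-pd}, the evaluation of the left-hand side of \eqref{vfc-ec} on $\alpha$ becomes
\[
\int_{\cT'/G} (\fs')^*\tau(\cE'/G)\cdot e_G(\cD)\cdot j^*\alpha \;=\; \int_{\cT'/G} j^*\lbr{\fs^*\tau(\cE/G)\cdot \alpha}\;=\; \int_{\cT/G} \pcd(\cT'/G)\cdot \fs^*\tau(\cE/G)\cdot \alpha,
\]
which is exactly the evaluation of the right-hand side on $\alpha$.

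The main obstacle I anticipate is the orientation bookkeeping in the equivariant rel--$C^\infty$ setting: one must verify that the Borel equivariant orientations of the virtual bundles $T\cT_G - \lc{\gk} - \cE$ and $T\cT'_G - \lc{\gk} - \cE'$ are compatible with the chosen splittings of $j^*\cE$ and $N_{\cT'/\cT}$, so that the signs in \eqref{plan-thom} and \eqref{plan-pd} align. Once the orientation compatibility is checked, the remainder is a formal consequence of the multiplicativity of equivariant Thom classes on direct sums and the orbifold push-pull formula from \textsection\ref{sec:orbifold-intersection}.
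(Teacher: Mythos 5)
Your proposal is correct and follows essentially the same route as the paper: the identity $j^*\fs^*\tau(\cE/G)=\fs'^*\tau(\cE'/G)\cdot e_G(\cD)$ is exactly the paper's reduction to the auxiliary chart $(G,\cT',j^*\cE,\fs')$ with enlarged obstruction bundle, and your push--pull step \eqref{plan-pd} is the commutative diagram the paper uses in the case $j^*\cE=\cE'$. You have merely packaged the two reductions as a single evaluation against test classes $\alpha$, which is a presentational difference, not a mathematical one.
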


Here we identify $e_G(\cD)\in H\ucc_G(\cT';\bQ)$ with the corresponding element in $H\ucc(\cT'/G;\bQ)$ and equip $\cD$ with the unique orientation so that $e_G(\cE'\oplus D) = e_G(\cE)|_{\cT'}$.

\begin{proof} Suppose first that $j^*\cE = \cE'$. The equality
	\begin{equation}\label{normal-vfc-same-obstruction} j_*\vfc{\fM'}_{\cK'} = \pcd(\cT'/G)\cap \vfc{\fM}_\cK\end{equation}
	follows from the commutativity of 
	\begin{center}\begin{tikzcd}
			\chml^{\vdim(\fM')+*}(\cM';\bQ) \arrow[rr,"{\fs'}^*\tau(\cE'/G)"] &&H^{\dim(\cT'/G)+*}_c(\cT'/G\rht\cM';\bQ) \arrow[d,"j_!"]\\ 
			\chml^{\vdim(\fM')+*}(\cM;\bQ)\arrow[u,"j^*"]\arrow[r,"\cdot\pcd(\cT'/G)"]
			&\chml^{\vdim(\fM)+*}(\cM;\bQ)\arrow[r,"\fs^*\tau(\cE/G)"] &H_c^{\dim(\cT/G)+*}(\cT/G;\bQ) \end{tikzcd} \end{center}
	where we need the convention 
	$$\inpr{\alpha\cap \vfc{\fM}}{\beta} = \inpr{\vfc{\fM}}{\beta\cdot \alpha}.$$
	Now assume $\rank(\cD) > 0$. Let $\wt{K} \coloneq (G,\cT',j^*\cE,\fs')$. This is also a global Kuranishi chart for $\cM'$, albeit with a larger obstruction bundle. By the definition of the virtual fundamental class,
	$$e_G(\cD)\cap \vfc{\fM'}_{\cK'} = \vfc{\fM'}_{\wt{\cK}}.$$
	This completes the proof.
\end{proof}

Proposition \ref{vfc-embedded-chart} is not optimal since one might have $\cE' = j^*\cE\dsm N_{\cT'/\cT}$, in which case we would expect that the virtual fundamental classes agree, at least under certain assumptions.

\begin{lemma}\label{quasi-isomorphic-global-charts} Let $\cK'$ and $\cK$ be rel--$C^\infty$ smooth global Kuranishi charts over $\cM$ for $M$. Suppose there exists a rel--$C^\infty$ embedding $j \cl \cK' \hkra \cK$ over $\cM$, inducing a quasi-isomorphism 
	\begin{equation}\label{quasi-gkc}[T_{\cT'/\cM}|_{{\fs'}\inv(0)}\xra{D\fs'}\cE'|_{{\fs'}\inv(0)}]\to [T_{\cT/\cM}|_{{\fs}\inv(0)}\xra{D\fs} \cE|_{{\fs}\inv(0)}] \end{equation} 
	of complexes of vector bundles. Then $\vfc{M}_{\cK'} = \vfc{M}_\cK$.
\end{lemma}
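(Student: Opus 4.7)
The plan is to apply Proposition \ref{vfc-embedded-chart} and then upgrade the resulting identity to the desired equality via a local Thom class computation in a tubular neighborhood of $\cT'$ in $\cT$, made possible by the quasi-isomorphism.

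Since both Kuranishi charts lie over the same base $\cM$, the normal bundle $N_{\cT'/\cT}$ agrees with its vertical part, so Proposition \ref{vfc-embedded-chart} gives
\begin{equation*}
    j_*(e_G(\cD)\cap \vfc{M}_{\cK'}) = \pcd(\cT'/G)\cap \vfc{M}_{\cK}.
\end{equation*}
Next, translate the hypothesis \eqref{quasi-gkc} into geometric data. The termwise short exact sequence of two-term complexes on $M$,
\begin{equation*}
0\to [T_{\cT'/\cM}|_M\xra{D\fs'}\cE'|_M]\to [T_{\cT/\cM}|_M\xra{D\fs}\cE|_M]\to [N^v_{\cT'/\cT}|_M\xra{\delta}\cD|_M]\to 0,
\end{equation*}
has a long exact sequence which, combined with the quasi-isomorphism, forces the quotient complex to be acyclic; equivalently, $\delta$ is an isomorphism of vector bundles along $M$. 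By equivariant extension and shrinking, $\delta$ remains an isomorphism on a $G$-invariant open neighborhood of $M$ in $\cT'$.

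Choose a $G$-equivariant rel--$C^\infty$ tubular neighborhood $\pi\cl W\to \cT'$ of $\cT'$ in $\cT$, together with a $G$-equivariant splitting $\cE|_W \cong \pi^*\cE'\oplus \pi^*\cD$ extending the canonical decomposition along $\cT'$. Writing $\fs = (\fs_1,\fs_2)$ in this splitting, the compatibility $\tilde{j}\,\fs' = \fs\, j$ forces $\fs_1|_{\cT'} = \fs'$ and $\fs_2|_{\cT'} = 0$, so the vertical derivative of $\fs_2$ along $M$ equals $\delta$. By compactness of $M$, after further shrinking to a $G$-invariant neighborhood $V$ of $M$, the section $\fs_2$ is transverse to the zero section of $\pi^*\cD$ on $V$ with zero locus exactly $\cT'\cap V$.

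Using multiplicativity of the equivariant Thom class, on $V/G$ we have $\fs^*\tau(\cE/G) = \fs_1^*\tau(\pi^*\cE'/G)\cdot \fs_2^*\tau(\pi^*\cD/G)$; the transversality of $\fs_2$ makes $\fs_2^*\tau(\pi^*\cD/G)$ Poincar\'e dual to $\cT'\cap V$ in $V/G$, so for any $\beta \in \chml^{\vdim}(M;\bQ)$,
\begin{equation*}
\lspan{\vfc{M}_\cK,\beta} = \int_{\cT/G}\fs^*\tau(\cE/G)\cdot \beta = \int_{\cT'/G}\fs'^*\tau(\cE'/G)\cdot \beta = \lspan{\vfc{M}_{\cK'},\beta}.
\end{equation*}
The main obstacle is the transversality-and-shrinking step: one must produce a $G$-equivariant rel--$C^\infty$ tubular neighborhood $W$ and a neighborhood $V\subset W$ of $M$ on which $\fs_2$ has zero locus exactly $\cT'\cap V$, all in the rel--$C^\infty$ category over $\cM$. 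This follows from the equivariant tubular neighborhood theorem and the compactness of $M$, but some care is required to preserve the rel--$C^\infty$ structure.
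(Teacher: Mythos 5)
Your proposal is correct and follows essentially the same route as the paper's proof: a relative tubular neighbourhood, a splitting $\cE\cong p^*\cE'\oplus p^*\cD$ with $\fs=(\fs_1,\fs_2)$, transversality of $\fs_2$ deduced from the quasi-isomorphism, and the identification of $\fs_2^*\tau(p^*\cD)$ with the Poincar\'e dual of its zero locus (the paper handles the possible discrepancy between $\fs_2\inv(0)$ and $\cT'$ by germ equivalence, matching your shrinking step). The opening appeal to Proposition \ref{vfc-embedded-chart} is superfluous, since your final Thom-class computation never uses it.
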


\begin{proof} Using a relative tubular neighbourhood, we may assume that $\cT$ admits a vector bundle structure $p \cl \cT\to \cT'$. Fix a splitting $\cE|_{\cT'} = \cE'\oplus \cD$. We may assume without loss of generality that $\cE= p^*\cE'\oplus p^*\cD$, where $p \cl \cT\to \cT'$ is the bundle map. Write $\fs = \fs_1\oplus \fs_2$ with respect to this splitting.\par 
	As $\cK' \hkra \cK$, we have $\cT'\sub \fs_2\inv(0)$. Given $x \in Z \coloneq {\fs'}\inv(0) = \fs\inv(0)$ we have an associated commutative diagram of vertical derivatives
	\begin{center}\begin{tikzcd}
			T_{\cT'/\cM,x} \arrow[rr,"d^v\fs'(x)"] \arrow[d,hook,""]&& \cE'_x \arrow[d,hook,""]\\
			T_{\cT'/\cM,x}\oplus \cT_x \arrow[rr,"d^v\fs_1(x)\oplus d^v\fs_2(x)"] && \cE'_x\oplus \cD_x \end{tikzcd} \end{center}
	Since $\coker(d^v\fs(x)) = \coker(d^v\fs'(x))$ is a quotient of $\cE'_x$, it follows that $\coker(d^v\fs_2(x)) = 0$. Replacing $\cT$ by a neighbourhood of $Z$, we may assume $\fs_2\pf 0$. Set $S \coloneq \fs_2\inv(0)$.  As $\dim(S) = \dim(\cT')$, the two global Kuranishi charts $\cK_2 \coloneq (G,S,p^*\cE'|_S,\fs_1|_S)$ and $\cK'$ are related by (Germ equivalence).\par 
	Finally, $\vfc{M}_{\cK_2} = \vfc{M}_{\cK}$, since the Poincar\'e dual of $S$ in $\cT$ is $\fs_2^*\tau_{p^*\cD}$.   
\end{proof}

In other words, the virtual fundamental class only depends on the global Kuranishi chart up to quasi-isomorphism, similar to \cite[Proposition 5.3]{BF97}.

\begin{example} Both the embedding condition and \eqref{quasi-gkc} are necessary for this to hold. To see this, consider $\cT= \bR =\cT'$ and $\cE = \lc{\bR^2} = \cE'$ with $\fs(t) = t^2$ and $\fs(t) = t^3$. Or $\cT' = \bR\times\{0\}\sub \cT =\bR^2$ with $\cE' = \lc{\bR}$ and $\cE = \lc{\bR^2}$ and $\fs(t,r) = (t^2,r^2)$ and $\fs'(t) = t^2$.
\end{example}

\section{Intersection theory on orbifolds}\label{sec:orbifold-intersection}

\subsection{Orbifolds}

In this appendix, we prove several results we needed in the previous sections. They mainly consist of projection formulas and the definition of a trace map in a quite general setting. We found neither in the literature in the form, respectively, generality necessary for our purposes.\par
For us, an \emph{orbifold} is a tuple $\fX = (\cc{\fX},[X])$, where $\cc{\fX}$ is a topological space, called the \emph{coarse moduli space of $\fX$}, $X$ is a (topological or smooth) proper \'etale groupoid and $[\,]$ denotes its Morita equivalence class. Here proper means that the source and target maps combine to give a proper map $X_1\to X_0\times X_0$, while the groupoid is \'etale if both source and target map are local homeomorphisms, respectively diffeomorphisms. $X$ is called a \emph{presentation} of $\fX$. Refer to \cite{Be04} for more details. By \cite[Corollary~1.2]{Par22}, any orbifold with only finitely many isotropy types can be represented by a \emph{global quotient}, that is, a proper \'etale groupoid of the form $[G\times M\rightrightarrows M]$ where $G$ is a compact Lie group acting almost freely, and possibly smoothly, on the manifold $M$. We denote by $[M/G]$ the orbifold presented by such a global quotient.\par
A proper \'etale groupoid $X = [X_1\rightrightarrows X_0]$ is \emph{orientable} if $X_0$ and $X_1$ are orientable and if the source and target maps from $X_1$ to $X_0$ are orientation-preserving. We call an orbifold \emph{orientable} if each representing groupoid is orientable. Thus, $[M/G]$ is orientable if and only if $M$ is orientable and and $G$ acts by orientation-preserving homeomorphisms.

\begin{remark} This notion of orientability is strictly stronger than requiring the orientability of the coarse moduli space as the example of the Klein bottle shows, whose quotient by an $S^1$-action is $S^1$ itself.
\end{remark}

By \cite[p.27]{Be04}, there is a canonical isomorphism 
\begin{equation}\label{eq:orbifold-cohomology}H^*([M/G],\bQ)\cong H^*_G(M,\bQ). \end{equation}
Let $q \cl M_G \to M/G$ denote the canonical map to the quotient; it defines by \cite[Proposition 36]{Be04} an isomorphism 
$$H^*(M/G,\bQ) \to H^*_G(M,\bQ).$$

As the coarse moduli space of an oriented orbifold is an oriented homology $\bQ$-manifold, it satisfies rational Poincar\'e duality
$$\chml\ucc_c(Z,\bQ)\cong H_{\dim(\fX)-*}(\cc{\fX},\cc{\fX}\sm Z,\bQ)$$
by \cite{Bre12}.\footnote{The proof of \cite[Lemma~A.6.4]{P16} also generalises easily to this setting.} Replacing singular homology with Borel--More homology, the same isomorphism holds with ordinary \v{C}ech cohomology on the left-hand side.\\

An \emph{orbibundle} $\fE$ over $\fX= (\cc{\fX},X)$ consists of an equivalence class of proper \'etale groupoids $E = [E_1\rightrightarrows E_0]$ so that there exists a morphism of groupoids $\pi \cl E\to X$ with $\pi_0 \cl E_0 \to X_0$ a vector bundle such that the square
	\begin{center}\begin{tikzcd}
			E_1 \arrow[r,"s^E"] \arrow[d,"\pi_1"]& E_0\arrow[d,"\pi"]\\ 
			X_1 \arrow[r,"s^X"] & X_0 \end{tikzcd} \end{center}
	is cartesian and $t^X\pi_1 = \pi_0 t^E$. 

\begin{example} If $\fX = [M/G]$, then orbibundles $\fE\to \fX$ correspond to global quotients $[E/G]$ of $G$-vector bundles $E\to M$.
\end{example}

\begin{remark}[Tangent bundle]\label{rem:orbifold-tangent-bundle} If $G$ acts smoothly and almost freely, the infinitesimal action defines an inclusion $M\times \fg\hkra TM$ of vector bundles, where $\fg = \text{Lie}(G)$. Let $\cD\subset TM$ be a $G$-invariant complement. Then the orbibundles $[G\times\cD\rightrightarrows \cD]$ and $[TG\times TM\rightrightarrows TM]$ are equivalent. By \eqref{eq:orbifold-cohomology}, an \emph{orientation} of $[M/G]$ is therefore the same as an equivariant Thom class of $\cD \to M$. 
\end{remark}

\subsection{Exceptional pushfoward}\label{sec:exc-push}

Let $f \cl M^m\to N^n$ be a $G$-equivariant map between smooth manifolds on which $G$ acts smoothly and almost freely. Suppose $[M/G]$ and $[N/G]$ are oriented. Then $f$ induces a morphism $[M/G]\to [N/G]$ and the \emph{exceptional pushforward} 
$$f_! \cl H^*(M/G,\bQ) \to H^{* +n-m}(N/G,\bQ)$$
is defined by $f_! \coloneq \pcd\, f_*\,\pcd$. We clearly have $g_!f_! = (g f)_!$.\par 
By \cite[Theorem 4.1]{Bre72}, we can factor $f$ as a composite $M\xra{j} N\times S^V\xra{\pr_1} N$, where $S^V$ is the one-point compactification of a finite-dimensional orthogonal $G$-representation $V$ and $j$ is an equivariant embedding. We can describe $j_!$ and ${\pr_1}_!$ explicitly.

\begin{example}[Embedding] Suppose $f$ is an embedding with Poincar\'e dual $\pcd(M/G)$. Then
	\begin{equation}\label{gysin-embedding} f_!f^*(\alpha) = \alpha \cdot \pcd(M/G)\end{equation}
	for $\alpha\in H^*(N/G;\bQ)$. If there exists an equivariant retraction $r \cl W\to M$, then $f_!(\alpha) = r^*\alpha \cdot  \pcd(M/G)$.
\end{example}

\begin{example}[Projection]\label{gysin-projection} Suppose $M = N\times S^V$, where $V$ is a finite-dimensional $G$-representation, $S^V$ is its one-point compactification (to which the $G$-action extends trivially), and $f$ is the projection. Then $f_G \cl (N\times S^V)_G \to N_G$ is the sphere bundle of $(N\times (V\dsm \bR))_G \to N_G$ and 
	\begin{equation*}\begin{tikzcd}
			H^*_G(N\times S^V,\bQ)\arrow[r,hook,""] \arrow[dr,"f_!"]&H^{*+1}_G(N\times (V\oplus \bR),N\times ((V\oplus \bR)\sm 0),\bQ) \arrow[d,"\cong"]\\ &H^{*-k}_G(N,\bQ)  \end{tikzcd} \end{equation*}
	commutes by \cite[\textsection 3]{Du03}, where the vertical map comes from the Thom isomorphism. 
\end{example}

In particular, we have the following observation.

\begin{corollary}\label{push-pull-principal-bundle} Suppose we have a cartesian square
	\begin{center}\begin{tikzcd}
			P \arrow[r,"f"] \arrow[d,"q"]&P' \arrow[d,"q'"]\\ B\arrow[r,"\bar{f}"] & B' \end{tikzcd} \end{center}
	where $q$ and $q'$ are principal $G$-bundles for a compact Lie group $G$, $B, B'$ are oriented smooth manifolds (and $P$, $P'$ are equipped with the corresponding $G$-orientation), $\bar{f}$ is smooth and proper, and a compact Lie group $G'$ acts on the whole square almost freely. Then 
	\begin{equation}\label{psh-pull-pb} f_!q^* = {q'}^*\bar{f}_!.\end{equation}
\end{corollary}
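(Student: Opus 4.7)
The plan is to factor $\bar{f}$ equivariantly as an embedding followed by a projection, pull this factorization back to the principal bundles, and then verify \eqref{psh-pull-pb} in each of the two resulting cases using the explicit formulas for $(-)_!$ recorded in \textsection\ref{sec:exc-push}.

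First I would apply the $G'$-equivariant version of \cite[Theorem 4.1]{Bre72} to write $\bar{f} = \bar{p}\circ\bar{j}$, where $\bar{j}\cl B \hookrightarrow B'\times S^V$ is a $G'$-equivariant embedding and $\bar{p}\cl B'\times S^V \to B'$ is the projection, for some finite-dimensional orthogonal $G'$-representation $V$. Pulling $q'$ back along this factorization (and using that the original square is cartesian) produces cartesian squares
\begin{center}\begin{tikzcd}
    P \arrow[r,"j"] \arrow[d,"q"] & P' \times S^V \arrow[d,"q' \times \ide"] \arrow[r,"p"] & P' \arrow[d,"q'"] \\
    B \arrow[r,"\bar{j}"] & B' \times S^V \arrow[r,"\bar{p}"] & B'
\end{tikzcd}\end{center}
with $f = p\circ j$. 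By functoriality $(g\circ h)_! = g_!\circ h_!$, it then suffices to verify \eqref{psh-pull-pb} for each of the two squares separately.

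For the right-hand projection square, Example \ref{gysin-projection} identifies $\bar{p}_!$ and $p_!$ with the inverses of the equivariant Thom isomorphisms of the trivial $(V\oplus\bR)$-bundles over $B'$ and $P'$ respectively. Since the bundle over $P'$ is the pullback along $q'$ of the bundle over $B'$, naturality of equivariant Thom classes under pullback of vector bundles gives the identity immediately.

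For the left-hand embedding square, the key point is that the Poincar\'e dual $\pcd(P/G')\in H^*((P'\times S^V)/G';\bQ)$ equals $(q'\times\ide)^*\pcd(B/G')$. This holds because the preimage under $q'\times\ide$ of a $G'$-invariant tubular neighbourhood of $B$ in $B'\times S^V$ is a $G'$-invariant tubular neighbourhood of $P$ whose normal bundle is the pullback of the normal bundle of $B$, so the Thom classes match. Combined with \eqref{gysin-embedding} applied via a retraction onto $P$ pulled back from a retraction onto $B$, this yields the claim. The main obstacle is checking this compatibility of normal bundles, tubular neighbourhoods and orientations across the cartesian square carefully; once done, no further computation is needed.
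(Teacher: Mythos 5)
Your proposal is correct and is essentially the argument the paper intends: the corollary is stated as an immediate consequence of factoring the map as an equivariant embedding into $B'\times S^V$ followed by the projection and invoking the explicit descriptions in \eqref{gysin-embedding} and Example \ref{gysin-projection}, exactly as you do (and as the paper carries out explicitly for the closely analogous Lemma \ref{projection-formula}). Your direct tubular-neighbourhood verification that the Poincar\'e dual pulls back correctly is the same compatibility the paper records as Corollary \ref{cor:poincare-dual-orbifold}, so no new ideas are needed beyond what you wrote.
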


\begin{lemma}[Projection formula]\label{projection-formula} Suppose we have a cartesian square
\begin{center}\begin{tikzcd}
		X' \arrow[r,"f'"] \arrow[d,"q"]&Y'\arrow[d,"p"]\\ X \arrow[r,"f"] & Y \end{tikzcd} \end{center}
of oriented $G$-manifolds, where $G$ acts almost freely. If $f$ is proper and $p$ is a submersion, then 
\begin{equation}\label{eq:proj-formula}f'_!q^* = p^*f_!. \end{equation}
If the $G$-action on $X'$ and $Y'$ extends to an almost $G\times G'$-action with respect to which $f'$, $q$ and $p$ are equivariant, then the equality \eqref{eq:proj-formula} holds for the maps $H^*(X/G;\bQ)\to H^{k+*}(Y'/G\times G';\bQ)$.
\end{lemma}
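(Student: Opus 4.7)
The plan is to factor the proper map $f$ as a $G$-equivariant embedding composed with a projection, and to verify the formula separately for each of the two factors.

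First I would invoke \cite[Theorem 4.1]{Bre72} and the properness of $f$ to obtain a finite-dimensional orthogonal $G$-representation $V$ together with a closed $G$-equivariant embedding $j \cl X \hookrightarrow Y \times S^V$ over $Y$. Writing $\pi \cl Y \times S^V \to Y$ for the projection, one has $f = \pi j$ and hence $f_! = \pi_! j_!$. Pulling back this factorization along the given cartesian square yields $f' = \pi' j'$, where $j' \cl X' \hookrightarrow Y' \times S^V$ is a closed $G$-equivariant embedding and $\pi' \cl Y' \times S^V \to Y'$ is the projection; set $\tilde p := p \times \ide_{S^V}$. As both induced squares are cartesian, it suffices to establish $\pi'_! \tilde p^* = p^* \pi_!$ and $j'_! q^* = \tilde p^* j_!$.

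For the projection, Example \ref{gysin-projection} identifies $\pi_!$ (up to the canonical restriction to the one-point compactification) with the inverse of the Thom isomorphism of the vector bundle $Y \times V \to Y$. Since this bundle pulls back along $p$ to $Y' \times V \to Y'$ and the Thom class is natural under base change, the identity $\pi'_! \tilde p^* = p^* \pi_!$ is immediate.

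For the embedding, I would realize $\pcd(X/G)$ as the image of the equivariant Thom class of the normal bundle $N_{X/(Y \times S^V)}$ under excision from an equivariant tubular neighborhood $W \cong N_{X/(Y \times S^V)}$. Because $\tilde p$ is a $G$-equivariant submersion transverse to $X$, the preimage $\tilde p^{-1}(W)$ is an equivariant tubular neighborhood of $X'$ in $Y' \times S^V$ and $N_{X'/(Y' \times S^V)} = q^* N_{X/(Y \times S^V)}$; hence the pullback of the Thom class is the Thom class, and consequently $\tilde p^* \pcd(X/G) = \pcd(X'/G)$. The bundle-projection retractions $r \cl W \to X$ and $r' \cl \tilde p^{-1}(W) \to X'$ satisfy $r \tilde p = q r'$, so the formula $j_!(\alpha) = r^* \alpha \cdot \pcd(X/G)$ from \textsection\ref{sec:exc-push} delivers
$$\tilde p^* j_!(\alpha) = (r\tilde p)^* \alpha \cdot \tilde p^* \pcd(X/G) = (r')^* q^* \alpha \cdot \pcd(X'/G) = j'_! q^*\alpha.$$
For the enlarged statement involving a further almost free $G'$-action on $X'$ and $Y'$, the same argument applies verbatim: letting $G'$ act trivially on $S^V$ makes the pulled-back embedding $j'$ automatically $(G \times G')$-equivariant, and both the equivariant tubular neighborhood and the associated Thom class admit $(G \times G')$-equivariant lifts. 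The main delicate point throughout is the naturality of Thom classes under submersion pullback, which is standard once compatible equivariant tubular neighborhoods have been chosen; the remainder of the argument is a formal unwinding of $f_! = \pcd\, f_*\,\pcd$.
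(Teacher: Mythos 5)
Your proof is correct and follows essentially the same route as the paper's: factor the proper map $f$ through a closed $G$-equivariant embedding into $Y\times S^V$ via \cite[Theorem 4.1]{Bre72}, handle the projection factor by naturality of the Thom class as in Example \ref{gysin-projection}, and handle the embedding factor via the description of $j_!$ in terms of a retraction and the Poincar\'e dual. Your treatment of the embedding case is somewhat more explicit than the paper's (which simply cites \eqref{gysin-embedding}), and usefully avoids circularity with Corollary \ref{cor:poincare-dual-orbifold} by deriving $\tilde p^*\pcd(X/G)=\pcd(X'/G)$ directly from the compatibility of tubular neighbourhoods and Thom classes.
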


\begin{proof} If $f$ is an embedding, so is $f'$ and the claim follows from \eqref{gysin-embedding}. If $f$ is a projection $Y\times S^V\to Y$ for some finite-dimensional $G$-representation $V$, we can assume $f'$ is the projection $Y'\times S^V\to Y'$. Denoting the induced map $Y'_{G\times G'}\to Y_G$ by $p$ as well, we have $$(Y'\times (V\dsm \bR))_{G\times G'} = p^*(Y\times (V\dsm \bR))_G.$$ 
	Thus, the claim follows from Example \eqref{gysin-projection} and the functoriality of the Thom class.
\end{proof}

\begin{corollary}\label{cor:poincare-dual-orbifold} Suppose we have a cartesian square as in Lemma \ref{projection-formula}, where $f$ is an embedding. Then $p^*\pcd(X/G) = \pcd(X'/G\times G')$. 
\end{corollary}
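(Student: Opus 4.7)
The plan is to derive the corollary as a direct consequence of Lemma \ref{projection-formula} combined with the description of the exceptional pushforward of the constant class $1$ along an embedding. First I would note that, since $f$ is an embedding and $p$ is a submersion, the pullback $f' \colon X' \to Y'$ is again an embedding; moreover, it is proper because properness is preserved under base change. Hence the setup of the projection formula applies in the second (equivariant) version, yielding the identity
\begin{equation*}
    f'_!\, q^* = p^*\, f_!
\end{equation*}
as maps $H^*(X/G;\bQ) \to H^{*+n-m}(Y'/(G\times G');\bQ)$, where $m = \dim X$, $n = \dim Y$.

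Next I would specialise to the constant class $1 \in H^0(X/G;\bQ)$. On the one hand, $q^*(1) = 1$, so the left-hand side becomes $f'_!(1)$. On the other hand, the formula displayed in \eqref{gysin-embedding} for an embedding gives $f_!(1) = f_! f^*(1) = 1 \cdot \pcd(X/G) = \pcd(X/G)$, and analogously $f'_!(1) = \pcd(X'/(G\times G'))$. Plugging these into the projection formula yields the desired equality
\begin{equation*}
    p^*\pcd(X/G) = p^* f_!(1) = f'_!\, q^*(1) = f'_!(1) = \pcd(X'/(G\times G')).
\end{equation*}

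No real obstacle is expected here: the content of the corollary is essentially the observation that Poincaré duals pull back under submersions, and this is encoded in the projection formula once one recognises the Poincaré dual as the image of $1$ under the Gysin map of an embedding. The only point that deserves a brief verification is that the cartesian square genuinely satisfies the hypotheses of Lemma \ref{projection-formula} after pullback, i.e., that $f'$ is a proper embedding and that the $G\times G'$-action on the pullback square has the required equivariance and almost-freeness properties; all of these follow from the assumptions by standard base-change arguments.
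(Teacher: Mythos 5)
Your overall strategy --- the projection formula of Lemma \ref{projection-formula} combined with the Gysin description $f_!f^* (\alpha)= \alpha\cdot\pcd(X/G)$ --- is the same engine the paper uses, and your verification that $f'$ is a proper embedding and that the equivariant hypotheses pull back is fine. However, there is a genuine gap in the step where you evaluate at $\alpha = 1$. The Poincar\'e dual is defined in the paper as a class in the \emph{relative} group $H^{k}(\cc{Y}\mid \cc{X};\bQ)\cong \bQ^{|\pi_0(\cc{X})|}$, and the corollary asserts an equality $p^*\pcd(X/G) = \pcd(X'/G\times G')$ of such relative classes in $H^k(\cc{Y'}\mid\cc{X'};\bQ)$. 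The identity \eqref{gysin-embedding} only involves the \emph{image} of $\pcd$ in absolute cohomology (since $f_!$ lands in $H^*(Y/G;\bQ)$), so your computation $p^*f_!(1) = f'_!(1)$ identifies the two classes only after mapping to $H^k(\cc{Y'};\bQ)$. That map need not be injective --- a component of $X'$ can be null-homologous in $Y'$ --- so the argument as written does not recover the relative statement, and even the absolute equality it does give could degenerate to $0=0$ on some components.

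The paper closes exactly this gap: it first observes that both classes live in $H^k(\cc{Y'}\mid\cc{X'};\bQ)\cong \bQ^{|\pi_0(\cc{X'})|}$ and hence differ by a locally constant function $b$ on $\cc{X'}$, and then runs your projection-formula comparison \emph{locally} over $Y$, using that $p$ admits local sections, to detect $b$ on each component and conclude $b\equiv 1$. To repair your proof you would need to add this localisation step (or otherwise argue that the relative classes can be compared componentwise, e.g.\ by restricting to small neighbourhoods of points of $X'$ where the relative-to-absolute map is an isomorphism).
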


\begin{proof} Let $\cc{Y'}$ and $\cc{X'}$ denote the quotients, and suppose $k = \codim(X')$. Since both $p^*\pcd(X/G)$ and $\pcd(X'/G\times G')$ live in $H^k(\cc{Y'}\mid \cc{X'};\bQ)\cong \bQ^{|\pi_0(\cc{X'})|}$, they differ by multiplication with a locally constant function $b$. Thus, $p^*f_!f^*\alpha = p^*\alpha\cdot p^*\pcd(\cc{X})$ on the one hand, while 
$$p^*f_!f^*\alpha = f'_!q^*f^*\alpha = f'_!{f'}^*p^*\alpha =  = p^*\alpha \cdot \pcd(\cc{X'}) = bp^*\alpha \cdot p^*\pcd(\cc{X}).$$
As the same equality holds locally on $Y$ and $p$ admits local sections, we must have $b \equiv 1$.
\end{proof}

\begin{lemma}\label{projection-formula-up-codimension} Let 
\begin{center}\begin{tikzcd}
		X' \arrow[r,"f'"] \arrow[d,"q"]&Y'\arrow[d,"p"]\\ X \arrow[r,"f"] & Y \end{tikzcd} \end{center}
be a cartesian square of smooth manifolds, where $f$ is proper. Suppose there exists an open subset $V\sub Y'$ so that $Y'\sm V$ has codimension at least $2$ and $p|_V \cl V\to Y$ is a (not necessarily surjective) submersion. Then $$f'_!q^* = p^*f_!.$$ The same is true in equivariant cohomology if given the assumptions of lemma \ref{projection-formula}.
\end{lemma}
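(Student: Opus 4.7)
The plan is to reduce to the standard projection formula (Lemma \ref{projection-formula}) in two steps. Using \cite[Theorem 4.1]{Bre72}, factor $f$ as $f = \pi\g j$ where $j\cl X\hkra Y\times S^V$ is a closed (equivariant) embedding into a trivial sphere bundle and $\pi\cl Y\times S^V\to Y$ is the projection; this lifts to a factorisation of the given cartesian square into two cartesian subsquares, the right one involving the projection $\pi'$ and the left one the embedding $j'$. For the projection square, the identity $\pi'_!(p\times \ide)^* = p^*\pi_!$ holds with no submersivity hypothesis on $p$: both pushforwards are computed by integration along the $S^V$-fibre via the (equivariant) Thom class of the trivial representation bundle, and this is compatible with $p^*$ by Künneth and naturality.

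It remains to handle the embedding case; note that $p\times \ide$ still satisfies the weakened submersion hypothesis, since its non-submersion locus $(Y'\setminus V)\times S^V$ has codimension at least $2$ in $Y'\times S^V$. For a closed embedding $j\cl X\hkra \wt Y$, $j_!\alpha$ is characterised as the class in $H^{*+c}(\wt Y;\bQ)$ supported on $X$ whose restriction to a tubular neighbourhood of $X$ equals $r^*\alpha\cdot \tau_{N_{X/\wt Y}}$, where $r$ is the retraction and $\tau$ is the equivariant Thom class. Because $p^*j_!\alpha$ and $j'_!q^*\alpha$ are then both supported on $p^{-1}(X) = X'$, both lift canonically to classes in the relative cohomology group $H^{*+c}(\wt Y' \rht X';\bQ)$. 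On the open submanifold $V\times S^V \sub \wt Y'$, the map $p\times \ide$ is a submersion transverse to $j$, so Lemma \ref{projection-formula} together with Corollary \ref{cor:poincare-dual-orbifold} implies that the two lifted classes restrict to the same element of $H^{*+c}(V\times S^V \rht X'\cap (V\times S^V);\bQ)$.

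The main obstacle is extending this equality of relative classes from $V\times S^V$ to all of $\wt Y'$. The plan is to use the codimension hypothesis through a local-cohomology argument: the restriction $H^{*+c}(\wt Y' \rht X';\bQ)\to H^{*+c}(V\times S^V \rht X'\cap V\times S^V;\bQ)$ sits in a long exact sequence whose kernel is a subquotient of local cohomology groups along the closed subset $(Y'\setminus V)\times S^V$, and these vanish rationally in the bidegree of interest because $\wt Y'$ is a smooth oriented manifold and the closed subset has real codimension at least $2$. The equivariant version of the statement follows by running the same argument after passing to the Borel construction, where the codimension hypothesis and all the auxiliary ingredients (tubular neighbourhoods, Thom classes, local cohomology) carry over unchanged.
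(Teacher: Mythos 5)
Your reduction is set up sensibly: factoring $f=\pi\circ j$ through $Y\times S^V$ pulls back to a factorisation of the whole square, base change for the projection $\pi$ is indeed unconditional (naturality of the Thom isomorphism for the trivial bundle $V\oplus\bR$), and in the embedding case both classes do admit canonical lifts to $H^{*+c}(\wt Y'\rht X';\bQ)$ that agree over $V\times S^V$ by Lemma \ref{projection-formula} and Corollary \ref{cor:poincare-dual-orbifold}. The gap is in the last step. The kernel of the restriction $H^{k}(\wt Y'\rht X';\bQ)\to H^{k}(V\times S^V\rht X'\cap(V\times S^V);\bQ)$ is the image of $H^{k}_{Z}(\wt Y'\rht X';\bQ)$ with $Z=(Y'\sm V)\times S^V$, and by Alexander duality this group is $H^{\text{BM}}_{n-k}(X'\cap Z;\bQ)$ with $n=\dim \wt Y'$: it is governed by the size of $X'\cap Z$, not by the codimension of $Z$ in $\wt Y'$, and it does not vanish in the degrees $k=|\alpha|+c$ you need. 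Codimension $\geq 2$ only gives vanishing of local cohomology in degrees $k<2$; already $\wt Y'=\bR^2$, $X'=Z=\{0\}$ gives a non-injective restriction $H^2(\bR^2\rht\{0\};\bQ)=\bQ\to H^2(\emptyset;\bQ)=0$ with a codimension-two bad locus, and the absolute analogue $H^2(S^2;\bQ)\to H^2(S^2\sm\{pt\};\bQ)=0$ fails in the same way. Since $|\alpha|$ is arbitrary in the applications, your degree-by-degree injectivity claim is false and the proof does not close.

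The hypothesis has to be exploited in top degree only, which is what the paper's (very terse) proof does via \cite[Corollary V.10.2]{Bre12}: fundamental classes in Borel--Moore homology are insensitive to removing closed subsets of strictly smaller dimension, and everything else follows by the module structure. Concretely, under $H^{|\alpha|+c}(\wt Y'\rht X';\bQ)\cong H^{\text{BM}}_{\dim X'-|\alpha|}(X';\bQ)$ your two lifts become $q^*\alpha\cap[X']$ and $q^*\alpha\cap \nu$, where $\nu$ is the image of $[X]$ under $H^{\text{BM}}_{\dim X}(X;\bQ)\cong H^{c}(\wt Y\rht X;\bQ)\xra{(p\times\ide)^*}H^{c}(\wt Y'\rht X';\bQ)\cong H^{\text{BM}}_{\dim X'}(X';\bQ)$. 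The whole comparison thus reduces to the single top-degree identity $\nu=[X']$, which is detected on the dense open $U=X'\cap(V\times S^V)$ where transversality and Lemma \ref{projection-formula} apply --- provided $H^{\text{BM}}_{\dim X'}(X'\cap Z;\bQ)=0$, i.e.\ the part of $X'$ over the non-submersion locus has dimension $<\dim X'$. That is an input about the preimage of the bad locus which your argument never isolates; replacing your local-cohomology step with this reduction to fundamental classes is what is needed to make the strategy work.
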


\begin{proof} Set $U \coloneq X'\times_{Y'} V$ and let $i \cl U \to X'$ and $j \cl V\to Y'$ be the inclusions. Let $\hat{f}\cl U\to V$ be the induced map. The fundamental classes appearing below are elements of Borel--Moore homology. By \cite[\textsection V.10(57),Corollary V.10.2]{Bre12} and Lemma \ref{projection-formula}, the claim holds in the nonequivariant case.\par 
Suppose now that $G$ and $G'$ are compact Lie groups so that $G\times G'$ acts almost freely on $X'$ and $Y'$ and $G$ acts almost freely on $Y$ with $f$ and $f'$ being equivariant and $p,q$ being invariant under the $G'$-action and restricting to principal bundles over $V,U$. Since we can check the equality $f'_!q^* = p^*f!$ as maps $H^*_G(X,\bQ)\to H^{*+k}_{G\times G'}(Y',\bQ)$ degree for degree, we can use finite-dimensional approximations of the classifying space of $G$. To these, apply Corollary \ref{push-pull-principal-bundle} to see that the projection formula holds for the pullbacks to $U$. Then use the same argument as in the first step to conclude. 
\end{proof}

\begin{lemma}\label{resolution-fibre-product} Suppose $f\cl X\to Y$ is a smooth map of \'etale proper groupoids and $[M/G]$ is a global quotient representing $Y$. If $X$ is a manifold, then the orbifold fibre product $M \times_Y X$ is a principal $G$-bundle over $X$.
\end{lemma}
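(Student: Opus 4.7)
The plan is to first reduce to the case where $Y = [M/G]$ itself. Since the orbifold fibre product is invariant under Morita equivalence in each argument, we may replace $Y$ with $[M/G]$ and regard $f$ as a smooth morphism of orbifolds $f \cl X \to [M/G]$ where $X$ is a trivial groupoid.

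A morphism $f \cl X \to [M/G]$ from a manifold to a global quotient corresponds, up to 2-isomorphism, to the data of a principal $G$-bundle $\pi \cl P \to X$ together with a $G$-equivariant smooth map $\tilde f \cl P \to M$. This is the standard classification via Hilsum--Skandalis bibundles, or equivalently the universal property of the atlas $M \to [M/G]$ viewed as classifying principal $G$-bundles with equivariant maps to $M$.

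It remains to compute the 2-fibre product $M \times_{[M/G]} X$ and identify it with $P$. For any manifold $T$, an element of $\Hom(T, M \times_{[M/G]} X)$ consists of a triple $(\mu, \xi, \alpha)$ with $\mu \cl T \to M$, $\xi \cl T \to X$, and $\alpha$ a 2-isomorphism between the two induced maps $T \to [M/G]$. Such a 2-isomorphism is equivalent to a $G$-equivariant trivialisation of the pulled-back bundle $\xi^*P \to T$ that intertwines $\tilde f$ with $\mu$. Unwinding, this amounts exactly to a lift $\tilde \xi \cl T \to P$ of $\xi$ along $\pi$ satisfying $\tilde f \circ \tilde \xi = \mu$. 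Naturally in $T$, this identifies $M \times_{[M/G]} X \cong P$ as spaces over $X$, and the projection is the bundle map $\pi$, which is a principal $G$-bundle by construction. The $G$-action pulled back from $M$ on the fibre product matches the principal $G$-action on $P$.

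The main subtlety lies in handling the 2-categorical nature of the fibre product, particularly in the presence of finite isotropy. While $M \to [M/G]$ is not a principal $G$-bundle in the classical sense when stabilisers are nontrivial, it is representable and its self-fibre-product $M \times_{[M/G]} M$ is canonically $G \times M$, so it behaves like a principal $G$-bundle in the 2-categorical sense. This is the fact that ensures that pulling back along any morphism from a manifold yields an honest principal $G$-bundle; once this point is carefully justified, the identification with $P$ is essentially a matter of unwinding definitions.
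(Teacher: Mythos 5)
Your argument is correct, but it follows a genuinely different route from the paper. You pass to the stack-theoretic picture: after using Morita invariance to replace $Y$ by $[M/G]$, you invoke the Hilsum--Skandalis/bibundle description of morphisms from a manifold into a quotient stack, under which $f$ \emph{is} a principal $G$-bundle $P\to X$ with an equivariant map to $M$, and then identify $M\times_{[M/G]}X$ with $P$ by the universal property of the atlas. The paper instead works entirely at the level of groupoids: it writes the fibre product explicitly as the space of triples $(p,\alpha,x)$ with $\alpha\cl q(p)\to f(x)$ an arrow of $Y$, defines the $G$-action by $g\cdot(p,\alpha,x)=(g\cdot p,\alpha\circ q_1(g,p)\inv,x)$, deduces freeness from the \'etaleness of the equivalence $[M/G]\to Y$, and checks local triviality by reducing to a local model $Y=[V/\Gamma]$ via a slice, where the projection becomes the pullback of a covering map. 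Your approach is cleaner and more general --- it does not use that the action is almost free or that $Y$ is \'etale, since the classification of maps into $[M/G]$ holds for any quotient stack --- but it outsources the real content (representability of the atlas and the bibundle correspondence) to standard but nontrivial facts about differentiable/topological stacks, which is precisely the material the paper's hands-on proof reconstructs in this special case. Both are valid; if you use your version, you should cite a precise reference for the bibundle classification in the topological (not just smooth) setting, since the paper allows topological proper \'etale groupoids.
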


\begin{proof} Since $M$ and $X$ are manifolds, so is $Z \coloneq M \times_Y X$. Let $q_0 \cl M\to Y_0$ be the canonical map. Then $Z = \{(p,\alpha,x)\in M\times Y_1\times X: \alpha \cl q(p)\to f(x)\}$
and $\pi \cl Z \to X$ is given by $\pi(p,\alpha,x) = x$. Define a $G$-action on $Z$ by setting 
$$g \cdot (p,\alpha,x) \coloneq (g\cdot p,\alpha \g q_1(g,p)\inv,x).$$
Clearly, $\pi$ is $G$-invariant and $g\cdot (p,\alpha,x) = (p,\alpha,x)$ implies that $g\cdot p = p$ and $q_1(g,p) = \ide$. Since $q\cl [M/G]\to Y$ is \'etale, we must have $g = e$. Thus, $G$ acts freely on $Z$. To see that $\pi$ is locally trivial, it suffices to consider the case where $Y = [V/\Gamma]$ for some finite group $\Gamma$ and $M = [S/G_p]$ for some slice $S$ through $p$. In this case, $\pi$ is the pullback of a covering map and thus a local diffeomorphism. This completes the proof.
\end{proof}

\subsection{Trace maps}\label{subsec:trace} In the definition of the equivariant virtual fundamental class, we make use of a \emph{trace map} $H^{*+m}_{K,fc}(\cT;\bQ)\to H^*_K(\Pt;\bQ)$, which is a special case of integration along the fibre. While this is classical for fibre bundles of closed smooth manifolds, \cite{BT82}, and has been generalised in algebraic geometry, \cite{Iv86,KS94}, we found no results for the specific situation needed in this paper. Thus, we give a brief definition and show the required properties. To avoid any subtleties with families of supports, we will assume that all spaces are locally compact, Hausdorff and paracompact. 

\begin{definition}[Integration along the fibre] Suppose $\pi \cl P \to B$ is an oriented fibre bundle over a paracompact base $B$ with fibre an oriented topological orbifold $X = [M/G]$. Denote by $\cH^*_c(X)$ the locally constant sheaf on $B$ with stalks given by $\cH^*_c(X)_b = H^*_c(P_b;\bQ)$ for $b \in B$. By \cite[Theorem 6.1]{Bre12} there exists a spectral sequence $\{E^{p,q}_r\}$ converging to $H^*_{fc}(P;\bQ)$ with 
$$E^{p,q}_2 = H^p(B;\cH^q_c(X)).$$
In particular, $E^{p,q}_2 = 0$ for $q > n\coloneq \dim(X)$, so $E^{p,n}_r \sub E^{p,n}_{r-1}$ for any $r > 2$. We have a canonical map $\int_X \cl\cH^n_c(X)\to \lc{\bQ}$ of locally constant sheaves on $B$; it is given at the stalk over $b \in B$ by 
$$\cH^n_c(X)_b =
H^n_c(X_b;\bQ)\xra{\Pt_!} \bQ.$$

We define the \emph{integration along the fibre} $\pi_*\cl H^{n+*}_{fc}(P;\bQ)\to H^*(B;\bQ)$ to be the composite
$$H^{n+*}_{fc}(P;\bQ) \to E^{*,n}_\infty \hkra E^{*,n}_2 = H^*(B;\cH^n_c(X)) \xra{(\int_X)_\#} H^*(B;\bQ).$$
\end{definition}

By \cite{Au73}, this agrees with the standard definition of integration along the fibre for smooth fibre bundles.

\begin{lemma}[Base change]\label{lem:trace-base-change}Suppose $\pi \cl P \to B$ is an orientable fibre bundle over a paracompact base with fibre $\cT$ an oriented orbifold and $f \cl B'\to B$ is a proper continuous map from another paracompact space. Then
\begin{center}\begin{tikzcd}
		H^{m+*}_{c}(P;\bQ)\arrow[r,] \arrow[d,"\tilde{f}^*"] & H^*(B;\bQ)\arrow[d,"f^*"]\\
		H^{m+*}_{c}(f^*P;\bQ) \arrow[r,]& H^*(B';\bQ)\end{tikzcd} \end{center} 
commutes. 
\end{lemma}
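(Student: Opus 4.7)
The strategy is to exploit the naturality of the Leray spectral sequence used in the definition of the trace map. First I would verify that $\pi' := f^*\pi \cl P' := f^*P \to B'$ is again an oriented fibre bundle with the same fibre $\cT$ and that the pulled-back map $\tilde{f} \cl P' \to P$ restricts to orientation-preserving homeomorphisms between corresponding fibres. As a consequence, the locally constant sheaf $\cH^m_c(\cT)$ on $B$ pulls back (along $f$) canonically to the analogous sheaf on $B'$, and the stalkwise trace map $\int_\cT \cl \cH^m_c(\cT)\to \lc{\bQ}$ is invariant under this pullback, since it is the same at each fibre.

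Next, using the naturality of the Leray spectral sequence of \cite[Theorem 6.1]{Bre12}, the cartesian square of fibre bundles induces a morphism of spectral sequences $\{E^{p,q}_r(\pi)\} \to \{E^{p,q}_r(\pi')\}$ whose $E_2$-term is the pullback map
$$f^*\cl H^p(B;\cH^q_c(\cT)) \to H^p(B';\cH^q_c(\cT))$$
and whose abutment is $\tilde{f}^*$ on fibrewise-compactly supported cohomology, respecting the Leray filtration. The properness of $f$ ensures that $\tilde{f}^*$ preserves the required support conditions.

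With these two naturality statements in hand, the claim reduces to a straightforward diagram chase: the commutativity of the square in the Lemma breaks up into three squares — one for the projection $H^{m+*}_{fc}(P;\bQ)\twoheadrightarrow E^{*,m}_\infty(\pi)$, one for the inclusion $E^{*,m}_\infty(\pi)\hookrightarrow E^{*,m}_2(\pi) = H^*(B;\cH^m_c(\cT))$, and one for the push-forward $(\int_\cT)_\#$ on the coefficient sheaf — each of which commutes by one of the observations above.

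The main point requiring care is bookkeeping with supports: it must be checked that the Leray spectral sequence \emph{with fibrewise compact supports} admits the required natural transformation under the cartesian square. This follows because $f$ is proper (so $\tilde{f}^{-1}$ of a fibrewise-compact set is fibrewise-compact) and because the fibres of $\pi$ and $\pi'$ are canonically identified as oriented orbifolds; everything else is formal from the functoriality of the Leray construction.
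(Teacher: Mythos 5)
Your proposal is correct and is exactly the paper's argument: the paper proves this lemma in one line by appealing to the functoriality of the Leray--Serre spectral sequence from \cite[\textsection 6.2]{Bre12}, which is precisely the naturality statement you spell out (together with the routine identification of coefficient sheaves and the support bookkeeping). No further comparison is needed.
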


\begin{proof} This follows from the functoriality of the Leray-Serre spectral sequence associated to a fibration; see \cite[\textsection 6.2]{Bre12}.
\end{proof}

\begin{corollary}\label{cor:eq-trace} Suppose $\cT$ is an oriented topological orbifold of dimension $m$ with a continuous action by a compact connected Lie group $G$. Then 
\begin{center}\begin{tikzcd}
		H^{m+*}_{G,c}(\cT;\bQ)\arrow[r,] \arrow[d,"i^*"] & H^*_G(\Pt;\bQ)\arrow[d,""]\\
		H^{m+*}_{c}(\cT;\bQ) \arrow[r,]& H^*(\Pt;\bQ)\end{tikzcd} \end{center}
commutes, where $H^{*}_{G,c}(\cT;\bQ) = H^{*}_{fc}(\cT_G;\bQ)$.
\end{corollary}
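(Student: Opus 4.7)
My plan is to reduce the corollary to a direct application of Lemma \ref{lem:trace-base-change} via finite-dimensional approximations of the classifying space $BG$. The key observation is that $\cT_G \to BG$ is a fibre bundle with fibre $\cT$, so integration along the fibre agrees with the equivariant trace; meanwhile, restricting the base along the inclusion of a point yields the non-equivariant trace.

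First I would fix a sequence of finite-dimensional approximations $E_n \to B_n$ of the universal bundle $EG \to BG$, where $B_n$ is a paracompact finite-dimensional manifold and $E_n$ is $n$-connected. Then $\cT_{G,n} := E_n \times_G \cT \to B_n$ is an oriented fibre bundle with fibre $\cT$ over a paracompact base. By definition of Borel equivariant cohomology with fibrewise-compact supports, one has $H^k_{G,c}(\cT;\bQ) \cong H^k_c(\cT_{G,n};\bQ)$ for $k \leq n$, where the supports on the right are fibrewise compact with respect to $\cT_{G,n} \to B_n$ (equivalently, genuinely compact in each fibre). Similarly, $H^k_G(\Pt;\bQ) \cong H^k(B_n;\bQ)$ for $k \leq n$. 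Under these identifications, the equivariant trace of Definition \ref{equivariant-vf-de} corresponds precisely to the integration along the fibre $\pi_*\cl H^{m+*}_c(\cT_{G,n};\bQ) \to H^*(B_n;\bQ)$ defined above, since both are built from the Leray--Serre spectral sequence and the fibrewise integration map $\int_\cT$ on the stalks.

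Next I would choose a basepoint $b_n \in B_n$ and consider the proper map $\iota_n \cl \{b_n\} \hookrightarrow B_n$. The pullback $\iota_n^*\cT_{G,n}$ is canonically identified with the fibre $\cT$, and the induced map $\widetilde{\iota}_n^* \cl H^*_c(\cT_{G,n};\bQ) \to H^*_c(\cT;\bQ)$ coincides in the range $k \leq n$ with the canonical forgetful map $i^*\cl H^{*}_{G,c}(\cT;\bQ) \to H^*_c(\cT;\bQ)$; this uses that $G$ is connected so that $B_n$ is path-connected, making $\widetilde{\iota}_n^*$ independent of the choice of basepoint. An application of Lemma \ref{lem:trace-base-change} to the square with vertical arrows $\iota_n$ and $\widetilde{\iota}_n$ yields the commutative diagram
\begin{equation*}
\begin{tikzcd}
H^{m+*}_c(\cT_{G,n};\bQ) \arrow[r,"\pi_*"] \arrow[d,"\widetilde{\iota}_n^*"] & H^*(B_n;\bQ) \arrow[d,"\iota_n^*"] \\
H^{m+*}_c(\cT;\bQ) \arrow[r,"\int_\cT"] & H^*(\Pt;\bQ).
\end{tikzcd}
\end{equation*}
Letting $n \to \infty$ and using the isomorphisms above in each fixed cohomological degree delivers the desired commutative square.

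The only real subtlety is verifying that the equivariant trace defined in terms of the fibration $\cT_G \to BG$ (via a filtration of $BG$ by paracompact spaces) is well-defined and matches what one gets on each $B_n$; this is a standard compatibility for the Leray--Serre spectral sequence under inclusions $B_n \hookrightarrow B_{n+1}$, combined with the stability statement for compactly supported cohomology in a fixed degree. With those compatibilities in hand, the corollary follows at once.
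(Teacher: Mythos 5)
Your proposal is correct and follows the paper's intended route: the corollary is stated immediately after Lemma \ref{lem:trace-base-change} precisely because it is the base-change square for the proper inclusion of a fibre $\cT \hookrightarrow \cT_G$ over $\Pt \hookrightarrow BG$. The detour through finite-dimensional approximations $B_n$ of $BG$ is a harmless extra layer of care (one can also apply the lemma directly to a paracompact CW model of $BG$, since the spectral-sequence definition of the trace does not need the base to be finite-dimensional), but the substance of the argument is the same.
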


\begin{lemma}[Functoriality]\label{lem:trace-functoriality} Suppose $\pi \cl P\to B$ and $\rho \cl E\to P$ are two oriented fibre bundles with fibres $X$ and $Y$ the coarse moduli spaces of oriented orbifolds. Then 
$$(\pi\rho)_* = \pi_*\rho_*\iota,$$ where $\iota$ is the canonical map from cohomology with $(\pi\rho)$-fibrewise compact support to cohomology with $\pi$-fibrewise compact support. 
\end{lemma}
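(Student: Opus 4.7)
The plan is to reduce the claim to a Grothendieck-type composition of Leray--Serre spectral sequences, together with a check on how the orientations combine.

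First, I would verify the support bookkeeping. The map $\iota$ is really the forgetful map from $(\pi\rho)$-fibrewise to $\rho$-fibrewise compactly supported cohomology on $E$, which is well-defined since each $(\pi\rho)$-fibre $E_b$ contains the $\rho$-fibres $\rho\inv(p)$ for $p \in P_b$, so compact support on $E_b$ implies compact support on each $\rho\inv(p)$. The resulting class $\rho_*\iota(\alpha) \in H^{m+*}(P;\bQ)$ has support inside $\rho(\mathrm{supp}\,\alpha)$, and $\rho(\mathrm{supp}\,\alpha \cap E_b) \subseteq P_b$ is compact for each $b$. Hence $\rho_*\iota$ lands in $\pi$-fibrewise compactly supported cohomology of $P$, making $\pi_*\rho_*\iota$ well-defined and matching the target of $(\pi\rho)_*$.

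Second, the structural observation is that the fibre $(\pi\rho)\inv(b)$ is itself an oriented fibre bundle over $\pi\inv(b)$ with oriented fibre $\rho\inv(p)$ of dimension $n$. Its own Leray--Serre spectral sequence, trivialized at the top using the two fibre orientations, identifies the local system $\mathcal H^{m+n}_c(\text{fibre of }\pi\rho)$ on $B$ with $\lc{\bQ}_B$ via a factorization through $\mathcal H^{m}_c(\text{fibre of }\pi)$; this expresses the trace of the composite fibre as the composition of the two factor traces. Combined with the functoriality of the Leray--Serre spectral sequence under composition of fibrations (abstractly, the Grothendieck composition $R(\pi\rho)_{!} = R\pi_{!}R\rho_{!}$ in the derived category of sheaves on $B$ provides a refinement of the spectral sequence of $\pi\rho$ by that of $\rho$ followed by that of $\pi$), the edge-map definition of integration along the fibre used in the paper yields $(\pi\rho)_* = \pi_*\rho_*\iota$ by chasing each stage.

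The main obstacle is the compatibility of the Leray--Serre spectral sequences under composition while tracking the fibrewise compact-support conditions, since the standard Grothendieck composition requires some care for non-proper fibrations. This can be handled abstractly by working with $R\pi_!$ and $R\rho_!$ in the derived category of sheaves of $\bQ$-vector spaces; alternatively, and more concretely, one can reduce via Lemma \ref{lem:trace-base-change} to a good open cover of $B$ trivializing both $\pi$ and the restriction of $\rho$ to its preimage, where the identity becomes a direct Fubini-type calculation on each chart, and then extend to all of $B$ by a Mayer--Vietoris gluing argument combined with the naturality of the edge map.
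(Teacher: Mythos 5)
Your outline is correct and rests on the same pillar as the paper's proof, namely the behaviour of the Leray--Serre spectral sequence under composition of fibrations; your preliminary support bookkeeping for $\iota$ is also the right reading of the statement. The execution differs, though. The paper works entirely at the level of locally constant sheaves on $B$: it factors $(\pi\rho)_*$ through $H^*(B;\cH^{k+\ell}(Z))$, introduces the Leray sheaf $\mathscr{H}^*(\pi;\cH^\ell(Y))$ of $\pi$ with coefficients in the fibrewise top cohomology of $Y$, and constructs an explicit stalkwise morphism $\cH^{*+\ell}(Z)\to \mathscr{H}^*(\pi;\cH^\ell(Y))$ via the edge map of the spectral sequence of each composite fibre $\theta\inv(\{b\})\to P_b$; the lemma then follows from two commuting squares taken from Bredon. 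Your first proposed route (the Grothendieck composition $R(\pi\rho)_!\simeq R\pi_!R\rho_!$) is a clean abstract repackaging of the same mechanism and would work, provided you verify that the paper's edge-map definition of $\pi_*$ coincides with the sheaf-theoretic trace $R\pi_!\,\bQ[m]\to\bQ$; your second route (trivialising cover, K\"unneth, Mayer--Vietoris) buys concreteness at the cost of a gluing argument over a general paracompact base. One point needs care: your ``structural observation'' that the trace of the composite fibre factors through the trace of the intermediate fibre is exactly the case $B=\Pt$ of the lemma being proved, so it cannot simply be asserted; the paper discharges precisely this step with the stalkwise edge-map factorisation, and in your concrete route it is discharged by the K\"unneth/Fubini computation on a chart. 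As long as you supply that base case independently, the argument goes through.
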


\begin{proof} We will assume both $X$ and $Y$ are compact, of dimension $k$, respectively $\ell$, in order to simplify the notation. The general case can be obtained by restricting to cohomology with suitable support. Let $\theta \coloneq \pi\rho$ and set $Z_b \coloneq \theta\inv(\{b\})$ for $b \in B$. The maps ${\rho_b}_*\cl H^{k+\ell}(Z_b;\bQ) \to H^k(P_b;\bQ)$ induce a morphism $\rho_*\cl \cH^{k+\ell}(Z)\to \cH^k(X)$ and $\theta_*$ factors as 
$$H^{k+\ell+*}(E;\bQ)\to H^*(B,\cH^{k+\ell}(Z)) \xra{(\rho_*)_\#} H^*(B,\cH^{k}(X)) \xra{\pi_*}H^*(B;\bQ).$$
By \cite[Corollary IV.7.3]{Bre12}, the Leray sheaf $\mathscr{H}(\pi;\cH^\ell(Y))$ of $\pi$ with coefficients in $\cH^\ell(Y)$, defined in \cite[\textsection IV.4]{Bre12}, is locally constant with stalks of the form $H^*(P_b;\cH^\ell(Y))$.\footnote{We use here that the monodromy of $\cH^*(\ell)$ is trivial in degree $\ell$ since we work with oriented fibre bundles.} Due to the functoriality of the Serre spectral sequence, there exists a canonical morphism $\cH^{*+\ell}(Z)\to \mathscr{H}^*(\pi;\cH^\ell(Y))$ of locally constant sheaves on $B$, given stalkwise by 
$$\cH^{*+\ell}(Z)_b = H^{*+\ell}(\theta\inv(\{b\};\bQ) \twoheadrightarrow E(b)^{*,\ell}_\infty \hkra  H^*(P_b;\cH^\ell(Y)) = \mathscr{H}^*(\pi;\cH^\ell(Y))_b.$$
where $\{E(b)^{p,q}_r\}$ is the Leray-Serre spectral sequence of $\theta\inv(\{b\})\to P_b$. 
This stalkwise description shows that $\rho_* \cl \cH^{*+\ell}(Z)\to \cH^k(X)$ factors through $\mathscr{H}^*(\pi;\cH^\ell(Y))$. Thus
\begin{center}\begin{tikzcd}
		H^{k+\ell+*}(E;\bQ)  \arrow[r,] \arrow[d]& H^{*}(B;\cH^{k+\ell}(Z))\arrow[r,"(\rho_*)_\#"]\arrow[d,] & H^*(B;\cH^k(X))\\
		H^{k+*}_{c}(P;\cH^\ell(Y)) \arrow[r,""]& H^*(P;\mathscr{H}^*(\pi;\cH^\ell(Y)))\arrow[ur]\end{tikzcd} \end{center}
commutes. By \cite[\textsection 6.2]{Bre12}, 
\begin{center}\begin{tikzcd}
		H^{k+*}_{c}(P;\cH^\ell(Y)) \arrow[r,""]\arrow[d,"(\int_Y)_\#"]& H^*(P;\mathscr{H}^*(\pi;\cH^\ell(Y)))\arrow[d,]\\
		H^{k+*}(P;\bQ)\arrow[r] & H^*(B;\cH^k(X))\end{tikzcd} \end{center}
commutes as well. The claim now follows by composing with $\pi_*$.
\end{proof}

\begin{lemma}\label{lem:trace-subfibration} Let $\pi \cl P \to B$ be an oriented locally trivial fibration over a locally contractible space with fibre the orbit space of $[M/G]$. Suppose $P'\subset P$ is a subspace so that the induced map $\pi'\cl P'\to B$ is an oriented fibre bundle with fibre given by the orbit space of $[M'/G]$, for a $G$-invariant submanifold $M'\sub M$. Assume the inclusion $P'\hkra P$ admits a normal bundle and a tubular neighbourhood. Then 
\begin{center}\begin{tikzcd}
		H^{m'+*}_{fc}(P';\bQ)\arrow[r,"\pi'_*"] \arrow[d,"{j_!}"] & H^*(B;\bQ)\\
		H^{m+*}_{fc}(P;\bQ) \arrow[ur,"\pi_*"]\end{tikzcd} \end{center}
commutes, where $m = \dim([M/G])$ and $m' = \dim([M'/G])$. Moreover, $j_!j^* = \sigma\cdot$ for a class $\sigma \in H^k(P\mid P';\bQ)$ restricting to the Poincar\'e dual of $\fX'$ over a fibre.
\end{lemma}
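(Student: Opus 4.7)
The plan is to build $\sigma$ from the Thom class of the normal bundle of $j$, and then reduce both assertions to fiberwise statements combined with the functoriality of integration along the fiber already established in Lemma \ref{lem:trace-functoriality}.

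First I would let $N\to P'$ denote the assumed normal bundle and $\psi\cl N\hkra P$ its tubular neighborhood. The orientations of $P$ and $P'$ canonically orient $N$, so there is a Thom class $\tau_N\in H^k(N\rht P';\bQ)$; excision along $\psi$ promotes it to the desired class $\sigma\in H^k(P\rht P';\bQ)$. Because $\pi$ and $\pi'$ are locally trivial oriented fibrations, restricting to a fibre $P_b$ identifies the normal bundle of $P'_b\sub P_b$ with $N|_{P'_b}$, so $\sigma|_{P_b}$ is precisely the Thom class of this fiberwise normal bundle, which by Remark \ref{rem:poinare-dual} represents the Poincar\'e dual of $P'_b$ in $P_b$.

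The identity $j_!j^*=\sigma\cdot(-)$ then follows from the standard Gysin argument: writing $j_!\alpha=\psi_*(\pi_N^*\alpha\cdot\tau_N)$ (extended by zero through $P\sm \psi(N)$), the push-pull formula yields $j_!j^*\beta=\beta\cdot\sigma$ for any $\beta\in H^*(P;\bQ)$. For the commutativity of the triangle, I would apply Lemma \ref{lem:trace-functoriality} to the tower $N\xra{\pi_N}P'\xra{\pi'}B$. Excision combined with the explicit description of $j_!$ above identifies $\pi_*\g j_!\alpha$ with the trace of $\tau_N\cdot\pi_N^*\alpha$ along $N\to B$. Applying Lemma \ref{lem:trace-functoriality} splits this as $\pi'_*\g(\pi_N)_*(\tau_N\cdot\pi_N^*\alpha)$, and the inner trace is $\alpha$ by the Thom isomorphism (fiberwise this is just the classical fact that integrating the Thom class of a rank-$k$ bundle over the fibre gives $1$, verified via Lemma \ref{lem:trace-base-change} at each point).

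The main technical obstacle will be checking that the sheaf-level description of $\pi_*$ via the Leray spectral sequence is compatible with the excision step: one must verify that the map $j_!$ induces the correct morphism $\mathcal{H}^{m'}_c(P'/B)\to\mathcal{H}^{m}_c(P/B)$ of Leray sheaves, and that this morphism intertwines the fiberwise integration morphisms $\int_{[M'/G]}$ and $\int_{[M/G]}$. By local triviality of both $\pi$ and $\pi'$, however, this is a local statement on $B$, where it reduces to the already-known statement for a product bundle together with the orbifold Poincar\'e duality recalled in \textsection\ref{sec:orbifold-intersection}; the rest is a routine diagram chase in Bredon's spectral sequence.
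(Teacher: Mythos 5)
Your proposal is correct and follows essentially the same route as the paper: define $\sigma$ as the (excised) Thom class of the normal bundle, express $j_!$ through the Thom isomorphism and the tubular neighbourhood, reduce the triangle to the case where $P$ is the normal bundle itself, and conclude by applying Lemma \ref{lem:trace-functoriality} to $N\to P'\to B$ together with the fact that fibrewise integration inverts the Thom isomorphism. The extra care you take with the Leray-sheaf compatibility is the same local-triviality reduction the paper implicitly relies on.
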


\begin{proof} Let $\rho\cl\cN \to P'$ be the normal bundle of the embedding and $\psi \cl V\to W\sub \cN$ be a tubular neighbourhood. Then $\sigma =\psi^*\tau_{\cN}$ and $j_!$ are given by the composite
$$H^*_{fc}(P';\bQ)\xra{\simeq} H^{k+*}_{fc}(\cN\mid P';\bQ) \xra{\psi^*} H^{k+*}_{fc}(W\mid P';\bQ) \to H^{k+*}_{fc}(P;\bQ).$$
It suffices thus to show that the above triangle commutes with $P$ replaced by $\cN$. In this case, $j_!$ is an isomorphism with inverse given by $\rho_*$. Thus, the claim follows from Lemma \ref{lem:trace-functoriality}.
\end{proof}

\begin{lemma}\label{boundary-integration} Suppose $X$ is the orbit space of an oriented global quotient orbifold of dimension $n$ with boundary and $\pi \cl P \to B$ is a fibre bundle with fibre $X$. If $j\cl P'\hkra P$ denotes the subbundle with fibre $\del X$, then the composition $$H^{*+ n-1}_c(P;\bQ)\xra{j^*} H^{*+n-1}_c(P';\bQ)\xra{\pi_*} H^*(B;\bQ)$$ vanishes.
\end{lemma}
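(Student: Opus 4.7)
The plan is to use naturality of the trace map (integration along the fibre) to reduce the claim to a purely fibrewise statement, which is then a version of Stokes' theorem for the oriented rational homology manifold $X$ with boundary $\del X$. By the construction of the trace in \S\ref{subsec:trace}, $\pi_*\cl H^{n-1+*}_c(P';\bQ)\to H^*(B;\bQ)$ is the composition of the edge map of the Leray--Serre spectral sequence for $\pi'\cl P'\to B$ with the morphism $(\int_{\del X})_\#$ induced by the stalk-wise integration $\int_{\del X}\cl \cH^{n-1}_c(\del X)\to \lc{\bQ}$. Naturality of the Leray--Serre spectral sequence along the morphism of fibre bundles $j\cl P'\hkra P$ over $B$ identifies $j^*$ at the level of the row $q=n-1$ of $E_2$ with the map $H^*(B;\cH^{n-1}_c(X))\to H^*(B;\cH^{n-1}_c(\del X))$ induced by the stalk-wise restriction $i^*\cl H^{n-1}_c(X;\bQ)\to H^{n-1}_c(\del X;\bQ)$. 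Combining these, the composition $\pi_*\circ j^*$ factors through the map of locally constant sheaves $\int_{\del X}\circ\, i^*\cl \cH^{n-1}_c(X)\to \lc{\bQ}$, so it suffices to show that $\int_{\del X}\circ\, i^*\cl H^{n-1}_c(X;\bQ)\to\bQ$ vanishes at each stalk.

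For the stalk claim, observe that $X=M/G$ is the orbit space of an oriented global quotient orbifold with boundary $\del X=\del M/G$, hence a rational homology manifold of dimension $n$ with boundary, and Poincaré--Lefschetz duality holds rationally. Consider the long exact sequence of the pair $(X,\del X)$ in compactly supported cohomology,
\begin{equation*}
H^{n-1}_c(X;\bQ)\xra{i^*}H^{n-1}_c(\del X;\bQ)\xra{\delta}H^n_c(X,\del X;\bQ).
\end{equation*}
Under the Lefschetz duality isomorphisms $H^{n-1}_c(\del X;\bQ)\cong H_0(\del X;\bQ)$ and $H^n_c(X,\del X;\bQ)\cong H_0(X;\bQ)$, this corresponds to the sequence $H_1(X,\del X;\bQ)\xra{\partial}H_0(\del X;\bQ)\xra{i_*}H_0(X;\bQ)$; thus $\delta$ is identified with $i_*$, while $\int_{\del X}$ corresponds to the augmentation $\epsilon\cl H_0(\del X;\bQ)\to\bQ$. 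Since $i_*$ sends the class of a point of $\del X$ to the class of the component of $X$ containing it, $\ker(i_*)$ consists of $0$-cycles whose component-wise sum is zero, and hence $\ker(i_*)\subseteq\ker(\epsilon)$. By exactness, $\im(i^*)\subseteq\ker(\delta)=\ker(i_*)\subseteq\ker(\epsilon)$, so the stalk composition vanishes.

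The main technical point is the validity of rational Poincaré--Lefschetz duality for the orbit space of an oriented global quotient orbifold with boundary, together with the identification of the coboundary $\delta$ with the pushforward $i_*$ on $H_0$ under the duality isomorphisms. Both are classical for smooth oriented manifolds with boundary and extend rationally to our setting via standard sheaf-theoretic arguments (cf.\ \cite{Bre12}). Given this framework, the argument is essentially a rephrasing of Stokes' theorem for $X$.
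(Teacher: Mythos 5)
Your argument is correct in substance but takes a genuinely different route from the paper. The paper attaches an external collar $\tilde P = P\cup_{P'}P'\times[0,1)$, uses a fibrewise proper deformation retraction to identify $H^*_c(\tilde P)$ with $H^*_c(P)$, and then observes that $\pi_*\circ j^*$ factors through two consecutive maps of the long exact sequence in compactly supported cohomology, the key computational input being the compatibility of fibre integration with the suspension isomorphism over the collar $P'\times(-1,1)$ (K\"unneth). You instead push the whole problem down to the fibre via naturality of the Leray--Serre spectral sequence and then prove the fibrewise vanishing $\int_{\del X}\circ i^*=0$ by Poincar\'e--Lefschetz duality for the rational homology manifold $X$, identifying $\delta$ with $i_*$ on $H_0$ and $\int_{\del X}$ with the augmentation; this is a clean ``local over $B$'' formulation of the same Stokes-type phenomenon. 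The paper's route avoids all filtration bookkeeping; yours isolates the geometric content in a single stalkwise computation, at the price of the point below. Note also that both routes need the existence of collars for almost free locally linear actions (the paper cites the generalisation of Brown's collaring theorem for this), so you should make that dependence explicit when invoking the collar structure of $(X,\del X)$.

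There is one step you need to justify more carefully: the claim that $\pi_*\circ j^*$ ``factors through'' $(\int_{\del X}\circ i^*)_\#$. The trace for $P'$ is read off from the top row $q=n-1$ of its spectral sequence, but a class in $H^{*+n-1}_c(P;\bQ)$ a priori has a leading term in $E_\infty^{*-1,n}(P)\sub H^{*-1}(B;\cH^n_c(X))$, and naturality of the spectral sequence only controls the composite on the filtration piece $F^{*}$; the contribution of $E_\infty^{*-1,n}$ to $E_\infty^{*,n-1}(P')$ is a secondary operation not seen on any fixed page. This is harmless here, but for a reason you should state: after discarding the open and closed sub-bundle of $P$ whose fibre is the union of the closed components of $X$ (on which $j^*$ vanishes anyway), every component of the fibre meets $\del X$, so $\cH^n_c(X)\cong H_0(X,\del X;\bQ)=0$ by Lefschetz duality and the row $q=n-1$ is the top row for $P$ as well; then $F^{*}=H^{*+n-1}_c(P)$, both $E_\infty^{*,n-1}$ terms are genuine subobjects of $E_2^{*,n-1}$, and the factorisation you assert holds on the nose. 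With that sentence added, and with the (standard but orientation-sensitive) compatibility of the two long exact sequences under Lefschetz duality spelled out --- the identification $\delta\leftrightarrow i_*$ only holds when $H^{n-1}_c(\del X;\bQ)\cong H_0(\del X;\bQ)$ is taken with respect to the boundary orientation, which is also what makes $\int_{\del X}$ the augmentation --- your proof is complete.
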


\begin{proof} The homology $\bQ$-manifold $\tilde{X} \coloneq X\cup_{\del X} \del X\times[0,1)$ admits a proper deformation retraction onto $X$, as does $\tilde{P} \coloneq X\cup_{P'}  P'\times[0,1)$ onto $P$ (where it is fibrewise proper). The deformation retraction fixes $P'$ pointwise and is a map of fibre bundles over $B$. By the long exact sequence in compactly supported cohomology, it suffices to show that $H^{*+n-1}_c(P';\bQ)\to H^*(B;\bQ)$ factors through $H^{*+n-1}_c(P';\bQ)\to H^{*+n}_c(\tilde{P};\bQ)$.\par
The results of \cite{Br62} generalise directly to the setting of topological manifolds with boundary, on which a compact group $G$ acts almost freely and locally linearly, and to fibrations thereof. Hence, $\pd X$ admits a collar inside $X$ and $P'$ admits one inside $P$. Thus, we can find a neighbourhood $U\sub \tilde{P}$ with $U \cong P'\times (-1,1)$ and the claim reduces to showing the commutativity of 
\begin{center}\begin{tikzcd}
		H^{*+n-1}_c(P';\bQ)\arrow[r,] \arrow[d,"\pi_!"] & H^{*+n}_c(P'\times (-1,1);\bQ)\arrow[dl,"\pi_!"]\\
		H^*(B;\bQ)\end{tikzcd} \end{center}
which is an immediate consequence of the K\"unneth theorem.\end{proof}

\begin{lemma}\label{cobordant-charts} Suppose two oriented global Kuranishi charts $\cK_i = (G,\cT_i,\cE_i,\fs_i)$ for $\fM_i$ are cobordant via $\cK= (G,\cT,\cE,\fs)$. If $f_i \cl \fM_i \to N$ is a continuous map so that $f_0\sqcup f_1$ extends over $\fs\inv(0)/G$, then ${f_0}_*\vfc{\fM_0} = {f_1}_*\vfc{\fM_1}$ in $\chml^*(N;\bQ)\dul$. The same is true in the equivariant setting.
\end{lemma}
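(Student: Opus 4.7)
The strategy is to interpret the asserted equality as a Stokes-type identity for the cobordism $\cT$, using Lemma \ref{boundary-integration} as the orbifold-with-boundary version of the fact that the integral of a closed form over a boundary vanishes.

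Fix $\alpha \in \chml^*(N;\bQ)$. Since $\fs\inv(0)/G$ is compact (as a cobordism between the compact moduli spaces $\fM_i$) and $\chml^*(\fs\inv(0)/G;\bQ)$ is the direct limit of $H^*(W/G;\bQ)$ over open $G$-invariant neighborhoods $W$ of $\fs\inv(0)/G$ in $\cT/G$, the pullback $F^*\alpha$ is represented by a class $\widetilde\beta$ on some such neighborhood; by shrinking $\cT$ we may assume $\widetilde\beta$ is defined on all of $\cT/G$ and restricts over a neighborhood of $\fM_i$ to $f_i^*\alpha$. Set
\[
\gamma := \fs^*\tau(\cE/G)\cdot \widetilde\beta \; \in \; H^{*+\rank(\cE)}_c(\cT/G;\bQ);
\]
the compactness of support follows because $\fs^*\tau(\cE/G)$ can be chosen to be supported in an arbitrarily small neighborhood of the compact set $\fs\inv(0)/G$.

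Let $j\cl \cT_0/G\sqcup\cT_1/G \hookrightarrow \cT/G$ be the boundary inclusion. Apply Lemma \ref{boundary-integration} to the trivial fibre bundle $\cT/G\to \Pt$ (whose fibre is the oriented orbifold-with-boundary $\cT/G$): the composite of $j^*$ with integration over the boundary vanishes, so $\int_{\partial(\cT/G)} j^*\gamma = 0$. With the standard orientation convention for a cobordism of oriented GKCs (induced boundary orientation $-\cT_0\sqcup \cT_1$), this unwinds to
\[
\int_{\cT_1/G}\fs_1^*\tau(\cE_1/G)\cdot f_1^*\alpha \;-\; \int_{\cT_0/G}\fs_0^*\tau(\cE_0/G)\cdot f_0^*\alpha \;=\; 0,
\]
which by Definition \eqref{eq:vfc-de} is precisely $\langle {f_1}_*\vfc{\fM_1},\alpha\rangle - \langle {f_0}_*\vfc{\fM_0},\alpha\rangle = 0$. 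As $\alpha$ was arbitrary, the two pushforwards agree.

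For the equivariant version, the same argument applies verbatim after replacing $\cT/G$ by its homotopy quotient $(\cT/G)_K$. Using finite-dimensional approximations of $EK$, this becomes a fibre bundle over (an approximation of) $BK$ whose fibre is the orbifold-with-boundary $\cT/G$; the cobordism structure is preserved fibrewise, so Lemma \ref{boundary-integration} combined with the integration-along-the-fibre formalism of \textsection\ref{subsec:trace} yields the analogous identity in $H^*_K$. The main technical point to verify is that the rel--$C^\infty$ cobordism $\cT$ genuinely furnishes $\cT/G$ with an oriented orbifold-with-boundary structure for which Lemma \ref{boundary-integration} applies, i.e.\ that the boundary admits an equivariant collar; this follows from the locally linear structure of the $G$-action together with the classical topological collar theorem, exactly as invoked in the proof of Lemma \ref{boundary-integration}.
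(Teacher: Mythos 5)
Your proof is correct and follows essentially the same route as the paper: extend the class over the cobordism, multiply by the pulled-back Thom class to land in compactly supported cohomology, and invoke Lemma \ref{boundary-integration} (applied to $\cT/G\to\Pt$) to see that the resulting boundary integral vanishes, with the sign coming from the boundary orientation $-\cT_0\sqcup\cT_1$. The paper encodes exactly this in a commutative diagram and likewise dismisses the equivariant case as a routine adaptation via the trace-map formalism.
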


\begin{proof} Set $\fW \coloneq \fs\inv(0)/G$. The claim follows from Lemma \ref{boundary-integration} and the commutativity of
\begin{center}\begin{tikzcd}
		\chml^{d+ n}(N;\bQ)\arrow[r,"f^*"]\arrow[dr,"f_0^*\sqcup -f_1^*"]&\chml^{d+*}(\fW;\bQ) \arrow[d,""]\arrow[r,"\fs^*\tau_{\cE}"] &H^{m+*}_c(\cT_0/G\sqcup \cT_1/G;\bQ)\arrow[d]\\
		&\chml^{d+*}(\fM_0\sqcup \fM_1;\bQ)\arrow[r,"\fs^*\tau_{\cE}"] &H^{m+*}_c(\cT_0/G\sqcup \cT_1/G;\bQ)\arrow[r,"\Pt_!"]& \bQ \end{tikzcd} \end{center}
where $d$ is the virtual dimension of $\fM_0$ and $\fM_1$. Since the equivariant version of this diagram also commutes and Lemma \ref{boundary-integration} is phrased for the equivariant setting, the last assertion is immediate.
\end{proof}

\bibliographystyle{amsalpha}
\bibliography{HGGW}

\Addresses

\end{document}